\documentclass{amsart} 
\usepackage{amssymb, amsmath}
\usepackage{amsfonts}
\usepackage{amssymb}
\usepackage{color}
\usepackage{comment}
\usepackage{epsfig}
\usepackage{float}
\usepackage{graphicx}
\usepackage[pagewise]{lineno}
\floatplacement{figure}{H}

\newtheorem{theorem}{Theorem}[section]
\newtheorem*{thmA*}{Theorem A}
\newtheorem*{thmB*}{Theorem B}
\newtheorem*{lemA*}{Lemma A}
\newtheorem{corollary}[theorem]{Corollary}

\newtheorem{definition}{Definition}[section]

\newtheorem{lemma}{Lemma}[section]

\newtheorem{proposition}[theorem]{Proposition}
\newtheorem{remark}{Remark}[section]

\newtheorem*{assumptionA*}{Assumption A}
\newtheorem*{assumptionB*}{Assumption B}
\newtheorem*{assumptionC*}{Assumption C}

\newcommand{\diam}{\text{\rm diam}}

\newcommand{\parallelsum}{\mathrel{\!/\mkern-5mu/\!}}
\def\be{\begin{equation}}
\def\ee{\end{equation}}

\def\tilde{\widetilde}

\numberwithin{equation}{section} \numberwithin{theorem}{section}
\numberwithin{figure}{section}
\vspace{1cm}

\usepackage{array,makecell}
\usepackage{graphicx, array, blindtext}

\allowdisplaybreaks

\begin{document}
\bibliographystyle{siam}

\title[]
{On the quantitative lower bounds for solutions to the Boltzmann equation in nonconvex domains}

\author{Jhe-kuan SU}

\date{\today}

\begin{abstract}
In this article, we study the continuous mild solutions to the Boltzmann equation in a bounded spatial domain, under either angular cutoff assumption or non-cutoff assumption. Without assuming convexity of the spatial domain, we establish a Maxwellian lower bound in the cutoff case, and a weaker-than-Maxwellian lower bound for the non-cutoff case. This extends the results of \cite{Bri1,Bri2}, where the convexity of the domain was required.
\end{abstract}

\maketitle

\section{Introduction}

In this paper, we investigate the lower bounds of the mild solutions to the Boltzmann equation on a spatial domain $\Omega \subset \mathbb{R}^3$ satisfying

\begin{assumptionA*}
    $\Omega$ is a connected, bounded open set with $C^2$ boundary.
\end{assumptionA*}

\noindent
The Boltzmann equation under consideration reads
\begin{equation}\label{Boltzmann equation}
     \partial_t f(t,x,v)+ v \cdot \nabla f(t,x,v)=Q[f,f](t,x,v),\ \forall (t,x,v) \in [0,T) \times \Omega \times \mathbb{R}^3,
\end{equation}
\begin{equation}\label{Boltzmann equation initial condition}
     f(0,x,v)=f_0(x,v),\ \forall (x,v) \in \Omega \times \mathbb{R}^3,
\end{equation}
where $T>0$.
We consider the mixed boundary condition: for any $(x,v) \in \Gamma^-$, we have
\begin{equation}\label{boundary condition}
f(x,v)= \alpha f(t,x,R(x,v)) +(1-\alpha)\left( \int_{w\cdot n(x)>0} f(t,x,w)(w\cdot n(x)) dw \right)\frac{1}{2\pi T_B^2}e^{-\frac{|v|^2}{2T_B}}
\end{equation}
for some $T_B>0$ and $\alpha \in [0,1]$. Here, $\Gamma^-:=\{ (x,v) \in \partial \Omega \times \mathbb{R}^3 | n(x)\cdot v<0 \}$, $R(x,v):=v-2(v\cdot n(x))n(x)$, where $n(x)$ is the outward unit normal vector at the boundary point $x \in \partial\Omega$.  

The collision operator $Q$ is defined as 

\begin{equation}
Q[h_1,h_2](v):=\int_{\mathbb{R}^3\times \mathbb{S}^2}B(|v-v_*|,\cos{\theta})[h_2(v')h_1(v'_*)-h_2(v)h_1(v_*)]\,dv_*\,d\sigma,
\end{equation}
where $v'$, $v_*$, $v'_*$ are defined as:
\begin{equation*}
    v':=\frac{v+v_*}{2}+\frac{|v-v_*|}{2}\sigma,\ v'_*:=\frac{v+v_*}{2}-\frac{|v-v_*|}{2}\sigma
\end{equation*}
with $\cos{\theta}:= \left\langle \frac{v-v_*}{|v-v_*|},\sigma \right\rangle$.
We assume that the collision kernel $B \geq 0$ and satisfies:
\begin{assumptionB*}
 \begin{equation}
    B(|v-v_*|, \cos{\theta})=\Phi(|v-v_*|)b(\cos{\theta}),
\end{equation}
\end{assumptionB*}
\noindent where $\Phi:=\Phi(|v-v_*|)$ is a function defined on $[0,\infty)$ satisfying

\begin{equation} \label{Phi estimate 1}
 c_{\Phi}r^{\gamma}  \leq \Phi(r) \leq C_{\Phi} r^{\gamma},\ \forall r \in [0,\infty),
\end{equation}
or 
\begin{equation}\label{Phi estimate 2}
\begin{cases}  c_{\Phi}r^{\gamma}  \leq \Phi(r) \leq C_{\Phi} r^{\gamma},\ &\forall r \in [1,\infty), \\ 
c_{\Phi}  \leq \Phi(r) \leq C_{\Phi} ,\ &\forall r \in [0,1]
\end{cases} 
\end{equation}
for some positive constants $c_{\Phi}$, $C_{\Phi}$ and $\gamma \in (-3, 1]$. Here, we assume that the function $b: \theta \mapsto b(\cos{\theta})$ is continuous on
$(0, \pi]$, positive near $\frac{\pi}{2}$, and satisfies

\begin{equation} \label{b estimate}
   \lim\limits_{\theta \rightarrow 0^+} \frac{b(\cos{\theta})\sin{\theta}}{\theta^{-(1+\nu)}} =b_0,
\end{equation}
for some $b_0>0$ and $\nu \in (-\infty ,2)$.

The problems of quantifying the positivity of the solutions to \eqref{Boltzmann equation}-\eqref{boundary condition} have attracted considerable attention from many authors. Beyond interest in physics, the problem also plays a crucial role in kinetic theory. For example, the exponential lower bound is crucial in the study of the behavior of the solution to the related Laudau equation in \cite{Des1,Des2}. In \cite{Gua1}, the Maxwellian lower bound is applied to derive the uniqueness of the solution to the Boltzmann equation in $L_v^1L_x^{\infty}(1+|v|^{2+0})$. 
In 1933, Carleman \cite{Car1} showed the existence of exponential lower bounds on the radially symmetric solution of the spatially homogeneous Boltzmann equation with angular cutoff hard potentials. Since then, many results have been derived. For example, in \cite{Pul1} Pulvirenti and Wennberg improved the result by proving the Maxwellian lower bound for non-radially symmetric solutions of the spatially homogeneous Boltzmann equation with angular cutoff hard potentials. In \cite{Mou 1}, Mouhot removed the homogeneous assumption and obtained the Maxwellian lower bound for the torus domain.
This result was extended to the bounded convex domain with $C^2$ boundary by Briant in \cite{Bri1,Bri2}. We note that in the articles \cite{Bri1,Bri2,Mou 1}, they also derived the "less than Maxwellian" lower bounds for the non-cutoff case. Later, this result was improved to Maxwellian lower bounds in the case of hard and moderately soft potentials with the assumption that $\Omega$ is a torus by Imbert, Mouhot, Silvestre in \cite{Imb1} and in the case where $\Omega$ is $\mathbb{R}^3$ by Henderson, Snelson, and Tarfulea in \cite{Hen1}.

However, the problem of Maxwellian lower bounds on a non-convex spatial domain remains an open problem in both the cutoff and non-cutoff cases. In this article, we derive a Maxwellian lower bound on an open bounded connected but not necessarily convex domain with a $C^2$ boundary for the cutoff case. For the non-cutoff case, we also derive a weaker than Maxwellian lower bound.

In \cite{Bri1,Bri2,Mou 1}, the following spreading property of the collision operator
\begin{equation}
    Q^+\left[\mathbf{1}_{B(v_0,\delta)},\mathbf{1}_{B(v_0,\delta)}\right](v) \geq C\xi^{\frac{1}{2}}\mathbf{1}_{B(v_0,\delta\sqrt{2}(1-\xi))}(v),
\end{equation}
for some $C>0$ depending on the cross section, and any $\xi \in (0,1)$,
is a key ingredient to deriving a lower bound around one point, which could then be extended to any other point using the convexity of the domain. 

In this article, we focus on the geometry of a $C^2$ connected domain and show that for any two points of $\Omega$ there always exists a suitable zigzag between these two points, which allows the spreading of the lower bound from one point to the other point like in \cite{Bri1,Bri2}. One major difficulty arises from the requirement that these zigzags in the domain must remain sufficiently distant from the boundary of $\Omega$. In fact, the shape structure affects the amount of gas molecules passing through the boundary. In \cite{Bri1, Bri2}, the lower bounds depend on: (1) hydrodynamic quantities. (2) the modulus of continuity of $f_0$. In our result, the lower bounds also depend on $\operatorname{Conn}_d(\Omega)$, the maximum number of "good" zigzag segments needed to connect any two points in the spatial domain. We put the definition of $\operatorname{Conn}_d(\Omega)$ in \eqref{2025 06 09 02:40} and address the detailed analysis concerning the geometry in Section 2.  Our main contribution lies in showing that the Maxwellian lower bound can be propagated throughout a non-convex domain by exploiting geometric connectivity properties, quantified through $\operatorname{Conn}_d(\Omega)$.

\bigskip

Throughout Sections 1--5, we assume that $\nu <0$ (cutoff case), where $\nu$ is defined in \eqref{b estimate}. To classify the type of solutions to \eqref{Boltzmann equation}--\eqref{boundary condition}, we first decompose the collision operator $Q$ as follows:

\begin{equation}
    \begin{split}
        &Q[h_1,h_2](v)\\
        =&\int_{\mathbb{R}^3\times \mathbb{S}^2}B(|v-v_*|,\cos{\theta})[h_2(v')h_1(v'_*)-h_2(v)h_1(v_*)]\,dv_*\,d\sigma\\
        =&\int_{\mathbb{R}^3\times \mathbb{S}^2}B(|v-v_*|,\cos{\theta})[h_2(v')h_1(v'_*)]\,dv_*\,d\sigma\\
        &-h_2(v)\int_{\mathbb{R}^3\times \mathbb{S}^2}B(|v-v_*|,\cos{\theta})h_1(v_*)\,dv_*\,d\sigma\\
        =&:Q^+[h_1,h_2](v)-h_2(v)L[h_1](v),
    \end{split}
\end{equation}
where

\begin{align*}
    &Q^+[h_1,h_2](v):=\int_{\mathbb{R}^3\times \mathbb{S}^2}B(|v-v_*|,\cos{\theta})[h_2(v')h_1(v'_*)]\,dv_*\,d\sigma,\\
    &L[h_1](v):=\int_{\mathbb{R}^3\times \mathbb{S}^2}B(|v-v_*|,\cos{\theta})h_1(v_*)\,dv_*\,d\sigma.
\end{align*}

Next, we define the following notations:
\begin{align*}
    &t_b(x,v):=\sup\left\{\{0\} \cup \{t>0\mid x-sv \in \Omega,\, \forall \,0<s<t\}\right\},\\
 &x_b(x,v):=x-t_b(x,v)v,\\
    &\Gamma^0:=\{(x,v) \in \partial\Omega \times \mathbb{R}^3 \mid n(x)\cdot v =0 \},\\
    &\Gamma^+:=\{(x,v) \in \partial\Omega \times \mathbb{R}^3 \mid n(x)\cdot v >0 \},\\
    &\Gamma^-:=\{(x,v) \in \partial\Omega \times \mathbb{R}^3 \mid n(x)\cdot v <0 \},\\
    &\Gamma_-^0:=\{(x,v) \in \Gamma^0 \mid t_b(x,v)=0,t_b(x,-v)>0,\, \exists \, \delta >0  \ni  \, x-s'v \in (\overline{\Omega})^c\, \forall \, s' \in (0,\delta) \},
\end{align*}

\begin{equation}
\begin{split}&\Gamma_{conti}:= \left\{\{0\}\times \overline{\Omega} \times \mathbb{R}^3\right\}  \cup  \left\{ (0,\infty)\times(\Gamma^-\cup \Gamma^0_-)\right \}\\
&\cup \left\{ (t,x,v)\in (0,T) \times \{ \Omega \times \mathbb{R}^3 \cup \Gamma^+ \} \ | t<t_b(x,v) \, \mathrm{ or } \, (x_b(x,v),v) \in \Gamma^-\cup \Gamma^0_-) \right\}.
    \end{split}
\end{equation}
Using the above notations and decomposition, we are ready to introduce the continuous mild solutions of the Boltzmann equation:
\begin{definition} \label{3/9 01:05}
We assume that the domain $\Omega \subset \mathbb{R}^3$ satisfies \textbf{Assumption A} and the collision kernel $B$ satisfies \textbf{Assumption B} with $\nu <0$, where $\nu$ is introduced in \eqref{b estimate}.   Given a nonnegative, continuous function $f_0$ on $\overline{\Omega} \times \mathbb{R}^3$, we call a nonnegative function $f$ defined on $[0,T) \times (\overline{\Omega} \times \mathbb{R}^3)$ with $|f(t,x,v)| \leq C(1+|v|)^{-r}$ for some constant $C>0$ and $r>3$ for any $0< t\leq T$, $(x,v) \in \overline{\Omega} \times \mathbb{R}^3$ a "continuous mild" solution to \eqref{Boltzmann equation}--\eqref{boundary condition} with initial data $f_0$ if $f$ is continuous on $\Gamma_{conti}$ and for $(t,x,v) \in [0,T) \times \Omega \times \mathbb{R}^3$ and the function $f(t,x,v)$ satisfies the following integral identities:
\begin{equation}\label{Duhamel formula}
\begin{split}
f(t,x,v)&= f_{0}(X_{0,t}(x,v),v)\exp\left(-\int_0^tL[f(s,X_{s,t}(x,v),\cdot)](v)\, ds\right)\\
&+\int_0^t \exp\left(-\int_s^t L[f(s',X_{s',t}(x,v),\cdot)](v)\, ds'\right)\\
&\hspace{1cm}Q^+[f(s,X_{s,t}(x,v),\cdot),f(s,X_{s,t}(x,v),\cdot)](v)\,ds,
\end{split}
\end{equation}
when $t \leq t_{\partial}(x,v):=\sup\{t\geq 0|x-vs\in \overline{\Omega},\ \forall s \in [0,t]\}$, and
\begin{equation}\label{Duhamel formula B}
\begin{split}
f(t,x,v)&= \alpha f(t-t_{\partial}(x,v),X_{t-t_{\partial}(x,v),t}(x,v),R(X_{t-t_{\partial}(x,v),t}(x,v),v))\\
&\hspace{0.5cm}\exp\left(-\int_{t-t_{\partial}(x,v)}^tL[f(s,X_{s,t}(x,v),\cdot)](v)\,ds\right)\\
&+(1-\alpha)\left( \int_{w\cdot n(X_{t-t_{\partial}(x,v),t}(x,v))>0} f(t,X_{t-t_{\partial}(x,v),t}(x,v),w)(w\cdot n(X_{t-t_{\partial}(x,v),t}(x,v))) dw \right)\\
&\hspace{1.8cm} \frac{1}{2\pi T_B^2}e^{-\frac{|v|^2}{2T_B}} \exp\left(-\int_{t-t_{\partial}(x,v)}^tL[f(s,X_{s,t}(x,v),\cdot)](v)\,ds\right)\\
&+\int_{t-t_{\partial}(x,v)}^t \exp\left(-\int_s^t L[f(s',X_{s',t}(x,v),\cdot)](v)\,ds'\right)\\
&\hspace{1.4cm}Q^+[f(s,X_{s,t}(x,v),\cdot),f(s,X_{s,t}(x,v),\cdot)](v)\,ds,
\end{split}
\end{equation}
when $t \geq t_{\partial}(x,v)$.
Here, a detailed definition of the characteristic line $X_{s,t}(x,v)$ is provided in the \textbf{Appendix}.

\end{definition}

\begin{remark}
   In the case of specular reflection ($\alpha = 1$), the continuous mild solution of \eqref{Boltzmann equation}--\eqref{boundary condition} can also be written as the continuous function on $\Gamma_{conti}$, $f$ such that for all $(t,x,v) \in [0,T]\times \overline{\Omega} \times \mathbb{R}^3$,
\begin{equation}\label{Duhamel formula specular}
\begin{split}
f(t,x,v)&= f_{0}(X_{0,t}(x,v),V_{0,t}(x,v))\exp\left(-\int_0^tL[f(s,X_{s,t}(x,v),\cdot)](V_{s,t}(x,v))\,ds\right)\\
&+\int_0^t \exp\left(-\int_s^t L[f(s',X_{s',t}(x,v),\cdot)](V_{s',t}(x,v))\,ds'\right)\\
&\hspace{1cm}Q^+[f(s,X_{s,t}(x,v),\cdot),f(s,X_{s,t}(x,v),\cdot)](V_{s,t}(x,v))\,ds,
\end{split}
\end{equation}
where $V_{s,t}$ is defined in the \textbf{Appendix}.

\end{remark}

\begin{remark}
The continuity assumption of the solution $f(t,x,v)$ is technical, as the proof only involves the continuity of the initial condition. 
Nonetheless, in the diffusive reflection boundary condition ($\alpha =0$), the existence of a solution of bounded variation is established in \cite{Guo1}. However, as shown in \cite{Kim1}, unlike in the convex case, the solution loses continuity, with singularities propagating along the grazing set. Motivated by \cite{Kim1}, we therefore consider the set 
 $\Gamma_{conti}$. In the same work, the authors also showed that the continuity of solutions on $\Gamma_{conti}$ is ensured under diffuse reflection boundary conditions, with continuity of the initial data and addition conditions. 
\end{remark}

Before stating our main result, we introduce the following hydrodynamic quantities:

\begin{equation}
    \varrho_f(t.x):=\int_{v \in \mathbb{R}^3}f^2(t,x,v)\,dv
\end{equation}

\begin{equation} \label{Energy constant 1}
    e_f(t,x):= \int_{v \in \mathbb{R}^3}|v|^2f(t,x,v)\,dv, \,E_f:= \sup\limits_{[0,T)\times \Omega}(e_f(t,x)+\mathcal{\varrho}_f(t.x)),
\end{equation}

\begin{equation} \label{Energy constant 3}
    l_{f,p}(t,x):= \left(\int_{v \in \mathbb{R}^3}f^p(t,x,v)\,dv\right)^{\frac{1}{p}}, \,
    L_{f,p}:= \sup\limits_{[0,T)\times \Omega}l_{f,p}(t,x),
\end{equation}
It is also known that the quantity (total mass)
\begin{equation}
    M:=\int_{\Omega \times \mathbb{R}^3} f(t,x,v) \,dx\,dv
\end{equation}
does not depend on $t$, where $f$ is a (continuous mild) solution of \eqref{Boltzmann equation}--\eqref{boundary condition}.

\bigskip

We can now state our main result as follows:

\begin{theorem} \label{Main theorem}
     Suppose that $\Omega \subset \mathbb{R}^3$ satisfies \textbf{Assumption A} and that the kernel $B$ satisfies \textbf{Assumption B} with $\nu<0$. We consider a non-negative continuous function $f_0$ on $\overline{\Omega} \times \mathbb{R}^3$. Let $f(t,x,v)$ be a continuous mild solution of \eqref{Boltzmann equation}--\eqref{boundary condition} on $[0,T) \times \overline{\Omega} \times \mathbb{R}^3$, with initial condition $f_0$ for some $T>0$, and let $\alpha \in [0,1]$. We assume that $f(t,x,v)$ satisfies the following properties:
  \begin{enumerate}
        \item $M>0$;
        \item $E_f<\infty$ if $\gamma \geq 0$ and $\max\{ E_f, L_{f,{p_{\gamma}}} \}< \infty$, where $p_\gamma > \frac{3}{3+\gamma} >0$, if $-3 < \gamma <0$.
    \end{enumerate}

    Then, the following lower bound holds:
    There exists $0 <\tau_0\leq T$ such that for any $\tau \in (0,\tau_0)$, there exist $\Delta_{\tau_0}>0$, $\rho>0$ and $\theta>0$ depending on $\tau_0$, $M$, $C_{\Phi},c_{\Phi}, \gamma, b_0, \nu, E_f$ (and $L_{f,p}$ if $\gamma <0$ ), $\tau$, $\alpha$, $\Omega$ and on the modulus of continuity of $f_0$, such that

    \begin{equation}
        f(t,x,v) \geq \frac{\rho}{(2\pi\theta)^{\frac{3}{2}}}e^{-\frac{|v|^2}{2\theta}},\ \forall \, t \in [\tau,\Delta_{\tau_0}),\ \forall \, (x,v) \in \overline{\Omega}\times \mathbb{R}^3.
    \end{equation}

\end{theorem}

The next theorem can be derived from \textbf{Theorem \ref{Main theorem}}:
\begin{theorem} \label{Main theorem 2}
     Suppose that $\Omega \subset \mathbb{R}^3$ satisfies \textbf{Assumption A} and that the kernel $B$ satisfies \textbf{Assumption B} with $\nu<0$. We consider a non-negative continuous function $f_0$ on $\overline{\Omega} \times \mathbb{R}^3$. Let $f(t,x,v)$ be a continuous mild solution of \eqref{Boltzmann equation}--\eqref{boundary condition} on $[0,T) \times \overline{\Omega} \times \mathbb{R}^3$, with initial condition $f_0$ for some $T>0$, and let $\alpha \in [0,1]$. We assume that $f(t,x,v)$ satisfies the following properties:
  \begin{enumerate}
        \item $M>0$;
        \item $E_f<\infty$ if $\gamma \geq 0$ and $\max\{ E_f, L_{f,{p_{\gamma}}} \}< \infty$, where $p_\gamma > \frac{3}{3+\gamma} >0$, if $-3 < \gamma <0$.
    \end{enumerate}

    Then, the following lower bound holds:
    For any $\tau \in (0,T)$, there exist $\rho>0$ and $\theta>0$ depending on $M$, $C_{\Phi},c_{\Phi}, \gamma, b_0, \nu, E_f$ (and $L_{f,p}$ if $\gamma <0$ ), $\tau$, $\alpha$, $\Omega$ and on the modulus of continuity of $f_0$, such that

    \begin{equation}
        f(t,x,v) \geq \frac{\rho}{(2\pi\theta)^{\frac{3}{2}}}e^{-\frac{|v|^2}{2\theta}},\ \forall \, t \in [\tau,T),\ \forall \, (x,v) \in \overline{\Omega}\times \mathbb{R}^3.
    \end{equation}

\end{theorem}

For the non-cutoff case, the result is stated in Chapter 6. We note that the numbers $\rho$ and $\theta$ are computable thanks to the special zigzag which will be introduced in \textbf{Proposition \ref{zigzag lemma}}, through which we can guarantee that a certain amount and range of the lower bound can be generated through the propagation in \textbf{Proposition \ref{redoable translation proposition}} and derive a computable uniform lower bound (which depends on $\Omega$ since the construction of the zigzag is affected by the geometric properties of $\Omega$) as in \textbf{Proposition \ref{initial point lower bound near boundary}}. 

The remainder of the article is organized as follows. In Section 2, we analyze the boundary of non-convex domain to derive some key properties. In Section 3, we focus on the derivation of a series of diluting initial lower bounds and the propagation effect of lower bounds. In Section 4 and Section 5, we provide a detailed proof of the Maxwellian lower bound on mild solution to the Boltzmann equation with non-fully specular ($0 \leq \alpha < 1$) and fully specular reflection boundary condition ($\alpha=1$), respectively. In Section 6, a weaker exponential lower bound of solutions to the Boltzmann equation for non-cutoff case is provided. 

\section{Geometric properties near the boundary}

In this section, we introduce some useful lemmas.

\begin{lemma} \label{initial cover over boundary}
    Given $\Omega$ which satisfies \textbf{Assumption A}, there exists $\delta:=\delta(\Omega)>0$, such that for any $0<d<\min\{1,\delta\}$, one can find $m_1 \in \mathbb{N}$, $\{x^0_i\}_{i=1}^{m_1} \in \partial\Omega$ depending on $d$ such that
    \begin{equation} \label{initial cover}  
   \partial \Omega \subset  \underset{1 \leq i \leq m_1}{\bigcup} B\left(x_i^{0},\frac{d}{8}\right)
  \end{equation}
    so that, for each $1\leq i \leq m_1$, after orthogonal transformation of variables, the boundary near $x_i^0$ can be represented as a graph of a $C^2$ scalar function on $\mathbb{R}^2$. More precisely, for each $1 \leq i \leq m_1$, there exists a orthonormal basis $\{e_i^1, e_i^2, -n(x_i^0)\}$ and a $C^2$ function $\phi_i:\mathbb{R}^2 \rightarrow \mathbb{R}$ such that
    \begin{align*}
        \phi_i(0,0)=0,  \nabla\phi_i(0,0)=0,
    \end{align*}
    \begin{equation}\label{circle presentation inner}
    \begin{split}
        &\partial \Omega \cap B(x_i^0,3d)=\{ y \in B(x_i^0,3d)  \mid y=x_i^0+u_1e_i^1+u_2e_i^2-\phi_i(u_1,u_2)n(x^0_i)  \},\\
        &\partial \Omega \cap \overline{B(x_i^0,3d)}=\{  y \in \overline{B(x_i^0,3d)} \mid y=x_i^0+u_1e_i^1+u_2e_i^2-\phi_i(u_1,u_2)n(x^0_i)\},\\
        &\Omega \cap B(x_i^0,3d)=\{  y \in B(x_i^0,3d) \mid \\ & \hspace{3.5cm}y= x_i^0+u_1e_i^1+u_2e_i^2-u_3n(x^0_i), u_3> \phi_i(u_1,u_2) \}.
    \end{split}
    \end{equation}
    Furthermore, we have $|\nabla \phi_i| < \frac{1}{100}$, $|\nabla^2 \phi_i| < \tilde{C}(\Omega)$ for some $\tilde{C}(\Omega)>\max\left\{\frac{4}{d},\frac{1}{2d_r}\right\}$, and for any $1\leq i\leq m_1$,

    \begin{equation} \label{additional initial cover}  
   B\left(x_i^0-\frac{d}{2}n(x_i^0),\frac{d}{2}\right) \subset  B\left(x_i^{0},d\right)\cap \Omega.
  \end{equation}
    Here, $d_r$ is introduced in \textbf{Remark \ref{2025/08/19 02:54}}.

\end{lemma}

\begin{proof}
   This is a direct consequence of the application of the Heine-Borel theorem.
\end{proof}
\begin{remark}\label{2025/08/19 02:54}
It is known that any bounded domain with $C^2$ boundary satisfies the uniform interior sphere condition. That is, there exists $d_r$ such that for any $x \in \partial \Omega$, one can find a ball with radius $d_r>0$ in $\Omega$ that intersects with $\partial \Omega$ only on $x$. In this article, we choose $d$ from \textbf{Lemma \ref{initial cover over boundary}} to be smaller than $d_r$.
    
\end{remark}
\begin{corollary}\label{small coro regarding distance}
   Let $\Omega$ satisfy \textbf{Assumption A} with $\delta:=\delta(\Omega)>0$. For any $0<d<\min\{1,\delta \}$, which is given in \textbf{Lemma \ref{initial cover over boundary}}, given  $1\leq i \leq m_1$, if $x \in B(x^0_i,\frac{3d}{2})$ with coordinate representation
   \begin{equation*}
       x= x_i^{0}+x_1e^1_{i}+x_2e^2_{i}-x_3n(x_i^{0}),
   \end{equation*}
 then
   \begin{equation}
       d(x, \partial\Omega \cap B(x^0_i,3d)) \geq \frac{1}{2}(x_3-\phi_{i}(x_1,x_2)).
   \end{equation}
\end{corollary}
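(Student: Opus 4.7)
The plan is to work entirely in the local graph coordinates $(u_1,u_2,u_3)$ given by \textbf{Lemma \ref{initial cover over boundary}}, exploiting the fact that $|\nabla\phi_i|$ is very small (at most $d/100 < 1/100$ since $d<1$). In these coordinates the boundary piece inside $B(x_i^0,3d)$ is exactly the graph $u_3=\phi_i(u_1,u_2)$, so any candidate nearest point $y\in\partial\Omega\cap B(x_i^0,3d)$ is of the form $y=x_i^0+y_1e_i^1+y_2e_i^2-\phi_i(y_1,y_2)n(x_i^0)$. Since $\{e_i^1,e_i^2,-n(x_i^0)\}$ is orthonormal, one has the exact expression
\begin{equation*}
|x-y|^2 = (x_1-y_1)^2+(x_2-y_2)^2+\bigl(x_3-\phi_i(y_1,y_2)\bigr)^2.
\end{equation*}

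Set $a:=\sqrt{(x_1-y_1)^2+(x_2-y_2)^2}$ and $b:=x_3-\phi_i(x_1,x_2)\geq 0$ (nonnegative by \eqref{circle presentation inner} since $x\in\overline{\Omega}$; if $x\notin\Omega$ then $b\leq 0$ and the conclusion is trivial). The mean value theorem together with the bound $|\nabla\phi_i|\leq 1/100$ gives
\begin{equation*}
|\phi_i(y_1,y_2)-\phi_i(x_1,x_2)|\leq \tfrac{a}{100},
\end{equation*}
so that $x_3-\phi_i(y_1,y_2)=b-s$ with $|s|\leq a/100$. The goal then reduces to the elementary inequality
\begin{equation*}
a^2+(b-s)^2 \geq \tfrac{b^2}{4} \quad\text{whenever } |s|\leq \tfrac{a}{100}.
\end{equation*}

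This is immediate: expanding and using $2bs\leq ab/50$, it suffices to check $a^2+\tfrac{3}{4}b^2\geq ab/50$, which follows from AM--GM (the left side dominates $\sqrt{3}\,ab$). Taking square roots and then infimum over $y\in\partial\Omega\cap B(x_i^0,3d)$ yields the claim. The main (minor) obstacle is purely bookkeeping: verifying that one is allowed to restrict to graph points, for which one invokes the explicit parametrization of $\partial\Omega\cap B(x_i^0,3d)$ from \textbf{Lemma \ref{initial cover over boundary}}; beyond this the smallness of $|\nabla\phi_i|$ leaves a comfortable margin, and in fact a constant much better than $1/2$ could be extracted if needed.
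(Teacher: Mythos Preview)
Your proof is correct and uses essentially the same ingredients as the paper: the graph parametrization of $\partial\Omega\cap B(x_i^0,3d)$ from \textbf{Lemma \ref{initial cover over boundary}} together with the mean value theorem and the smallness bound $|\nabla\phi_i|<d/100$. The only cosmetic difference is that the paper argues by contradiction (assuming a boundary point at distance $<\tfrac12(x_3-\phi_i(x_1,x_2))$ and deducing $|\nabla\phi_i|\geq 1$), whereas you bound $|x-y|$ directly for every $y$ via the elementary inequality $a^2+(b-s)^2\geq b^2/4$.
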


\begin{proof}
    This is a direct consequence of the fact that $|\nabla\phi_i| <\frac{1}{100}$ and 
 the mean value theorem. Indeed, given $x= x_i^{0}+x_1e^1_{i}+x_2e^2_{i}-x_3n(x_i^{0})$, if there exists $x'= x_i^{0}+x'_1e^1_{i}+x'_2e^2_{i}-x'_3n(x_i^{0}) \in B(x,\frac{1}{2}(x_3-\phi_{i}(x_1,x_2))) $ such that $x' \in \partial\Omega \cap B(x^0_i,3d))$, we have by \textbf{Lemma \ref{initial cover over boundary}} that 
 \begin{align*}
     \phi_i(x'_1,x'_2)=x'_3.
 \end{align*} 
 By the mean value theorem, there exists $\tilde{x}_1$ between $x_1$ and $x'_1$, $\tilde{x}_2$ between $x_2$ and $x'_2$ such that

 \begin{equation}
     |\nabla\phi_i(\tilde{x}_1,\tilde{x}_2)|=\frac{|\phi_i(x_1,x_2)-\phi_i(x'_1,x'_2)|}{|(x_1,x_2)-(x'_1,x'_2)|} \geq \frac{\frac{1}{2}(x_3-\phi_{i}(x_1,x_2))}{\frac{1}{2}(x_3-\phi_{i}(x_1,x_2))}=1, 
 \end{equation}
 from which we deduce a contradiction to the fact that $|\nabla\phi_i(\tilde{x}_1,\tilde{x}_2)|<\frac{1}{100}$.
\end{proof}

\begin{remark}\label{remark 2}
    Notice that by the triangle inequality we have $\Omega_{\frac{d}{8}} \subset \bigcup\limits_{i=1}^{m_1}B(x_i^{0},\frac{d}{4})$, where $\Omega_{\epsilon }:=\{ x\in \Omega | \ d(x,\partial\Omega)< \epsilon \}$.
\end{remark}

Next, we define $y_i^0:=x_i^0-\frac{3d}{4}n(x_i^0)$. Observe that $y_i^0 \in \Omega-\Omega_{\frac{d}{8}}$, since $B(y_i^0,\frac{d}{8}) \subset B\left(x_i^0-\frac{d}{2}n(x_i^0),\frac{d}{2}\right) \subset  B\left(x_i^{0},d\right)\cap \Omega $.

\begin{lemma} \label{initial cover general}
    Given $\Omega$ which satisfies \textbf{Assumption A}. Let $ 0< d< \min(1,\delta)$ and $y_i^0 \in \Omega$ as defined in \textbf{Lemma \ref{initial cover over boundary}}. 
    \smallskip Then, there exist $m_2 \in \mathbb{N}$, $\{ y_{i}^0 \}_{i=m_1+1}^{m_1+m_2} \subset \Omega-\Omega_{\frac{d}{8}}$ such that
\begin{enumerate}
    \item  For any $x \in \Omega_{\frac{d}{8}} $, there exists $1 \leq i \leq m_1$ such that $\overline{xy_i^0} \subset \Omega$. 
    \item For any $x \in \Omega-\Omega_{\frac{d}{8}}$, there exists $m_1+1\leq i\leq m_1+m_2$ with the property that $x \in B(y_i^0,\frac{d}{16})$ and $\overline{xy_i^0} \subset \Omega$.
\end{enumerate}

\noindent
    Here, $\overline{xy}:=\{sx+(1-s)y \mid 0\leq s \leq 1 \}$.
    
\end{lemma}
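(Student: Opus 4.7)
The plan is to handle the two properties separately. For (1), the proof relies on the local graph representation from Lemma~\ref{initial cover over boundary} together with the quantitative gradient bound $|\nabla\phi_i|<d/100$. For (2), the proof is a standard compactness argument for $\Omega-\Omega_{d/8}$.

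For part (1), given $x\in\Omega_{d/8}$, I would invoke Remark~\ref{remark 2} to obtain $i\in\{1,\dots,m_1\}$ with $x\in B(x_i^0,d/4)$. Writing $x=x_i^0+x_1e_i^1+x_2e_i^2-x_3n(x_i^0)$ in the local frame around $x_i^0$, membership $x\in\Omega$ translates by \eqref{circle presentation inner} into $x_3>\phi_i(x_1,x_2)$, and $y_i^0$ has coordinates $(0,0,3d/4)$. For $s\in[0,1]$, the point $z_s:=sx+(1-s)y_i^0$ has coordinates $(sx_1,sx_2,sx_3+(1-s)\tfrac{3d}{4})$, and it remains in $B(x_i^0,3d)$ since both endpoints do. To certify $z_s\in\Omega$, it thus suffices to establish
\begin{equation*}
sx_3+(1-s)\tfrac{3d}{4}-\phi_i(sx_1,sx_2)>0.
\end{equation*}
I would decompose this as
\begin{equation*}
s\bigl[x_3-\phi_i(x_1,x_2)\bigr]+s\bigl[\phi_i(x_1,x_2)-\phi_i(sx_1,sx_2)\bigr]+(1-s)\bigl[\tfrac{3d}{4}-\phi_i(sx_1,sx_2)\bigr].
\end{equation*}
The first bracket is strictly positive because $x\in\Omega$. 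The other two are controlled using $\phi_i(0,0)=0$ and $|\nabla\phi_i|<d/100$, which yield both $|\phi_i(sx_1,sx_2)|\le (d/100)\,s\,|(x_1,x_2)|\le sd^2/400$ and, by the mean value inequality, $|\phi_i(x_1,x_2)-\phi_i(sx_1,sx_2)|\le (d/100)(1-s)(d/4)$. Since $d<1$, the dominant $(1-s)\tfrac{3d}{4}$ term absorbs the $O(d^2)$ errors, yielding a strictly positive lower bound $\gtrsim (1-s)d$ for $s\in[0,1)$, while at $s=1$ the inequality reduces to $x_3>\phi_i(x_1,x_2)$, which holds by assumption.

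For part (2), I would first observe that $\Omega-\Omega_{d/8}=\{x\in\overline{\Omega}:\dist(x,\partial\Omega)\ge d/8\}$ is closed in $\mathbb{R}^3$ (by continuity of $\dist(\cdot,\partial\Omega)$) and bounded (since $\Omega$ is), hence compact. Covering it by the open family $\{B(y,d/16):y\in\Omega-\Omega_{d/8}\}$ and extracting a finite subcover by Heine--Borel yields the centers $y_{m_1+1}^0,\dots,y_{m_1+m_2}^0$. Given $x\in\Omega-\Omega_{d/8}$, pick $i$ with $x\in B(y_i^0,d/16)$. For any $z\in\overline{xy_i^0}$ one has $|z-y_i^0|\le d/16$, while $\dist(y_i^0,\partial\Omega)\ge d/8$, so the triangle inequality gives $\dist(z,\partial\Omega)\ge d/16>0$, placing $z\in\Omega$. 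The main technical point is the computation in part (1): the numerical choice of the coefficient $3/4$ in the definition $y_i^0=x_i^0-(3d/4)n(x_i^0)$ is exactly what provides enough headroom above the graph of $\phi_i$ to dominate the $O(d^2)$ error coming from $|\nabla\phi_i|<d/100$; part (2) is then essentially routine once one notices the compactness of $\Omega-\Omega_{d/8}$.
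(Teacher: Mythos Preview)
Your proof is correct and follows essentially the same approach as the paper. For part~(1) you work directly in the local graph coordinates and verify that each point of the segment lies strictly above the graph of $\phi_i$, whereas the paper's proof here is terse (the detailed computation appears later, in the proof of Lemma~\ref{zigzag lemma}, via a contradiction argument with the mean value theorem); your direct decomposition is arguably cleaner but rests on the same ingredients, namely the representation \eqref{circle presentation inner} and the bound $|\nabla\phi_i|<d/100$. Part~(2) is identical to the paper's compactness argument.
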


\begin{proof}
By \textbf{Lemma \ref{initial cover over boundary}} and \textbf{Remark \ref{remark 2}}, there exist $m_1$ and $\{x_i^0\}_{i=1}^{m_1}$ such that 
\begin{equation}
    \Omega_{\frac{d}{8}} \subset \bigcup_{i=1}^{m_1} B \left (x_i^0,\frac{d}{4}\right).
\end{equation}
 Because of the representation of $\partial \Omega \cap B(x^0_i,d)$ in \textbf{Lemma \ref{initial cover over boundary}}, we see that
 \begin{equation}
     \overline{xy_i^0}\in \Omega,\ \forall x \in B\left(y_i^0,\frac{d}{4}\right).
 \end{equation}
 Next, we cover $\Omega-\Omega_{\frac{d}{8}}$ by $\{ B(x,\frac{d}{16})  \}_{x \in \Omega-\Omega_{\frac{d}{8}}}$. Since $\Omega-\Omega_{\frac{d}{8}}$ is compact. The Heine-Borel covering theorem ensures the existence of a finite sub-cover of this collection (Here, we choose the sub-cover with the minimum number of covers). We then denote the corresponding centers by $\{ y_i^0 \}_{i=m_1+1}^{m_1+m_2}$. Finally, we conclude the proof by observing that 
 \begin{equation*}
B\left(y_i^0,\frac{d}{16}\right) \subset \Omega -\Omega_{\frac{d}{16}} \subset \Omega,\ \forall  \, m_1+1 \leq i \leq m_1+m_2.  
 \end{equation*}
    
\end{proof}

\begin{lemma} \label{line near point lemma}
    Suppose $\Omega$ which satisfies \textbf{Assumption A}, with $\delta$ and $d$ as given in \textbf{Lemma \ref{initial cover over boundary}}. For any $1\leq i \leq m_1$, given
    \begin{align*}
    &x=x^0_i+x_{1}e^1_i+x_{2}e^2_i-x_{3}n(x^0_i),\\
    &y=x^0_i+y_{1}e^1_i+y_{2}e^2_i-y_{3}n(x^0_i)
\end{align*}
    with $x \in \overline{B(x^0_i,\frac{d}{4})}, \,y \in \overline{ B(x^0_i,\frac{3d}{4})}$, $(y_1^2+y_2^2)^{\frac{1}{2}} \leq \frac{d}{4}$, and $\frac{x-y}{|x-y|} \cdot n(x^0_i) \geq \frac{1}{2}$, \\ we have
    \begin{equation}
        d(\overline{xy},\partial\Omega \cap \overline{B(x^0_i,d)})=d(x,\partial\Omega \cap \overline{B(x^0_i,d)}).
    \end{equation}
\end{lemma}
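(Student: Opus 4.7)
I will work in the local coordinates at $x^0_i$ from Lemma \ref{initial cover over boundary}. Writing $v := y - x$, the hypothesis $\frac{x-y}{|x-y|} \cdot n(x^0_i) \geq 1/2$ reads $v_3 \geq |v|/2$ in the basis $\{e^1_i, e^2_i, -n(x^0_i)\}$, so in particular $|v_{12}| \leq (\sqrt{3}/2)|v|$. Combining $|x_{12}| \leq d/4$ (from $x \in \overline{B(x^0_i,d/4)}$) with the hypothesis $|y_{12}| \leq d/4$ gives $|z_{12}(t)| \leq d/4$ for the segment parametrization $z(t) := (1-t)x + ty$, $t \in [0,1]$. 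Setting $S := \partial\Omega \cap \overline{B(x^0_i,d)}$, my goal is to show that $t \mapsto d(z(t),S)^2$ is strictly increasing on $[0,1]$; since $z(0) = x$, this yields $d(\overline{xy},S) = d(x,S)$.

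\textbf{Locating the minimizer.} For each $t$, let $p^*(t) = (u^*(t), \phi_i(u^*(t))) \in S$ be any minimizer of $p \mapsto |z(t) - p|$. The first-order optimality condition on the graph of $\phi_i$ reads
\[
u^*(t) - z_{12}(t) = -(z_3(t) - \phi_i(u^*(t)))\,\nabla\phi_i(u^*(t)).
\]
Using $|\nabla\phi_i| \leq d/100$ together with the bound $z_3(t) - \phi_i(u^*(t)) < d$ (obtained from $|z_3(t)| \leq 3d/4$ and $|\phi_i(u^*(t))| \leq d^2/100$) forces $|u^*(t) - z_{12}(t)| \leq d^2/99$. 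Combined with $|z_{12}(t)| \leq d/4$, this gives $|p^*(t) - x^0_i| < d$, placing $p^*(t)$ in the relative interior of $S$, so the sphere constraint $|p - x^0_i| = d$ is inactive and the displayed optimality equation holds. Substituting yields the clean identity
\[
z(t) - p^*(t) = (z_3(t) - \phi_i(u^*(t)))\,(-\nabla\phi_i(u^*(t)), 1).
\]

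\textbf{Positivity of the derivative and conclusion.} Taking the inner product with $v$,
\[
v \cdot (z(t) - p^*(t)) = (z_3(t) - \phi_i(u^*(t)))\,\bigl(v_3 - v_{12} \cdot \nabla\phi_i(u^*(t))\bigr).
\]
The first factor is strictly positive since $z(t) \in \Omega$ (one checks $H(z(t)) \geq H(x) > 0$ for $H(a) := a_3 - \phi_i(a_{12})$ by a mean value computation using $v_3 \geq |v|/2$ and $|\nabla\phi_i| \leq d/100$) and $u^*(t)$ is within $d^2/99$ of $z_{12}(t)$. The second factor satisfies $v_3 - v_{12}\cdot\nabla\phi_i(u^*(t)) \geq |v|/2 - (d/100)|v| > 0$ since $d < 1$. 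For small $h > 0$ and any minimizer $p^*(t+h)$, the identity $|z(t) - p^*(t+h)|^2 = d(z(t+h),S)^2 - 2h\,v\cdot(z(t+h) - p^*(t+h)) + h^2|v|^2$ combined with $|z(t) - p^*(t+h)|^2 \geq d(z(t),S)^2$ yields $d(z(t+h),S)^2 > d(z(t),S)^2$, giving the desired strict monotonicity on $[0,1]$.

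\textbf{Main obstacle.} The delicate step will be verifying that $p^*(t)$ never slips onto the edge $\partial\Omega \cap \partial B(x^0_i,d)$: were it to, the clean identity $z - p^* \propto (-\nabla\phi_i,1)$ would acquire a Lagrange multiplier from the sphere constraint and the sign analysis would fail. This is secured by the tight bound $|z_{12}(t)| \leq d/4$ (which crucially uses the extra hypothesis $|y_{12}| \leq d/4$) combined with the smallness $|\nabla\phi_i| \leq d/100$ from Lemma \ref{initial cover over boundary}.
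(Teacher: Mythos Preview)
Your approach is the same as the paper's: both show that $q\mapsto d\bigl(qx+(1-q)y,\,S\bigr)$ is strictly monotone along the segment by proving that the displacement from any segment point $z$ to its nearest point $p^*\in S$ (the paper's $z''$) is parallel to the surface normal at $p^*$, and hence has positive inner product with the direction $x-y$. Your derivation of the parallelism via the first–order optimality condition on the graph is a cleaner version of the paper's long contradiction argument establishing $(z''-z_{q_1})\parallel n(z'')$.

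There is, however, a circularity in your ``Locating the minimizer'' step. You invoke the unconstrained first–order condition to obtain $|u^*-z_{12}|\le d^2/99$ and then conclude that $|p^*-x^0_i|<d$, i.e.\ the sphere constraint is inactive---but the unconstrained first–order condition is only valid \emph{after} you know the sphere constraint is inactive. The paper resolves this by a direct comparison (the passage showing $(z''_1,z''_2)\in B(0,5d/6)$, else an interior competitor is closer). An equally short non-circular fix: for $|u|\ge d/2$ and $|z_{12}|\le d/4$ one has
\[
u\cdot\nabla_u\bigl|z-(u,\phi_i(u))\bigr|^2
=2|u|^2-2u\cdot z_{12}-2(z_3-\phi_i(u))\,u\cdot\nabla\phi_i(u)
\ge 2|u|\bigl(|u|-\tfrac{d}{4}-d\cdot\tfrac{d}{100}\bigr)>0,
\]
so the squared distance is strictly increasing radially for $|u|\ge d/2$, and no minimizer can lie on $\partial B(x^0_i,d)$. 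You correctly flag exactly this issue as the ``main obstacle'', but the argument you supply for it is the circular one. A minor secondary point: your appeal to ``$H(x)>0$'' (i.e.\ $x\in\Omega$) is not among the stated hypotheses; the paper's proof carries the same implicit assumption through the sign choice $Z_1=+n(z'')$, and it holds in every application of the lemma.
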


\begin{proof}

Note that it suffices to show that for any $0\leq q_1 < 1$, there exists $\epsilon'(q_1)>0$ such that for any $q_1<q_2<q_1+\epsilon'(q_1)$, we have $d(q_1x+(1-q_1)y, \partial \Omega \cap \overline{B(x^0_i,d)})> d(q_2x+(1-q_2)y, \partial \Omega \cap \overline{B(x^0_i,d)})$.

\begin{figure}[ht]
\centering
\includegraphics[width=0.65\linewidth]{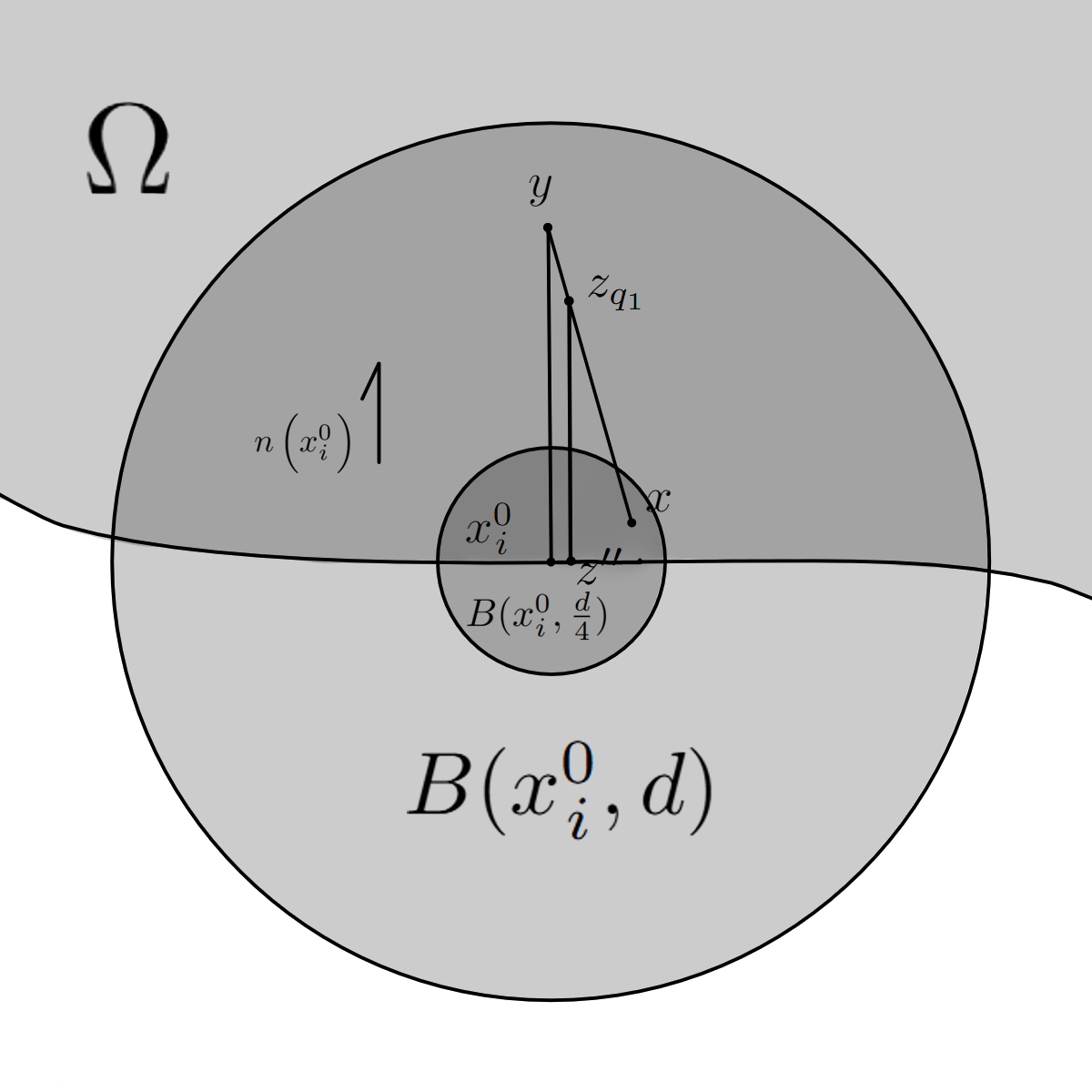}  
\caption{A figure illustrating how the points in the proof of Lemma \ref{line near point lemma} is located.}
\label{Figure 2.png}
\end{figure}

To show that, we first notice that by non-emptiness and compactness of $\partial \Omega \cap \overline{B(x^0_i,d)}$, there exists a point $z'' \in \partial \Omega \cap \overline{B(x^0_i,d)}$ such that 
\begin{align*}
    |z''-q_1x-(1-q_1)y|=d(q_1x+(1-q_1)y, \partial \Omega \cap \overline{B(x^0_i,d)}).
\end{align*}

We parameterize $x$, $y$, $z''$ as follows:
\begin{align*}
    &x=x^0_i+x_{1}e^1_i+x_{2}e^2_i-x_{3}n(x^0_i),\\
    &y=x^0_i+y_{1}e^1_i+y_{2}e^2_i-y_{3}n(x^0_i),\\
    &z''=x^0_i+z''_1e^1_i+z''_2e^2_i-z''_3n(x^0_i).
\end{align*}

We define the following quantities:
\begin{align}
    \label{Z_1 eq}&Z_1:=\frac{(z''-q_1x-(1-q_1)y)}{|z''-q_1x-(1-q_1)y|}=\frac{z''-z_{q_1}}{|z''-z_{q_1}|},\\
    \label{Z_2 eq}&Z_2:=\frac{x-q_1x-(1-q_1)y}{|x-q_1x-(1-q_1)y|}=\frac{x-y}{|x-y|}.
\end{align}
Before we proceed, we also define $z_{q_1}:=q_1x+(1-q_1)y$, and we notice that 
\begin{equation*}
    Z_1 = n(z'').
\end{equation*}
By \textbf{Lemma \ref{initial cover over boundary}}, we have (we recall $Z_1$ from \eqref{Z_1 eq})
\begin{align*}
    &Z_1\cdot n(x^0_i)\\
    =&n(z'')\cdot n(x^0_i) \\
     =& \frac{1}{\sqrt{|\partial_1\phi_{i}(z''_1,z''_2)|^2+|\partial_2\phi_{i}(z''_1,z''_2)|^2+1}}\\
    >&\sqrt{\frac{10000}{10001}}.
\end{align*}

Notice that we have from the assumption of \textbf{Lemma \ref{line near point lemma}} that (we recall $Z_2$ from \eqref{Z_2 eq})
\begin{align*}
&n(x^0_i)\cdot Z_2\geq\frac{1}{2}.
\end{align*}

Hence, we deduce that
\begin{equation}\label{2025 08/13 03:22}
\begin{split}
    &Z_1 \cdot Z_2\\
    =&n(x^0_i) \cdot Z_2+(Z_1-n(x^0_i)) \cdot Z_2\\
    \geq& \frac{1}{2} -|Z_1-n(x^0_i)||Z_2|\\
    =&\frac{1}{2}-\sqrt{|Z_1|^2+|n(x^0_i)|^2-2Z_1\cdot n(x^0_i)}\\
    >&\frac{1}{2} -\sqrt{2\left(1-\sqrt{\frac{10000}{10001}}\right) }\\
    >&\frac{\sqrt{2}}{3}.
    \end{split}
\end{equation}
Now, we define
\begin{align*}
    &W_{1,//}:=((z''-z_{q_1}) \cdot Z_2) Z_2=|z''-z_{q_1}|(Z_1\cdot Z_2)Z_2,\\
    &W_{1,\perp}:=(z''-z_{q_1})-W_{1,//}.
\end{align*}
Clearly, we have $z''-z_{q_1}=W_{1,//}+W_{1,\perp}$, $W_{1,//} \parallelsum Z_2 \parallelsum x-y$, and $W_{1,\perp} \perp Z_2$.

Finally, by taking $\epsilon'(q_1) :=\min\left\{  \frac{\sqrt{2}|z''-z_{q_1}|}{6|x-y|}, 1-q_1 \right\}$, we have 
\begin{align*}
    &z''-q_2x-(1-q_2)y \in \overline{xy},
    \end{align*}
    and 
    \begin{align*}
    0&<|x-y|(q_2-q_1) < |z''-q_1x-(1-q_1)y| \frac{\sqrt{2}}{3}\\
    &<|z''-z_{q_1}| Z_1 \cdot Z_2\\
    &=|W_{1,//}|,
\end{align*}
for any $q_1<q_2<q_1+\epsilon'(q_1)$.

Hence, we have 
\begin{align*}
    &|W_{1,//}-|x-y|(q_2-q_1)Z_2|<|W_{1,//}|,
\end{align*}
and we deduce that
\begin{align*}
    &d\bigg(q_2x+(1-q_2)y, \partial \Omega \cap \overline{B(x^0_i,d)}\bigg)<d\left(q_1x+(1-q_1)y, \partial \Omega \cap \overline{B(x^0_i,d)}\right)
\end{align*}
for any $q_1<q_2<q_1+\epsilon'(q_1)$.
\end{proof}

Now, we introduce one more property regarding the geometry of $\Omega$:
\begin{lemma} \label{path wise connected inner}
    Suppose that $\Omega$ satisfies \textbf{Assumption A} with $\delta$, $d$ as given in \textbf{Lemma \ref{initial cover over boundary}}. Then, $\Omega-\Omega_{\frac{d}{8}}$ is pathwise connected; that is, given $x'_1, x'_2 \in \Omega-\Omega_{\frac{d}{8}}$, there exists a continuous curve in $\Omega-\Omega_{\frac{d}{8}}$ joining $x'_1$ and $x'_2$.
\end{lemma}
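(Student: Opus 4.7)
The plan is to start with any continuous path in $\Omega$ between $x_1'$ and $x_2'$ (which exists because $\Omega \subset \R^3$ is open and connected, hence path-connected) and then push the portions that enter $\Omega_{d/8}$ inward along the local graph normals supplied by \textbf{Lemma \ref{initial cover over boundary}}, producing a continuous path entirely contained in $\Omega - \Omega_{d/8}$. The main ingredients are \textbf{Corollary \ref{small coro regarding distance}} (to certify that pushed points sit at depth at least $d/8$) and \textbf{Lemma \ref{line near point lemma}} (to control the short connectors between original and pushed pieces).

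Fix a continuous $\gamma:[0,1]\to\Omega$ with $\gamma(0)=x_1'$ and $\gamma(1)=x_2'$. The set $\{t : \gamma(t) \in \Omega_{d/8}\}$ is open in $[0,1]$; on each connected component $(a,b)$ of this set, \textbf{Remark \ref{remark 2}} yields $\gamma((a,b)) \subset \bigcup_{i=1}^{m_1} B(x_i^0, d/4)$, so by compactness and continuity I choose a partition $a = t_0 < \cdots < t_N = b$ and indices $i_0,\ldots,i_{N-1}\in\{1,\ldots,m_1\}$ such that $\gamma([t_k, t_{k+1}]) \subset B(x_{i_k}^0, d/4)$. In the chart at $x_{i_k}^0$, write $\gamma(t) = x_{i_k}^0 + \gamma_1(t) u_1 + \gamma_2(t) u_2 - \gamma_3(t) n(x_{i_k}^0)$ and define the retracted arc
\begin{equation*}
\tilde\gamma(t) := x_{i_k}^0 + \gamma_1(t) u_1 + \gamma_2(t) u_2 - \bigl(\phi_{i_k}(\gamma_1(t), \gamma_2(t)) + d/4\bigr) n(x_{i_k}^0),\quad t \in [t_k, t_{k+1}].
\end{equation*}
By \textbf{Corollary \ref{small coro regarding distance}}, $\dist(\tilde\gamma(t), \partial\Omega \cap \overline{B(x_{i_k}^0, 3d)}) \geq d/8$, while the rest of $\partial\Omega$ lies at distance at least $2d$ from $B(x_{i_k}^0, d)$; hence $\tilde\gamma(t) \in \Omega - \Omega_{d/8}$. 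I then stitch the pieces together: at each interior $t_k$ I connect $\tilde\gamma(t_k^-)$ to $\tilde\gamma(t_k^+)$ by a straight segment in the overlap $B(x_{i_{k-1}}^0, d) \cap B(x_{i_k}^0, d)$, and at $a$ and $b$ I connect $\gamma(a)$ and $\gamma(b)$ to $\tilde\gamma(a^+)$ and $\tilde\gamma(b^-)$ by straight segments along $-n(x_{i_0}^0)$ and $-n(x_{i_{N-1}}^0)$ respectively. Each of these connectors runs from a shallower point to a deeper one in its relevant chart and thus satisfies the inward-direction hypothesis of \textbf{Lemma \ref{line near point lemma}}, which then guarantees that the connector remains at distance $\geq d/8$ from $\partial\Omega$.

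The main obstacle is arranging the stitching to be continuous at an interior $t_k$ where the chart index changes, since $\tilde\gamma(t_k^-)$ and $\tilde\gamma(t_k^+)$ are defined via different inward normals and generally disagree. I plan to overcome this by refining the partition so that $\gamma(t_k)$ lies well inside the overlap $B(x_{i_{k-1}}^0, d/4) \cap B(x_{i_k}^0, d/4)$ and $|t_{k+1} - t_k|$ is small; both candidate retracted points then lie at normal depth $\geq d/4$ in their respective charts, and the short straight connector between them meets the hypotheses of \textbf{Lemma \ref{line near point lemma}} when applied in either chart, delivering the required distance bound. Continuity of the resulting broken-line curve is then immediate from the continuity of $\gamma$ and of each graph $\phi_{i_k}$, yielding the desired continuous path in $\Omega - \Omega_{d/8}$ joining $x_1'$ and $x_2'$.
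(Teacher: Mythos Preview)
Your overall strategy matches the paper's: take any path in $\Omega$ and push the near-boundary portions inward using the local charts from \textbf{Lemma \ref{initial cover over boundary}}. The endpoint connectors at $a$ and $b$ are handled correctly by \textbf{Lemma \ref{line near point lemma}}, since they run purely along $-n(x_{i_0}^0)$. The gap is at the interior chart-change connectors. Both retracted points $\tilde\gamma(t_k^-)$ and $\tilde\gamma(t_k^+)$ are obtained from the same base point $\gamma(t_k)$ by pushing a distance of order $d/4$ along the two inward normals $-n(x_{i_{k-1}}^0)$ and $-n(x_{i_k}^0)$; their difference is therefore, up to lower-order terms, proportional to $n(x_{i_{k-1}}^0)-n(x_{i_k}^0)$, which is nearly orthogonal to each normal. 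Hence the connector is essentially tangential and the hypothesis $\frac{x-y}{|x-y|}\cdot n(x_i^0)\ge \tfrac12$ of \textbf{Lemma \ref{line near point lemma}} fails in either chart. A related issue is that your retraction depth $d/4$ yields, via \textbf{Corollary \ref{small coro regarding distance}}, distance exactly $d/8$ with no slack, so any tangential drift along the connector may drop below the threshold.

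The paper avoids both problems by retracting more deeply (translating the zigzag vertices by $\tfrac{d}{2}$ along the inward normal rather than $d/4$) and by not invoking \textbf{Lemma \ref{line near point lemma}} for the interior segments at all: it parametrises each interior segment $\overline{\tilde x_j'\tilde x_{j+1}'}$ in a single chart $k(j)$ and applies \textbf{Corollary \ref{small coro regarding distance}} directly to every point, checking by an explicit coordinate computation (using only $|\nabla\phi_{k(j)}|<d/100$) that the third coordinate stays at least $d/4$ above $\phi_{k(j)}$. If you adopt that route---retract to depth $\ge d/2$ and replace the appeal to \textbf{Lemma \ref{line near point lemma}} at chart changes by a pointwise third-coordinate estimate via \textbf{Corollary \ref{small coro regarding distance}}---your outline goes through; as written, the use of \textbf{Lemma \ref{line near point lemma}} at the interior transitions does not.
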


\begin{proof}
   Given $x'_1 , x'_2 \in \Omega-\Omega_{\frac{d}{8}}$. Since the domain $\Omega$ is connected, $\Omega$ is pathwise connected. Hence, there exist $l \in \mathbb{N} ,\{\tilde{x}_i\}_{i=1}^{l} \in \Omega$ such that the following curve 
\begin{align*}
    &\phi(s)
    =\begin{cases}
        [1-(l+1)s]x'_1+(l+1)s\tilde{x}_1, \ &\mathrm{for}\ 0 \leq s \leq \frac{1}{l+1},\\
        [1-(l+1)s+j]\tilde{x}_j+[(l+1)s-j]\tilde{x}_{j+1},\  &\mathrm{for}\ \frac{j}{l+1} \leq s \leq \frac{j+1}{l+1},\, 1\leq j \leq l-1, \\
        [1-(l+1)s+l]\tilde{x}_l+[(l+1)s-l]x'_2,\  &\mathrm{for}\ \frac{l}{l+1} \leq s \leq 1,
    \end{cases}
\end{align*}
lies within $\Omega$. 

As \textbf{Remark \ref{remark 2}} shows, we have
\begin{equation*}
    \{ \phi(s)\mid 0 \leq s\leq 1 \} \cap \Omega_{\frac{d}{8}} \subset \bigcup_{i=1}^{m_1}B(x^0_{i},\frac{d}{4}).
\end{equation*}

Without loss of generality, we can assume that there is a function $k:\mathbb{N}\cap[1,l]\longrightarrow \mathbb{N}\cap [1,m_1]$ such that $k(l-1)=k(l)$,

$\left\{
\begin{aligned}
        \tilde{x}_1 &\in \partial B(x^0_{k(1)},\frac{d}{4}) ,\\ 
    \overline{\tilde{x}_{j}\tilde{x}_{j+1}} &\subset \overline{ B(x^0_{k(j)},\frac{d}{4})},\, |\tilde{x}_{j}-\tilde{x}_{j+1}| \leq \frac{d}{10},\quad \forall \, j=1,...,l-1, \\
    \tilde{x}_{l} &\in \partial B(x^0_{k(l)},\frac{d}{4}) ,
\end{aligned}
\right.$

and that
\begin{align*}
  \overline{x'_1\tilde{x}_1} \subset \Omega-\Omega_{\frac{d}{8}},\ \overline{\tilde{x}_l x'_2} \subset \Omega-\Omega_{\frac{d}{8}}.
\end{align*}
Indeed, given $0\leq j \leq l$ a segment $\overline{\tilde{x}_{j}\tilde{x}_{j+1}}$ (here we consider $\tilde{x}_{0}:=x'_1$, $\tilde{x}_{l+1}:=x'_2$), and a number $1 \leq k \leq m_1$, the set $\overline{\tilde{x}_{j}\tilde{x}_{j+1}} \cap \partial B(x_k^0,\frac{d}{4})$ is finite. In fact, after adding finite subsections of the line, we can assume that either the entire line segment $\overline{\tilde{x}_{j}\tilde{x}_{j+1}}$ lies within a ball $\overline{B(x_{k(j)}^0, \frac{d}{4})}$ for some $1 \leq k(j) \leq m_1$ or belongs to $\cap_{k=1}^{m_1}B^c(x_k^0,\frac{d}{4}) \subset \Omega-\Omega_{\frac{d}{8}}$.

\begin{figure}[ht]
\centering
\includegraphics[width=1\linewidth]{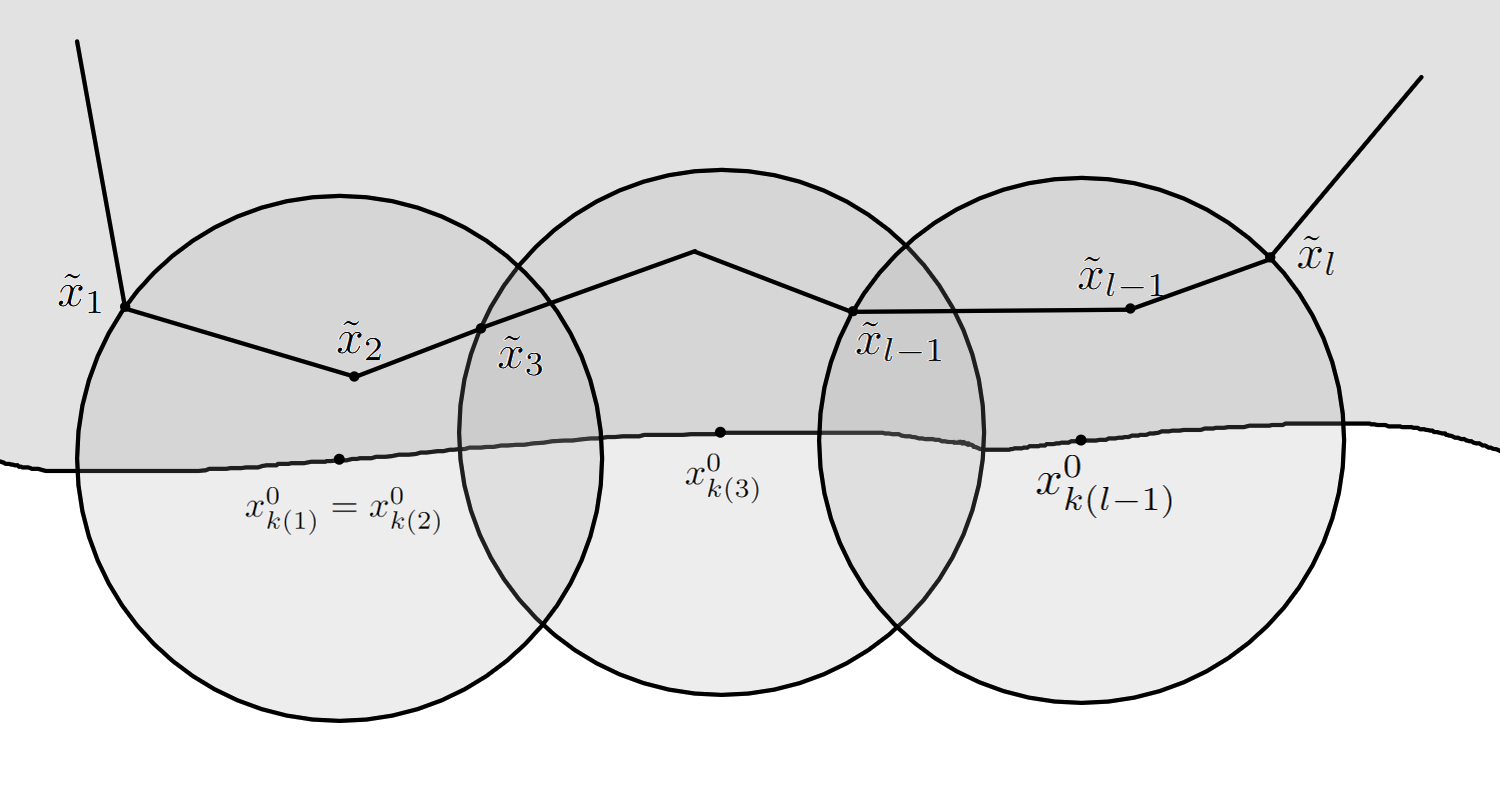}  
\caption{Example of construction of the re-indexation of k}
\label{Figure 5.png}
\end{figure}

Now, we define 
\begin{align*}
    &\tilde{\phi}(s)
    =\begin{cases}
        [1-2(l+1)s]x'_1+2(l+1)s\tilde{x}_1, \ &\mathrm{for}\ 0 \leq s \leq \frac{1}{2(l+1)},\\
        [2-2(l+1)s]\tilde{x}_1+[2(l+1)s-1]\tilde{x}'_1, \ &\mathrm{for}\ \frac{1}{2(l+1)} \leq s \leq \frac{1}{l+1},\\
        [1-(l+1)s+j]\tilde{x}'_j+[(l+1)s-j]\tilde{x}'_{j+1},\  &\mathrm{for}\ \frac{j}{l+1} \leq s \leq \frac{j+1}{l+1},\ 1 \leq j \leq l-1,\\
        [1-2(l+1)s+2l]\tilde{x}'_l+[2(l+1)s-2l]\tilde{x}_l,\  &\mathrm{for}\ \frac{l}{l+1} \leq s \leq \frac{2l+1}{2(l+1)},\\
        [2-2(l+1)s+2l]\tilde{x}_l+[2(l+1)s-2l-1]x'_2,\  &\mathrm{for}\ \frac{2l+1}{2(l+1)} \leq s \leq 1,
    \end{cases}
\end{align*}
where 
\begin{equation}
       \tilde{x}'_j:=  \tilde{x}_j-\frac{d}{2}n(x^0_{k(j)})\ ,\ 1 \leq j \leq l.
\end{equation}

Now, we show that this new curve belongs to $\Omega-\Omega_{\frac{d}{8}}$. To do this, it suffices to show that for $1 \leq j \leq l-1$,
\begin{align*}
    \overline{\tilde{x}'_j\tilde{x}'_{j+1}}, \overline{\tilde{x}_1\tilde{x}'_{1}},\overline{\tilde{x}_l\tilde{x}'_{l}} \subset \Omega-\Omega_{\frac{d}{8}}.
\end{align*}
To show that $\overline{\tilde{x}'_j\tilde{x}'_{j+1}} \subset \Omega-\Omega_{\frac{d}{8}}$ for $1\leq j \leq l-1$, we re-parametrize $\tilde{x}_j$ as follows:

\begin{align*}
\tilde{x}_j=x^0_{k(j)}+\tilde{x}_{j,1}e^1_{k(j)}+\tilde{x}_{j,2}e^2_{k(j)}-\tilde{x}_{j,3}n(x^0_{k(j)}).
\end{align*}
We deduce that 
\begin{align*}
\tilde{x}'_j=x^0_{k(j)}+\tilde{x}_{j,1}e^1_{k(j)}+\tilde{x}_{j,2}e^2_{k(j)}-\left(\tilde{x}_{j,3}+\frac{d}{2}\right)n(x^0_{k(j)}).
\end{align*}
Here, we recall that $\{e^1_{k(j)},e^2_{k(j)},n(x^0_{k(j)})\}$ is an orthonormal basis of $\mathbb{R}^3$.
Notice that $\overline{\tilde{x}'_j\tilde{x}'_{j+1}} \subset \Omega \cap B(x^0_{k(j)},d)$. Therefore, for $0 \leq s \leq 1$, we have 
\begin{align*}
    &d((1-s)\tilde{x}'_j+s\tilde{x}'_{j+1}, \partial \Omega)\\
     \geq &d((1-s)\tilde{x}'_j+s\tilde{x}'_{j+1}, \partial (\Omega \cap B(x^0_{k(j)},3d) ))\\
    \geq &\min \{ d((1-s)\tilde{x}'_j+s\tilde{x}'_{j+1},  \overline{\Omega} \cap \partial B(x^0_{k(j)},3d) ),d((1-s)\tilde{x}'_j+s\tilde{x}'_{j+1}, \partial \Omega \cap \overline{B(x^0_{k(j)},3d)} ) \}.
    \end{align*}
Notice that 

\begin{equation*}
    d((1-s)\tilde{x}'_j+s\tilde{x}'_{j+1},  \overline{\Omega} \cap \partial B(x^0_{k(j)},3d) ) \geq d( B(x^0_{k(j)},d),\partial B(x^0_{k(j)},3d)) =2d.
\end{equation*}

Since $|\tilde{x}'_j-x^0_{k(j)}| \leq |\tilde{x}'_j-\tilde{x}_j|+|\tilde{x}_j-x^0_{k(j)}| \leq \frac{3d}{4}$, we have  $\tilde{x}'_j \in B(x^0_{k(j)},\frac{3d}{2})$.   By \textbf{Corollary \ref{small coro regarding distance}} we have 
\begin{equation}
    d(\tilde{x}'_j, \partial \Omega \cap \overline{B(x^0_{k(j)},3d)} ) \geq \frac{d}{4} 
\end{equation}

 We also notice that, using the same argument as \eqref{2025 08/13 03:22}, we have $|n(x^0_{k(j)}) - n(x^0_{k(j+1)})|< \frac{1}{50}$ for $1 \leq j \leq l-1$, from which we deduce that 
 \begin{equation}
     |\tilde{x}'_j-\tilde{x}'_{j+1}| \leq |\tilde{x}_j-\tilde{x}_{j+1}| +\frac{d}{2}|n(x^0_{k(j)})-n(x^0_{k(j+1)})| \leq \frac{d}{10}+\frac{d}{100}< \frac{d}{9}.
 \end{equation}
Hence, for any $0 \leq s \leq 1$
\begin{align*}
    &d((1-s)\tilde{x}'_j+s\tilde{x}'_{j+1}, \partial \Omega \cap \overline{B(x^0_{k(j)},3d)} )
    \geq  \frac{d}{4}-\frac{sd}{9}\geq \frac{d}{8}.
\end{align*}
    Therefore, we conclude that for any $0 \leq s \leq 1$
    \begin{align*}
    d((1-s)\tilde{x}'_j+s\tilde{x}'_{j+1}, \partial \Omega)\geq \frac{d}{8}.
\end{align*}
We conclude that 
\begin{equation}\label{non near boundary final result}
    \overline{\tilde{x}'_j\tilde{x}'_{j+1}} \in \Omega-\Omega_{\frac{d}{8}},
\end{equation}
for $ 2 \leq j \leq l-2$.

For the case $j=1$ or $l$, we use \textbf{Lemma \ref{line near point lemma}}. Notice that for $j=1$ or $l$, we have $\tilde{x}_j \in \overline{B(x^0_{k(j)},\frac{d}{4})}, \,\tilde{x}'_{j} \in \overline{B(x^0_{k(j)},\frac{3d}{4})}$.  Since for $i=1$ or $l$ 
\begin{equation*}
    \sqrt{\tilde{x}^2_{j,1}+\tilde{x}^2_{j,2}} \leq \frac{d}{4},
\end{equation*}
and \begin{equation*}
    \frac{\tilde{x}_j-\tilde{x}'_j}{|\tilde{x}_j-\tilde{x}'_j|} \cdot n(x^0_{k(j)}) = 1,
\end{equation*}
all the assumptions of \textbf{Lemma \ref{line near point lemma}} are satisfied. Hence, we deduce that 
\begin{equation*}
    d(\overline{\tilde{x}_j\tilde{x}'_{j}}, \partial \Omega ) = d(\overline{\tilde{x}_j\tilde{x}'_{j}}, \partial \Omega \cap \overline{B(x^0_{k(j)},d)})= d(\tilde{x}_j, \partial \Omega \cap \overline{B(x^0_{k(j)},d)}) \geq \frac{d}{8},
\end{equation*}
for $j=1$ or $l$. We conclude that the curve $\tilde{\phi}(s)$ is the desired curve between $x_1'$ and $x_2'$ which belongs to $\Omega- \Omega_{\frac{d}{8}}$. 

\end{proof}

Before we proceed, for given $\delta$, $d$, we define $\mathbb{Y}:=\mathbb{Y}_1 \cup \mathbb{Y}_2$, where
\begin{equation*}
    \mathbb{Y}_1:=\{ 
y_i^0 \mid 1\leq i \leq m_1 \},\, \mathbb{Y}_2:=\{ 
y_i^0 \mid m_1+1\leq i \leq m_1+m_2 \}.
\end{equation*} 
Now, we introduce a useful property of $\mathbb{Y}$:
\begin{lemma} \label{zigzag lemma}
    Suppose that $\Omega$ satisfies \textbf{Assumption A} with $\delta$, $d$ as given in \textbf{Lemma \ref{initial cover over boundary}}. Given $x'_1, x'_2 \in \Omega$. Then, there exist $\{y_i\}_{i=1}^{N(x'_1,x'_2)} \in \mathbb{Y}$ such that $\{ x'_1, y_1, y_2,..., y_{N(x'_1,x'_2)}, x'_2 \}$ forms a zigzag in $\Omega$, that is,

\begin{align*}
    \overline{x'_1y_1} \in \Omega,\ \overline{y_{N(x'_1,x'_2)}x'_2} \in \Omega,\ 
    \overline{y_iy_{i+1}} \in \Omega,\ \forall 1 \leq i \leq N(x'_1,x'_2)-1.
\end{align*}
Moreover, we have 
\begin{align*}
    &d(\overline{x'_1y_1},\partial \Omega) \geq \min \left\{d(x'_1,\partial \Omega),\frac{d}{10} \right\},\\
    &d(\overline{x'_2y_{N(x'_1,x'_2)}},\partial \Omega) \geq \min \left\{d(x'_2,\partial \Omega),\frac{d}{10} \right\},\\
    &d(\overline{y_i,y_{i+1}}, \partial \Omega)>\frac{d}{10},
\end{align*}
for $1 \leq i \leq N(x'_1,x'_2)-1$.
We call such a zigzag a ``good zigzag" from $x'_1$ to $x'_2$ with number of segments $N(x'_1,x'_2)+1$.
    
\end{lemma}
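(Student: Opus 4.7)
The plan is to prove Lemma \ref{zigzag lemma} in three stages: for each endpoint $x'_k$ ($k=1,2$), locate a nearby proxy $y^*_k \in \mathbb{Y}$ with $\overline{x'_k y^*_k} \subset \Omega$ satisfying the stated endpoint distance bound; then join the two interior proxies $y^*_1, y^*_2$ by a broken line with vertices in $\mathbb{Y}_2$ whose interior segments stay at distance greater than $d/10$ from $\partial \Omega$. The final zigzag is the concatenation of these three pieces.

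For the endpoint handling I split into the two cases already isolated by Lemma \ref{initial cover general}. If $x'_k \in \Omega - \Omega_{d/8}$, part (2) of that lemma supplies $y^*_k \in \mathbb{Y}_2$ with $x'_k \in B(y^*_k, d/16)$ and $\overline{x'_k y^*_k} \subset \Omega$. From the inclusion $B(y^*_k, d/16) \subset \Omega - \Omega_{d/16}$ in the proof of Lemma \ref{initial cover general}, a direct contradiction argument using a midpoint-to-boundary construction yields $d(y^*_k, \partial\Omega) \geq d/8$, and since every point of the very short segment $\overline{x'_k y^*_k}$ is within $d/16$ of $y^*_k$, the triangle inequality gives the distance bound after careful bookkeeping of the constants. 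If instead $x'_k \in \Omega_{d/8}$, part (1) of Lemma \ref{initial cover general} produces $y^*_k = y^0_{i_k} \in \mathbb{Y}_1$ with $\overline{x'_k y^*_k} \subset \Omega$; the direction from $x'_k$ to $y^*_k$ is almost antiparallel to $n(x^0_{i_k})$ (since in the local coordinates of Lemma \ref{initial cover over boundary} we have $(x'_k)_3 \in [-\tfrac{d^2}{400},\tfrac{d}{8}]$ while $(y^*_k)_3 = 3d/4$), so Lemma \ref{line near point lemma} applies and gives
\begin{equation*}
d(\overline{x'_k y^*_k}, \partial\Omega \cap \overline{B(x^0_{i_k}, d)}) = d(x'_k, \partial\Omega \cap \overline{B(x^0_{i_k}, d)}).
\end{equation*}
Since the segment lies entirely inside $B(x^0_{i_k}, d)$, and boundary patches indexed by $j \neq i_k$ are separated from $B(x^0_{i_k}, d)$ by a uniform gap, this gives $d(\overline{x'_k y^*_k}, \partial\Omega) \geq d(x'_k, \partial\Omega)$, which is the desired lower bound $\min\{d(x'_k, \partial\Omega), d/10\}$.

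For the middle stage, both proxies $y^*_1, y^*_2$ lie in $\Omega - \Omega_{d/8}$, which is path-connected by Lemma \ref{path wise connected inner}. Fix a continuous path $\psi: [0,1] \to \Omega - \Omega_{d/8}$ joining them; its image is compact, so uniform continuity lets me choose a partition $0 = t_0 < t_1 < \cdots < t_N = 1$ with $|\psi(t_j) - \psi(t_{j+1})|$ as small as desired. For each $\psi(t_j)$ I pick $y^0_{i_j} \in \mathbb{Y}_2$ with $\psi(t_j) \in B(y^0_{i_j}, d/16)$, using the cover of $\Omega - \Omega_{d/8}$ by $\{B(y^0_i, d/16)\}_{i > m_1}$ supplied in the proof of Lemma \ref{initial cover general}. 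The resulting broken line $y^0_{i_0} \to y^0_{i_1} \to \cdots \to y^0_{i_N}$ then tracks $\psi$ closely; provided the partition is fine enough, each segment has small length and each point on it lies in $\Omega$ at distance greater than $d/10$ from $\partial \Omega$.

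The principal obstacle is the narrowness of the distance budget in the middle stage. The points of $\mathbb{Y}_2$ satisfy only the weak bound $d(y^0_i, \partial \Omega) \geq d/8$, leaving a margin of merely $d/40$ before a connecting segment could drift closer than $d/10$ to $\partial\Omega$. Closing this gap requires controlling both the length and the relative position of each segment; in practice this is done by refining the partition (and, if needed, passing to a subcover of radius smaller than $d/16$ around each $y^0_{i_j}$), and by checking via the local tubular structure of $\Omega - \Omega_{d/8}$ that consecutive vertices are joined by a straight segment that remains inside $\Omega$. If necessary, auxiliary waypoints modelled on the interior polygonal construction of $\tilde{\phi}$ from the proof of Lemma \ref{path wise connected inner} can be inserted to keep the broken line within the required distance band.
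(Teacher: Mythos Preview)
Your three-stage plan matches the paper's, and the endpoint case $x'_k\in\Omega_{d/8}$ is essentially right (though your claim that boundary patches with $j\neq i_k$ are separated from $B(x^0_{i_k},d)$ by a uniform gap is false---the correct reason the far boundary is irrelevant is that the segment sits in $B(x^0_{i_k},3d/4)$, so any point of $\partial\Omega$ outside $\overline{B(x^0_{i_k},d)}$ is at distance $\ge d/4$). The genuine gap is in your distance estimates for the short segments, both at the interior endpoint and in the middle stage. For $x'_k\in\Omega-\Omega_{d/8}$ you invoke only the triangle inequality: with $d(y^*_k,\partial\Omega)\ge d/8$ and $|x'_k-y^*_k|<d/16$, the best you get for a point on the segment is $d/8-d/32=3d/32<d/10$, which is not enough. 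The same deficit recurs in the middle stage: consecutive vertices $y^0_{i_j},y^0_{i_{j+1}}\in\mathbb{Y}_2$ can be as far apart as roughly $d/8$ (no matter how fine the partition of $\psi$, because $\mathbb{Y}_2$ is a \emph{fixed} $d/16$-net and cannot be refined), and $d/8-d/16=d/16<d/10$. Your proposed remedy of passing to a finer subcover changes $\mathbb{Y}$, which the lemma does not allow.

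The missing ingredient is the two-ball ``lens'' estimate that the paper uses repeatedly: if $z,w\in\Omega-\Omega_{d/8}$ with $|z-w|=\ell<d/4$, then $\overline{zw}\subset B(z,d/8)\cup B(w,d/8)\subset\Omega$ and
\[
d\bigl(\overline{zw},\,\partial[B(z,\tfrac{d}{8})\cup B(w,\tfrac{d}{8})]\bigr)\ \ge\ \sqrt{(d/8)^2-(\ell/2)^2}.
\]
This exceeds $d/10$ precisely when $\ell<3d/20$, which is comfortably achievable: for the endpoint pair one has $\ell<d/16$ (giving $\sqrt{15}\,d/32>d/10$), and for the middle stage one refines the partition of $\psi$ until $|\psi(t_j)-\psi(t_{j+1})|<d/40$, so that $\ell<d/8+d/40=3d/20$ (take strict inequality to land strictly above $d/10$). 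With the lens computation in place your path-discretization argument for the middle stage does go through and is a legitimate alternative to the paper's reachability argument (defining $\mathbb{O}(w_1)$ and using the intermediate value theorem along the path from Lemma~\ref{path wise connected inner} to produce a contradiction); but without it, neither the endpoint nor the interior bound closes.
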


\begin{proof}
Given $x'_1, x'_2 \in \Omega$, we shall show that there exist $y'_1, y'_2 \in \mathbb{Y}$ such that 

\begin{equation}\label{geo lemma first eq}
    d(\overline{x'_iy'_i}, \partial \Omega) \geq \min \left\{d(x'_i, \partial \Omega), \frac{d}{10} \right\},
\end{equation}
for $i=1,2$. Indeed, if $x'_i \in \Omega_{\frac{d}{8}}$, then by \textbf{Lemma \ref{initial cover over boundary}}, there exists $1 \leq l(i) \leq m_1$ such that $x'_i \in B(x_{l(i)}^{0},\frac{d}{4})$ and we can parameterize the boundary of $\Omega$ near $x^0_{l(i)}$ as follows:
\begin{small}
\begin{align*}
    & \partial \Omega \cap B(x_{l(i)}^0,d)\\
    =&\{ x_{l(i)}^0+u_{1}e_{l(i)}^1+u_{2}e_{l(i)}^2-\phi_{l(i)}(u_{1},u_{2})n(x^0_{l(i)})\mid \\
    & \hspace{0.2cm}x_{l(i)}^0+u_{1}e_{l(i)}^1+u_{2}e_{l(i)}^2-\phi_{l(i)}(u_{1},u_{2})n(x^0_{l(i)}) \in B(x_{l(i)}^0,d) \}
\end{align*}
\end{small}
with $|\nabla \phi_{l(i)}| < \frac{1}{100}$.
We shall show that
\begin{equation}\label{Lemma 2.3 first include}
    \overline{x'_iy^0_{l(i)}}\subset B(x_{l(i)}^0,d) \cap \Omega.
\end{equation}
Observe that, $\overline{x'_iy^0_{l(i)}} \subset B(x_{l(i)}^0,d)$ by the convexity of $B(x_{l(i)}^0,d)$, so it suffices to show that $\overline{x'_iy^0_{l(i)}}\subset  \Omega$. To show this, we define $s' := \inf{\{s \mid sx'_i+(1-s)y^0_{l(i)}\notin \Omega\}}$. If $\overline{x'_iy^0_{l(i)}}\not\subset  \Omega$, we have $0\leq s'<1$. Notice that $0<s'$ since $x'_i\in \Omega$ and $\Omega$ are open. Next, we observe that $z:=s'x'_i+(1-s')y^0_{l(i)}\in \partial\Omega$. Now, we recall the \textbf{Lemma \ref{initial cover over boundary}}. We can re-parametrize $y^0_{l(i)}$, $z$ and $x'_i$ as follows:
\begin{align*}
   y^0_{l(i)}&=x^0_{l(i)}+0u_1+0u_2-\frac{3d}{4}n(x^0_{l(i)}),\\
   z&=x^0_{l(i)}+z_1u_1+z_2u_2-z_3n(x^0_{l(i)}),\\    x'_i&=x^0_{l(i)}+x'_{i,1}u_1+x'_{i,2}u_2-x'_{i,3}n(x^0_{l(i)}).
\end{align*} Since $x'_i \in  B(x_{l(i)}^0,\frac{d}{4})$, we have 
\begin{align*}
    |x'_{i,1}|^2+|x'_{i,2}|^2 \leq |x'_{i,1}|^2+|x'_{i,2}|^2+|x'_{i,3}|^2 =|x'_i-x^0_{l(i)}|^2\leq \frac{d^2}{16}.
\end{align*}
Now, we consider the following function $\psi:[0,\sqrt{|z_1-x'_{i,1}|^2+|z_2-x'_{i,2}|^2}] \rightarrow \mathbb{R}$:
\begin{align*}
    \psi(\zeta):=\phi_{l(i)}(z_1+\frac{x'_{i,1}-z_1}{\sqrt{|z_1-x'_{i,1}|^2+|z_2-x'_{i,2}|^2}}\zeta,z_2+\frac{x'_{i,2}-z_2}{\sqrt{|z_1-x'_{i,1}|^2+|z_2-x'_{i,2}|^2}}\zeta).
\end{align*}
By the mean value theorem, there exists $\zeta'$ such that 
\begin{align*}
    \psi'(\zeta')&=\frac{\psi\left(\sqrt{|z_1-x'_{i,1}|^2+|z_2-x'_{i,2}|^2}\right)-\psi(0)}{\sqrt{|z_1-x'_{i,1}|^2+|z_2-x'_{i,2}|^2}-0}\\
    &\leq \frac{x'_{i,3}-\phi_{l(i)}(z_1,z_2)}{\sqrt{|z_1-x'_{i,1}|^2+|z_2-x'_{i,2}|^2}}.\\
\end{align*}
Here we used the fact that $x'_{i,3} \geq \phi_{l(i)}(x'_{i,1},x'_{i,2})$ from the fact that $x'_i \in \Omega$ and \eqref{circle presentation inner}.

Note that $y^0_{l(i)}$, $z$ and $x'_i$ belong to a straight line, so we have

\begin{align*}
    \frac{x'_{i,3}-\phi_{l(i)}(z_1,z_2)}{\sqrt{|z_1-x'_{i,1}|^2+|z_2-x'_{i,2}|^2}}&=\frac{\phi_{l(i)}(z_1,z_2)-\frac{3d}{4}}{\sqrt{|z_1|^2+|z_2|^2}}\\
    & \leq \frac{1}{100}-\frac{3}{4} = -\frac{74}{100}.
\end{align*}
Here we use the fact that $z \in \overline{B(x^0_{l(i)},d}).$

Finally, we get a contradiction to $\overline{x'_iy^0_{l(i)}}\not\subset  \Omega$ by computing:
\begin{align*}
    &\nabla \phi_{l(i)}\left(z_1+\frac{x'_{i,1}-z_1}{\sqrt{|z_1-x'_{i,1}|^2+|z_2-x'_{i,2}|^2}}\zeta',z_2+\frac{x'_{i,2}-z_2}{\sqrt{|z_1-x'_{i,1}|^2+|z_2-x'_{i,2}|^2}}\zeta'\right)\\
    &\hspace{0.75cm}\cdot \left(\frac{x'_{i,1}-z_1}{\sqrt{|z_1-x'_{i,1}|^2+|z_2-x'_{i,2}|^2}},\frac{x'_{i,2}-z_2}{\sqrt{|z_1-x'_{i,1}|^2+|z_2-x'_{i,2}|^2}}\right)\\ =&\psi'(\zeta') \leq -\frac{74}{100},
\end{align*}
which implies that $|\nabla \phi_{l(i)}| \geq \frac{74}{100}> \frac{1}{100}$.
Hence, we deduce that \eqref{Lemma 2.3 first include} holds.

\begin{figure}[ht]
\centering
\includegraphics[width=0.65\linewidth]{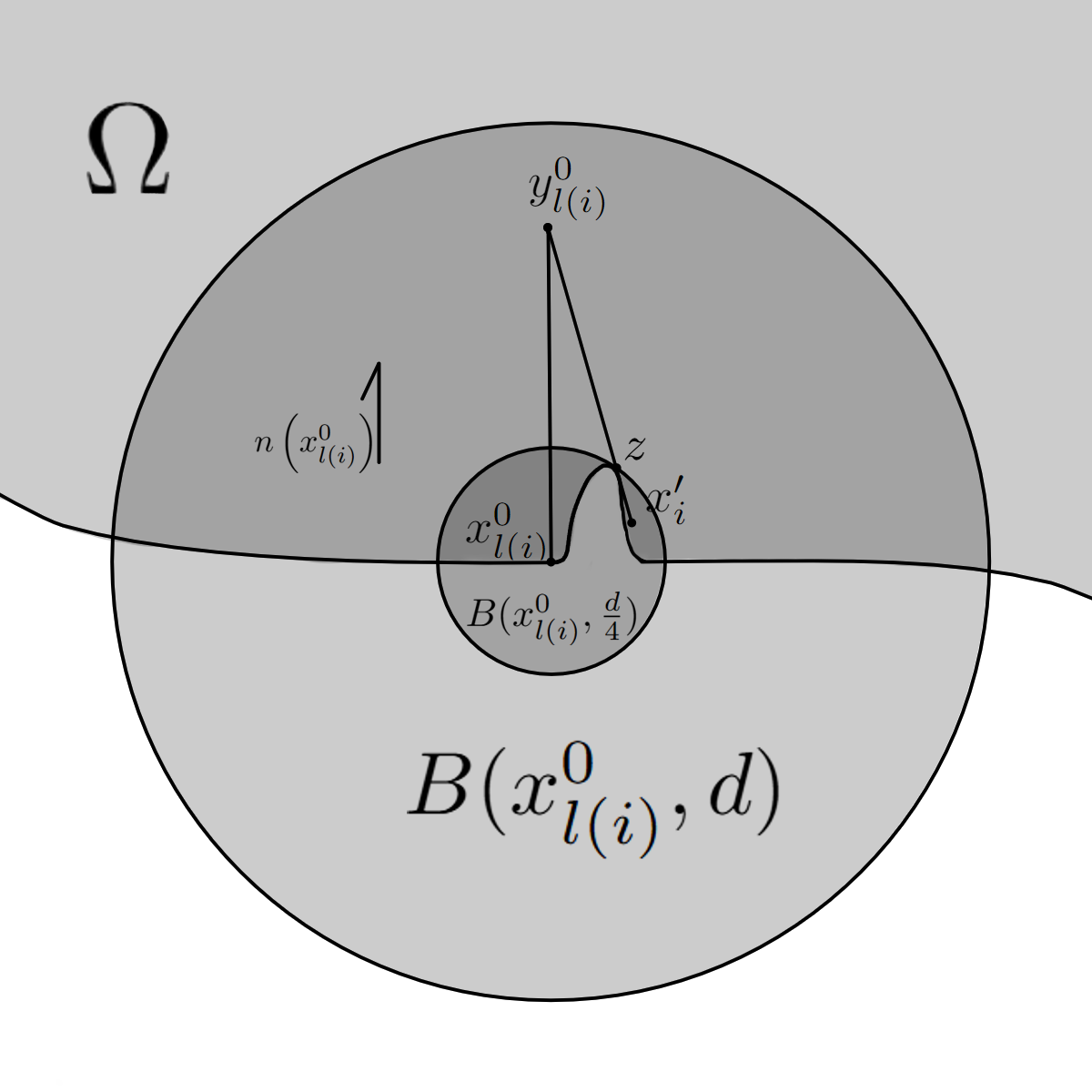}  
\label{Figure 1.png}
\end{figure}

\

\

\

Hence, by \eqref{Lemma 2.3 first include}, we have
\begin{equation*}
    d(\overline{x'_iy^0_{l(i)}}, \partial \Omega) \geq d(\overline{x'_iy^0_{l(i)}}, \partial (\Omega \cap B(x^0_{l(i)},d))).
\end{equation*}

Next, we show that

\begin{equation*}
     d(\overline{x'_iy^0_{l(i)}}, \partial (\Omega \cap B(x^0_{l(i)},d))) \geq \min \{d(x'_i, \partial \Omega), \frac{d}{10} \}.
\end{equation*}
Since $\partial (\Omega \cap B(x^0_{l(i)},d))\subset[\partial B(x^0_{l(i)},d)\cap \overline{\Omega}] \cup [\partial \Omega \cap \overline{B(x^0_{l(i)},d)})]$, we have
\begin{equation*}
\begin{split}
    &d(\overline{x'_iy^0_{l(i)}}, \partial (\Omega \cap B(x^0_{l(i)},d)))\\
    \geq& d(\overline{x'_iy^0_{l(i)}}, [\partial B(x^0_{l(i)},d)\cap \overline{\Omega}] \cup [\partial \Omega \cap \overline{B(x^0_{l(i)},d)})])\\
      =& \min \{d(\overline{x'_iy^0_{l(i)}}, \partial B(x^0_{l(i)},d)\cap \overline{\Omega}) , d(\overline{x'_iy^0_{l(i)}}, \partial \Omega \cap 
 \overline{B(x^0_{l(i)},d)}  ) \}.
\end{split}
\end{equation*}
By the convexity of $B(x^0_{l(i)},d)$, we have
\begin{align*}
    d(\overline{x'_iy^0_{l(i)}}, \partial B(x^0_{l(i)},d)\cap \overline{\Omega}) \geq \frac{d}{4}
    \geq \min \{d(x'_i, \partial \Omega), \frac{d}{10} \}.
\end{align*}
Next, we notice that
\begin{equation}\label{2025 05/05 07:52}
    \frac{x'_i-y^0_{l(i)}}{|x'_i-y^0_{l(i)}|} \cdot n(x^0_{l(i)}) \geq \frac{2\sqrt{2}}{3}.
\end{equation}
Now, we use \textbf{Lemma \ref{line near point lemma}}. To show that all the assumption of \textbf{Lemma \ref{line near point lemma}} is satisfied, we notice that we have $x'_i \in B(x^0_{l(i)},\frac{d}{4})$, $y^0_{l(i)}=x^0_{l(i)}-\frac{3d}{4}n(x^0_{l(i)}) \in \overline{B(x^0_{l(i)}, \frac{3d}{4})}$, and \eqref{2025 05/05 07:52}.
As a result, we deduce that 
\begin{equation}\label{geo lemma sec eq}
\begin{split}
    d(\overline{x'_iy^0_{l(i)}}, \partial \Omega \cap \overline{B(x^0_{l(i)},d)}) = d(x'_i, \partial \Omega \cap \overline{B(x^0_{l(i)},d)}) \geq \min \{d(x'_i, \partial \Omega), \frac{d}{10} \}.
    \end{split}
\end{equation}

We showed that \eqref{geo lemma first eq} holds when $x'_i \in \Omega_{\frac{d}{8}}$.
\medskip
In the other case, namely when $x'_i \notin \Omega_{\frac{d}{8}}$, by \textbf{Lemma \ref{initial cover general}}, there exists $y\in \mathbb{Y}_2$ s.t. $x'_i \in B(y,\frac{d}{16})$. By using the fact that $B\left(x'_i,\frac{d}{8}\right)\cup B\left(y,\frac{d}{8}\right) \subset \Omega$, and that 

\begin{equation*}
    \overline{x'_iy} \subset B\left(x'_i,\frac{d}{8}\right)\cup B\left(y,\frac{d}{8}\right),
\end{equation*}
we have

\begin{align*}
    d(\overline{x'_iy},\partial \Omega) &\geq d\left(\overline{x'_iy}, \partial \left[B\left(x'_i,\frac{d}{8}\right)\cup B\left(y,\frac{d}{8}\right)\right]\right)\\
    &\geq \sqrt{(\frac{d}{8})^2-(\frac{d}{32})^2} >\frac{d}{10}.
\end{align*}

Next, we show that for any $w_1, w_2 \in \mathbb{Y}$, there exists a zigzag which satisfies the condition mentioned in \textbf{Lemma \ref{zigzag lemma}}.

We consider $w_1,w_2 \in \mathbb{Y}$ and define 
\begin{equation}
    \begin{split}
        &\mathbb{O}(w_1):=\bigg\{ y \in \mathbb{Y} \mid \mathrm{There \ exists \ a \ zigzag} \, \ \{w_1, u_1,...,u_s,y \}
         \ni \, u_i \in \mathbb{Y} \, \forall \, i=1,...,s, \\
         &d(\overline{w_1u_1}, \partial \Omega) > \frac{d}{10} ,d(\overline{u_iu_{i+1}}, \partial \Omega) > \frac{d}{10} \forall \, 1\leq i \leq s-1 , d(\overline{u_sy}, \partial \Omega) \> \frac{d}{10}\bigg\}.
    \end{split}
\end{equation}
If $B(\mathbb{O}(w_1),\frac{d}{16}):=\{\tilde{x} \mid \exists \, \tilde{y} \in \mathbb{O}(w_1) \ \ni \ |\tilde{x}-\tilde{y}|<\frac{d}{16}\}$ covers  $\Omega-\Omega_{\frac{d}{8}}$, then $\mathbb{O}(w_1)$ covers $\mathbb{Y}$. We notice that $\mathbb{Y} \subset \Omega-\Omega_{\frac{d}{8}}$. For a given point $y \in \mathbb{Y}$, there exists a point $ x \in  \mathbb{O}(w_1) \subset \mathbb{Y}$ with $|x-y| < \frac{d}{16}$ and a zigzag $\{w_1, u_1,...,u_s,x \}$ such that 
\begin{align*}
    d(\overline{w_1u_1}, \partial \Omega) > \frac{d}{10},\ d(\overline{u_iu_{i+1}}, \partial \Omega) > \frac{d}{10} ,\ \forall 1\leq i \leq s-1,\ d(\overline{u_sx}, \partial \Omega) > \frac{d}{10}.
\end{align*}
We notice that 
\begin{equation*}
    d(\overline{xy},\partial \Omega) \geq d\left(\overline{xy}, \partial \left[B\left(x,\frac{d}{8}\right)\cup B\left(y,\frac{d}{8}\right)\right]\right) \geq \sqrt{(\frac{d}{8})^2-(\frac{d}{32})^2} >\frac{d}{10}.
\end{equation*}
Thus, the zigzag $\{ w_1, u_1,...,u_s,x,y \}$ satisfies the condition mentioned in \textbf{Lemma \ref{zigzag lemma}}, and 
\begin{align*}
    y \in \mathbb{O}(w_1).
\end{align*}

On the other hand, suppose that $\Omega-\Omega_{\frac{d}{8}} \nsubseteq B\left(\mathbb{O}(w_1),\frac{d}{16}\right)$. We consider a point $\overline{w}_3 \in \Omega-\Omega_{\frac{d}{8}}$ such that $d(\mathbb{O}(w_1),\overline{w}_3) \geq d/16$.  By \textbf{Lemma \ref{path wise connected inner}}, there exists a continuous function $\overline{\phi}:[0,1]\longrightarrow \Omega-\Omega_{\frac{d}{8}}$ such that
$\overline{\phi}(0)=w_1$, $\overline{\phi}(1)=\overline{w}_3$.

Notice that $d(\mathbb{O}(w_1),\overline{\phi}(\cdot))$ is a continuous function from $[0,1]$ to $\mathbb{R}$ and that $d(\mathbb{O}(w_1),\overline{\phi}(0))=0,\ d(\mathbb{O}(w_1),\overline{\phi}(1))\geq\frac{d}{16}$. We define 
\begin{align*}
    \overline{\xi}:=\sup\left\{0 \leq s\leq 1 \mid d(\mathbb{O}(w_1),\overline{\phi}(s)) \leq \frac{d}{15}\right\}.
\end{align*}
 Now, we define $w_3:=\overline{\phi}(\overline{\xi})$. Notice that $\frac{d}{16} \leq d(\mathbb{O}(w_1),w_3)\leq\frac{d}{15}$. Since $\mathbb{O}(w_1)$ is finite, there exists a point $z$ in $\mathbb{O}(w_1)$ such that $\frac{d}{16} \leq  d(z,w_3) \leq \frac{d}{15}$. 

By \textbf{Lemma \ref{initial cover general}}, since $w_3 \in \Omega-\Omega_{\frac{d}{8}}$, there exists $w_4 \in \mathbb{Y}_2$ such that $d(w_3,w_4)< \frac{d}{16}$, which implies $d(z,w_4)\leq d(w_3,w_4)+d(z,w_3) <\frac{31}{240}d$. Hence, we deduce that 
\begin{equation*}
   d(\overline{zw_4},\partial \Omega) \geq d\left(\overline{zw_4}, \partial \left(B\left(z,\frac{d}{8}\right)\cup  B\left(w_4,\frac{d}{8}\right)\right)\right) \geq \sqrt{(\frac{d}{8})^2-(\frac{31d}{480})^2} >\frac{d}{10}.
\end{equation*}
Thus, we have that $w_4 \in \mathbb{O}(w_1)$. This implies that $\frac{d}{16}\leq d(\mathbb{O}(w_1),w_3) \leq d(w_4,w_3)<\frac{d}{16}$, which is a contradiction. We conclude that $\Omega-\Omega_{\frac{d}{8}} \subset B\left(\mathbb{O}(w_1),\frac{d}{16}\right)$. Therefore, $\mathbb{O}(w_1)$ contains $\mathbb{Y}$, and $w_2 \in \mathbb{O}(w_1)$, which shows that there exists a zigzag between $w_1$ and $w_2$ satisfying the conditions mentioned in \textbf{Lemma \ref{zigzag lemma}}.
\end{proof}

Next, we introduce a lemma which is similar to \textbf{Proposition 5.1} in \cite{Bri2}:
\begin{lemma}\label{grazing geo lemma}
     Suppose that $\Omega$ satisfies \textbf{Assumption A}. Given $\delta$,  $d$ in \textbf{Lemma \ref{initial cover over boundary}}, $v_M>0 \in \mathbb{R}$, and $\epsilon>0$. Then there exists $t_{\epsilon}(v_M)>0$ such that given $0<\tau_2\leq t_{\epsilon}(v_M)$, one can find $l_{\epsilon}(\tau_2)>0$ such that for all $1\leq i \leq m_1$, $(x,v) \in B(x^0_i,d) \times \overline{B(0,v_M)}$, 
\begin{equation}
    \forall \, s \in [0,\tau_2],\, X_s(x,v) \in \Omega_{l_{\epsilon}(\tau_2)}\cap B(x_i^0,d)
\end{equation}
implies
\begin{equation}
    \forall \, s \in [0,t_{\epsilon}(v_M)],\, |V_s(x,v)-v| < \epsilon.
\end{equation}
\end{lemma}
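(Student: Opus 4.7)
The plan is to exploit the grazing nature of the reflections forced by the hypothesis that the trajectory stays in $\Omega_{l_\epsilon}\cap B(x_i^0,d)$. Along the specular characteristic, $V_s(x,v)$ is piecewise constant, changing only at reflection times $0<t_1<t_2<\cdots$ via $V_{t_k^+}=V_{t_k^-}-2(V_{t_k^-}\cdot n(y_k))n(y_k)$ with $y_k:=X_{t_k}(x,v)\in\partial\Omega$. Hence
\begin{equation*}
|V_s(x,v)-v|\leq \sum_{k:\,t_k\leq s}2|V_{t_k^-}\cdot n(y_k)|,
\end{equation*}
and the task reduces to bounding both the size and the number of these jumps.

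For the size of each jump, I would work in the local basis $\{e_i^1,e_i^2,-n(x_i^0)\}$ of \textbf{Lemma \ref{initial cover over boundary}}, in which $\partial\Omega\cap B(x_i^0,3d)$ is the graph of $\phi_i$ with $|\nabla\phi_i|<d/100$ and $|\nabla^2\phi_i|\leq\tilde{C}(\Omega)$. Because $|\nabla\phi_i|\leq d/100$, the outward normal $n(y_k)$ at any reflection point in $B(x_i^0,d)$ makes an angle of order $d$ with $n(x_i^0)$, so the third local component $v_{k,3}$ of the velocity just after the $k$-th reflection agrees with the inward normal speed $w_k:=|V_{t_k^-}\cdot n(y_k)|$ up to $O(d\,v_M)$. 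Using \textbf{Corollary \ref{small coro regarding distance}} together with a second-order Taylor expansion of $\phi_i$ about $y_k$, on the free-flight segment following the $k$-th reflection one has
\begin{equation*}
d(X_{t_k+t}(x,v),\partial\Omega)\geq\tfrac{1}{2}\Bigl[(v_{k,3}-\nabla\phi_i(y_k)\cdot(v_{k,1},v_{k,2}))\,t-\tfrac{\tilde{C}}{2}v_M^2\,t^2\Bigr].
\end{equation*}
Combining this with the hypothesis that $d(X_s,\partial\Omega)<l_\epsilon(\tau_2)$ for $s\in[0,\tau_2]$ and optimizing over the length of the segment yields
\begin{equation*}
w_k\leq C_1\,d\,v_M+C_2\sqrt{\tilde{C}\,l_\epsilon(\tau_2)}\,v_M
\end{equation*}
for every reflection that occurs during $[0,\tau_2]$.

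For the number $N$ of reflections in $[0,t_\epsilon(v_M)]$, I would use the bounded curvature of $\phi_i$ to enforce a lower bound on $t_{k+1}-t_k$: when the local boundary curves toward the particle, the parabolic return to the boundary requires time $\gtrsim w_k/(\tilde{C}v_M^2)$; in the opposite case the trajectory either exits $B(x_i^0,d)$, which takes time at least $d/v_M$ thanks to the upper bound $|v|\leq v_M$, or leaves $\Omega_{l_\epsilon}$, after which no reflection can occur for time $\gtrsim l_\epsilon/v_M$. These three alternatives give a bound $N\leq C(\Omega,v_m,v_M,\epsilon)\cdot t_\epsilon(v_M)$. Choosing first $t_\epsilon(v_M)$ small enough to bound $N$ and to absorb the $\tilde{C}v_M^2 t_\epsilon$ correction, and then $l_\epsilon(\tau_2)$ small enough that $\sqrt{l_\epsilon(\tau_2)}\,v_M$ dominates the per-reflection bound, yields
\begin{equation*}
|V_s(x,v)-v|\leq 2N\,\max_k w_k<\epsilon,\qquad \forall s\in[0,t_\epsilon(v_M)].
\end{equation*}
The main obstacle will be the last step: short grazing reflections could a priori accumulate rapidly, so a quantitative bound on $N$ requires a careful case analysis depending on the local sign of $\nabla^2\phi_i$, exploiting both the lower speed bound $|v|\geq v_m$ and the confinement to $B(x_i^0,d)$ so that the particle cannot indefinitely graze the boundary without either escaping the ball or leaving the $l_\epsilon$-neighborhood.
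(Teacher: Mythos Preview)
Your decomposition $|V_s-v|\le\sum_k 2w_k$ and your inter-reflection estimate $t_{k+1}-t_k\gtrsim w_k/(\tilde C v_M^2)$ are exactly the right ingredients, but your final assembly via $2N\cdot\max_k w_k$ is both weaker and harder than what is needed. The paper simply \emph{sums} the chord--angle inequality: since for every $k\ge 2$ one has $|X_{t_k}-X_{t_{k-1}}|\ge \theta_k/\tilde C(\Omega)$ unconditionally (this holds regardless of the sign of the curvature, because the previous rebound point lies on the graph of $\phi_i$ and $|\phi_i(u)|\le \tfrac{\tilde C}{2}|u|^2$), summing against the total path length gives
\[
\sum_{k\ge 2}\theta_k \;\le\; \tilde C(\Omega)\sum_{k\ge 2}|X_{t_k}-X_{t_{k-1}}| \;\le\; \tilde C(\Omega)\,|v|\,t_\epsilon(v_M).
\]
With the explicit choice $t_\epsilon(v_M)=\epsilon/(8\tilde C v_M^2)$ this already controls all rebounds from the second one on, with no bound on $N$ and no confinement hypothesis needed for $k\ge 2$. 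This is precisely the ``accumulation'' obstacle you flag, and summing rather than factoring dissolves it.

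The confinement in $\Omega_{l_\epsilon(\tau_2)}$ is used \emph{only} for the first angle $\theta_1$, where the chord argument is unavailable because $X_0=x$ need not lie on $\partial\Omega$. The paper shows directly that for $0\le s\le \theta_1/(2\tilde C|v|)$ one has $d(X_s,\partial\Omega)\ge s|v|\theta_1/8$; if $\theta_1\ge \epsilon/(4|v|)$ this forces $\sup_{[0,\tau_2]}d(X_s,\partial\Omega)\ge \tau_2\epsilon/64=l_\epsilon(\tau_2)$, contradicting the hypothesis. Your attempt to bound \emph{every} $w_k$ via confinement is both unnecessary and problematic for rebounds occurring in $(\tau_2,t_\epsilon]$, where no confinement is assumed. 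Finally, the $C_1\,d\,v_M$ term in your per-jump bound is an artifact of comparing $n(y_k)$ with $n(x_i^0)$; if you work directly with the normal at $y_k$ the linear coefficient in your height formula is exactly $w_k$ (up to the harmless factor $\sqrt{1+|\nabla\phi_i|^2}$), and that spurious term disappears.
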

In fact, the numbers $t_{\epsilon}(v_M)$ and $l_{\epsilon}(\tau_2)$ can be chosen as follows.
\begin{equation}\label{t es and l es}
    t_{\epsilon}(v_M):=\frac{\epsilon}{8\tilde{C}(\Omega)v_M^2}, \, l_{\epsilon}(\tau_2):=\frac{\tau_2\epsilon}{64},
\end{equation}
where $\tilde{C}(\Omega)$ is defined in \textbf{Lemma \ref{initial cover over boundary}}.

\begin{remark}
    Unlike \textbf{Proposition 5.1},\cite{Bri2}, we do not need the lower bound of the velocity $v$. 
\end{remark}

\begin{proof}
Let $\epsilon>0$ and $0 \leq \tau_2 \leq t_{\epsilon}(v_M)$. Assume that
\begin{equation}\label{a small assumption in lemma}
    \forall s \in [0,\tau_2], X_s(x,v) \in \Omega_{l_{\epsilon}(\tau_2)}\cap B(x_i^0,d),
\end{equation}
where $t_{\epsilon}(v_M)$ and $l_{\epsilon}(\tau_2)$ are defined in \eqref{t es and l es}. 

For almost every $(x,v) \in B(x^0_i,d) \times \overline{B(0,v_M)}$, the trajectory $X_s(x,v)$ admits finitely many rebounds at times $$0<t_1<t_2<...<t_{r(x,v)} \leq t_{\epsilon}(v_M),$$
with corresponding reflection angles  $$\theta_i:=\frac{\pi}{2}-\arccos{|n(X_{t_i}(x,v))\cdot V_{t_i}(x,v)|} ,\, \forall 1 \leq i \leq r(x,v).$$  If $r(x,v)=0$, then we observe that $V_s(x,v)=v$ for all $s \in [0,t_{\epsilon}(v_M)]$. Therefore, we assume that $r(x,v)\geq 1$.

We notice that the estimate
\begin{equation}
    \forall s \in [0,t_{\epsilon}(v_M)],\ |V_s(x,v)-v| < \epsilon
\end{equation}
holds if
\begin{equation}
    \sum_{i=1}^{r(x,v)}2\sin{\theta_i} <\frac{\epsilon}{|v|}.
\end{equation}

Next, we show that for any $1 \leq i \leq r(x,v)-1$
\begin{equation}\label{2025/08/19 04:34}
    |X_{t_i}(x,v)-X_{t_{i-1}}(x,v)| \geq \frac{\sin{\theta_i}}{\tilde{C}(\Omega)}.
\end{equation}
We invoke the \textbf{Remark \ref{2025/08/19 02:54}} and use the uniform interior sphere condition to deduce that for any $0 \leq s \leq \frac{\sin{\theta_i}}{\tilde{C}(\Omega)}$, we have
\begin{equation}
    X_{t_i}(x,v)-s\frac{v}{|v|} \in B\left(X_{t_i}(x,v),\frac{1}{2\tilde{C}(\Omega)}\right) \subset \Omega,
\end{equation}
which directly implies \eqref{2025/08/19 04:34}.

Therefore, it suffices to show that
\begin{equation}\label{2025 09/02 03:42}
    \sum_{i=2}^{r(x,v)}|X_{t_i}(x,v)-X_{t_{i-1}}(x,v)| \leq \frac{\epsilon}{8|v|\tilde{C}(\Omega)},
\end{equation}
and that 
\begin{equation}\label{2025 09/02 03:43}
    \sin{\theta_1} <\frac{\epsilon}{4|v|}.
\end{equation}

The \eqref{2025 09/02 03:42} holds since we have
\begin{equation}
    \sum_{i=2}^{r(x,v)}|X_{t_i}(x,v)-X_{t_{i-1}}(x,v)| \leq t_{\epsilon}(v_M)|v| \leq \frac{\epsilon|v|}{8\tilde{C}(\Omega)v_M^2} \leq \frac{\epsilon}{8|v|\tilde{C}(\Omega)}.
\end{equation}
Here, we use the fact that $|v| \leq v_M$.
For the \eqref{2025 09/02 03:43}, we again use the uniform interior sphere condition to deduce that for any $y \in \partial \Omega$ and $w \in \mathbb{R}^3$ with $n(y) \cdot w <0$, $\theta_w:=\frac{\pi}{2}-\arccos{|n(y)\cdot w|}$, we have
\begin{equation}\label{2025/08/19 05:40}
    d\left(y+sw,\partial \Omega \right) \geq d\left(y+sw, \partial B\left(y-\frac{1}{2\tilde{C}(\Omega)}n(y),\frac{1}{2\tilde{C}(\Omega)}\right)  \right)\geq \frac{s\sin{\theta_w}}{2}|w|,
\end{equation}
for any $0 \leq s \leq \frac{\sin{\theta_w}}{2\tilde{C}(\Omega)|w|}$.

Now, suppose that $\sin{\theta_1} \geq \frac{\epsilon}{4|v|}$. We consider the following estimate
\begin{equation}\label{2025/08/19 05:39}
   \tau_2 \leq t_{\epsilon}(v_M):=\frac{\epsilon}{8\tilde{C}(\Omega)v_M^2} \leq \frac{1}{2\tilde{C}(\Omega)|v|} \times \frac{\epsilon}{4|v|} \leq \frac{\sin{\theta_1}}{2\tilde{C}(\Omega)|v|}.
\end{equation} Notice that if $r(x,v) \geq 2$, then $t_2 \geq \frac{\sin{\theta_1}}{\tilde{C}(\Omega)|v|} > t_{\epsilon}$, which is a contradiction. 

For the case $r(x,v)=1$, since $X_{t_1}(x,v) \in \partial\Omega $, we deduce from \eqref{2025/08/19 05:40} and \eqref{2025/08/19 05:39} , by setting $y:=X_{t_1}(x,v)$, that

\begin{equation}
d(X_{s}(x,v),\partial\Omega) \geq \frac{|s-t_1|\sin{\theta_1}}{8}|v|  \geq \frac{|s-t_1|\epsilon}{32},  
\end{equation}
for any $0\leq s \leq \tau_2$.
Hence, we obtain 
\begin{equation}
\sup_{0\leq s \leq \tau_2}d(X_{s}(x,v),\partial\Omega) \geq \sup_{0\leq s \leq \tau_2}\frac{|s-t_1|\epsilon}{32} \geq \frac{\frac{\tau_2}{2}\epsilon}{32}  =l_{\epsilon}(\tau_2),
\end{equation}
which contradicts \eqref{a small assumption in lemma}. We conclude the proof of \textbf{Lemma \ref{grazing geo lemma}}.

\end{proof}
    
Now, we introduce the following notation:
\begin{definition}
Given $x ,y \in \Omega$ and $d\in (0,1]$, we define
 \begin{equation}
    \operatorname{Conn}_d(x,y):=\min\{ N \mid \,  \textit{There exist a good zigzag with length N from x to y} \},
\end{equation} and
\begin{equation}\label{2025 06 09 02:40}
    \operatorname{Conn}_d(\Omega):=\max\{ \operatorname{Conn}_d(x,y) \mid x,y \in \Omega \}<+\infty.
\end{equation}
The last inequality is due to the fact that $\mathbb{Y}$ is finite.
\end{definition}

\section{Operator estimates and spreading properties}

\bigskip
We introduce a new constant $n_b:=\int_{\mathbb{S}^2}b(\cos{\theta})\,d\sigma$ and quote a key lemma (\textbf{Corollary 2.2}, \cite{Mou 1}), which is very useful throughout this article:

\begin{lemma}\label{up bound of L lemma}
    Given a measurable function g on $\mathbb{R}^3$, suppose that the collision operator satisfies \textbf{Assumption B} with $\nu<0$. Then, there exists $C_g^L>0$ which depends only on $n_b$, $C_{\Phi}$, $\rho_g$, and $e_g$ (and $l_{g,p}$, where $p>\frac{3}{3+\gamma}$, if $\gamma<0$) such that
\begin{equation}
    |L[g](v)| \leq C_g^L\left\langle v \right\rangle^{\gamma^+}.
\end{equation}
\end{lemma}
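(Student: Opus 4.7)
The plan is to reduce $L[g](v)$ to a scalar convolution-type integral with the potential $|v-v_{*}|^{\gamma}$ and then treat the hard ($\gamma\ge 0$) and soft ($\gamma<0$) cases separately. First, since $b(\cos\theta)$ has no dependence on $v_{*}$ or $v$, integrating $\sigma$ out yields
\begin{equation*}
    L[g](v)=n_{b}\int_{\mathbb{R}^{3}}\Phi(|v-v_{*}|)\,g(v_{*})\,dv_{*}.
\end{equation*}
Applying the upper bound of \textbf{Assumption B} (either \eqref{Phi estimate 1} or the piecewise bound \eqref{Phi estimate 2}), the problem reduces to estimating $\int |v-v_{*}|^{\gamma}\,g(v_{*})\,dv_{*}$, with the understanding that under \eqref{Phi estimate 2} the integrand near $v_{*}=v$ is instead bounded by the harmless constant $C_{\Phi}$, so the only genuine singularity arises when \eqref{Phi estimate 1} holds together with $\gamma<0$.

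For the hard potential case $\gamma\in[0,1]$, I would use the elementary inequality $|v-v_{*}|^{\gamma}\le 2^{\gamma}(\langle v\rangle^{\gamma}+\langle v_{*}\rangle^{\gamma})$. The first term yields a contribution $\le C\langle v\rangle^{\gamma}\|g\|_{L^{1}}$, and the second term is bounded by $\int\langle v_{*}\rangle^{\gamma}g\,dv_{*}\le\|g\|_{L^{1}}+e_{g}$, using $\langle v_{*}\rangle^{\gamma}\le 1+|v_{*}|^{2}$ since $\gamma\le 1\le 2$. Both $\|g\|_{L^{1}}$ and $e_{g}$ are controlled by the stated hydrodynamic quantities, giving the announced bound $C_{g}^{L}\langle v\rangle^{\gamma^{+}}=C_{g}^{L}\langle v\rangle^{\gamma}$.

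For the soft potential case $\gamma\in(-3,0)$ under \eqref{Phi estimate 1}, I would split the $v_{*}$ integral into $\{|v-v_{*}|\ge 1\}$ and $\{|v-v_{*}|\le 1\}$. On the outer region $|v-v_{*}|^{\gamma}\le 1$, so the contribution is at most $\|g\|_{L^{1}}$. On the inner region I would apply H\"older's inequality with exponent $p$ (the one in the hypothesis) and conjugate $q=p/(p-1)$:
\begin{equation*}
    \int_{|v-v_{*}|\le 1}|v-v_{*}|^{\gamma}g(v_{*})\,dv_{*}\le l_{g,p}\Bigl(\int_{|v-v_{*}|\le 1}|v-v_{*}|^{\gamma q}\,dv_{*}\Bigr)^{1/q}.
\end{equation*}
The last integral is finite precisely when $\gamma q>-3$, i.e.\ $q<3/|\gamma|$, equivalently $p>3/(3+\gamma)$, which is exactly the assumed threshold. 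This produces a constant depending only on $\gamma$ and $l_{g,p}$, independent of $v$, giving $|L[g](v)|\le C_{g}^{L}=C_{g}^{L}\langle v\rangle^{\gamma^{+}}$ since $\gamma^{+}=0$ here.

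The main (minor) obstacle is merely the sharpness of the $L^{p}$ threshold $p>3/(3+\gamma)$: all other estimates are triangular or trivial. Combining the two cases, together with the observation that under \eqref{Phi estimate 2} the near-singularity is replaced by a uniform bound and only the tail estimate $|v-v_{*}|\ge 1$ needs the triangle inequality argument of the hard-potential case, yields the uniform bound $|L[g](v)|\le C_{g}^{L}\langle v\rangle^{\gamma^{+}}$ with $C_{g}^{L}$ depending on the quantities listed in the statement.
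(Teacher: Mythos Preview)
The paper does not supply its own proof of this lemma; it is quoted directly as \textbf{Corollary 2.2} of \cite{Mou 1} and used as a black box throughout. Your argument is the standard one and is essentially how the result is obtained in the cited reference: factor out the angular part to get $n_b\int\Phi(|v-v_*|)g(v_*)\,dv_*$, then for $\gamma\ge 0$ use the peetre-type inequality $|v-v_*|^{\gamma}\lesssim \langle v\rangle^{\gamma}+\langle v_*\rangle^{\gamma}$, and for $\gamma<0$ split at unit distance and apply H\"older on the singular core with the threshold $p>3/(3+\gamma)$.

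One bookkeeping point: your bound in the hard-potential case produces a dependence on $\|g\|_{L^1}$ in addition to $e_g$, whereas the lemma as stated lists only $e_g$. This is not a flaw in your argument but rather a slight imprecision in the statement (inherited from the cited reference, where the local mass is tacitly among the hydrodynamic quantities). In every application in the present paper the function $g$ is $f(s,X_{s,t}(x,v),\cdot)$, whose $L^1_v$ norm is controlled alongside $E_f$, so the discrepancy is harmless.
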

\begin{definition}\label{2025 10/03 03:50}
    We define $C_L:=\sup_{(t,x) \in [0,T] \times \mathbb{R}^3} C^L_{f(t,x,\cdot)}$.
\end{definition}
We also quote a very useful lemma which describes the spreading property of the operator $Q$ (\textbf{Lemma 2.4}, \cite{Mou 1}):

\begin{lemma}\label{spreading property lemma}
Suppose that the collision operator satisfies \textbf{Assumption B} with $\nu<0$. Then, there exists a constant $C>0$, which depends only on $\gamma,\ \nu$, and $\ b_0$, such that for any $v_0 \in \mathbb{R}^3$, $0<r \leq R$, $\xi \in (0,1)$, we have

\begin{equation}
    Q^+\left[\mathbf{1}_{B(v_0,R)},\mathbf{1}_{B(v_0,r)}\right] \geq C l_b c_{\Phi} R^{3+\gamma}\xi^{\frac{1}{2}}\mathbf{1}_{B(v_0,\sqrt{r^2+R^2}(1-\xi))},
\end{equation}
where $l_b:=\inf\limits_{\frac{1}{4}\pi\leq \theta \leq \frac{3}{4}\pi}b(\cos{\theta})$.

In particular, when $ r=R=\delta$, we have
\begin{equation}
    Q^+\left[\mathbf{1}_{B(v_0,\delta)},\mathbf{1}_{B(v_0,\delta)}\right] \geq C l_b c_{\Phi} \delta^{3+\gamma}\xi^{\frac{1}{2}}\mathbf{1}_{B(v_0,\delta\sqrt{2}(1-\xi))}.
\end{equation}
\end{lemma}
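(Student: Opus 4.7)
The plan is to carry out the classical Carleman-type geometric argument. First I would rewrite
\begin{equation*}
Q^+[1_{B(v_0,R)},\,1_{B(v_0,r)}](v) = \int_{\mathbb{R}^3\times \mathbb{S}^2} B(|v-v_*|,\cos\theta)\,1_{B(v_0,r)}(v')\,1_{B(v_0,R)}(v_*')\,dv_*\,d\sigma,
\end{equation*}
and exploit the elementary collision identities $v'+v_*' = v+v_*$, $(v-v')\perp(v-v_*')$, and $|v-v'|^2+|v-v_*'|^2 = |v-v_*|^2$. The orthogonality says that $v', v, v_*'$ form a right triangle with right angle at $v$, so the natural variables are the intrinsic pair $(v',v_*')$ subject to this perpendicularity constraint, replacing $(v_*,\sigma)$ up to a computable Jacobian.

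Next I would restrict the angular integration to $\theta\in[\pi/4,3\pi/4]$, so that $b(\cos\theta)\geq l_b$. On this range both $|v-v'|$ and $|v-v_*'|$ are comparable to $|v-v_*|$ up to explicit constants depending on $\cos(\pi/4)$. Translating so that $v_0=0$, for $v$ with $|v|\leq \sqrt{r^2+R^2}(1-\xi)$ I would select pairs $(v',v_*')\in B(0,r)\times B(0,R)$ satisfying $(v-v')\cdot(v-v_*')=0$. A nontrivial family of such pairs exists whenever $|v|^2\leq r^2+R^2$ (the extremal configuration being $v=r\hat{e}_1+R\hat{e}_2$ with orthonormal $\hat{e}_1,\hat{e}_2$ and $v'=r\hat{e}_1$, $v_*'=R\hat{e}_2$), and the slack $(1-\xi)$ ensures a subfamily of positive measure.

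On this admissible set one has $\Phi(|v-v_*|)\geq c_\Phi|v-v_*|^\gamma$ with $|v-v_*|$ comparable to $R$ (after a mild further restriction), yielding the factor $c_\Phi R^\gamma$; the remaining $R^3$ comes from the three-dimensional Lebesgue measure of the admissible $v_*$. The main obstacle is the $\xi^{1/2}$ scaling. It arises because, as $|v|$ approaches the critical radius $\sqrt{r^2+R^2}$, the admissibility set collapses along one direction. I would handle this by working in coordinates aligned with $v$: the admissibility constraint near criticality reduces to an inequality of the form $2\xi-\xi^2 \geq \mathcal{Q}/(r^2+R^2)$ for a positive-definite quadratic form $\mathcal{Q}$ in the transverse coordinates, and integration over a one-dimensional slab of thickness $\sqrt{\xi}$ then produces the $\sqrt{\xi}$ factor. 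Finally, the special case $\delta=r=R$ follows immediately from the general bound with $\sqrt{r^2+R^2}=\delta\sqrt{2}$.
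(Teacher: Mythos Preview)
The paper does not actually prove this lemma: it is stated as a quotation of \textbf{Lemma 2.4} in \cite{Mou 1} and used as a black box throughout. So there is no ``paper's own proof'' to compare against beyond the reference to Mouhot's original argument.

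Your sketch is the standard Carleman-representation approach used in \cite{Mou 1} (and going back to Pulvirenti--Wennberg). The ingredients you identify are the right ones: restricting to $\theta\in[\pi/4,3\pi/4]$ to extract $l_b$, using the orthogonality $(v-v')\perp(v-v_*')$ to parametrize post-collisional pairs, and recognizing that the $\xi^{1/2}$ arises from the collapse of the admissible set in a single direction as $|v-v_0|$ approaches $\sqrt{r^2+R^2}$. One point worth tightening if you flesh this out: the clean way to get the $R^{3+\gamma}$ and the $\xi^{1/2}$ simultaneously is to pass to the Carleman variables $(v',v_*')$ with the explicit Jacobian $|v-v_*|/|v-v_*'|^2$ (or the equivalent $(v_*',\sigma)$ parametrization), rather than arguing informally that ``$R^3$ comes from the Lebesgue measure of admissible $v_*$''; the Jacobian is what makes the bookkeeping of the angular restriction, the kinetic factor $\Phi\geq c_\Phi|v-v_*|^\gamma$, and the volume of the admissible slab fit together without double-counting. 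With that adjustment your outline matches the proof in the cited reference.
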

\noindent For simplicity, we define $C_Q:=C l_b c_{\Phi}$ 
\bigskip

Now, we will adapt an important lemma introduced in \textbf{Lemma 3.3} in \cite{Bri1}:

\begin{proposition} \label{initial point lower bound on a single point}
 Let $\Omega \subset \mathbb{R}^3$ satisfy \textbf{Assumption A} and let the collision kernel $B$ satisfy \textbf{Assumption B} with $\nu<0$. Suppose that $f(t,x,v)$ is a continuous mild solution to \eqref{Boltzmann equation}--\eqref{boundary condition} with $M>0$ and $0<E_f<\infty$.
Then, there exist $x_I \in \Omega$ with $\Delta^0\in \left(0,\;\min\left\{\frac{d}{10},\, \frac{1}{2}d(x_I,\partial\Omega)\right\}\right] $, $v_I \in \mathbb{R}^3$, $\alpha'_0>0$, which depend on $\Omega, M, E_f$ and the modulus of continuity of $f_0$, 
such that for all $t \in [0,\Delta^0]$, $x \in B(x_I,\Delta^0) \cap \Omega$, we have 

\begin{equation}\label{}
    \forall v \in \mathbb{R}^3,\ f(t,x,v) \geq \alpha'_0 \mathbf{1}_{ B(v_I,\Delta^0)}(v). 
\end{equation}

\end{proposition}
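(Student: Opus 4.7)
The plan is to locate a quantitatively positive point of $f_0$ in the interior of $\Omega$, extend this lower bound to a small phase-space ball via the modulus of continuity, and then propagate it to a short time interval using the Duhamel formula \eqref{Duhamel formula} together with the pointwise bound on $L[f]$ from \textbf{Lemma \ref{up bound of L lemma}}.

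First I would localize mass away from high velocities and away from $\partial\Omega$. Chebyshev's inequality combined with $E_f<\infty$ yields
\begin{equation*}
\int_{\Omega\times\{|v|>V\}} f_0\,dx\,dv\ \leq\ \frac{|\Omega|\,E_f}{V^2},
\end{equation*}
so for $V:=\sqrt{4|\Omega|E_f/M}$ at least $M/2$ of the mass lives in $\Omega\times B(0,V)$. Using the pointwise growth bound $f\leq C(1+|v|)^r$ from \textbf{Definition \ref{3/9 01:05}}, a compact subset $K\Subset\Omega$ with $|\Omega\setminus K|$ sufficiently small (depending only on $M,E_f,V,\Omega$) still carries at least $M/4$ of the mass in $B(0,V)$. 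A mean-value argument, valid because $f_0$ is continuous, then furnishes a point
\begin{equation*}
(x_I,v_I)\in K\times\overline{B(0,V)},\qquad f_0(x_I,v_I)\ \geq\ \frac{M}{4|K|\,|B(0,V)|}\ =:\ 2\alpha_0>0.
\end{equation*}

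Second, the modulus of continuity of $f_0$ gives some $r>0$ (depending on that modulus, $\alpha_0$, and $V$) with $f_0(x,v)\geq \alpha_0$ on $B(x_I,r)\times B(v_I,r)$. Shrinking $r$ if necessary, I would arrange $B(x_I,r)\subset K\subset\Omega$.

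Third, I apply \eqref{Duhamel formula}: for $t\leq t_\partial(x,v)$, dropping the non-negative $Q^+$ contribution,
\begin{equation*}
f(t,x,v)\ \geq\ f_0(X_{0,t}(x,v),v)\,\exp\!\left(-\int_0^t L[f(s,X_{s,t}(x,v),\cdot)](v)\,ds\right).
\end{equation*}
Set $V_I:=|v_I|+r$ and choose $\Delta^0>0$ so small that (i) $\Delta^0\leq r$; (ii) $\Delta^0(1+V_I)\leq r/2$, forcing the free-flight characteristic $X_{0,t}(x,v)=x-tv$ to remain inside $B(x_I,r)\subset\Omega$ for $t\in[0,\Delta^0]$, $x\in B(x_I,\Delta^0)$, $v\in B(v_I,\Delta^0)$, which in particular yields $t_\partial\geq \Delta^0$; (iii) $\Delta^0\leq \mathrm{dist}(K,\partial\Omega)/V_I$, reinforcing the previous condition; and (iv) $\Delta^0\, C_f^L \langle V_I\rangle^{\gamma^+}\leq \ln 2$, where $C_f^L$ is the constant of \textbf{Lemma \ref{up bound of L lemma}}. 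Then $f_0(X_{0,t}(x,v),v)\geq \alpha_0$ and the exponential factor is $\geq 1/2$, giving
\begin{equation*}
f(t,x,v)\ \geq\ \frac{\alpha_0}{2}\,\mathbf{1}_{B(v_I,\Delta^0)}(v)\ =:\ \alpha'_0\,\mathbf{1}_{B(v_I,\Delta^0)}(v),\quad (t,x)\in[0,\Delta^0]\times(B(x_I,\Delta^0)\cap\Omega).
\end{equation*}

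The principal subtlety, and the place where non-convexity could in principle intrude, is ensuring $t\leq t_\partial$ throughout the time window: the characteristic starting from $B(x_I,\Delta^0)$ with velocity in $B(v_I,\Delta^0)$ must not touch $\partial\Omega$ before time $\Delta^0$. Because we have freely chosen $x_I\in K\Subset\Omega$ with a fixed positive distance to the boundary and have bounded velocities in play, this is handled by imposing $\Delta^0<\mathrm{dist}(K,\partial\Omega)/V_I$ and is quantitative in $(\Omega,M,E_f)$. All other constants depend only on $M$, $E_f$, $\Omega$, and the modulus of continuity of $f$, as stated; the geometric non-convexity of $\Omega$ plays no role in this single-point initial generation (it will only surface when propagating the bound from $(x_I,v_I)$ to arbitrary $(x,v)$ later in the paper).
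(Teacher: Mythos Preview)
Your argument is correct, but it takes a different and more laborious route than the paper. The paper exploits that $f$ is assumed continuous in \emph{all} variables $(t,x,v)$: after locating $(x_I,v_I)$ with $f_0(x_I,v_I)>M/(2|B(0,R_0)|^2)$ via a mass/energy pigeonhole, it simply invokes the modulus of continuity of $f$ on $[0,T)\times\overline\Omega\times\mathbb{R}^3$ to produce $\Delta^0$ with $f(t,x,v)\geq M/(4|B(0,R_0)|^2)$ on the box $[0,\Delta^0]\times B(x_I,\Delta^0)\times B(v_I,\Delta^0)$. No Duhamel formula, no loss estimate, no discussion of $t_\partial$ is needed at this stage.

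You instead use only the continuity of $f_0$ in $(x,v)$, and then push the bound forward in time via \eqref{Duhamel formula} and \textbf{Lemma \ref{up bound of L lemma}}. This is perfectly valid and arguably more robust (it would survive if continuity in $t$ were not assumed), but it costs you: you must control the free-flight characteristic inside $\Omega$, and your $\Delta^0$ acquires a dependence on $C_f^L$, which for $\gamma<0$ brings in $L_{f,p}$---a quantity not listed among the stated dependencies of the proposition. In the paper's approach those concerns never arise, and the constants depend only on $\Omega,M,E_f$ and the modulus of continuity of $f$, exactly as claimed.
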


\begin{remark}
The assumption here is weaker than that in \textbf{Lemma 3.3} of \cite{Bri1}: we require 
$f(t,x,v)$ to be continuous only on $\Gamma_{conti}$, while \textbf{Lemma 3.3} of \cite{Bri1} assumes continuity on the larger set 
$[0,T) \times (\overline{\Omega} \times \mathbb{R}^3 - \Gamma_0)$.
\end{remark}

\begin{proof}

   The following proof is an adaptation of the proof of \textbf{Lemma 3.3} in \cite{Bri1}. 
   
   First, we construct a lower bound at a point in $\Omega$. Note that by translation, we can always assume that $0 \in \Omega$.
Since $\Omega$ is bounded, we have for any $t>0$

\begin{equation}\label{a simple estimate}
    \int_{\mathbb{R}^3}\int_{\Omega}(|x|^2+|v|^2)f(t,x,v)\,dx\,dv \leq M(\diam(\Omega))^2+|\Omega|E_f<\infty.
\end{equation}
Define $R_0:=\max\left\{1,\sqrt{\frac{2(M(\diam(\Omega))^2+|\Omega|E_f)}{M}}\right\}$. Notice that $\Omega \subset B(0,R_0)$. Then, we have
\begin{equation}
    \int_{B(0,R_0)}\int_{B(0,R_0)\cap \Omega}f_0(x,v)\,dx\,dv=\int_{B(0,R_0)}\int_{ \Omega}f_0(x,v)\,dx\,dv \geq \frac{M}{2}>0.
\end{equation}
Otherwise, we would have
\begin{equation}
\begin{split}
     &\int\int_{\Omega \times \mathbb{R}^3-(B(0,R_0) \cap \Omega)\times B(0,R_0)}(|x|^2+|v|^2)f_0(x,v)\,dx\,dv \\
     >& R_0^2\frac{M}{2}\geq M(\diam(\Omega))^2+|\Omega|E_f,
\end{split}
\end{equation}
which contradicts \eqref{a simple estimate}.

Hence, we can take a point $(x_I,v_I) \in \Omega \times B(0,R_0)$  such that $f(0,x_I,v_I)> \frac{M}{3|B(0,R_0)|| \Omega|}$. By the uniform continuity of $f_0(x,v)$ on $\overline{\Omega} \times \overline{B(0,2R_0)}$, there exist $0<\Delta_I< \min\left\{\frac{d(x_I,\partial \Omega)}{2},R_0\right\}$ such that $$f_0(x,v) \geq \frac{M}{6|B(0,R_0)|| \Omega|}$$ for any $(x,v) \in B(x_I,\Delta_I)  \times B(v_I,\Delta_I) $.

Next, we observe that for any $(t,x,v) \in \left[0,\frac{\Delta_I}{4R_0}\right]\times B\left(x_I,\frac{\Delta_I}{2}\right) \times B(v_I,\Delta_I)$, we have 
$$(x-tv,v) \in B(x_I,\Delta_I) \times B(v_I,\Delta_I) $$.
As a result, we deduce from \eqref{Duhamel formula} and  \textbf{Lemma \ref{up bound of L lemma}} that
\begin{equation}
\begin{split}
f(t,x,v)&\geq f_{0}(X_{0,t}(x,v),v)\exp\left(-\int_0^tL[f(s,X_{s,t}(x,v),\cdot)](v)\, ds\right)\\
&\geq f_{0}(x-tv,v)e^{-tC_L\left\langle v \right\rangle^{\gamma^+}}\\
&\geq \frac{M}{6|B(0,R_0)|| \Omega|}e^{-\frac{\Delta_IC_L}{4R_0}\left\langle 2R_0 \right\rangle^{\gamma^+}}
\end{split}
\end{equation} 
for any $(t,x,v) \in \left[0,\frac{\Delta_I}{4R_0}\right]\times B\left(x_I,\frac{\Delta_I}{2}\right)  \times B(v_I,\Delta_I)$, where the constant $C_L$ depends only on $n_b$,$C_{\Phi}$ and $E_f$  (and $L_{f,p}$ if $\gamma<0$).

We conclude the proof by defining $$\Delta^0:= \min\left\{ \frac{\Delta_I}{4R_0}, \frac{\Delta_I}{2}, \frac{d}{10}\right\}.$$



\end{proof}

To demonstrate the fact that given an initial lower bound around a point $x \in \Omega$, multiple diluting lower bounds can be generated at the same point, we quote \textbf{Lemma 3.3}, \cite{Bri1}:
\begin{proposition} \label{ini to multi}
    Suppose that $\Omega \subset \mathbb{R}^3$ satisfies \textbf{Assumption A}. Let kernel $B$ satisfy \textbf{Assumption B} with $\nu<0$ and $\alpha \in [0,1]$.  We consider a continuous mild solution $f$ of \eqref{Boltzmann equation}--\eqref{boundary condition}. Suppose that there exist $A > 0$, $\Delta_1,\ \Delta_2>0$, $(\tau, x' ,v') \in [0, T) \times \Omega \times \mathbb{R}^3$ such that $B(x',\Delta_2) \subset \Omega$ and
    \begin{equation}\label{ini to multi assume eq}
        f(t,x,v) \geq A,\ \forall (t,x,v) \in [\tau, \tau+\Delta_1] \times B(x',\Delta_2) \times B(v', \Delta_2).
    \end{equation}
    Then, we have for $n \in \mathbb{N}\cup \{ 0 \}$, $t \in [\tau, \tau+\Delta_1]$, $x \in B(x',\frac{\Delta_2}{2^n}) $, 
\begin{equation}\label{}
     f(t,x,v) \geq \alpha_n(\tau,t,\Delta_2,A,|v'|)\mathbf{1}_{ B(v',r_n(\Delta_2))}(v), \,\forall \, v \in \mathbb{R}^3,
\end{equation}
where the numbers $\{r_n(\Delta_2)\}_{n=0}^{\infty} \in \mathbb{R}$, $\{t_n(t,\Delta_2,|v'|)\}_{n=0}^{\infty}\in \mathbb{R}$ and $\{\alpha_n(\tau, t,\Delta_2,A,|v'|)\}_{n=1}^{\infty}\in \mathbb{R}$ are defined as below:
\begin{align*}
&r_0(\Delta_2):=\Delta_2,\ r_{n+1}(\Delta_2):=\frac{3\sqrt{2}}{4}r_{n}(\Delta_2),\\    
&t_n(\tau,t,\Delta_2,|v'|):=\max\left\{ \tau, t-\frac{\Delta_2}{2^{n+1}(2r_n(\Delta_2)+|v'|)} \right\},\\
 &   \alpha_0:=A,
\end{align*}
\begin{equation}\label{alpha second line}
\begin{split}
&\alpha_{n+1}(\tau,t,\Delta_2,A,|v'|)\\
:=&\frac{C_Q}{2}r_n^{3+\gamma}(\Delta_2)\int_{t_n(\tau,t,\Delta_2,|v'|)}^{t}e^{-(t-s)C_L\left\langle2r_n(\Delta_2)+|v'|\right\rangle^{\gamma^{+}}}\alpha_n(\tau,s,\Delta_2,A,|v'|)^2ds\\
=&\frac{C_Q}{2}r_n^{3+\gamma}(\Delta_2)\int_{0}^{t-t_n(\tau,t,\Delta_2,|v'|)}e^{-uC_L\left\langle2r_n(\Delta_2)+|v'|\right\rangle^{\gamma^{+}}}\alpha_n(\tau,t-u,\Delta_2,A,|v'|)^2du.
\end{split}
\end{equation}
    
\end{proposition}
\begin{proof}
     The following proof is also a summary of the proof of \textbf{Lemma 3.3} in \cite{Bri1}. 
We prove the proposition by induction on $n$. The case $n=0$ is exactly the assumption.  Assume that \textbf{Proposition \ref{ini to multi}} holds for $n=k$. Given $t \in [\tau,\tau+\Delta_1]$, $x \in B(x', \frac{\Delta_2}{2^{k+1}})$, $v \in B(0,|v'|+2r_{k}(\Delta_2))$, we first notice that when $s \in \left[\max\left\{ \tau, t-\frac{\Delta_2}{2^{k+1}(|v'|+2r_k(\Delta_2))} \right\},t\right]$, we have
\begin{equation*}
    |x'-X_{s,t}(x,v)|=|x'-x+tv-sv|\leq \frac{\Delta_2}{2^{k+1}}+|t-s|(|v'|+2r_k(\Delta_2)) \leq  \frac{\Delta_2}{2^{k}},
\end{equation*}
which implies that $X_{s,t}(x,v) \in B(x', \frac{\Delta_2}{2^k}) \subset \Omega$.

We consider the second term of the right hand side of \eqref{Duhamel formula} to obtain the following lower bound:

\begin{small}
\begin{equation}
    \begin{split}
        &f(t,x,v) \\
         \geq &\int_{\tau}^t \exp\left(-\int_s^t L[f(s',X_{s',t}(x,v),\cdot)](v))\,ds'\right)Q^+[f(s,X_{s,t}(x,v),\cdot),f(s,X_{s,t}(x,v),\cdot)](v)\,ds.
    \end{split}
\end{equation}
\end{small}

Furthermore, by \textbf{Lemma \ref{up bound of L lemma}}, we have

\begin{equation}
|L[f(s',X_{s',t}(x,v),\cdot)](v)| \leq C_L\langle  v \rangle^{\gamma^+} \leq C_L\left\langle |v'|+2r_k(\Delta_2) \right\rangle^{\gamma^+},
\end{equation}
where the constant $C_L$ depends only on $n_b$,$C_{\Phi}$ and $E_f$  (and $L_{f,p}$ if $\gamma<0$).

Hence, the following estimate:

\begin{small}
\begin{equation}
    \begin{split}
        &f(t,x,v) \\
         \geq &\int_{\tau}^t \exp\left(-(t-s)C_L\left\langle |v'|+2r_k(\Delta_2) \right\rangle^{\gamma^+}\right)Q^+[f(s,X_{s,t}(x,v),\cdot),f(s,X_{s,t}(x,v),\cdot)](v)\,ds.
    \end{split}
\end{equation}
\end{small}

Since we have $|X_{s,t}(x,v)-x'| \leq \frac{\Delta_2}{2^{k}}$, we can use the induction hypothesis:

\begin{equation}
    f(s,X_{s,t}(x,v),w) \geq \alpha_k(\tau,s,\Delta_2,A,|v'|)1_{ B(v',r_k(\Delta_2))}(w) ,\ \forall w \in \mathbb{R}^3 
\end{equation}
to deduce that

\begin{equation}
    \begin{split}
        &f(t,x,v) \\
         \geq& \int_{\max\left\{ \tau, t-\frac{\Delta_2}{2^{k+1}(2r_k(\Delta_2)+|v'|)} \right\}}^t \exp\left(-(t-s)C_L\left\langle |v'|+2r_k(\Delta_2)\right\rangle^{\gamma^+}\right)\\
        &\alpha^2_k(\tau,s,\Delta_2,A,|v'|)Q^+[1_{B(v',r_k(\Delta_2))}(\cdot),1_{B(v',r_k(\Delta_2))}(\cdot)](v)\,ds.
    \end{split}
\end{equation}

Then we use \textbf{Lemma \ref{spreading property lemma}} to "spread" the lower bound:

For any $\xi \in (0,1)$ and $w \in \mathbb{R}^3$
\begin{equation}
    Q^+\left[1_{B(v',r_k(\Delta_2))},1_{B(v',r_k(\Delta_2))}\right](w)\geq C_Q (r_k(\Delta_2))^{3+\gamma}\xi^{\frac{1}{2}}1_{B(v',r_k(\Delta_2)\sqrt{2}(1-\xi))}(w).
\end{equation}
Hence, for any $\xi \in (0,1)$ 
\begin{equation}
    \begin{split}
        &f(t,x,v) \\
         \geq& \int_{\max\left\{ \tau, t-\frac{\Delta_2}{2^{k+1}(2r_k(\Delta_2)+|v'|)} \right\}}^t \exp\left(-(t-s)C_L\left\langle |v'|+2r_k(\Delta_2) \right\rangle^{\gamma^+}\right)\\
        &\alpha^2_k(\tau, s,\Delta_2,A,|v'|)C_Q (r_k(\Delta_2))^{3+\gamma}\xi^{\frac{1}{2}}1_{B(v',r_k(\Delta_2)\sqrt{2}(1-\xi))}(v)\,ds.
    \end{split}
\end{equation}

Set $\xi =\frac{1}{4}$ and notice that $B(v',r_{k+1}(\Delta_2))) \subset B(0,|v'|+2r_{k}(\Delta_2))$. We conclude thus the proof of \textbf{Proposition \ref{ini to multi}}.

\end{proof}
\begin{remark}\label{alpha monotone thing}
    Notice that by the last line of \eqref{alpha second line} and the fact that for any $\tau < t < t'< \infty$
    \begin{align*}
        &t-t_n(\tau,t,\Delta_2,|v'|)\leq t'-t_n(\tau,t',\Delta_2,|v'|).
    \end{align*}
    Hence, we notice that by induction on $n$, we can show that when $\tau<t$, $\alpha_{n+1}(\tau,t,\Delta_2,A,|v'|)$ is strictly decreasing with respect to $|v'|$ and strictly increasing with respect to $t$. 
\end{remark}

Now, we show that we can generate lower bounds starting from one point $x' \in \Omega$ and reaching another point $y \in \Omega$, as long as $\overline{x'y} \subset \Omega$. Without loss of generality, we may assume that $\Delta_2 \leq 2$.

\begin{proposition} \label{translation proposition}
   Let $\Omega \subset \mathbb{R}^3$ satisfy \textbf{Assumption A} and let the collision kernel $B$ satisfy \textbf{Assumption B} with $\nu<0$, $\alpha \in [0,1]$. We consider a continuous mild solution $f$ of \eqref{Boltzmann equation}--\eqref{boundary condition}. Suppose that there exists $A > 0$, $\Delta_1, \Delta_2 \in (0,1]$, $(\tau, x' ,v') \in [0, T) \times \Omega \times \mathbb{R}^3$ with $B(x',\Delta_2) \subset \Omega$ such that

    \begin{equation*}
        f(t,x,v) \geq A,\ \forall (t,x,v) \in [\tau, \tau+\Delta_1] \times B(x',\Delta_2) \times B(v', \Delta_2).
    \end{equation*}
Then, given $\tau' \in (\tau,\tau+\Delta_1)$, $y \in \Omega$ with $\overline{x'y} \subset \Omega$, we have that $B\left(y, \frac{\Delta_2}{2^{m+1}}\right) \subset \Omega$. Moreover, for all $x \in \Omega$, $v \in \mathbb{R}^3$
\begin{align}\label{P2.3 main eq}
    f(t,x,v) \geq \mathbb{B}'(m,\tau,\tau',\Delta_2,A,|v'|) \mathbf{1}_{B\left(y,\frac{\Delta_2}{2^{m+1}}\right) \times B\left(\mathbb{V}'(\tau,\tau',y,x'), \frac{\Delta_2}{2^{m+1}}\right)}(x,v)
\end{align}
for any $m \geq m'(\tau,\tau',\Delta_2,|v'|,d(\overline{x'y},\partial \Omega))$ and $\tau' \leq t \leq \min\{ \tau+\Delta_1,\tau' +\mathbb{D}'(m,\tau,\tau',\Delta_2)\}$.

Here, the functions $m'(\tau,\tau', \Delta_2,\mathfrak{y}_1,\mathfrak{y}_2)$, $\mathbb{B}'(m,\tau,\tau',\Delta_2,A,\mathfrak{y}_3)$, $\mathbb{V}'(\tau,\tau',y,x')$, and $\mathbb{D}'(m,\tau,\tau',\Delta_2)$ are defined as

\begin{align*}
    &m'(\tau,\tau',\Delta_2,\mathfrak{y}_1,\mathfrak{y}_2):=\max\left\{1,\left\lceil \log_{\frac{3\sqrt{2}}{4}}\left(\frac{\frac{2d_{\Omega}}{\tau'-\tau}+\mathfrak{a}_1+1}{\Delta_2}\right)\right\rceil 
,\left\lceil \log_{2}{\frac{\Delta_2+1}{\mathfrak{y}_2}} \right\rceil \right\},\\
    &\mathbb{B}'(m,\tau,\tau',\Delta_2,A,\mathfrak{y}_3):= \alpha_{m}\left(\tau,\frac{\tau'+\tau}{2},\Delta_2, A,\mathfrak{y}_3\right) e^{-C_L\left \langle \frac{2d_{\Omega}}{\tau'-\tau}+1\right \rangle ^{\gamma^+}},\\
&\mathbb{V}'(\tau,\tau',y,x'):=\frac{2(y-x')}{\tau'-\tau},\\
&\mathbb{D}'(m,\tau,\tau',\Delta_2):=\frac{\Delta_2}{2^{m+1}\left(\frac{2d_{\Omega}}{\tau'-\tau}+1\right)}=\frac{\Delta_2(\tau'-\tau)}{2^{m+1}(2d_{\Omega}+\tau'-\tau)}.
\end{align*}
Here, $C_L$ is defined in \textbf{Definition \ref{2025 10/03 03:50}}, $\alpha_{m}\left(\tau,\frac{\tau'+\tau}{2},\Delta_2, A,\mathfrak{y}_3\right)$ is as defined in \textbf{Proposition \ref{ini to multi}}, and we denote $d_{\Omega}:=\diam(\Omega)$ as the diameter of $\Omega$.

\end{proposition}
 
\begin{proof}

For $0\leq\tau<\tau'< \tau+\Delta_1$, we define $v'':=\frac{2(y-x')}{\tau'-\tau}$ and take $m \in \mathbb{N} \cup \{ 0 \}$ with $$m \geq \max \left \{1,  \left\lceil \log_{\frac{3\sqrt{2}}{4}}\left(\frac{\frac{2d_{\Omega}}{\tau'-\tau}+|v'|+1}{\Delta_2}\right)\right\rceil 
, \left\lceil \log_{2}{\frac{\Delta_2+1}{d(\overline{x'y},\partial \Omega)}} \right\rceil \right \},$$ then we see that $$r_m(\Delta_2):=(\frac{3\sqrt{2}}{4})^m\Delta_2 \geq \frac{2d_{\Omega}}{\tau'-\tau}+|v'|+1 $$ and $ \frac{\Delta_2}{2^m} <d(\overline{x'y},\partial\Omega)$. Here, the number $r_m(\Delta_2)$ is as defined in \textbf{Proposition \ref{ini to multi}} and we notice that $\frac{3\sqrt{2}}{4}>1$. 
Observe that we have
\begin{align*}
\frac{\Delta_2}{2^{m+1}} \leq  \frac{1}{2}d(\overline{x'y},\partial \Omega)<d(y,\partial \Omega),
\end{align*}
so that $B\left(y, \frac{\Delta_2}{2^{m+1}}\right) \subset \Omega$.

Now, we prove the lower bound \eqref{P2.3 main eq}.
As long as $x \in B\left(y,\frac{\Delta_2}{2^{m+1}}\right)$ and $v \in B\left(v'', \min \left\{ \frac{\Delta_2}{2^{m+1}(t-\tau)},1 \right\}\right)$, for $$\tau' \leq t \leq \min \left\{ \tau+\Delta_1, \tau'+   \frac{\Delta_2}{2^{m+1}\left(\frac{2d_{\Omega}}{\tau'-\tau}+1\right)}\right\},$$ we have

\begin{equation*}
\begin{split}
       & \left|x-\frac{t-\tau}{2}v-x'\right| \\
    \leq& |x-y|+\left|\frac{t-\tau}{2}v-(y-x')\right| < \frac{\Delta_2}{2^{m+1}}+\left|\frac{t-\tau}{2}v-\frac{\tau'-\tau}{2}v''\right|\\
    \leq& \frac{\Delta_2}{2^{m+1}}+ \left|\frac{t-\tau}{2}v-\frac{\tau'-\tau}{2}v\right|+ \left|\frac{\tau'-\tau}{2}v-\frac{\tau'-\tau}{2}v''\right|\\
    < &\frac{\Delta_2}{2^{m+1}}+ \left|\frac{t-\tau'}{2}v\right|+ \frac{\Delta_2}{2^{m+2}}\\
    < &\frac{\Delta_2}{2^{m+1}}+\frac{\Delta_2}{2^{m+2}}+ \frac{\Delta_2}{2^{m+2}}\\
   = &\frac{\Delta_2}{2^{m}},
\end{split}
\end{equation*}
by which we deduce that
$x-\frac{t-\tau}{2}v\in B\left(x',\frac{\Delta_2}{2^m}\right)$.
Notice that we have 
\begin{equation*}
r_m(\Delta_2) \geq |v'| +\frac{2d_{\Omega}}{\tau'-\tau}+1\geq |v'|+|v''|+1 > |v'|+|v|,
\end{equation*}
so we have $v \in B(v', r_m(\Delta_2))$.

Note that for any $0 \leq s \leq 1$, we have 
\begin{equation*}
\begin{split}
    &|sx+(1-s)\left(x-\frac{t-\tau}{2}v\right)-(sy+(1-s)x')|\\
     \leq& s|x-y|+(1-s)\left|x-\frac{t-\tau}{2}v-x'\right|\\
     \leq& \frac{\Delta_2}{2^{m}} < d(\overline{x'y},\partial\Omega).
\end{split}
\end{equation*}

Hence, the segment $\{ x-\sigma v|\ 0 \leq \sigma \leq \frac{t-\tau}{2} \}$ also lies within $\Omega$. From this, we deduce by using the first term of the right hand side of \eqref{Duhamel formula},   \textbf{Lemma \ref{up bound of L lemma}}, and \textbf{Proposition \ref{ini to multi}} that
\begin{equation}\label{Long repeat 2}
\begin{split}
f(t,x,v)&\geq f\left (\frac{t+\tau}{2},x-\frac{t-\tau}{2}v,v\right )e^{-\frac{t-\tau}{2}C_L\left \langle v\right \rangle ^{\gamma^+}},\\
&\geq \alpha_m\left(\tau,\frac{t+\tau}{2},\Delta_2, A,|v'|\right)\mathbf{1}_{B\left(x',\frac{\Delta_2}{2^m}\right) \times B(v',r_m(\Delta_2))}(x-\frac{t-\tau}{2}v,v) e^{-\frac{t-\tau}{2}C_L\left \langle v\right \rangle ^{\gamma^+}}\\
&= \alpha_m\left(\tau,\frac{t+\tau}{2},\Delta_2, A,|v'|\right) e^{-\frac{t-\tau}{2}C_L\left \langle  
v \right \rangle ^{\gamma^+}},
\end{split}
\end{equation}
where we use the fact that
$x-\frac{t-\tau}{2}v\in B(x',\frac{\Delta_2}{2^m})$ and $v \in B(v', r_m(\Delta_2))$.
Hence, we have the following inequality for $\tau' \leq t \leq \min\bigg\{\tau+\Delta_1, \tau' +\frac{\Delta_2}{2^{m+1}(\frac{2d_{\Omega}}{\tau'-\tau}+1)}\bigg\}$, $x \in \Omega$, $v \in \mathbb{R}^3$ :

\begin{align}
    &\nonumber f(t,x,v) \\ 
    \nonumber  \geq &\alpha_m\left(\tau,\frac{t+\tau}{2},\Delta_2, A,|v'|\right) e^{-\frac{t-\tau}{2}C_L\left \langle v\right \rangle ^{\gamma^+}} \mathbf{1}_{B\left(y,\frac{\Delta_2}{2^{m+1}}\right) \times B\left(v'', \min \left\{ \frac{\Delta_2}{2^{m+1}(t-\tau)},1\right\} \right)}(x,v)\\ 
      \label{SSS} \geq &\alpha_m\left(\tau,\frac{\tau'+\tau}{2},\Delta_2, A,|v'|\right) e^{-C_L\left \langle \frac{2d_{\Omega}}{\tau'-\tau}+1\right \rangle ^{\gamma^+}} \mathbf{1}_{B\left(y,\frac{\Delta_2}{2^{m+1}}\right) \times B\left(v'', \frac{\Delta_2}{2^{m+1}}\right)}(x,v).   
\end{align}
Here, we use \textbf{Remark \ref{alpha monotone thing}} in the last inequality above.
The line \eqref{SSS} corresponds to \eqref{P2.3 main eq}. Thus, we have shown the statement of \textbf{Proposition \ref{translation proposition}}.

\end{proof}

The following corollary can be derived by modifying the starting time $\tau'$ in \textbf{Proposition \ref{translation proposition}}:

\begin{corollary} \label{redoable translation proposition}
    Let $\Omega \subset \mathbb{R}^3$ satisfy \textbf{Assumption A} and let the collision kernel $B$ satisfy \textbf{Assumption B} with $\nu<0$, $\alpha \in [0,1]$. We consider a continuous mild solution $f$ of \eqref{Boltzmann equation}--\eqref{boundary condition}. Suppose that there exists $A > 0$, $\Delta_1, \Delta_1',  \Delta_2 \in (0,\frac{1}{2}]$, $(\tau, x' ,v') \in [0, T) \times \Omega \times \mathbb{R}^3$ with $B(x',\Delta_2) \subset \Omega$ such that
    \begin{equation}\label{CORO2.4 LOW BOUND}
        f(t,x,v) \geq A,\ \forall (t,x,v) \in [\tau, \tau+\Delta_1+\Delta_1'] \times B(x',\Delta_2) \times B(v', \Delta_2).
    \end{equation}
Then, given $y \in \Omega$ such that $\overline{x'y} \subset \Omega$, we have $B(y,\frac{\Delta_2}{2^{m+1}}) \subset \Omega$. Furthermore, for any $x \in \Omega, v \in \mathbb{R}^3$
\begin{align*}
    f(t,x,v) \geq \mathbb{B}(m,\tau,\Delta_1,\Delta_2,A,|v'|) \mathbf{1}_{B(y,\frac{\Delta_2}{2^{m+1}}) \times B\left(\mathbb{V}(\Delta_1,\Delta_2,\tau,m,y,x'), \frac{\Delta_2}{2^{m+1}}\right)}(x,v)
\end{align*}
for any $m \geq \mathbb{M}(\Delta_1,\Delta_2,|v'|,d(\overline{x'y},\partial \Omega))$ and $\mathbb{T}(\Delta_1,\Delta_2,\tau,m) \leq t \leq \min \{\tau+\Delta_1+\Delta_1' ,\tau+ \Delta_1+\mathbb{D}(\Delta_1,\Delta_2,m)\}$.

Here, the functions $\mathbb{M}(\Delta_1,\Delta_2,\mathfrak{y}_1,\mathfrak{y}_2)$, $\mathbb{B}(m,\tau,\tau',\Delta_2,A,\mathfrak{y}_3)$, $\mathbb{D}(\Delta_1,\Delta_2,m)$, $\mathbb{T}(\Delta_1,\Delta_2,\tau,m)$, and $\mathbb{V}(\Delta_1,\Delta_2,\tau,m,y,x')$ are defined as

\begin{align*}
    &\mathbb{M}(\Delta_1,\Delta_2,\mathfrak{y}_1,\mathfrak{y}_2):=\max\left\{1,\left\lceil \log_{\frac{3\sqrt{2}}{4}}\left(\frac{\frac{4d_{\Omega}}{\Delta_1}+\mathfrak{y}_1+1}{\Delta_2}\right)\right\rceil 
,\left\lceil \log_{2}{\frac{\Delta_2+1}{\mathfrak{y}_2}} \right\rceil  \right\},\\
    &\mathbb{B}(m,\tau,\Delta_1,\Delta_2,A,\mathfrak{y}_3):= \alpha_m\left(\tau,\tau+\frac{\Delta_1}{4},\Delta_2, A,\mathfrak{y}_3\right) e^{-C_L\left \langle \frac{4d_{\Omega}}{\Delta_1}+1\right \rangle ^{\gamma^+}},\\
&\mathbb{D}(\Delta_1,\Delta_2,m):=\frac{\Delta_1\Delta_2}{2^{m+2}(4d_{\Omega}+\Delta_1)},\\
&\mathbb{T}(\Delta_1,\Delta_2,\tau,m):=\max\left\{\tau+\frac{\Delta_1}{2},\Delta_1+\tau-\frac{\Delta_1\Delta_2}{2^{m+2}(4d_{\Omega}+\Delta_1)} \right\},\\
&\mathbb{V}(\Delta_1,\Delta_2,\tau,m,y,x'):=\frac{2(y-x')}{\mathbb{T}(\Delta_1,\Delta_2,\tau,m)-\tau}=\frac{2(y-x')}{\max\left\{\frac{\Delta_1}{2},\Delta_1-\frac{\Delta_1\Delta_2}{2^{m+2}(4d_{\Omega}+\Delta_1)} \right\}}.
\end{align*}

\end{corollary}

\begin{remark}
    By \textbf{Remark \ref{alpha monotone thing}}, we notice that 
    \begin{enumerate}
    \item $\mathbb{B}(m,\tau,\Delta_1,\Delta_2,A,\mathfrak{y}_3)$ is strictly decreasing in $\mathfrak{y}_3$. 
    \item  $\mathbb{M}(\Delta_1,\Delta_2,\mathfrak{y}_1,\mathfrak{y}_2)$ is increasing in $\mathfrak{y}_1$ and decreasing in $\mathfrak{y}_2$.
    \item  $\mathbb{T}(\Delta_1,\Delta_2,\tau,m)$ is increasing in $m$.
    \item  $|\mathbb{V}(\Delta_1,\Delta_2,\tau,m,y,x')|$ is decreasing in $m$ with the upper bound $\frac{4d_{\Omega}}{\Delta_1}$. 
    \end{enumerate}
\end{remark}

Now, we will use the multiple lower bounds around the initial point $x_I$ from \textbf{Proposition \ref{initial point lower bound on a single point}} to derive lower bounds around the points $\{ y^0_i \}$. 

\begin{proposition} \label{initial point lower bound near boundary}
 Let $\Omega \subset \mathbb{R}^3$ satisfy \textbf{Assumption A} and let the collision kernel $B$ satisfy \textbf{Assumption B} with $\nu<0$. Fix $d$ with $0< d <\min\{1,\delta\}$ as in \textbf{Lemma \ref{initial cover over boundary}} and a collection of points $\{ y_i^0 \}_{i=1}^{m_1+m_2} \subset \Omega$ defined in \textbf{Lemmas \ref{initial cover over boundary} and \ref{initial cover general}}.
 Let $x_I$ be as defined in \textbf{Proposition \ref{initial point lower bound on a single point}}. 

Suppose that, for fixed $\alpha \in [0,1]$, there exists a continuous mild solution to \eqref{Boltzmann equation}--\eqref{boundary condition}.
 Then, for any $\tau'' \in (0,\Delta^0)$, with $\Delta^0 \in (0,\min\{\frac{d}{10}, \frac{1}{2}d(x_I,\partial\Omega)\}]$ as in \textbf{Proposition \ref{initial point lower bound on a single point}}, there exist positive numbers $\mathfrak{R}(\tau'')$, $\mathfrak{B}(\tau'')$, and $\mathfrak{d}(\tau'')$, depending on $\tau''$, $\Omega$, $M$, and $E_f$ (and $L_{f,p}$ if $\gamma <0$), and a family of velocity vectors $$\{ v_i(\tau'') \}_{i=1}^{m_1+m_2} \subset B(0,\mathfrak{R}(\tau''))$$ 
such that $$\bigcup_{i=1}^{m_1+m_2}B\left(y_i^0,\mathfrak{d}(\tau'') \right) \subset \Omega.$$ Moreover, for each $1\leq i \leq m_1+m_2$, we have

\begin{equation}\label{Prop 2.2 eq}
    f(t,x,v) \geq \ \mathfrak{B}(\tau'')\mathbf{1}_{ B\left(y_i^0 ,\mathfrak{d}(\tau'') \right) \times B\left(v_i(\tau''),\mathfrak{d}(\tau'')\right)}(x,v), 
\end{equation}
for all $$\tau''-\mathfrak{d}(\tau'') \leq t \leq \tau''+\mathfrak{d}(\tau'').$$
Here, we use the same radius $\mathfrak{d}(\tau'')$ for the space and velocity balls and for the time window.
\end{proposition}

\begin{proof}

First, by \textbf{Proposition \ref{initial point lower bound on a single point}}, we have
\begin{equation}
    \forall v \in \mathbb{R}^3,\ f(t,x,v) \geq \alpha'_0 \mathbf{1}_{ B(v_I,\Delta^0)}(v),
\end{equation}
 for all $t \in [0,\Delta^0]$, $x \in B(x_I,\Delta^0)$. Here, the quantities $\alpha'_0$, $x_I$, $v_I$, $\Delta^0$ are defined in \textbf{Proposition \ref{initial point lower bound on a single point}}. Note that $B(x_I,\Delta^0) \subset \Omega$.
Consider a good zigzag $\{x_I,y_1,y_2,...,y_{\operatorname{Conn}_d(x_I,y_i^0)}=y_i^0\}$ in $\Omega$ (cf. \textbf{Lemma \ref{zigzag lemma}}).

Given $0<\tau''< \Delta^0$, for any $j \in \mathbb{N}$, and a $j$-tuple $$\mathfrak{M}_j:=(\mathfrak{m}_1,\mathfrak{m}_2,...,\mathfrak{m}_j) \in \mathbb{N}^j,$$ we define $ \mathbb{B}_{j}(\mathfrak{M}_j,\tau'',\Delta^0,\alpha'_0,|v_I|)$, $\mathbb{T}_{j}(\mathfrak{M}_j,\tau'',\Delta^0)$, $\mathbb{D}_{j}(\mathfrak{M}_j,\tau'',\Delta^0)$, $\mathbb{V}_{j}(\mathfrak{M}_j,\tau'',\Delta^0)$, $\mathbb{U}_{j}(\mathfrak{M}_j,\tau'',\Delta^0)$,
$\mathfrak{n}_{j}$ as
\begin{align*}
    &\mathbb{T}_{1}(\mathfrak{M}_1,\tau'',\Delta^0):=\mathbb{T}(\tau'',\Delta^0,0,\mathfrak{m}_1),\\ 
&\mathbb{D}_1(\mathfrak{M}_1,\tau'',\Delta^0):=\mathbb{D}(\tau'',\Delta^0,\mathfrak{m}_1),\\
&\mathbb{V}_1(\mathfrak{M}_1,\tau'',\Delta^0):=\mathbb{V}(\tau'',\Delta^0,0,\mathfrak{m}_1,y_1,x_I),\\
    &\mathbb{B}_{1}(\mathfrak{M}_1,\tau'',\Delta^0,\alpha'_0,|v_I|):=\mathbb{B}(\mathfrak{m}_1,0,\tau'',\Delta^0,\alpha'_0,|v_I|),\\
   & \mathbb{U}_{1}:=\Delta^0.
\end{align*}
\begin{align*}   
&\mathbb{T}_{j+1}(\mathfrak{M}_{j+1},\tau'',\Delta^0):=\mathbb{T}\left(\tau''-\mathbb{T}_{j}(\mathfrak{M}_j,\tau'',\Delta^0),\frac{\Delta^0}{2^{\sum_{k=1}^{j}(\mathfrak{m}_k+1)}},\mathbb{T}_{j}(\mathfrak{M}_j,\tau'',\Delta^0),\mathfrak{m}_{j+1}\right),\\
&
\mathbb{D}_{j+1}(\mathfrak{M}_{j+1},\tau'',\Delta^0)=\mathbb{D}\left(\tau''-\mathbb{T}_{j}(\mathfrak{M}_j,\tau'',\Delta^0),\frac{\Delta^0}{2^{\sum_{k=1}^{j}(\mathfrak{m}_k+1)}},\mathfrak{m}_{j+1}\right),\\
    &\mathbb{V}_{j+1}(\mathfrak{M}_{j+1},\tau'',\Delta^0):=\mathbb{V}\left(\tau''-\mathbb{T}_j(\mathfrak{M}_j,\tau'',\Delta^0),\frac{\Delta^0}{2^{\sum_{k=1}^{j}(\mathfrak{m}_k+1)}},\mathbb{T}_{j}(\mathfrak{M}_j,\tau'',\Delta^0),\mathfrak{m}_{j+1},y_{j+1},y_j\right)\\
    &\hspace{3.3cm}=\frac{2(y_{j+1}-y_j)}{\mathbb{T}_{j+1}(\mathfrak{M}_{j+1},\tau'',\Delta^0)-\mathbb{T}_{j}(\mathfrak{M}_j,\tau'',\Delta^0)},\\
   &\mathbb{B}_{j+1}(\mathfrak{M}_{j+1},\tau'',\Delta^0,\alpha'_0,|v_I|)\\
   &\hspace{1cm}:=\mathbb{B}\Bigg(\mathfrak{m}_{j+1},\mathbb{T}_j(\mathfrak{M}_j,\tau'',\Delta^0),\tau''-\mathbb{T}_j(\mathfrak{M}_j,\tau'',\Delta^0),\frac{\Delta^0}{2^{\sum_{k=1}^{j}(\mathfrak{m}_k+1)}},\\
&\hspace{2.2cm}\mathbb{B}_j(\mathfrak{M}_j,\tau'',\Delta^0,\alpha'_0,|v_I|),\frac{2d_{\Omega}}{\mathbb{T}_{j}(\mathfrak{M}_{j},\tau'',\Delta^0)-\mathbb{T}_{j-1}(\mathfrak{M}_{j-1},\tau'',\Delta^0)}\Bigg),\\
&\mathbb{U}_{j+1}(\mathfrak{M}_{j+1},\tau'',\Delta^0):=\min\{\mathbb{U}_{j}(\mathfrak{M}_j,\tau'',\Delta^0),\tau'' +\mathbb{D}_{j}(\mathfrak{M}_j,\tau'',\Delta^0)\}.
\end{align*}
Here, the notation $\mathbb{B}$, $\mathbb{T}$, $\mathbb{D}$, $\mathbb{V}$ are defined in
\textbf{Corollary \ref{redoable translation proposition}}. We also define $\mathbb{T}_{0}:=0$ by convention. By \textbf{Remark \ref{ineq TDU}}, we have $\mathbb{T}_j(\mathfrak{M}_j,\tau'',\Delta^0)< \tau''$ and that all denominators $\mathbb{T}_j(\mathfrak{M}_j,\tau'',\Delta^0)-\mathbb{T}_{j-1}(\mathfrak{M}_j-1,\tau'',\Delta^0)$ are positive.
We also define 
\begin{align*}
 &\mathfrak{n}_{1} := \max\left\{1,\left\lceil \log_{\frac{3\sqrt{2}}{4}}\left(\frac{\frac{4d_{\Omega}}{\tau''}+|v_I|+1}{\Delta^0}\right)\right\rceil 
,\left\lceil \log_{2}{\frac{\Delta^0+1}{\min\{d(x_I,\partial\Omega),\frac{d}{10}\}}} \right\rceil  \right\}  ,\\
&\mathfrak{N}_j(m):=(\mathfrak{n}_1,\mathfrak{n}_2,...,\mathfrak{n}_{j-1},m), \,\mathfrak{N}_j:=\mathfrak{N}_j(\mathfrak{n}_j),\\
&\mathfrak{n}_{j+1} := \max\Bigg\{1,\left\lceil \log_{\frac{3\sqrt{2}}{4}}\left(\frac{\frac{4d_{\Omega}}{\tau''-\mathbb{T}_{j}(\mathfrak{N}_j,\tau'',\Delta^0)}+\frac{2d_{\Omega}}{\mathbb{T}_{j}(\mathfrak{N}_j,\tau'',\Delta^0)-\mathbb{T}_{j-1}(\mathfrak{N}_{j-1},\tau'',\Delta^0)}+1}{\frac{\Delta^0}{2^{\sum_{k=1}^{j}(\mathfrak{n}_k+1)}}}\right)\right\rceil 
,\\
&\hspace{2.4cm}\left\lceil \log_{2}{\frac{\left(\frac{\Delta^0}{2^{\sum_{k=1}^{j}(\mathfrak{n}_k+1)}}+1\right)}{\frac{d}{10}}} \right\rceil  \Bigg\}.
\end{align*}
  
Now, we claim that for $1\leq j\leq \operatorname{Conn}_d(x_I,y_i^0)$, we have
\begin{equation}
\begin{split}
    &f(t,x,v) \geq \ \mathbb{B}_{j}(\mathfrak{N}_j(m),\tau'',\Delta^0,\alpha'_0,|v_I|)\\
    & \hspace{2cm} \mathbf{1}_{B\left(y_{j},\frac{\Delta^0}{2^{m+1+\sum_{k=1}^{j-1}(\mathfrak{n}_k+1)}}\right) \times B\left(\mathbb{V}_{j}(\mathfrak{N}_j(m),\tau'',\Delta^0), \frac{\Delta^0}{2^{m+1+\sum_{k=1}^{j-1}(\mathfrak{n}_k+1)}}\right)}(x,v),
\end{split}
\end{equation}
for any 
\begin{equation*} 
\begin{split}
    &\mathbb{T}_{j}(\mathfrak{N}_j(m),\tau'',\Delta^0)\leq t \\ \leq
    &\min\{\mathbb{U}_{j}(\mathfrak{N}_j(m),\tau'',\Delta^0),\tau'' +\mathbb{D}_{j}(\mathfrak{N}_j(m),\tau'',\Delta^0)\}
\end{split}
\end{equation*}
and $m \geq \mathfrak{n}_j$. Here, $\alpha'_0$ is the constant defined in \textbf{Proposition \ref{initial point lower bound on a single point}}. 
\medskip

This will be proved by induction on $j$.

\medskip\paragraph{\textbf{Step 1: base case: propagation of the lower bound from $x_I$ to $y_1$.}}

\

For the case $j=1$, we  consider the \textbf{Corollary \ref{redoable translation proposition}} with $A=\alpha'_0$, $\tau=0$,  $\Delta_1=\tau''$, $\Delta_1'=\Delta^0-\tau''$, $x'=x_I$, $y=y_1$, $v'=v_I$, and $\Delta_2=\Delta^0$. Notice that all the assumptions of \textbf{Corollary \ref{redoable translation proposition}} are satisfied since 
 $B(x_I,\Delta^0) \subset B(x_I, \frac{1}{2}d(x_I,\partial \Omega)) \subset \Omega$ and $\overline{x_Iy_1} \in \Omega$ and the lower bound \eqref{CORO2.4 LOW BOUND} holds. Thus we derive a lower bound at $y_1$ from $x_I$ by using \textbf{Corollary \ref{redoable translation proposition}}:
\begin{equation*}
      f(t,x,v) \geq \mathbb{B}(m,0,\tau'',\Delta^0,\alpha'_0,|v_I|) \mathbf{1}_{B\left(y_1,\frac{\Delta^0}{2^{m+1}}\right) \times B\left(\mathbb{V}(\tau'',\Delta^0,0,m,y_1,x_I), \frac{\Delta^0}{2^{m+1}}\right)}(x,v)
\end{equation*}
for any $\mathbb{T}(\tau'',\Delta^0,0,m) \leq t \leq \min\big\{ \Delta^0,\tau'' +\mathbb{D}(\tau'',\Delta^0,m)\big\}$ 

\noindent and $m  \geq  \mathbb{M}(\tau'',\Delta^0,|v_I|,d(\overline{x_Iy_1},\partial \Omega))$.
Notice that 
\begin{align*}
   &\mathbb{M}(\tau'',\Delta^0,|v_I|,d(\overline{x_Iy_1},\partial \Omega))\\
   =& \max\left\{1,\left\lceil \log_{\frac{3\sqrt{2}}{4}}\left(\frac{\frac{4d_{\Omega}}{\tau''}+|v_I|+1}{\Delta^0}\right)\right\rceil 
,\left\lceil \log_{2}{\frac{\Delta^0+1}{d(\overline{x_Iy_1},\partial \Omega)}} \right\rceil  \right\}\\
\leq& \max\left\{1,\left\lceil \log_{\frac{3\sqrt{2}}{4}}\left(\frac{\frac{4d_{\Omega}}{\tau''}+|v_I|+1}{\Delta^0}\right)\right\rceil 
,\left\lceil \log_{2}{\frac{\Delta^0+1}{\min\{d(x_I,\partial\Omega),\frac{d}{10}\}}} \right\rceil  \right\}\\
=&\mathfrak{n}_1,
\end{align*}
which implies that as long as $m \geq \mathfrak{n}_1$, we have $m \geq \mathbb{M}(\tau'',\Delta^0,|v_I|,d(\overline{x_Iy_1},\partial \Omega))$, and this finishes the case for $j=1$.

\medskip\paragraph{\textbf{Step 2: Induction: propagation of the lower bound from $y_{k}$ to $y_{k+1}$.}}

\

Now, suppose that the case $j=k$ holds, i.e.,
\begin{equation}
\begin{split}
    &f(t,x,v) \geq \ \mathbb{B}_{k}(\mathfrak{N}_k(\tilde{m}),\tau'',\Delta^0,\alpha'_0,|v_I|)\\
    & \hspace{2cm} \mathbf{1}_{B\left(y_{k},\frac{\Delta^0}{2^{\tilde{m}+1+\sum_{l=1}^{k-1}(\mathfrak{n}_l+1)}}\right) \times B\left(\mathbb{V}_{k}(\mathfrak{N}_k(\tilde{m}),\tau'',\Delta^0), \frac{\Delta^0}{2^{\tilde{m}+1+\sum_{l=1}^{k-1}(\mathfrak{n}_l+1)}}\right)}(x,v),
\end{split}
\end{equation}
for any 
\begin{equation*} 
\begin{split}
    &\mathbb{T}_{k}(\mathfrak{N}_k(\tilde{m}),\tau'',\Delta^0)\leq t \\ \leq
    &\min\{\mathbb{U}_{k}(\mathfrak{N}_k(\tilde{m}),\tau'',\Delta^0),\tau'' +\mathbb{D}_{k}(\mathfrak{N}_k(\tilde{m}),\tau'',\Delta^0)\}
\end{split}
\end{equation*}
and $\tilde{m} \geq \mathfrak{n}_k$.

By taking $\tilde{m}=\mathfrak{n}_k$, we use \textbf{Corollary \ref{redoable translation proposition}} again by considering 
\begin{align*}
    A&=\mathbb{B}_{k}(\mathfrak{N}_k,\tau'',\Delta^0,\alpha'_0,|v_I|),\\
    \tau&=\mathbb{T}_{k}(\mathfrak{N}_k,\tau'',\Delta^0),\\
    \Delta_1&=\tau''-\mathbb{T}_{k}(\mathfrak{N}_k,\tau'',\Delta^0),\\
    \Delta_1'&=\min\{\mathbb{U}_{k}(\mathfrak{N}_k,\tau'',\Delta^0),\tau'' +\mathbb{D}_{k}(\mathfrak{N}_k,\tau'',\Delta^0)\}-\tau'',\\
    x'&=y_k,\,y=y_{k+1},\\
    v'&=\mathbb{V}_{k}(\mathfrak{N}_k,\tau'',\Delta^0),\\
    \Delta_2&= \frac{\Delta^0}{2^{\sum_{l=1}^{k}(\mathfrak{n}_l+1)}}.
\end{align*}
We notice that 
by \textbf{Remark \ref{ineq TDU}},
we have $\Delta_1>0$ and $\Delta_1'>0$.

Also, all the assumptions of \textbf{Corollary \ref{redoable translation proposition}} are satisfied again since 
\begin{align*}
    B\left(y_k,\frac{\Delta^0}{2^{\sum_{l=1}^{k}(\mathfrak{n}_l+1)}}\right)\subset B\left(y_k, \frac{d}{10}\right) \subset \Omega,
\end{align*}
 $\overline{y_{k}y_{k+1}} \in \Omega$, $\Delta_1\leq \Delta^0\leq \frac{1}{2}$, and 
\begin{align*}
    \Delta_1'&=\min\{\mathbb{U}_{k}(\mathfrak{N}_k,\tau'',\Delta^0),\tau'' +\mathbb{D}_{k}(\mathfrak{N}_k,\tau'',\Delta^0)\}-\tau''\\
    &\leq \mathbb{U}_{k}(\mathfrak{N}_k,\tau'',\Delta^0)-\tau''\\
    &\leq \mathbb{U}_{1}-\tau''\leq \Delta^0-\tau'' \leq \Delta^0 \leq \frac{1}{2}.
\end{align*}
Here, we recall that $\mathbb{U}_{1}= \Delta^0$.
Hence, by noticing that 
\begin{small}
\begin{align*}
&|\mathbb{V}_k(\mathfrak{N}_k,\tau'',\Delta^0)|\\
=&\frac{2|y_{k}-y_{k-1}|}{\mathbb{T}_{k}(\mathfrak{N}_k,\tau'',\Delta^0)-\mathbb{T}_{k-1}(\mathfrak{N}_{k-1},\tau'',\Delta^0)}\\
\leq &\frac{2d_{\Omega}}{\mathbb{T}_{k}(\mathfrak{N}_k,\tau'',\Delta^0)-\mathbb{T}_{k-1}(\mathfrak{N}_{k-1},\tau'',\Delta^0)},
\end{align*}
\end{small}

we have
\begin{equation}\label{enough eq}
\begin{split}
    &f(t,x,v) \\
    \geq &\ \mathbb{B}\Bigg(m,\mathbb{T}_k(\mathfrak{N}_k,\tau'',\Delta^0),\tau''-\mathbb{T}_k(\mathfrak{N}_k,\tau'',\Delta^0),\\
    &\frac{\Delta^0}{2^{\sum_{l=1}^{k}(\mathfrak{n}_l+1)}}, \mathbb{B}_k(\mathfrak{N}_k,\tau'',\Delta^0,\alpha'_0,|v_I|),|\mathbb{V}_k(\mathfrak{N}_k,\tau'',\Delta^0)|\Bigg)\\
    &\mathbf{1}_{B\left(y_{k+1},\frac{\Delta^0}{2^{m+1+\sum_{l=1}^{k}(\mathfrak{n}_l+1)}}\right) \times B\left(\mathbb{V}_{k+1}(\mathfrak{N}_{k+1}(m),\tau'',\Delta^0), \frac{\Delta^0}{2^{m+1+\sum_{l=1}^{k}(\mathfrak{n}_l+1)}}\right)}(x,v)\\
    \geq&\mathbb{B}_{k+1}(\mathfrak{N}_{k+1}(m),\tau'',\Delta^0,\alpha'_0,|v_I|)\\
    &\mathbf{1}_{B\left(y_{k+1},\frac{\Delta^0}{2^{m+1+\sum_{l=1}^{k}(\mathfrak{n}_l+1)}}\right) \times B\left(\mathbb{V}_{k+1}(\mathfrak{N}_{k+1}(m),\tau'',\Delta^0), \frac{\Delta^0}{2^{m+1+\sum_{l=1}^{k}(\mathfrak{n}_l+1)}}\right)}(x,v),
\end{split}
\end{equation}
for any 
\begin{align*}
    &m\geq \mathbb{M}(\tau''-\mathbb{T}_{k}(\mathfrak{N}_k,\tau'',\Delta^0),\\
    &\hspace{0.5cm}\frac{\Delta^0}{2^{\sum_{l=1}^{k}(\mathfrak{n}_l+1)}},|\mathbb{V}_{k}(\mathfrak{N}_k,\tau'',\Delta^0)|,d(\overline{y_ky_{k+1}},\partial \Omega)).
\end{align*}

We also notice that 
\begin{align*}
     &\mathbb{M}\Bigg(\tau''-\mathbb{T}_{k}(\mathfrak{N}_k,\tau'',\Delta^0),\\
    &\hspace{0.6cm}\frac{\Delta^0}{2^{\sum_{l=1}^{k}(\mathfrak{n}_l+1)}},|\mathbb{V}_{k}(\mathfrak{N}_k,\tau'',\Delta^0)|,d(\overline{y_ky_{k+1}},\partial \Omega)\Bigg)\\
    \leq &\max\Bigg\{1,\left\lceil \log_{\frac{3\sqrt{2}}{4}}\left(\frac{\frac{4d_{\Omega}}{\tau''-\mathbb{T}_{k}(\mathfrak{N}_k,\tau'',\Delta^0)}+\frac{2d_{\Omega}}{\mathbb{T}_{k}(\mathfrak{N}_k,\tau'',\Delta^0)-\mathbb{T}_{k-1}(\mathfrak{N}_{k-1},\tau'',\Delta^0)}+1}{\frac{\Delta^0}{2^{\sum_{l=1}^{k}(\mathfrak{n}_l+1)}}}\right)\right\rceil
    \\
    &\hspace{1.2cm}\left\lceil \log_{2}{\frac{\frac{\Delta^0}{2^{\sum_{l=1}^{k}(\mathfrak{n}_l+1)}}+1}{\frac{d}{10}}} \right\rceil  \Bigg\}\\
    =& \mathfrak{n}_{k+1}.
\end{align*}
Hence,  $m \geq \mathfrak{n}_{k+1}$ is sufficient for \eqref{enough eq} to hold.

\

\paragraph{\textbf{Step 3: Conclusion.}}

\

Finally, by induction, the estimate holds for $j=1,...,\operatorname{Conn}_d(x_I,y^0_i)$ and we conclude the proof of \textbf{Proposition \ref{initial point lower bound near boundary}} by selecting $j=\operatorname{Conn}_d(x_I,y^0_i)$ and defining 
\begin{align*}
&\mathfrak{R}(\tau''):=\max_{1\leq j\leq \operatorname{Conn}_d(\Omega)}\left\{\frac{2d_{\Omega}}{\mathbb{T}_{j}(\mathfrak{N}_{j},\tau'',\Delta^0)-\mathbb{T}_{j-1}(\mathfrak{N}_{j-1},\tau'',\Delta^0)}\right\},\\
&\mathfrak{B}(\tau''):=\min_{1\leq j\leq \operatorname{Conn}_d(\Omega)}\{\mathbb{B}_{j}(\mathfrak{N}_j,\tau'',\Delta^0,\alpha'_0,|v_I|)\},\\
&\mathfrak{d}_X(\tau''):=\frac{\Delta^0}{2^{\sum_{i=1}^{\operatorname{Conn}_d(\Omega)}(\mathfrak{n}_i+1)}}\leq \frac{d}{10},\\
&\mathfrak{d}_T(\tau''):=\min\bigg\{ \frac{\Delta^0-\tau''}{2}, \tau''-\mathbb{T}_{\operatorname{Conn}_d(\Omega)}(\mathfrak{N}_{\operatorname{Conn}_d(\Omega)},\tau'',\Delta^0) , \\
&\hspace{2.5cm}\mathbb{U}_{\operatorname{Conn}_d(\Omega)}(\mathfrak{N}_{\operatorname{Conn}_d(\Omega)},\tau'',\Delta^0)-\tau'', \mathbb{D}_{\operatorname{Conn}_d(\Omega)}(\mathfrak{N}_{\operatorname{Conn}_d(\Omega)},\tau'',\Delta^0) \bigg\},\\
&\mathfrak{d}(\tau''):=\min\{\mathfrak{d}_X(\tau'')  ,\mathfrak{d}_T(\tau'')\}\leq \frac{d}{10},\\
&v_i(\tau''):=\mathbb{V}_{\operatorname{Conn}_d(x_I,y^0_i)}(\mathfrak{N}_{\operatorname{Conn}_d(x_I,y^0_i)},\tau'',\Delta^0).
\end{align*}

\end{proof}

\begin{remark}\label{ineq TDU}
    Notice that for any $j \geq 1$, we have
\begin{equation}\label{small est T}
   \tau'' > \mathbb{T}_{j+1}(\mathfrak{M}_{j+1},\tau'',\Delta^0) >  \mathbb{T}_{j}(\mathfrak{M}_j,\tau'',\Delta^0)\geq\frac{\tau''}{2}
\end{equation}
(with possibly equality when $j=1$ in the lower bound $\mathbb{T}_j \geq \frac{\tau''}{2}$)
\begin{equation}\label{small est D}
    0<\mathbb{D}_{j+1}(\mathfrak{M}_{j+1},\tau'',\Delta^0) <  \mathbb{D}_{j}(\mathfrak{M}_j,\tau'',\Delta^0),
\end{equation}
and 
\begin{equation}\label{small est U}
    \tau''<\mathbb{U}_{j+1}(\mathfrak{M}_{j+1},\tau'',\Delta^0) \leq  \mathbb{U}_{j}(\mathfrak{M}_j,\tau'',\Delta^0),
\end{equation}
for $\mathfrak{m}_1 \geq \mathfrak{n}_1$, $\mathfrak{m}_i \geq 0$ for $i \geq 2$.
\end{remark}

\begin{proof}

We prove it by induction on $j$. For $j=1$, assume $\mathfrak{m}_1\geq \mathfrak{n}_1$. We notice that

\begin{equation}\label{tau dominate}
    \frac{\tau''}{2} \leq\mathbb{T}_1(\mathfrak{m}_1,\tau'',\Delta^0)=\mathbb{T}(\tau'',\Delta^0,0,\mathfrak{m}_1)=\max\left\{ \frac{\tau''}{2},\tau''-\frac{\tau''\Delta^0}{2^{\mathfrak{m}_1+2}(4d_{\Omega}+\tau'')} \right\} < \tau''
\end{equation}
and that 

\begin{equation}
\begin{split}
    &\mathbb{T}_2((\mathfrak{m}_1,\mathfrak{m}_2),\tau'',\Delta^0)=\mathbb{T}(\tau''-\mathbb{T}_{1}(\mathfrak{m}_1,\tau'',\Delta^0),\frac{\Delta^0}{2^{\mathfrak{m}_1+1}},\mathbb{T}_{1}(\mathfrak{m}_1,\tau'',\Delta^0),\mathfrak{m}_2)\\
        =&\max \left\{ \frac{\tau''}{2}+\frac{\mathbb{T}_{1}(\mathfrak{m}_1,\tau'',\Delta^0)}{2}, \tau''-\frac{(\tau''-\mathbb{T}_{1}(\mathfrak{m}_1,\tau'',\Delta^0))\frac{\Delta^0}{2^{\mathfrak{m}_1+1}}}{2^{\mathfrak{m}_2+2}(4d_{\Omega}+(\tau''-\mathbb{T}_{1}(\mathfrak{m}_1,\tau'',\Delta^0)))} \right\}.
\end{split}
\end{equation}

By \eqref{tau dominate}, we have 
\begin{equation*}
    \frac{\tau''}{2}+\frac{\mathbb{T}_{1}(\mathfrak{m}_1,\tau'',\Delta^0)}{2} > \mathbb{T}_{1}(\mathfrak{m}_1,\tau'',\Delta^0).
\end{equation*}
Next, we notice that the following inequality
\begin{equation}
    \tau''-\frac{(\tau''-\mathbb{T}_{1}(\mathfrak{m}_1,\tau'',\Delta^0))\frac{\Delta^0}{2^{\mathfrak{m}_1+1}}}{2^{\mathfrak{m}_2+2}(4d_{\Omega}+(\tau''-\mathbb{T}_{1}(\mathfrak{m}_1,\tau'',\Delta^0)))}> \mathbb{T}_{1}(\mathfrak{m}_1,\tau'',\Delta^0)
\end{equation}
is equivalent to the inequality below:
\begin{equation}
    \Delta^0<2^{\mathfrak{m}_1+\mathfrak{m}_2+3}(4d_{\Omega}+(\tau''-\mathbb{T}_{1}(\mathfrak{m}_1,\tau'',\Delta^0))).
\end{equation}

We notice that by \textbf{Lemma \ref{initial cover over boundary}}, we have $d_{\Omega}\geq d$. Indeed, we have
\begin{equation*}
 x_i^0-2dn(x_i^0),\,x_i^0-\frac{1}{2}dn(x_i^0) \in \Omega,\,  \left|x_i^0-2dn(x_i^0)-(x_i^0-\frac{1}{2}dn(x_i^0))\right|=\frac{3d}{2} . 
\end{equation*}
Notice that 
\begin{equation}
    2^{\mathfrak{m}_1} \geq \frac{\Delta^0+1}{\min\{d(x_I,\partial\Omega),\frac{d}{10}\}}.
\end{equation}
Here we use the fact that $$\mathfrak{m}_1 \geq \mathfrak{n}_1=\max\left\{1,\left\lceil \log_{\frac{3\sqrt{2}}{4}}\left(\frac{\frac{4d_{\Omega}}{\tau''}+|v_I|+1}{\Delta^0}\right)\right\rceil 
,\left\lceil \log_{2}{\frac{\Delta^0+1}{\min\{d(x_I,\partial\Omega),\frac{d}{10}\}}} \right\rceil  \right\}.$$
As a result, we deduce that 

\begin{equation}
    \begin{split}
        &2^{\mathfrak{m}_1+\mathfrak{m}_2+3}(4d_{\Omega}+(\tau''-\mathbb{T}_{1}(\mathfrak{m}_1,\tau'',\Delta^0)))\\
         \geq&\frac{\Delta^0+1}{\min\{d(x_I,\partial\Omega),\frac{d}{10}\}}2^{\mathfrak{m}_2+3}(4d_{\Omega}+(\tau''-\mathbb{T}_{1}(\mathfrak{m}_1,\tau'',\Delta^0))) \\
        \geq&\frac{32(\Delta^0+1)d}{\min\{d(x_I,\partial\Omega),\frac{d}{10}\}}\\
        >& \frac{d(\Delta^0+1)}{\frac{d}{10}}\\
         >& \Delta^0.
    \end{split}
\end{equation}
Hence, we conclude that $\mathbb{T}_2((\mathfrak{m}_1,\mathfrak{m}_2),\tau'',\Delta^0)>\mathbb{T}_1(\mathfrak{m}_1,\tau'',\Delta^0)$.

For the number $\mathbb{D}_j(\mathfrak{M}_j,\tau'',\Delta^0)$, we have
\begin{align*}   
\mathbb{D}_1(\mathfrak{m}_1,\tau'',\Delta^0)=\mathbb{D}(\tau'',\Delta^0,\mathfrak{m}_1)=\frac{\tau''\Delta^0}{2^{\mathfrak{m}_1+2}(4d_{\Omega}+\tau'')}>0.
\end{align*}

We also notice that
\begin{align*}   
&\mathbb{D}_{2}((\mathfrak{m}_1,\mathfrak{m}_2),\tau'',\Delta^0)=\mathbb{D}(\tau''-\mathbb{T}_{1}(\mathfrak{m}_1,\tau'',\Delta^0),\frac{\Delta^0}{2^{\mathfrak{m}_1+1}},\mathfrak{m}_2)\\
=&\frac{(\tau''-\mathbb{T}_{1}(\mathfrak{m}_1,\tau'',\Delta^0))\frac{\Delta^0}{2^{\mathfrak{m}_1+1}}}{2^{\mathfrak{m}_2+2}(4d_{\Omega}+\tau''-\mathbb{T}_{1}(\mathfrak{m}_1,\tau'',\Delta^0))}\\
=&\frac{(\tau''-\mathbb{T}_{1}(\mathfrak{m}_1,\tau'',\Delta^0))\Delta^0}{2^{\mathfrak{m}_1+\mathfrak{m}_2+3}(4d_{\Omega}+\tau''-\mathbb{T}_{1}(\mathfrak{m}_1,\tau'',\Delta^0))}\\
 <&\frac{\tau''\Delta^0}{2^{\mathfrak{m}_1+2}(4d_{\Omega}+\tau'')}=\mathbb{D}_1(\mathfrak{m}_1,\tau'',\Delta^0).
\end{align*} 

For $\mathbb{U}_{j}(\mathfrak{M}_j,\tau'',\Delta^0)$, we have 
\begin{align*}
    \mathbb{U}_{1}:=\Delta^0>\tau'',
\end{align*}
\begin{align*}
    &\mathbb{U}_{2}((\mathfrak{m}_1,\mathfrak{m}_2),\tau'',\Delta^0)=\min\{\mathbb{U}_{1},\tau''+ \mathbb{D}_{1}(\mathfrak{m}_{1},\tau'',\Delta^0)\}\leq \mathbb{U}_{1}.
\end{align*}
So we finished the case $j=1$.

Suppose that \eqref{small est T}, \eqref{small est D}, \eqref{small est U} hold for $j=k $, we have 

   \begin{equation*}
    \begin{split}
        & (\tau''-\mathbb{T}_{k}(\mathfrak{M}_k,\tau'',\Delta^0))(4d_{\Omega}+(\tau''-\mathbb{T}_{k-1}(\mathfrak{M}_{k-1},\tau'',\Delta^0)))\\
        <& 2^{\mathfrak{m}_{k+1}+1}(\tau''-\mathbb{T}_{k-1}(\mathfrak{M}_{k-1},\tau'',\Delta^0))(4d_{\Omega}+(\tau''-\mathbb{T}_{k}(\mathfrak{M}_k,\tau'',\Delta^0))),
    \end{split}
    \end{equation*}
from which we deduce that 
     \begin{equation*}
    \begin{split}
        & \frac{(\tau''-\mathbb{T}_{k}(\mathfrak{M}_k,\tau'',\Delta^0))\Delta^0}{2^{1+\sum_{l=1}^{k+1}(\mathfrak{m}_l+1)}(4d_{\Omega}+(\tau''-\mathbb{T}_{k}(\mathfrak{M}_k,\tau'',\Delta^0)))}\\
        <& \frac{(\tau''-\mathbb{T}_{k-1}(\mathfrak{M}_{k-1},\tau'',\Delta^0))\Delta^0}{2^{1+\sum_{l=1}^{k}(\mathfrak{m}_l+1)}(4d_{\Omega}+(\tau''-\mathbb{T}_{k-1}(\mathfrak{M}_{k-1},\tau'',\Delta^0)))}.
    \end{split}
    \end{equation*}

Hence, we have 
    \begin{equation}
    \begin{split}
        &\mathbb{T}_{k+1}(\mathfrak{M}_{k+1},\tau'',\Delta^0)\\
        =& \mathbb{T}(\tau''-\mathbb{T}_{k}(\mathfrak{M}_k,\tau'',\Delta^0),\frac{\Delta^0}{2^{\sum_{l=1}^{k}(\mathfrak{m}_l+1)}},\mathbb{T}_{k}(\mathfrak{M}_k,\tau'',\Delta^0),\mathfrak{m}_{k+1})\\
        =& 
\max\begin{Bmatrix}
\frac{\tau''}{2}+\frac{\mathbb{T}_{k}(\mathfrak{M}_k,\tau'',\Delta^0)}{2},\\
\tau''-\frac{(\tau''-\mathbb{T}_{k}(\mathfrak{M}_k,\tau'',\Delta^0))\frac{\Delta^0}{2^{\sum_{l=1}^{k}(\mathfrak{m}_l+1)}}}{2^{\mathfrak{m}_{k+1}+2}(4d_{\Omega}+(\tau''-\mathbb{T}_{k}(\mathfrak{M}_k,\tau'',\Delta^0)))}
\end{Bmatrix}
\\
        =&\max \Bigg\{ \frac{\tau''}{2}+\frac{\mathbb{T}_{k}(\mathfrak{M}_k,\tau'',\Delta^0)}{2},\\
        &\hspace{1.2cm}\tau''-\frac{(\tau''-\mathbb{T}_{k}(\mathfrak{M}_k,\tau'',\Delta^0))\frac{\Delta^0}{2^{\sum_{l=1}^{k}(\mathfrak{m}_l+1)}}}{2^{\mathfrak{m}_{k+1}+2}(4d_{\Omega}+(\tau''-\mathbb{T}_{k}(\mathfrak{M}_k,\tau'',\Delta^0)))} \Bigg\}\\
        =& \max \Bigg\{ \frac{\tau''}{2}+\frac{\mathbb{T}_{k}(\mathfrak{M}_k,\tau'',\Delta^0)}{2},\\
        &\hspace{1.2cm} \tau''-\frac{(\tau''-\mathbb{T}_{k}(\mathfrak{M}_k,\tau'',\Delta^0))\Delta^0}{2^{1+\sum_{l=1}^{k+1}(\mathfrak{m}_l+1)}(4d_{\Omega}+(\tau''-\mathbb{T}_{k}(\mathfrak{M}_k,\tau'',\Delta^0)))} \Bigg\}\\
        >&  \max \Bigg\{ \frac{\tau''}{2}+\frac{\mathbb{T}_{k-1}(\mathfrak{M}_{k-1},\tau'',\Delta^0)}{2},\\
        & \hspace{1.2cm} \tau''-\frac{(\tau''-\mathbb{T}_{k-1}(\mathfrak{M}_{k-1},\tau'',\Delta^0))\Delta^0}{2^{1+\sum_{l=1}^{k}(\mathfrak{m}_l+1)}(4d_{\Omega}+(\tau''-\mathbb{T}_{k-1}(\mathfrak{M}_{k-1},\tau'',\Delta^0)))} \Bigg\}\\
        =&\mathbb{T}_{k}(\mathfrak{M}_k,\tau'',\Delta^0).
    \end{split}
    \end{equation}
Also, we notice that by induction hypothesis
 \begin{equation}
    \begin{split}
&\mathbb{T}_{k+1}(\mathfrak{M}_{k+1},\tau'',\Delta^0)\\
=&\max \Bigg\{ \frac{\tau''}{2}+\frac{\mathbb{T}_{k}(\mathfrak{M}_k,\tau'',\Delta^0)}{2},\\
& \hspace{1.2cm} \tau''-\frac{(\tau''-\mathbb{T}_{k}(\mathfrak{M}_k,\tau'',\Delta^0))\Delta^0}{2^{1+\sum_{l=1}^{k+1}(\mathfrak{m}_l+1)}(4d_{\Omega}+(\tau''-\mathbb{T}_{k}(\mathfrak{M}_k,\tau'',\Delta^0)))} \Bigg\}\\
<&\tau''.
    \end{split}
    \end{equation}
    
Next, we notice that 

\begin{align*}   
0&<\mathbb{D}_{k+1}(\mathfrak{M}_{k+1},\tau'',\Delta^0)\\
&=\mathbb{D}(\tau''-\mathbb{T}_{k}(\mathfrak{M}_k,\tau'',\Delta^0),\frac{\Delta^0}{2^{\sum_{l=1}^{k}(\mathfrak{m}_l+1)}},\mathfrak{m}_{k+1})\\
&=\frac{(\tau''-\mathbb{T}_{k}(\mathfrak{M}_k,\tau'',\Delta^0))\frac{\Delta^0}{2^{\sum_{l=1}^{k}(\mathfrak{m}_l+1)}}}{2^{\mathfrak{m}_{k+1}+2}(4d_{\Omega}+(\tau''-\mathbb{T}_{k}(\mathfrak{M}_k,\tau'',\Delta^0)))}\\
&<\frac{(\tau''-\mathbb{T}_{k-1}(\mathfrak{M}_{k-1},\tau'',\Delta^0))\frac{\Delta^0}{2^{\sum_{l=1}^{k-1}(\mathfrak{m}_l+1)}}}{2^{\mathfrak{m}_{k}+2}(4d_{\Omega}+(\tau''-\mathbb{T}_{k-1}(\mathfrak{M}_{k-1},\tau'',\Delta^0)))}\\
&=\mathbb{D}_{k}(\mathfrak{M}_k,\tau'',\Delta^0),
\end{align*} 
and that
\begin{align*}
    \tau''<&\mathbb{U}_{k+1}(\mathfrak{M}_{k+1},\tau'',\Delta^0)\\
    =&\min\{\mathbb{U}_{k}(\mathfrak{M}_k,\tau'',\Delta^0),\tau'' +\mathbb{D}_{k}(\mathfrak{M}_k,\tau'',\Delta^0)\}\\
    <& \min\{\mathbb{U}_{k-1}(\mathfrak{M}_{k-1},\tau'',\Delta^0),\tau'' +\mathbb{D}_{k-1}(\mathfrak{M}_{k-1},\tau'',\Delta^0)\}\\
    =&\mathbb{U}_{k}(\mathfrak{M}_k,\tau'',\Delta^0).
\end{align*}
Hence, we show that \eqref{small est T}, \eqref{small est D}, \eqref{small est U} holds for $j=k+1$, and we conclude the proof of the statement of \textbf{Remark \ref{ineq TDU}}.

\end{proof}
\begin{remark}
    The numbers  $\{\mathfrak{n}_i\}_{i=1}^{\operatorname{Conn}_d(\Omega)}$ and $\mathfrak{R}(\tau'')$ do not depend on the choice of zigzag. 
\end{remark}

\begin{proposition} \label{initial point lowerbound general sense}
   Suppose that $\Omega \subset \mathbb{R}^3$ satisfies \textbf{Assumption A} and that the collision kernel $B$ satisfies \textbf{Assumption B} with $\nu<0$, $\alpha \in [0,1]$. Let $f(t,x,v)$ be a continuous mild solution to \eqref{Boltzmann equation}--\eqref{boundary condition}. We consider $0< \tau'' < \Delta^0$, where $\Delta^0$ is given in \textbf{Proposition \ref{initial point lower bound on a single point}}. Then, there exist $0<\delta_{X}(\tau'')= \delta_{V}(\tau'')\leq \frac{d}{56}$, $\delta_{T}(\tau'')$, $a_0(\tau'')>0$, which depend on $\Omega$, $M$, $E_f$, and a modulus of continuity of $f_0$ (and $L_{f,p}$ if $\gamma <0$); and $\mathfrak{R}_{\min}(\tau'')\geq 2$, which depends on $\tau''$, $\Omega$, $M$, and $E_f$ (and $L_{f,p}$ if $\gamma <0$), such that $\delta_{X}(\tau'') \geq \delta_T(\tau'')$ and that for all $N \in \mathbb{N}$, there exist $\mathcal{N}(N,\tau'')$ in $\mathbb{N}$, $\{ x_i \}_{i=1}^{\mathcal{N}(N,\tau'')}$ in $\Omega$ and $\{ \overline{v}_{i}(\tau'') \}_{i=1}^{\mathcal{N}(N,\tau'')}$ in $B(0,\mathfrak{R}_{\min}(\tau''))$, such that

 \begin{equation}
\overline{\Omega} \subset \underset{1 \leq i \leq \mathcal{N}(N,\tau'')}{\bigcup}B\left(x_i,\frac{\delta_X(\tau'')}{2^N}\right),
     \end{equation}

      \begin{equation}
 f(t,x,v) \geq a_0(\tau'')\mathbf{1}_{B(\overline{v}_{i}(\tau''),\delta_V(\tau''))}(v)
     \end{equation}
for any $1\leq i \leq \mathcal{N}(N,\tau'')$, and any $(t,x,v) \in [\tau'', \tau''+\delta_{T}(\tau'')]\times [B(x_i,\delta_X(\tau''))\cap \overline{\Omega}]\times \mathbb{R}^3$.  

\end{proposition}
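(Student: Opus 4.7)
The overall strategy is to combine the finitely many anchor-point bounds from \textbf{Proposition \ref{initial point lower bound near boundary}} with the translation mechanism of \textbf{Corollary \ref{redoable translation proposition}} to spread the lower bound to an arbitrarily fine spatial net of $\overline{\Omega}$, while keeping the amplitude $a_0(\tau'')$, the velocity radius $\delta_V(\tau'')$, the spatial radius $\delta_X(\tau'')$, and the time window $\delta_T(\tau'')$ independent of $N$; the parameter $N$ will only affect the cardinality $N_X$ of the net.

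First I would fix a single iteration index $m=m_0$ large enough that the conclusion of \textbf{Proposition \ref{initial point lower bound near boundary}} holds simultaneously for every $y_i^0$ with $1\le i\le m_1+m_2$; such $m_0$ exists because the collection of anchors is finite. Taking the minimum over $i$ of the resulting amplitudes, ball radii, and time windows yields uniform constants $B_0>0$, $\rho_0>0$ and a non-empty interval $[T_0,T_0+D_0]\subset[\tau'',\Delta^0]$ on which
\begin{equation*}
f(t,x,v)\ge B_0\,1_{B(v_i(\tau''),\rho_0)}(v)\quad\text{for all }x\in B(y_i^0,\rho_0),\ t\in[T_0,T_0+D_0]
\end{equation*}
at every anchor. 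Because each velocity center $v_i(\tau'')=\mathbb{V}_{Conn_d(x_I,y_i^0)}$ is built from ratios bounded by $2d_\Omega/(\tau''-\mathbb{T}_\cdot)$ along the zigzag, all of them lie in some ball $B(0,R_{\min}(\tau''))$ with $R_{\min}(\tau'')\ge 2$ depending only on $\tau''$, $\Omega$, $M$, $E_f$.

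Given $N\in\mathbb{N}$, I would then declare $\delta_X(\tau'')$ to be a small absolute constant depending only on the fixed data, pick a $\delta_X(\tau'')/2^N$-net $\{x_i\}_{i=1}^{N_X}$ of $\overline{\Omega}$, and for each $x_i$ invoke \textbf{Lemma \ref{zigzag lemma}} together with \textbf{Lemma \ref{initial cover general}} to obtain an anchor $y_{j(i)}^0$ with $\overline{x_iy_{j(i)}^0}\subset\Omega$ and $d(\overline{x_iy_{j(i)}^0},\partial\Omega)\ge\min\{d(x_i,\partial\Omega),d/10\}$. Applying \textbf{Corollary \ref{redoable translation proposition}} with $x'=y_{j(i)}^0$, $y=x_i$, $v'=v_{j(i)}(\tau'')$, $\Delta_2=\rho_0$, and $m$ equal to the threshold $\mathbb{M}$ would yield $f(t,x,v)\ge\mathbb{B}\cdot 1_{B(x_i,\rho_0/2^{m+1})\times B(\overline v_i,\rho_0/2^{m+1})}(x,v)$ on a time sub-interval of length $\mathbb{D}$; the constants $\mathbb{B}$, $\rho_0/2^{m+1}$, $\mathbb{D}$ and the center $\overline v_i$ are then renamed $a_0(\tau'')$, $\delta_V(\tau'')$, $\delta_T(\tau'')$, $\overline v_i$, with $\delta_T$ shrunk afterwards if necessary to enforce $\delta_X\ge\delta_T$.

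The hard part is that for net points $x_i$ near $\partial\Omega$ the quantity $d(\overline{x_iy_{j(i)}^0},\partial\Omega)$ is only controlled by $d(x_i,\partial\Omega)$, which can shrink with $N$; this would drive $\mathbb{M}$ up and the output radius $\rho_0/2^{m+1}$ down, destroying the uniformity we need. To bypass this I would not land the propagation exactly at $x_i$: using the boundary chart from \textbf{Lemma \ref{initial cover over boundary}}, I would shift $x_i$ inward along $-n(x_l^0)$ to an auxiliary point $\tilde x_i$ with $d(\tilde x_i,\partial\Omega)\ge c\,\delta_X(\tau'')$ (applying \textbf{Corollary \ref{small coro regarding distance}} to certify this distance), propagate from $y_{j(i)}^0$ to $\tilde x_i$ via \textbf{Corollary \ref{redoable translation proposition}} with a uniformly bounded $m$, and fix $\delta_X(\tau'')$ small enough that the Euclidean ball produced at $\tilde x_i$ covers the cap $B(x_i,\delta_X(\tau''))\cap\overline\Omega$. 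The $C^2$ graph representation of $\partial\Omega$ with gradient bounded by $d/100$ provided by \textbf{Lemma \ref{initial cover over boundary}} is exactly what makes such a uniform choice of $\delta_X(\tau'')$ possible, closing the argument.
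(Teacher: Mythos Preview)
Your identification of the boundary obstacle is exactly right, but your proposed fix does not close the gap. The problem is structural to \textbf{Corollary \ref{redoable translation proposition}}: the threshold $\mathbb{M}$ contains the term $\lceil\log_2\frac{\Delta_2+1}{D}\rceil$ with $D=d(\overline{x'y},\partial\Omega)$, and this term is not optional---it is precisely what guarantees $\frac{\Delta_2}{2^{m}}<D$ so that the propagated ball $B(y,\frac{\Delta_2}{2^{m+1}})$ lies inside $\Omega$ (see \eqref{I don't know why mark}). Consequently the output radius at your shifted point $\tilde{x}_i$ always satisfies $r_{\text{out}}=\tfrac{\Delta_2}{2^{m+1}}<\tfrac{D}{2}\le\tfrac{1}{2}d(\tilde{x}_i,\partial\Omega)$. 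But for $x_i$ on (or arbitrarily close to) $\partial\Omega$ you have $d(\tilde{x}_i,\partial\Omega)\le|\tilde{x}_i-x_i|$, so $r_{\text{out}}<|\tilde{x}_i-x_i|$ and the ball $B(\tilde{x}_i,r_{\text{out}})$ does \emph{not} contain $x_i$, let alone the cap $B(x_i,\delta_X)\cap\overline{\Omega}$. Shrinking $\delta_X$ does not help: the obstruction is between $r_{\text{out}}$ and the shift length, independently of $\delta_X$.

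The paper resolves this differently: for near-boundary $x_i$ it abandons \textbf{Corollary \ref{redoable translation proposition}} as a black box and re-runs the Duhamel step by hand, choosing $m'$ \emph{without} the $\log_2((\Delta_2+1)/D)$ term. The trajectory-in-$\Omega$ verification, which the corollary obtains from the crude tube estimate $\frac{\Delta_2}{2^m}<D$, is replaced by an angle computation in the local chart of \textbf{Lemma \ref{initial cover over boundary}}: since the anchor $y_{k(i)}^0=x_{k(i)}^0-\tfrac{3d}{4}n(x_{k(i)}^0)$ sits directly inward from the chart center, any velocity $v$ pointing from a point near $x_{k(i)}^0$ toward $y_{k(i)}^0$ satisfies $n(x_{k(i)}^0)\cdot\tfrac{v}{|v|}\ge\tfrac{7}{29}$, and the graph bound $|\nabla\phi_{k(i)}|<\tfrac{d}{100}$ then forces the segment $\{x-sv\}$ to stay in $\Omega$ regardless of how close $x$ is to $\partial\Omega$. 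This yields a fixed output radius $\tfrac{\Delta}{2^{m'+1}}$ at $x_i$ itself, with $m'$ uniform. Your far-from-boundary case (via \textbf{Corollary \ref{redoable translation proposition}} directly) matches the paper's second case and is fine; what is missing is this transversality argument for the near-boundary case.
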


\begin{proof}

Given $0<\tau''<\Delta^0$, we choose $0<d< \min\{1, \delta\}$ and notice that by \textbf{Proposition \ref{initial point lower bound near boundary}} there exist $\mathfrak{B}(\tau'')>0$, $\frac{d}{10}\geq \mathfrak{d}(\tau'')>0$, $\mathfrak{R}(\tau'')$, $\{v_i(\tau'')\}_{i=1}^{m_1+m_2} \in B(0,\mathfrak{R}(\tau''))$ such that
\begin{equation}\label{Prop 2.2 eq later}
    f(t,x,v) \geq \ \mathfrak{B}(\tau'')\mathbf{1}_{B(y^0_i,\mathfrak{d}(\tau'')) \times B(v_i(\tau''),\mathfrak{d}(\tau''))}(v) ,\ \forall t \in [\tau''-\mathfrak{d}(\tau''), \tau''+\mathfrak{d}(\tau'')] ,
\end{equation}
for any $1\leq i \leq m_1+m_2$.

Notice that, by the definition and \textbf{Remark \ref{ineq TDU}}, we have 
\begin{equation}
\begin{split}
    \tau'' &> \tau''-\mathfrak{d}(\tau'') \geq \tau'' - (\tau''-\mathbb{T}_{\operatorname{Conn}_d(\Omega)}(\mathfrak{N}_{\operatorname{Conn}_d(\Omega)},\tau'',\Delta^0))\\
    &=\mathbb{T}_{\operatorname{Conn}_d(\Omega)}(\mathfrak{N}_{\operatorname{Conn}_d(\Omega)},\tau'',\Delta^0)>\frac{\tau''}{2},
\end{split}
\end{equation}
\begin{equation}
    \tau''+\mathfrak{d}(\tau'') \leq \tau'' + \frac{\Delta^0-\tau''}{2}=\frac{\Delta^0+\tau''}{2}<\Delta^0.
\end{equation}
Now, we define 
\begin{equation}\label{2025 09/08 02:19}
    \mathfrak{n}'':= \max\left\{\mathbb{M}\left(\mathfrak{d}(\tau''),\mathfrak{d}(\tau''),\mathfrak{R}(\tau''),\frac{d}{16}\right),  \left\lceil \log_{\frac{3\sqrt{2}}{4}}\left(\frac{\frac{4d_{\Omega}}{\mathfrak{d}(\tau'')}+\mathfrak{R}(\tau'')+1}{\mathfrak{d}(\tau'')}\right)\right\rceil \right\}.
\end{equation}
Notice that $$r_{\mathfrak{n}''}(\mathfrak{d}(\tau''))=\left(\frac{3\sqrt{2}}{4}\right)^{\mathfrak{n}''}\mathfrak{d}(\tau'') \geq \frac{4d_{\Omega}}{\mathfrak{d}(\tau'')}+\mathfrak{R}(\tau'')+1.$$ We also define
\begin{equation}\label{painful definition}
\begin{split}
    \delta_X(\tau'')=\delta_V(\tau''):= \min\left\{  \frac{1}{4}\min\limits_{1\leq i \leq m_1+m_2}d(y_i^0,\partial \Omega), \frac{d}{56} , \frac{\mathfrak{d}(\tau'')}{2^{\mathfrak{n}''+2}},\frac{2d_{\Omega}}{\mathfrak{d}(\tau'')}\right\}.
\end{split}
\end{equation}

Given $N \in \mathbb{N}$, we notice that by compactness of $\overline{\Omega}$, there exists $\{ x_i \}_{i=1}^{\mathcal{N}(N,\tau'')} \in \Omega$ such that $\overline{\Omega} \subset \underset{1\leq i \leq \mathcal{N}(N,\tau'')}{\bigcup} B(x_i,\frac{\delta_X(\tau'')}{2^N})$. 
Given $1\leq i \leq \mathcal{N}(N,\tau'')$, we consider two cases. 

\textbf{1. Point near the boundary}: If $d(x_i,\partial \Omega) \leq \frac{d}{7}$, then there exists $x'_i \in \partial \Omega$ such that $|x'_i-x_i|\leq \frac{d}{6}$. Also, by \eqref{initial cover}, there exists $k(i)\in \{ 
1,2,...,m_1 \}$ such that $|x_{k(i)}^0-x'_i|\leq \frac{d}{8}$, so that $|x_i-x_{k(i)}^0| \leq |x_i-x_i'|+|x_i'-x_{k(i)}^0| \leq \frac{7}{24}d$.

 We consider
\begin{align}
    T'':=\max\left\{\tau''-\frac{\mathfrak{d}(\tau'')}{2},\tau''-\frac{\mathfrak{d}(\tau'')^2}{2^{\mathfrak{n}''+2}(4d_{\Omega}+\mathfrak{d}(\tau''))} \right\},
\end{align}

\begin{align*}
&\overline{v}_{i}(\tau''):=\frac{2(x_i-y^0_{k(i)})}{T''-(\tau''-\mathfrak{d}(\tau''))}=\frac{2(x_i-y^0_{k(i)})}{\max\left\{\frac{\mathfrak{d}(\tau'')}{2},\mathfrak{d}(\tau'')-\frac{\mathfrak{d}(\tau'')^2}{2^{\mathfrak{n}''+2}(4d_{\Omega}+\mathfrak{d}(\tau''))} \right\}}.
\end{align*}

\begin{equation*}
    \mathfrak{R}^1_{\min}(\tau''):=\frac{2d_{\Omega}}{\max\left\{\frac{\mathfrak{d}(\tau'')}{2},\mathfrak{d}(\tau'')-\frac{\mathfrak{d}(\tau'')^2}{2^{\mathfrak{n}''+2}(4d_{\Omega}+\mathfrak{d}(\tau''))} \right\}}
\end{equation*}

Here, we recall the definition of $y^0_{j}$ in \textbf{Lemma \ref{initial cover over boundary}}.
As long as $x \in B(x_i,\min\{\frac{\mathfrak{d}(\tau'')}{2^{\mathfrak{n}''+1}},\frac{d}{56}\})\cap \Omega$, $v \in B\left(\overline{v}_{i}(\tau''), \min \left\{ \frac{\mathfrak{d}(\tau'')}{2^{\mathfrak{n}''+1}(t-(\tau''-\mathfrak{d}(\tau'')))},1 \right\}\right)$, $T'' \leq t \leq T''+   \frac{\mathfrak{d}(\tau'')}{2^{\mathfrak{n}''+1}\left(\frac{2d_{\Omega}}{T''-(\tau''-\mathfrak{d}(\tau''))}+1\right)}$, we have
\begin{equation}\label{2025 4/18 12:04}
    x-\frac{t-(\tau''-\mathfrak{d}(\tau''))}{2}v \in B\left(y^0_{k(i)},\frac{\mathfrak{d}(\tau'')}{2^{\mathfrak{n}''}}\right).
\end{equation}

Furthermore, by definition of $\mathfrak{n}''$ in \eqref{2025 09/08 02:19}, we have
\begin{equation}
r_{\mathfrak{n}''}(\mathfrak{d}(\tau'')) \geq \mathfrak{R}(\tau'')+ \frac{4d_{\Omega}}{\mathfrak{d}(\tau'')}+1 \geq |v_{k(i)}(\tau'')|+\left|\frac{2(x_i-y^0_{k(i)})}{T''-(\tau''-\mathfrak{d}(\tau''))}\right|+1 \geq |v_{k(i)}(\tau'')|+|v|,
\end{equation}
which implies 
\begin{equation}\label{2025/04/18 12:09}
    v \in B(v_{k(i)}(\tau''), r_{\mathfrak{n}''}(\mathfrak{d}(\tau''))).
\end{equation}
Here, $\mathfrak{R}(\tau'')$ is defined in \textbf{Proposition \ref{initial point lower bound near boundary}}.
Next, notice that 
\begin{equation} \label{3/9 07:51}
    |x-x^0_{k(i)}| \leq |x-x_i|+|x_i-x^0_{k(i)}| \leq \frac{d}{56}+\frac{7d}{24}< \frac{d}{3},
\end{equation}
 so we have $x \in B(x^0_{k(i)},\frac{d}{3})$.
 Now, we parametrize $x$ and $x-\frac{t-(\tau''-\mathfrak{d}(\tau''))}{2}v$ as below:
\begin{equation}
    x=x^0_{k(i)}+\tilde{x}_1e^1_{k(i)}+\tilde{x}_2e^2_{k(i)}-\tilde{x}_3n(x^0_{k(i)}),
\end{equation}
\begin{equation} \label{3/9 07:52}
    x-\frac{t-(\tau''-\mathfrak{d}(\tau''))}{2}v=x^0_{k(i)}+\overline{x}_1e^1_{k(i)}+\overline{x}_2e^2_{k(i)}-\overline{x}_3n(x^0_{k(i)}).
\end{equation}
We notice by \eqref{3/9 07:51} that $\tilde{x}_3 < \frac{d}{3}$.
 
Since we have (notice that $\mathfrak{d}(\tau'') \leq \frac{d}{8}$)
\begin{equation*}
    x-\frac{t-(\tau''-\mathfrak{d}(\tau''))}{2}v \in B(y^0_{k(i)},\frac{\mathfrak{d}(\tau'')}{2^{\mathfrak{n}''}}) \subset B(y^0_{k(i)},\frac{d}{8}),
\end{equation*}
we deduce that $\overline{x}_3 > \frac{3d}{4}-\frac{d}{8}$.

As a result, we have 
\begin{align*}
 &|x-[x-\frac{t-(\tau''-\mathfrak{d}(\tau''))}{2}v]| \\
 \leq &|x-x^0_{k(i)}|+|x^0_{k(i)}-y^0_{k(i)}|+|y^0_{k(i)}-[x-\frac{t-(\tau''-\mathfrak{d}(\tau''))}{2}v]| \\
 \leq &\frac{d}{3}+\frac{3d}{4}+\frac{d}{8}=\frac{29d}{24}, 
\end{align*}
which implies that
\begin{equation} \label{170 eq}
\begin{split}
    &n(x^0_{k(i)})\cdot \frac{v}{|v|}\\
    =&n(x^0_{k(i)})\cdot \frac{x-[x-\frac{t-(\tau''-\mathfrak{d}(\tau''))}{2}v]}{|x-[x-\frac{t-(\tau''-\mathfrak{d}(\tau''))}{2}v]|}\\
    \geq& n(x^0_{k(i)}) \cdot \frac {[(\tilde{x}_1-\overline{x}_1)e^1_{k(i)}+(\tilde{x}_2-\overline{x}_2)e^2_{k(i)}-(\tilde{x}_3-\overline{x}_3)n(x^0_{k(i)})]}{\frac{29d}{24}}\\
    =&\frac{\overline{x}_3-\tilde{x}_3}{\frac{29d}{24}}\\
    \geq&  \frac{\frac{3d}{4}-\frac{d}{8}-\frac{d}{3}}{\frac{29d}{24}}= \frac{7}{29}.
\end{split}
\end{equation} 
Hence, by using the method as before, we have
\begin{equation}\label{a line in Omega 3.6}
    \left\{ \,x-sv \middle|\ 0 \leq s \leq \frac{t-(\tau''-\mathfrak{d}(\tau''))}{2} \,\right\} \subset \Omega.
\end{equation}
We also notice that 
\begin{equation}\label{delta_T smal est}
\begin{split}
      &T''+   \frac{\mathfrak{d}(\tau'')}{2^{\mathfrak{n}''+1}\left(\frac{2d_{\Omega}}{T''-(\tau''-\mathfrak{d}(\tau''))}+1\right)}\\
      \geq& 
    \max\left\{\tau''-\frac{\mathfrak{d}(\tau'')}{2},\tau''-\frac{\mathfrak{d}(\tau'')^2}{2^{\mathfrak{n}''+2}(4d_{\Omega}+\mathfrak{d}(\tau''))} \right\}+  \frac{\mathfrak{d}(\tau'')^2}{2^{\mathfrak{n}''+1}\left(4d_{\Omega}+\mathfrak{d}(\tau'')\right)}\\
    \geq& \tau''+\frac{\mathfrak{d}(\tau'')^2}{2^{\mathfrak{n}''+2}\left(4d_{\Omega}+\mathfrak{d}(\tau'')\right)}.
\end{split}
\end{equation}

Since $B(y^0_{k(i)},\mathfrak{d}(\tau'')) \subset B(y^0_{k(i)},\frac{d}{8}) \subset \Omega$ by \textbf{Lemma \ref{initial cover general}}, from \eqref{Prop 2.2 eq later}, we can apply \textbf{Proposition \ref{ini to multi}} with $\tau=\tau''-\mathfrak{d}(\tau'') $, $\Delta_1=2\mathfrak{d}(\tau''),\ \Delta_2=\mathfrak{d}(\tau'')$, $x'=y^0_{k(i)} $, and $v'=v_{k(i)}(\tau'')$.

Hence, we have for $\tau'' \leq t \leq T''+   \frac{\mathfrak{d}(\tau'')}{2^{\mathfrak{n}''+1}\left(\frac{2d_{\Omega}}{T''-(\tau''-\mathfrak{d}(\tau''))}+1\right)}$ (see \textbf{Remark \ref{alpha monotone thing}} and the fact that $T''+   \frac{\mathfrak{d}(\tau'')}{2^{\mathfrak{n}''+1}\left(\frac{2d_{\Omega}}{T''-(\tau''-\mathfrak{d}(\tau''))}+1\right)} \leq \tau'' + \mathfrak{d}(\tau'')$)

\begin{equation}
\begin{split}
    &f\left(\frac{t+(\tau''-\mathfrak{d}(\tau''))}{2},x-\frac{t-(\tau''-\mathfrak{d}(\tau''))}{2}v,v\right)\\
    \geq &\alpha_{\mathfrak{n}''}\left(\tau''-\mathfrak{d}(\tau''),\tau''-\frac{\mathfrak{d}(\tau'')}{2},\mathfrak{d}(\tau''),\mathfrak{B}(\tau''),\mathfrak{R}(\tau'')\right)\\
    &\mathbf{1}_{ B\left(y^0_{k(i)},\frac{\mathfrak{d}(\tau'')}{2^{\mathfrak{n}''}}\right) \times B(v_{k(i)}(\tau''), r_{\mathfrak{n}''}(\mathfrak{d}(\tau'')))}\left(x-\frac{t-(\tau''-\mathfrak{d}(\tau''))}{2}v,v\right),
\end{split}
\end{equation}
which implies by the Duhamel formula that for any $x \in B(x_i,\min\{\frac{\mathfrak{d}(\tau'')}{2^{\mathfrak{n}''+1}},\frac{d}{56}\})\cap \Omega$, $v \in B(\overline{v}_{i}(\tau''), \min \{ \frac{\mathfrak{d}(\tau'')}{2^{\mathfrak{n}''+1}(t-(\tau''-\mathfrak{d}(\tau'')))},1 \})$, $\tau'' \leq t \leq T''+   \frac{\mathfrak{d}(\tau'')}{2^{\mathfrak{n}''+1}\left(\frac{2d_{\Omega}}{T''-(\tau''-\mathfrak{d}(\tau''))}+1\right)}$, we have
\begin{equation}\label{one eq to another eq }
\begin{split}
&f(t,x,v)\\
\geq &f\left(\frac{t+(\tau''-\mathfrak{d}(\tau''))}{2},x-\frac{t-(\tau''-\mathfrak{d}(\tau''))}{2}v,v\right)e^{-\frac{t-(\tau''-\mathfrak{d}(\tau''))}{2}C_L\left \langle \mathfrak{R}^1_{\min}(\tau'')+1\right \rangle ^{\gamma^+}}\\
\geq &\alpha_{\mathfrak{n}''}\left(\tau''-\mathfrak{d}(\tau''),\tau''-\frac{\mathfrak{d}(\tau'')}{2},\mathfrak{d}(\tau''),\mathfrak{B}(\tau''),\mathfrak{R}(\tau'')\right)\\
&\mathbf{1}_{ B\left(y^0_{k(i)},\frac{\mathfrak{d}(\tau'')}{2^{\mathfrak{n}''}}\right) \times B(v_{k(i)}(\tau''), r_{\mathfrak{n}''}(\mathfrak{d}(\tau'')))}\left(x-\frac{t-(\tau''-\mathfrak{d}(\tau''))}{2}v,v\right)e^{-C_L\left \langle \mathfrak{R}^1_{\min}(\tau'')+1\right \rangle ^{\gamma^+}}\\
\geq &\alpha_{\mathfrak{n}''}\left(\tau''-\mathfrak{d}(\tau''),\tau''-\frac{\mathfrak{d}(\tau'')}{2},\mathfrak{d}(\tau''),\mathfrak{B}(\tau''),\mathfrak{R}(\tau'')\right) e^{-C_L\left \langle  
\mathfrak{R}^1_{\min}(\tau'')+1 \right \rangle ^{\gamma^+}}.
\end{split}
\end{equation}
Here, the last line is from \eqref{2025 4/18 12:04} and \eqref{2025/04/18 12:09}.
Now, we define $$\delta_{T,1}(\tau''):=\frac{\mathfrak{d}(\tau'')^2}{2^{\mathfrak{n}''+2}(4d_{\Omega}+\mathfrak{d}(\tau''))}.$$ Notice that $\delta_{T,1}(\tau'') \leq \mathfrak{d}(\tau'')$, and we see by \eqref{delta_T smal est} that
\begin{equation*}
  \tau''+\delta_{T,1}(\tau'')  \leq  T''+   \frac{\mathfrak{d}(\tau'')}{2^{\mathfrak{n}''+1}\left(\frac{2d_{\Omega}}{T''-(\tau''-\mathfrak{d}(\tau''))}+1\right)} .
\end{equation*}

As a result, we deduce from \eqref{one eq to another eq } that for all 
$(x,v) \in \Omega \times \mathbb{R}^3$ and $\tau'' \leq t \leq \tau''+\delta_{T,1}(\tau'')$, we have 
\begin{align*}
 &   f(t,x,v) \geq \alpha_{\mathfrak{n}''}\left(\tau''-\mathfrak{d}(\tau''),\tau''-\frac{\mathfrak{d}(\tau'')}{2},\mathfrak{d}(\tau''),\mathfrak{B}(\tau''),\mathfrak{R}(\tau'')\right) e^{-C_L\left \langle  
\mathfrak{R}^1_{\min}(\tau'')+1 \right \rangle ^{\gamma^+}}\\
&\mathbf{1}_{B(x_i,\min\{\frac{\mathfrak{d}(\tau'')}{2^{\mathfrak{n}''+1}},\frac{d}{56}\}) \times B(\overline{v}_{i}(\tau''), \min \{ \frac{\mathfrak{d}(\tau'')}{2^{\mathfrak{n}''+1}(t-(\tau''-\mathfrak{d}(\tau'')))},1 \})}(x,v)\\
&\geq \alpha_{\mathfrak{n}''}\left(\tau''-\mathfrak{d}(\tau''),\tau''-\frac{\mathfrak{d}(\tau'')}{2},\mathfrak{d}(\tau''),\mathfrak{B}(\tau''),\mathfrak{R}(\tau'')\right) e^{-C_L\left \langle  
\mathfrak{R}^1_{\min}(\tau'')+1 \right \rangle ^{\gamma^+}} \\
&\hspace{0.5cm}\mathbf{1}_{B(x_i,\delta_X) \times B(\overline{v}_{i}(\tau''), \delta_V(\tau''))}(x,v),
\end{align*}
 where we used \eqref{painful definition} to derive that $\delta_X \leq  \max\{\frac{\mathfrak{d}(\tau'')}{2^{\mathfrak{n}''+1}},\frac{d}{56}\}$.
In the final line we used the fact that 
\begin{align*}
    &\min \left\{ \frac{\mathfrak{d}(\tau'')}{2^{\mathfrak{n}''+1}(t-(\tau''-\mathfrak{d}(\tau'')))},1 \right\} \\
    \geq & \min \left\{ \frac{\mathfrak{d}(\tau'')}{2^{\mathfrak{n}''+1}((\tau''+\mathfrak{d}(\tau''))-(\tau''-\mathfrak{d}(\tau'')))},1 \right\} \geq \min \left\{\frac{1}{2^{\mathfrak{n}''+2}},1 \right\}.
\end{align*}

\textbf{2. Point far from the boundary.} 

If $d(x_i,\partial \Omega) > \frac{d}{7}$, by \textbf{Lemma \ref{initial cover general}}, we can find $m_1+1 \leq k(i)\leq m_1+m_2$ such that $x_i \in B(y^0_{k(i)}, \frac{d}{16})$, which implies $\overline{y^0_{k(i)}x_i} \subset B(y^0_{k(i)},\frac{d}{16})  \subset \Omega-\Omega_{\frac{d}{16}}$.

We can use \textbf{Corollary \ref{redoable translation proposition}}, since by \eqref{Prop 2.2 eq later}, \eqref{CORO2.4 LOW BOUND} is satisfied with $\tau=\tau''-\mathfrak{d}(\tau'')$,  $\Delta_1=\Delta_1'=\Delta_2=\mathfrak{d}(\tau'')$, $x'=y^0_{k(i)}$, $y=x_i$, $v'=v_{k(i)}(\tau'')$. Notice as before that all the assumptions of \textbf{Corollary \ref{redoable translation proposition}} are satisfied by the fact that $B(y^0_{k(i)},\mathfrak{d}(\tau'')) \subset B(y^0_{k(i)}, \frac{d}{8}) \subset \Omega$ and that $\overline{y^0_{k(i)}x_i} \subset B\left(y^0_{k(i)},\frac{d}{16}\right) \subset  \Omega$.

Thus, we have
\begin{equation}\label{2025 3 26 02:36}
\begin{split}
   & f(t,x,v)\\
   &\geq \mathbb{B}(\tilde{m},\tau''-\mathfrak{d}(\tau''),\mathfrak{d}(\tau''),\mathfrak{d}(\tau''),\mathfrak{B}(\tau''),|v_{k(i)}(\tau'')|)\\ &\hspace{0.4cm}\times\mathbf{1}_{B(x_i,\frac{\mathfrak{d}(\tau'')}{2^{\tilde{m}+1}}) \times B\left(\mathbb{V}(\mathfrak{d}(\tau''),\mathfrak{d}(\tau''),\tau''-\mathfrak{d}(\tau''),\tilde{m},x_i,y^0_{k(i)}), \frac{\mathfrak{d}(\tau'')}{2^{\tilde{m}+1}}\right)}(x,v).
\end{split}
\end{equation}
for $\tilde{m} \geq \mathbb{M}(\mathfrak{d}(\tau''),\mathfrak{d}(\tau''),|v_{k(i)}(\tau'')|,d(\overline{y^0_{k(i)}x_i},\partial \Omega))$, $(x,v) \in \Omega \times \mathbb{R}^3$ and $\mathbb{T}(\mathfrak{d}(\tau''),\mathfrak{d}(\tau''),\tau''-\mathfrak{d}(\tau''),\tilde{m}) \leq t \leq \min\{\tau''+\mathfrak{d}(\tau''),\tau''+\mathbb{D}(\mathfrak{d}(\tau''),\mathfrak{d}(\tau''),\tilde{m}))\}$. 
Next, we define $\delta_{T,2}(\tau''):=\min\{\mathfrak{d}(\tau''),\mathbb{D}(\mathfrak{d}(\tau''),\mathfrak{d}(\tau''),\mathfrak{n}'')\}$.
We notice that 
\begin{equation}
\mathfrak{n}''=\mathbb{M}\left(\mathfrak{d}(\tau''),\mathfrak{d}(\tau''),\mathfrak{R}(\tau''),\frac{d}{16}\right) \geq \mathbb{M}\left(\mathfrak{d}(\tau''),\mathfrak{d}(\tau''),|v_{k(i)}(\tau'')|,d\left(\overline{y^0_{k(i)}x_i},\partial \Omega\right)\right).
\end{equation}
As a result, by \eqref{2025 3 26 02:36}, we have for $(x,v) \in \Omega \times \mathbb{R}^3$ and $\mathbb{T}(\mathfrak{d}(\tau''),\mathfrak{d}(\tau''),\tau''-\mathfrak{d}(\tau''),\mathfrak{n}'') \leq t \leq \min\{\tau''+\mathfrak{d}(\tau''),\tau''+\mathbb{D}(\mathfrak{d}(\tau''),\mathfrak{d}(\tau''),\mathfrak{n}''))\}=\tau''+\delta_{T,2}(\tau'')$
\begin{align*}
   & f(t,x,v)\\
   \geq &\mathbb{B}(\mathfrak{n}'',\tau''-\mathfrak{d}(\tau''),\mathfrak{d}(\tau''),\mathfrak{d}(\tau''),\mathfrak{B}(\tau''),|v_{k(i)}(\tau'')|)\\
   &\mathbf{1}_{B(x_i,\frac{\mathfrak{d}(\tau'')}{2^{\mathfrak{n}''+1}}) \times B\left(\mathbb{V}(\mathfrak{d}(\tau''),\mathfrak{d}(\tau''),\tau''-\mathfrak{d}(\tau''),\mathfrak{n}'',x_i,y^0_{k(i)}), \frac{\mathfrak{d}(\tau'')}{2^{\mathfrak{n}''+1}}\right)}(x,v)\\
     \geq &\mathbb{B}(\mathfrak{n}'',\tau''-\mathfrak{d}(\tau''),\mathfrak{d}(\tau''),\mathfrak{d}(\tau''),\mathfrak{B}(\tau''),|v_{k(i)}(\tau'')|)\\
     &
    \mathbf{1}_{B(x_i,\delta_X) \times B\left(\mathbb{V}(\mathfrak{d}(\tau''),\mathfrak{d}(\tau''),\tau''-\mathfrak{d}(\tau''),\mathfrak{n}'',x_i,y^0_{k(i)}), \delta_V(\tau'') \right)}(x,v)\\
     \geq &\mathbb{B}(\mathfrak{n}'',\tau''-\mathfrak{d}(\tau''),\mathfrak{d}(\tau''),\mathfrak{d}(\tau''),\mathfrak{B}(\tau''),\mathfrak{R}(\tau''))\\
     &\mathbf{1}_{B(x_i,\delta_X) \times B\left(\mathbb{V}(\mathfrak{d}(\tau''),\mathfrak{d}(\tau''),\tau''-\mathfrak{d}(\tau''),\mathfrak{n}'',x_i,y^0_{k(i)}), \delta_V(\tau'') \right)}(x,v),
\end{align*}
where we used \eqref{painful definition}.
We now define
\begin{equation}
\overline{v}_{i}(\tau''):=\mathbb{V}(\mathfrak{d}(\tau''),\mathfrak{d}(\tau''),\tau''-\mathfrak{d}(\tau''),\mathfrak{n}'',x_i,y^0_{k(i)})=\frac{2(x_i-y^0_{k(i)})}{\max\left\{\frac{\mathfrak{d}(\tau'')}{2},\mathfrak{d}(\tau'')-\frac{\mathfrak{d}(\tau'')^2}{2^{\mathfrak{n}''+2}(4d_{\Omega}+\mathfrak{d}(\tau''))} \right\}},
\end{equation}
\begin{align*}
    &\mathfrak{R}^2_{\min}(\tau''):=\frac{2d_{\Omega}}{\max\left\{\frac{\mathfrak{d}(\tau'')}{2},\mathfrak{d}(\tau'')-\frac{\mathfrak{d}(\tau'')^2}{2^{\mathfrak{n}''+2}(4d_{\Omega}+\mathfrak{d}(\tau''))} \right\}}
\end{align*}
and finish the case by noticing that 

\begin{equation*}
    \mathbb{T}(\mathfrak{d}(\tau''),\mathfrak{d}(\tau''),\tau''-\mathfrak{d}(\tau''),\mathfrak{n}'') < \tau'' 
\end{equation*}
and that
\begin{equation*}
    |\overline{v}_{i}(\tau'')| \leq \mathfrak{R}^2_{\min}(\tau'').
\end{equation*}

We conclude the proof by defining 
\begin{equation}\label{2025 11/12 16:21}
    \delta_{T}(\tau''):=\min\{\delta_{T,1}(\tau''),\delta_{T,2}(\tau''),\delta_{X}(\tau'')\},
\end{equation}

\begin{equation}\label{R_min defi}
    \mathfrak{R}_{\min}(\tau''):=\max\{\mathfrak{R}^1_{\min}(\tau'') , \mathfrak{R}^2_{\min}(\tau''),2\},
\end{equation}
and 
\begin{equation*}
\begin{split}
   &a_0(\tau''):=  \min \bigg\{ \alpha_{\mathfrak{n}''}\left(\tau''-\mathfrak{d}(\tau''),\tau''-\frac{\mathfrak{d}(\tau'')}{2},\mathfrak{d}(\tau''),\mathfrak{B}(\tau''),\mathfrak{R}(\tau'')\right) e^{-C_L\left \langle  
\mathfrak{R}^1_{\min}(\tau'')+1 \right \rangle ^{\gamma^+}},\\
   &\hspace{2.5cm} \mathbb{B}(\mathfrak{n}'',\tau''-\mathfrak{d}(\tau''),\mathfrak{d}(\tau''),\mathfrak{d}(\tau''),\mathfrak{B}(\tau''),\mathfrak{R}(\tau'')) \bigg\}. 
\end{split}
\end{equation*}
\end{proof}

\section{Maxwellian bound for the cut-off case for non-fully specular reflection condition}

In this section, we consider the lower bound problem in the cut-off case with a non-fully specular boundary condition. 

\begin{proposition} \label{positve integral estimate}
    Suppose that $\Omega \subset \mathbb{R}^3$ satisfies \textbf{Assumption A} and that the collision kernel $B$ satisfies \textbf{Assumption B} with $\alpha \in [0,1]$. We consider a continuous mild solution $f(t,x,v)$ of \eqref{Boltzmann equation}--\eqref{boundary condition} . Given $\tau'' \in (0,\Delta^0)$ with $\Delta^0>0$  given in \textbf{Proposition \ref{initial point lower bound on a single point}}, there exist $b(\tau'')$, $\delta_{T,I}(\tau'')>0$, which depends on $\Omega, M, E_f$, and the modulus of continuity of $f_0$, such that for $t \in [\tau'', \tau''+\delta_{T,I}(\tau'')]$, 

    \begin{equation}
        \forall x \in \partial \Omega, \ \int_{v_*\cdot n(x)>0}f(t,x,v_*)(v_*\cdot n(x))\,dv_*>b(\tau'').
    \end{equation}
\end{proposition}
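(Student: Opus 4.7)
The plan is to reduce the claim to the pointwise lower bound provided by \textbf{Proposition \ref{initial point lowerbound general sense}} and to exploit the explicit geometric form of the velocities $\bar v_i(\tau'')$ produced in its ``point near the boundary'' branch, so that at every $x\in\partial\Omega$ the associated velocity ball $B(\bar v_i(\tau''),\delta_V(\tau''))$ lies inside the outgoing half-space $\{v_*\cdot n(x)>0\}$ and stays away from its boundary.

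First I would apply \textbf{Proposition \ref{initial point lowerbound general sense}} with any fixed $N\ge 1$ to get parameters $\delta_X(\tau''),\delta_V(\tau''),\delta_T(\tau''),a_0(\tau'')>0$, centers $\{x_i\}\subset\Omega$ covering $\overline{\Omega}$ by $B(x_i,\delta_X(\tau'')/2^N)$, and vectors $\bar v_i(\tau'')$ with
\begin{equation*}
f(t,y,v)\;\ge\;a_0(\tau'')\,1_{B(\bar v_i(\tau''),\delta_V(\tau''))}(v)\quad\text{on}\ [\tau'',\tau''+\delta_T(\tau'')]\times[B(x_i,\delta_X(\tau''))\cap\overline{\Omega}]\times\mathbb R^3,
\end{equation*}
and set $\delta_{T,I}(\tau''):=\delta_T(\tau'')$. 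Given $x\in\partial\Omega$ I would pick an $i$ with $|x-x_i|<\delta_X(\tau'')/2^N$. Since $\delta_X(\tau'')\le d/56<d/7$ (from the construction inside the proof of \textbf{Proposition \ref{initial point lowerbound general sense}}), $d(x_i,\partial\Omega)<d/7$, so $x_i$ belongs to the ``point near the boundary'' case of that proof. That case delivers an index $k(i)\in\{1,\ldots,m_1\}$ with $|x_i-x^0_{k(i)}|\le 7d/24$ and the explicit formula
\begin{equation*}
\bar v_i(\tau'')=\frac{2\bigl((x_i-x^0_{k(i)})+\tfrac{3d}{4}n(x^0_{k(i)})\bigr)}{T_0},\qquad T_0\in\bigl[\tfrac{\Delta}{2},\Delta\bigr],\ \Delta\le\tfrac{d}{8}.
\end{equation*}

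I would then estimate $\bar v_i(\tau'')\cdot n(x)$ by splitting the numerator into tangential and normal pieces. The graph representation of \textbf{Lemma \ref{initial cover over boundary}} gives $n(x^0_{k(i)})\cdot n(x)=1/\sqrt{|\nabla\phi_{k(i)}|^2+1}\ge 99/100$ throughout $\partial\Omega\cap B(x^0_{k(i)},d)$; since also $|x-x^0_{k(i)}|\le\delta_X(\tau'')/2^N+7d/24<d$ and $|(x_i-x^0_{k(i)})\cdot n(x)|\le 7d/24$, this yields the clean lower bound
\begin{equation*}
\bar v_i(\tau'')\cdot n(x)\;\ge\;\frac{2}{T_0}\Bigl[\tfrac{3d}{4}\cdot\tfrac{99}{100}-\tfrac{7d}{24}\Bigr]\;\ge\;\frac{5d}{6T_0}\;>\;0.
\end{equation*}
Because $\delta_V(\tau'')$ is bounded by internal thresholds of the form $\Delta/2^{m'+2}$ while $T_0\le\Delta\le d/8$, a direct comparison gives $\delta_V(\tau'')\le\tfrac12\bar v_i(\tau'')\cdot n(x)$, so the whole ball $B(\bar v_i(\tau''),\delta_V(\tau''))$ is contained in $\{v_*\cdot n(x)>0\}$ and each of its elements satisfies $v_*\cdot n(x)\ge\tfrac{5d}{12T_0}$. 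The conclusion then follows by elementary integration:
\begin{equation*}
\int_{v_*\cdot n(x)>0}f(t,x,v_*)(v_*\cdot n(x))\,dv_*\;\ge\;a_0(\tau'')\int_{B(\bar v_i(\tau''),\delta_V(\tau''))}(v_*\cdot n(x))\,dv_*\;\ge\;\frac{4\pi}{3}a_0(\tau'')\delta_V(\tau'')^3\cdot\frac{5d}{12T_0}\;>\;b(\tau'')
\end{equation*}
uniformly in $x\in\partial\Omega$ and $t\in[\tau'',\tau''+\delta_{T,I}(\tau'')]$, where $b(\tau'')$ is taken to be, say, half of the explicit right-hand side in order to secure strict inequality.

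The main obstacle is not analytic but bookkeeping: I must verify that every boundary point is served by an index $i$ falling in the near-boundary case of \textbf{Proposition \ref{initial point lowerbound general sense}} (so the outward-pointing formula for $\bar v_i(\tau'')$ is available), and that $\delta_V(\tau'')$ is genuinely dominated by $\bar v_i(\tau'')\cdot n(x)$. Both rely on the near-flatness estimate $|\nabla\phi_{k(i)}|<d/100$ of \textbf{Lemma \ref{initial cover over boundary}} and on the specific choice of $\delta_X(\tau''),\delta_V(\tau'')$ in Section~3, so the argument reduces to checking inequalities rather than doing new analysis.
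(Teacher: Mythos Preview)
Your proposal is correct and follows essentially the same geometric mechanism as the paper, but is packaged differently. The paper, instead of invoking \textbf{Proposition~\ref{initial point lowerbound general sense}} and then extracting the explicit form of $\bar v_i(\tau'')$ from its proof, simply \emph{repeats} the ``point near the boundary'' propagation argument directly with the target point equal to $x_\partial\in\partial\Omega$ itself, obtaining a velocity $v_{x_\partial}(\tau'')=\tfrac{2(x_\partial-y^0_i)}{T-(\tau''-\Delta)}$ and then computing $v_{x_\partial}(\tau'')\cdot n(x_\partial)\ge \tfrac{5d}{8\Delta}$ via the same $|\nabla\phi|<d/100$ estimate. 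Your version is slightly more economical because it reuses the statement of \textbf{Proposition~\ref{initial point lowerbound general sense}} on $B(x_i,\delta_X)\cap\overline{\Omega}$ rather than redoing the Duhamel step, at the cost of having to open the proof of that proposition to read off the formula for $\bar v_i(\tau'')$ and to justify that the relevant $x_i$ falls in the near-boundary branch. The paper's version is marginally cleaner because the velocity points exactly from $y^0_i$ to the boundary point, so the inner product is taken with the normal at the same point; your argument must instead compare $n(x)$ at $x\in\partial\Omega$ with a velocity built from the nearby interior center $x_i$, which introduces the extra (but harmless) term $|(x_i-x^0_{k(i)})\cdot n(x)|\le 7d/24$. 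Both routes rest on the identical analytic ingredients and yield the same conclusion.
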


\begin{remark}
    Although the estimate remains valid for $\alpha=1$, in our analysis, we only employ it for $\alpha \in [0,1)$.
\end{remark}

\begin{proof}

Given $0<\tau''<\Delta^0$, we choose $0<d< \min\{1, \delta\}$ and notice that by \textbf{Proposition \ref{initial point lower bound near boundary}} there exist $\mathfrak{B}(\tau'')>0$, $\frac{d}{10}\geq \mathfrak{d}(\tau'')>0$, $\mathfrak{R}(\tau'')$, $\{v_i(\tau'')\}_{i=1}^{m_1+m_2} \in B(0,\mathfrak{R}(\tau''))$ such that
\begin{equation}\label{Prop 2.2 eq later}
    f(t,x,v) \geq \ \mathfrak{B}(\tau'')\mathbf{1}_{B(y^0_i,\mathfrak{d}(\tau'')) \times B(v_i(\tau''),\mathfrak{d}(\tau''))}(x,v) ,\ \forall t \in [\tau''-\mathfrak{d}(\tau''), \tau''+\mathfrak{d}(\tau'')] ,
\end{equation}
for any $1\leq i \leq m_1+m_2$.
   Given $x_{\partial} \in \partial \Omega$, by \textbf{Lemma \ref{initial cover over boundary}}, there exists $1\leq i \leq m_1$ such that $x_{\partial} \in B(x^0_i,\frac{d}{8})$.

  Then, we proceed with the same method as in \textbf{Proposition \ref{initial point lowerbound general sense}}, this time we propagate the lower bound from $x^0_i$ to $x_{\partial}$ 

   We define
 \begin{equation}\label{m' new defi}
     \mathfrak{m}_{\partial}:= \max\left\{1,\left\lceil \log_{\frac{3\sqrt{2}}{4}}\left(\frac{\frac{4d_{\Omega}}{\mathfrak{d}(\tau'')}+\mathfrak{R}(\tau'')+1}{\mathfrak{d}(\tau'')}\right)\right\rceil 
 \right\},
 \end{equation} and define

\begin{align}
    T_{\partial}:=\max\left\{\tau''-\frac{\mathfrak{d}(\tau'')}{2},\tau''-\frac{\mathfrak{d}(\tau'')^2}{2^{\mathfrak{m}_{\partial}+2}(4d_{\Omega}+\mathfrak{d}(\tau''))} \right\},
\end{align}

\begin{align}\label{v partial defi}
&v_{x_{\partial}}(\tau''):=\frac{2(x_{\partial}-y^0_{i})}{T_{\partial}-(\tau''-\mathfrak{d}(\tau''))}=\frac{2(x_{\partial}-y^0_{i})}{\max\left\{\frac{\mathfrak{d}(\tau'')}{2},\mathfrak{d}(\tau'')-\frac{\mathfrak{d}(\tau'')^2}{2^{\mathfrak{m}_{\partial}+2}(4d_{\Omega}+\mathfrak{d}(\tau''))} \right\}},
\end{align}
\begin{align}
&R^{\partial}_{\min}(\tau''):=\frac{2d_{\Omega}}{\max\left\{\frac{\mathfrak{d}(\tau'')}{2},\mathfrak{d}(\tau'')-\frac{\mathfrak{d}(\tau'')^2}{2^{\mathfrak{m}_{\partial}+2}(4d_{\Omega}+\mathfrak{d}(\tau''))} \right\}}.
\end{align}

Notice that, by the definition of notation of $R^{\partial}_{\min}(\tau'')$, we have
\begin{equation}\label{2025 3 26 02:55}
    |v_{x_{\partial}}(\tau'')| \leq R^{\partial}_{\min}(\tau'').
\end{equation}

As long as $v \in B\left(v_{x_{\partial}}(\tau''), \min \left\{ \frac{\mathfrak{d}(\tau'')}{2^{\mathfrak{m}_{\partial}+1}(t-(\tau''-\mathfrak{d}(\tau'')))},1 \right\}\right)$, $T_{\partial} \leq t \leq T_{\partial}+   \frac{\mathfrak{d}(\tau'')}{2^{\mathfrak{m}_{\partial}+1}\left(\frac{2d_{\Omega}}{T_{\partial}-(\tau''-\mathfrak{d}(\tau''))}+1\right)}$, we have

\begin{equation}\label{2025 3 26 03:12}
\begin{split}
       & \left|x_{\partial}-\frac{t-(\tau''-\mathfrak{d}(\tau''))}{2}v-y^0_{i}\right| < \frac{\mathfrak{d}(\tau'')}{2^{\mathfrak{m}_{\partial}+1}}.
\end{split}
\end{equation}

  Furthermore, by the definition of $\mathfrak{m}_{\partial}$ in \eqref{m' new defi}, we have
\begin{equation}\label{2025 3 26 03:13}
r_{\mathfrak{m}_{\partial}}(\mathfrak{d}(\tau'')) \geq \mathfrak{R}(\tau'')+ \frac{4d_{\Omega}}{\mathfrak{d}(\tau'')}+1 \geq |v_{i}(\tau'')|+\left|\frac{2(x_{\partial}-y^0_{i})}{T_{\partial}-(\tau''-\mathfrak{d}(\tau''))}\right|+1 \geq |v_{i}(\tau'')|+|v|,
\end{equation}
which implies $v \in B(v_{i}(\tau''), r_{\mathfrak{m}_{\partial}}(\mathfrak{d}(\tau'')))$. Here, we recall $\mathfrak{R}(\tau'')$ again in \textbf{Proposition \ref{initial point lower bound near boundary}}.
Next, notice that 
\begin{equation}\label{3/9 07:49}
    |x_{\partial}-x^0_{i}| \leq \frac{d}{8},
\end{equation}
and that 
\begin{equation}\label{3/9 07:50}
    x_{\partial}-\frac{t-(\tau''-\mathfrak{d}(\tau''))}{2}v \in B(y^0_{i},\frac{\mathfrak{d}(\tau'')}{2^{\mathfrak{m}_{\partial}+1}})\subset B(y^0_{i},\mathfrak{d}(\tau'')) \subset B(y^0_{i},\frac{d}{8})
\end{equation}
by the fact that $\mathfrak{d}(\tau'') \leq \frac{d}{8}$.
As before, we note that the segment $\left\{ x_{\partial}-sv|\ 0 \leq s \leq \frac{t-(\tau''-\mathfrak{d}(\tau''))}{2} \right\}$ between $x_{\partial}$ and $x_{\partial}-\frac{t-(\tau''-\mathfrak{d}(\tau''))}{2}v \in B\left(y^0_{i},\frac{\mathfrak{d}(\tau'')}{2^{\mathfrak{m}_{\partial}}}\right)$ also lies within $\Omega$. 
We also notice that 
\begin{equation}\label{painful saturday 1}
\begin{split}
      &T_{\partial}+   \frac{\mathfrak{d}(\tau'')}{2^{\mathfrak{m}_{\partial}+1}\left(\frac{2d_{\Omega}}{T_{\partial}-(\tau''-\mathfrak{d}(\tau''))}+1\right)}\\
      \geq& 
    \max\left\{\tau''-\frac{\mathfrak{d}(\tau'')}{2},\tau''-\frac{\mathfrak{d}(\tau'')^2}{2^{\mathfrak{m}_{\partial}+2}(4d_{\Omega}+\mathfrak{d}(\tau''))} \right\}+  \frac{\mathfrak{d}(\tau'')^2}{2^{\mathfrak{m}_{\partial}+1}\left(4d_{\Omega}+\mathfrak{d}(\tau'')\right)}\\
    \geq& \tau''+\frac{\mathfrak{d}(\tau'')^2}{2^{\mathfrak{m}_{\partial}+2}\left(4d_{\Omega}+\mathfrak{d}(\tau'')\right)}.
\end{split}
\end{equation}

Then, using \textbf{Proposition \ref{ini to multi}} (note that as before by \eqref{Prop 2.2 eq later}, the assumption \eqref{ini to multi assume eq} holds with $\tau=\tau''-\mathfrak{d}(\tau'') $, $\Delta_1=2\mathfrak{d}(\tau''),\ \Delta_2=\mathfrak{d}(\tau'')$, $x'=y^0_{i} $, and $v'=v_{i}(\tau'')$, we also have $B(y^0_{i},\mathfrak{d}(\tau'')) \subset B(y^0_{i},\frac{d}{8}) \subset \Omega$ by \textbf{Lemma \ref{initial cover general}}), we have for $\tau'' \leq t \leq T_{\partial}+   \frac{\mathfrak{d}(\tau'')}{2^{\mathfrak{m}_{\partial}+1}\left(\frac{2d_{\Omega}}{T_{\partial}-(\tau''-\mathfrak{d}(\tau''))}+1\right)}$ (see \textbf{Remark \ref{alpha monotone thing}} and the fact that $T_{\partial}+   \frac{\mathfrak{d}(\tau'')}{2^{\mathfrak{m}_{\partial}+1}\left(\frac{2d_{\Omega}}{T_{\partial}-(\tau''-\mathfrak{d}(\tau''))}+1\right)} \leq \tau'' + \mathfrak{d}(\tau'')$)

\begin{equation}
\begin{split}
    &f\left(\frac{t+(\tau''-\mathfrak{d}(\tau''))}{2},x_{\partial}-\frac{t-(\tau''-\mathfrak{d}(\tau''))}{2}v,v\right)\\
    \geq& \alpha_{\mathfrak{m}_{\partial}}\left(\tau''-\mathfrak{d}(\tau''),\tau''-\frac{\mathfrak{d}(\tau'')}{2},\mathfrak{d}(\tau''),\mathfrak{B}(\tau''),\mathfrak{R}(\tau'')\right)\\
    &\mathbf{1}_{ B\left(y^0_{i},\frac{\mathfrak{d}(\tau'')}{2^{\mathfrak{m}_{\partial}}}\right) \times B\left(v_{i}(\tau''), r_{\mathfrak{m}_{\partial}}(\mathfrak{d}(\tau''))\right)}\left(x_{\partial}-\frac{t-(\tau''-\mathfrak{d}(\tau''))}{2}v,v\right),
\end{split}
\end{equation}
which implies by the Duhamel formula that for any $v \in B(v_{x_{\partial}}(\tau''), \min \{ \frac{\mathfrak{d}(\tau'')}{2^{\mathfrak{m}_{\partial}+1}(t-(\tau''-\mathfrak{d}(\tau'')))},1 \})$, $\tau'' \leq t \leq T_{\partial}+   \frac{\mathfrak{d}(\tau'')}{2^{\mathfrak{m}_{\partial}+1}\left(\frac{2d_{\Omega}}{T_{\partial}-(\tau''-\mathfrak{d}(\tau''))}+1\right)}$, we have (notice that by \eqref{2025 3 26 02:55}, we have $|v| \leq |v_{x_{\partial}}(\tau'')|+1 \leq R^{\partial}_{\min}(\tau'')+1$)
\begin{equation}\label{painful saturday 2}
\begin{split}
&f(t,x_{\partial},v)\\
\geq& f\left(\frac{t+(\tau''-\mathfrak{d}(\tau''))}{2},x_{\partial}-\frac{t-(\tau''-\mathfrak{d}(\tau''))}{2}v,v\right)e^{-\frac{t-(\tau''-\mathfrak{d}(\tau''))}{2}C_L\left \langle R^{\partial}_{\min}(\tau'')+1\right \rangle ^{\gamma^+}}\\
\geq& \alpha_{\mathfrak{m}_{\partial}}\left(\tau''-\mathfrak{d}(\tau''),\tau''-\frac{\mathfrak{d}(\tau'')}{2},\mathfrak{d}(\tau''),\mathfrak{B}(\tau''),\mathfrak{R}(\tau'')\right)\\
&\mathbf{1}_{ B\left(y^0_{i},\frac{\mathfrak{d}(\tau'')}{2^{\mathfrak{m}_{\partial}}}\right) \times B(v_{i}(\tau''), r_{\mathfrak{m}_{\partial}}(\mathfrak{d}(\tau'')))}\left(x_{\partial}-\frac{t-(\tau''-\mathfrak{d}(\tau''))}{2}v,v\right)e^{-C_L\left \langle R^{\partial}_{\min}(\tau'')+1\right \rangle ^{\gamma^+}}.
\end{split}
\end{equation}
Next, we recall \eqref{2025 3 26 03:12} and \eqref{2025 3 26 03:13}: 
\begin{equation}
    x_{\partial}-\frac{t-(\tau''-\mathfrak{d}(\tau''))}{2}v \in B\left(y^0_{i},\frac{\mathfrak{d}(\tau'')}{2^{\mathfrak{m}_{\partial}}}\right), v \in B(v_{i}(\tau''), r_{\mathfrak{m}_{\partial}}(\mathfrak{d}(\tau''))).
\end{equation}

So, we have
\begin{equation}
\begin{split}
&\alpha_{\mathfrak{m}_{\partial}}(\tau''-\mathfrak{d}(\tau''),\frac{2\tau''-\mathfrak{d}(\tau'')}{2},\mathfrak{d}(\tau''),\mathfrak{B}(\tau''),\mathfrak{R}(\tau''))\\
&\mathbf{1}_{ B\left(y^0_{i},\frac{\mathfrak{d}(\tau'')}{2^{\mathfrak{m}_{\partial}}}\right) \times B(v_{i}(\tau''), r_{\mathfrak{m}_{\partial}}(\mathfrak{d}(\tau'')))}(x_{\partial}-\frac{t-(\tau''-\mathfrak{d}(\tau''))}{2}v,v)e^{-C_L\left \langle R^{\partial}_{\min}(\tau'')+1\right \rangle ^{\gamma^+}}\\
\geq &\alpha_{\mathfrak{m}_{\partial}}(\tau''-\mathfrak{d}(\tau''),\frac{2\tau''-\mathfrak{d}(\tau'')}{2},\mathfrak{d}(\tau''),\mathfrak{B}(\tau''),\mathfrak{R}(\tau'')) e^{-C_L\left \langle  
R^{\partial}_{\min}(\tau'')+1 \right \rangle ^{\gamma^+}}.
\end{split}
\end{equation}

Now, we define $\delta_{T,I}(\tau''):=\frac{\mathfrak{d}(\tau'')^2}{2^{\mathfrak{m}_{\partial}+2}(4d_{\Omega}+\mathfrak{d}(\tau''))}$(notice as before that $\delta_{T,I}(\tau'') \leq \mathfrak{d}(\tau'')$), and notice by \eqref{painful saturday 1} that
\begin{equation}
    T_{\partial}+   \frac{\mathfrak{d}(\tau'')}{2^{\mathfrak{m}_{\partial}+1}\left(\frac{2d_{\Omega}}{T_{\partial}-(\tau''-\mathfrak{d}(\tau''))}+1\right)}
    \geq \tau''+\delta_{T,I}(\tau'').
\end{equation}

As a result, we have by \eqref{painful saturday 2} that for all 
$(x,v) \in \Omega \times \mathbb{R}^3$ and $\tau'' \leq t \leq \tau''+\delta_{T,I}(\tau'')$ ,
\begin{align*}
 &   f(t,x_{\partial},v)\\
 \geq& \alpha_{\mathfrak{m}_{\partial}}(\tau''-\mathfrak{d}(\tau''),\frac{2\tau''-\mathfrak{d}(\tau'')}{2},\mathfrak{d}(\tau''),\mathfrak{B}(\tau''),\mathfrak{R}(\tau'')) e^{-C_L\left \langle  
R^{\partial}_{\min}(\tau'')+1 \right \rangle ^{\gamma^+}}\\
&\mathbf{1}_{ B(v_{x_{\partial}}(\tau''), \min \{ \frac{\mathfrak{d}(\tau'')}{2^{\mathfrak{m}_{\partial}+1}(t-(\tau''-\mathfrak{d}(\tau'')))},1 \})}(v)\\
\geq &\alpha_{\mathfrak{m}_{\partial}}(\tau''-\mathfrak{d}(\tau''),\frac{2\tau''-\mathfrak{d}(\tau'')}{2},\mathfrak{d}(\tau''),\mathfrak{B}(\tau''),\mathfrak{R}(\tau'')) e^{-C_L\left \langle  
R^{\partial}_{\min}(\tau'')+1 \right \rangle ^{\gamma^+}} \mathbf{1}_{B(v_{x_{\partial}}(\tau''),\frac{1}{2^{\mathfrak{m}_{\partial}+2}})}(v).
\end{align*}
In the final line, we use the fact again that 
\begin{equation*}
    \min \left\{ \frac{\mathfrak{d}(\tau'')}{2^{\mathfrak{m}_{\partial}+1}(t-(\tau''-\mathfrak{d}(\tau'')))},1 \right\} \geq \min \left\{ \frac{\mathfrak{d}(\tau'')}{2^{\mathfrak{m}_{\partial}+1}((\tau''+\mathfrak{d}(\tau''))-(\tau''-\mathfrak{d}(\tau'')))},1 \right\} \geq \frac{1}{2^{\mathfrak{m}_{\partial}+2}}.
\end{equation*}
Next, we define 
\begin{align*}
    &A':=\alpha_{\mathfrak{m}_{\partial}}(\tau''-\mathfrak{d}(\tau''),\frac{2\tau''-\mathfrak{d}(\tau'')}{2},\mathfrak{d}(\tau''),\mathfrak{B}(\tau''),\mathfrak{R}(\tau'')) e^{-C_L\left \langle  
R^{\partial}_{\min}(\tau'')+1 \right \rangle ^{\gamma^+}},\\
&\delta_{V,I}(\tau''):=\frac{1}{2^{\mathfrak{m}_{\partial}+2}}.
\end{align*}
  We conclude that 
\begin{equation}
\begin{split}
&f(t,x_{\partial},v)\geq A'\mathbf{1}_{ B(v_{x_{\partial}}(\tau''),\delta_{V,I}(\tau''))}(v),\\
\end{split}
\end{equation}
for $\tau'' \leq t \leq \tau''+\delta_{T,I}(\tau'')$. Here we recall as in \eqref{v partial defi} that $v_{x_{\partial}}(\tau'')=\frac{2(x_{\partial}-y^0_i)}{\max\left\{\frac{\mathfrak{d}(\tau'')}{2},\mathfrak{d}(\tau'')-\frac{\mathfrak{d}(\tau'')^2}{2^{\mathfrak{m}_{\partial}+2}(4d_{\Omega}+\mathfrak{d}(\tau''))} \right\}}$.

Next, by parameterizing $$ x_{\partial}= x_i^{0}+x_{\partial,1}e^1_{i}+x_{\partial,2}e^2_{i}-\phi_i(x_{\partial,1},x_{\partial,2})n(x_i^{0}), $$ we find that 

    \begin{equation}
\begin{split}
  &v_{x_{\partial}}(\tau'')\cdot n(x_{\partial})\\
   =&\frac{2}{\max\left\{\frac{\mathfrak{d}(\tau'')}{2},\mathfrak{d}(\tau'')-\frac{\mathfrak{d}(\tau'')^2}{2^{\mathfrak{m}_{\partial}+2}(4d_{\Omega}+\mathfrak{d}(\tau''))} \right\}}[(x_i^0-y_i^0)+(x_{\partial}-x_i^0)]\cdot n(x_{\partial})\\
   \geq& \frac{2}{\max\left\{\frac{\mathfrak{d}(\tau'')}{2},\mathfrak{d}(\tau'')-\frac{\mathfrak{d}(\tau'')^2}{2^{\mathfrak{m}_{\partial}+2}(4d_{\Omega}+\mathfrak{d}(\tau''))} \right\}\sqrt{|\nabla\phi_i(x_{\partial,1},x_{\partial,2})|^2+1}}\\
  &\times\left[\left(0,0,-\frac{3d}{4}\right)\cdot (\partial_x\phi_i(x_{\partial,1},x_{\partial,2}),\partial_y\phi_i(x_{\partial,1},x_{\partial,2}),-1)-\left|(x_{\partial}-x_i^0)\right|\right]\\
   \geq &\frac{5d}{8\mathfrak{d}(\tau'')}.
\end{split}  
\end{equation}

Then, we have
\begin{equation*}
 v\cdot n(x_{\partial}) \geq \frac{5d}{16\mathfrak{d}(\tau'')}, \ \forall v \in B\left(v_{x_{\partial}}(\tau''), \frac{5d}{16\mathfrak{d}(\tau'')} \right).      
\end{equation*}

Hence, for $t \in [\tau'',\tau''+\delta_{T,I}(\tau'')]$, $x_{\partial} \in \partial \Omega$,

    \begin{equation}
\begin{split}
  &  \int_{v_*\cdot n(x_{\partial})>0}f(t,x_{\partial},v_*)(v_*\cdot n(x_{\partial}))\,dv_*\\
   \geq& \int_{B\left(v_{x_{\partial}}(\tau''),\min\{ \delta_{V,I}(\tau''),\frac{5d}{16\mathfrak{d}(\tau'')} \} \right)} A' \frac{5d}{16\mathfrak{d}(\tau'')}\,dv_*\\
 \geq& A' \frac{5d}{16\mathfrak{d}(\tau'')}\frac{4\pi(\min\{ \delta_{V,I}(\tau''),\frac{5d}{16\mathfrak{d}(\tau'')} \})^3}{3}\\
    =:&b(\tau''),
\end{split}  
\end{equation}
and we finish the proof.

\end{proof}

The rest of the proof of existence of lower bound is exactly the same as in Briant's paper \cite{Bri1}(from Prop. 3.9, pp. 21–25). Before we introduce the next proposition, we define some constants here:

\begin{definition}\label{2025 09/23 11:51}
  Given $\tau'',\tau'>0$, $N \in \mathbb{N}$, $0\leq\alpha<1$, $\delta_X(\tau'')$, $\delta_V(\tau'')$, $\mathfrak{R}_{\min}(\tau'')$ as in \textbf{Prop. \ref{initial point lowerbound general sense}} and a sequence $\{ \xi_n \}_{n=1}^{\infty}$ satisfying $0\leq\xi_n \leq\frac{1}{4}$ for $n \in \mathbb{N}$,  we define the following numbers:
  \begin{equation}\label{r'' defi}
r''_{n}:=
\begin{cases}
\delta_V(\tau''),\ n=0\\
\sqrt{2}(1-\xi_{n})r''_{n-1},\ n \geq 1,
\end{cases}
  \end{equation}

  \begin{equation}\label{3/9 05:07}
b_{n,\tau''}(\tau'):=
\begin{cases}
\min\{b(\tau''),a_{0}(\tau'')\}&,\ n=0\\
\min \begin{Bmatrix}
    (1-\alpha)b(\tau'')e^{-C_L(\tau'-\tau'')\langle \mathfrak{R}_{\min}(\tau'')+r''_{1} \rangle^{\gamma^+}}\\
    \times\frac{1}{2\pi T_B^2}e^{-\frac{(\mathfrak{R}_{\min}(\tau'')+r''_{1} )^2}{2T_B}},\\
(\min(a_{0}(\tau''),b(\tau')))^2C_Qr_{0}^{''3+\gamma}\xi^{\frac{1}{2}}_{1} \\\times \frac{\tau'-\tau''}{2^{2}(\mathfrak{R}_{\min}(\tau'')+r''_{1})}e^{-C_L\frac{(\tau'-\tau'')\langle \mathfrak{R}_{\min}(\tau'')+r''_{1} \rangle^{\gamma^+}}{2(\mathfrak{R}_{\min}(\tau'')+r''_{1})}} 
\end{Bmatrix} & ,\ n=1\\
(1-\alpha)b(\tau'')e^{-C_L(\tau'-\tau'')\langle \mathfrak{R}_{\min}(\tau'')+r''_{n} \rangle^{\gamma^+}} \frac{1}{2\pi T_B^2}e^{-\frac{(\mathfrak{R}_{\min}(\tau'')+r''_{n} )^2}{2T_B}}&,\ n \geq 2,
\end{cases}
  \end{equation}
where $b(\tau'')$ is introduced in \textbf{Prop. \ref{positve integral estimate}}, and

  \begin{equation}\label{3/9 05:08}
a_{n,\tau''}(\tau'):=
\begin{cases}
a_0(\tau'')&,\ n=0\\
(\min(a_{n-1,\tau''}(\tau'),b_{n-1,\tau''}(\tau')))^2C_Qr_{n-1}^{''3+\gamma}\xi^{\frac{1}{2}}_{n}\\
\times \frac{\tau'-\tau''}{2^{n+1}(\mathfrak{R}_{\min}(\tau'')+r''_{n})}e^{-C_L\frac{(\tau'-\tau'')\langle \mathfrak{R}_{\min}(\tau'')+r''_{n} \rangle^{\gamma^+}}{2^{n}(\mathfrak{R}_{\min}(\tau'')+r''_{n})}}&,\ n \geq 1,
\end{cases}
  \end{equation}
where $a_0(\tau'')$ is defined in \textbf{Proposition \ref{initial point lowerbound general sense}}.    
\end{definition}

We now introduce the following proposition.
\begin{proposition}\label{final lower bound extension}
    Suppose that $\Omega \subset \mathbb{R}^3$ satisfies \textbf{Assumption A} and that the collision kernel $B$ satisfies \textbf{Assumption B} with $\nu<0$. Let $f$ be the continuous mild solution of \eqref{Boltzmann equation}--\eqref{boundary condition} with $0 \leq\alpha<1$. We fix $\tau'' \in (0,\Delta^0)$, where $\Delta^0>0$ is given in \textbf{Proposition \ref{initial point lower bound on a single point}}.
   Then, there exist $\delta_{T,D}(\tau'')>0$, which depends on $\Omega, M, E_f$, and the modulus of continuity of $f_0$, such that for every $\tau' \in (\tau'',\tau''+\delta_{T,D}(\tau'')]$ and every $N \in \mathbb{N}$, for each $0\leq n \leq N$ and each $1 \leq i \leq \mathcal{N}(N,\tau'')$, we have 

   \begin{equation}\label{3/9 12:43}
   \begin{split}
       &  \ \ \ \ \   f(t,x,v) \geq \min\left(a_{n,\tau''}(\tau'),b_{n,\tau''}(\tau')\right)\mathbf{1}_{B(\overline{v}_{i}(\tau''),r''_{n})}(v),\\
       &\forall (t,x) \in \left[\tau''+(\tau'-\tau'')\left(1-\frac{1}{2^{n+1}(\mathfrak{R}_{\min}(\tau'')+r''_{n})}\right),\tau'\right]\times \left[B\left(x_i,\frac{\delta_X(\tau'')}{2^n}\right)\cap \overline{\Omega}\right],
   \end{split}
   \end{equation}
   where $x_i$ and $\overline{v}_{i}(\tau'')$ are defined in \textbf{Proposition \ref{initial point lowerbound general sense}} and depend on $N$.
   Here, we recall that the notation $\delta_X(\tau'')$, $\mathfrak{R}_{\min}(\tau'')$, $\mathcal{N}(N,\tau'')$ are defined in \textbf{Proposition \ref{initial point lowerbound general sense}} and that $a_{n,\tau''}(\tau')>0,\,b_{n,\tau''}(\tau')>0$ and $r''_{n}$ are defined in \textbf{Definition \ref{2025 09/23 11:51}}.
\end{proposition}

\begin{proof}
    The proof follows the same induction-on-$n$ scheme as Proposition 3.9 of \cite{Bri1}. 
    Notice that we have $\mathfrak{R}_{\min}(\tau'')\geq 1$ in \textbf{Proposition \ref{initial point lowerbound general sense}}.  Then, we define (we recall that the notion $\delta_{T,I}(\tau'')$ provided in \textbf{Proposition \ref{positve integral estimate}})
    \begin{equation}\label{delta _TD defi}
      \delta_{T,D}(\tau''):=\min\{\delta_X(\tau''),\delta_T(\tau''),\delta_{T,I}(\tau'')\}
    \end{equation} and pick $\tau' \in (\tau'',\tau''+\delta_{T,D}(\tau'')]$. This choice ensures $\delta_{T,D}(\tau'') \leq \delta_X(\tau'')$
 (for the interior displacement), $\delta_{T,D}(\tau'') \leq \delta_{T}(\tau'')$ (for the boundary integral) and $\delta_{T,D}(\tau'') \leq \delta_{T,I}(\tau'')$(for the time slab).
    The proof uses an induction on $n$. The case $n=0$ is a direct consequence of \textbf{Proposition \ref{initial point lowerbound general sense}}.
    \medskip 
    
    We now assume that the case $n=k$ holds. Given $1 \leq i \leq \mathcal{N}(N,\tau'')$, consider
    \begin{equation}\label{Prop 4.2 txv assumption}
    \begin{split}
                &(t,x,v) \in \left[\tau''+(\tau'-\tau'')\left(1-\frac{1}{2^{k+2}(r''_{k+1}+\mathfrak{R}_{\min}(\tau''))}\right),\tau'\right] \\
        &\hspace{2cm}\times \left[B\left(x_i,\frac{\delta_X(\tau'')}{2^{k+1}}\right) \cap \overline{\Omega} \right] \times B\left(\overline{v}_{i}(\tau''),r''_{k+1}\right).
    \end{split}
    \end{equation}
    We generate the announced lower bound depending on whether the characteristic line $\{X_{s,t}(x,v)\}_{\tau'' \leq s \leq t}$ touches the boundary or not.

    \medskip \textbf{First case: $X_{s,t}(x,v) \in \overline{\Omega}$ for $\tau'' \leq s \leq t$.}
We use the Duhamel formula \eqref{Duhamel formula} and apply \textbf{Lemma \ref{up bound of L lemma}} to deduce that
    \begin{equation}\label{3/9 12:31}
    \begin{split}
        &f(t,x,v)\\
        \geq &\int_{\tau''+(\tau'-\tau'')\left(1-\frac{1}{2^{k+1}\left(r''_{k+1}+\mathfrak{R}_{\min}(\tau'')\right)}\right)}^{\tau''+(\tau'-\tau'')\left(1-\frac{1}{2^{k+2}\left(r''_{k+1}+\mathfrak{R}_{\min}(\tau'')\right)}\right)} e^{-C_L(t-s)\left\langle r''_{k+1}+\mathfrak{R}_{\min}(\tau'')\right\rangle^{\gamma^+}} \\ 
        &Q^+[f(s,X_{s,t}(x,v),\cdot),f(s,X_{s,t}(x,v),\cdot)](v)\,ds.
    \end{split}
    \end{equation}
Next, we notice that in the assumption that 
 \begin{equation}\label{2025 04/06 08:26}
      \tau' \in (\tau'',\tau''+\delta_{T,D}(\tau'')],       
      \end{equation}

for any $(t,x,v)$ which satisfies \eqref{Prop 4.2 txv assumption}, for all 
\begin{equation}\label{Prop 4.2 s range}
\begin{split}
    &\tau''+(\tau'-\tau'')\left(1-\frac{1}{2^{k+1}(r''_{k+1}+\mathfrak{R}_{\min}(\tau''))}\right)\leq s \\ 
    \leq& \tau''+(\tau'-\tau'')\left(1-\frac{1}{2^{k+2}(r''_{k+1}+\mathfrak{R}_{\min}(\tau''))}\right),
\end{split}
  \end{equation}
      we have
\begin{equation}
    \begin{split}
   |X_{s,t}(x,v)-x_i|\leq \frac{\delta_X(\tau'')}{2^{k}}.
    \end{split}
    \end{equation}
Indeed, we have
\begin{equation}
    |X_{s,t}(x,v)-x_i|\leq |X_{s,t}(x,v)-x|+|x-x_i| \leq |v|(t-s)+\frac{\delta_X(\tau'')}{2^{k+1}}.
\end{equation}
    Plus, we have  
    \begin{equation}
        |v| \leq r''_{k+1}+|\overline{v}_{i}(\tau'')| \leq r''_{k+1}+\mathfrak{R}_{\min}(\tau''),
    \end{equation}
    \begin{equation*}
      t-s \leq \tau'-\left[\tau''+(\tau'-\tau'')\left(1-\frac{1}{2^{k+1}(r''_{k+1}+\mathfrak{R}_{\min}(\tau''))}\right)\right]=\frac{\tau'-\tau''}{2^{k+1}(r''_{k+1}+\mathfrak{R}_{\min}(\tau''))}.
    \end{equation*}
    Hence, we have (we also use \eqref{2025 04/06 08:26} and \eqref{delta _TD defi})
    \begin{equation}
    \begin{split}
   &|X_{s,t}(x,v)-x_i| \leq \frac{\tau'-\tau''}{2^{k+1}}+\frac{\delta_X(\tau'')}{2^{k+1}} \leq \frac{\delta_{T,D}(\tau'')}{2^{k+1}}+\frac{\delta_X(\tau'')}{2^{k+1}}  \leq \frac{\delta_X(\tau'')}{2^{k}}.
    \end{split}
    \end{equation}
Since $r''_{k+1}(\tau'') > r''_{k}(\tau'')$, we have $$\tau''+(\tau'-\tau'')\left(1-\frac{1}{2^{k+1}\left(r''_{k+1}+\mathfrak{R}_{\min}(\tau'')\right)}\right) > \tau''+(\tau'-\tau'')\left(1-\frac{1}{2^{k+1}\left(r''_{k}+\mathfrak{R}_{\min}(\tau'')\right)}\right).$$ 
    Thus we can apply the induction assumption and deduce that
    \begin{equation}\label{Prop4.2 very smal esti}
        f(s,X_{s,t}(x,v),w) \geq \min\left(a_{k,\tau''}(\tau'),b_{k,\tau''}(\tau')\right)\mathbf{1}_{B(\overline{v}_{i}(\tau''),r''_{k})}(w),
    \end{equation}
for any $w \in \mathbb{R}^3$, $s$ satisfying \eqref{Prop 4.2 s range}, $i=1,...,\mathcal{N}(N,\tau'')$, and $(t,x,v)$ satisfying \eqref{Prop 4.2 txv assumption}.
    Thus, we deduce from \eqref{Prop4.2 very smal esti} and  \textbf{Lemma \ref{spreading property lemma}} that
      \begin{equation}
      \begin{split}
    &f(t,x,v)\\ 
    \geq & \int_{\tau''+(\tau'-\tau'')\left(1-\frac{1}{2^{k+1}\left(r''_{k+1}+\mathfrak{R}_{\min}(\tau'')\right)}\right)}^{\tau''+(\tau'-\tau'')\left(1-\frac{1}{2^{k+2}\left(r''_{k+1}+\mathfrak{R}_{\min}(\tau'')\right)}\right)} \min\left(a_{k,\tau''}(\tau'),b_{k,\tau''}(\tau')\right)^2\\
    &\times e^{-C_L(t-s)\langle r''_{k+1}+\mathfrak{R}_{\min}(\tau'')\rangle^{\gamma^+}} C_Q(r''_{k})^{3+\gamma}\xi_{k+1}^{\frac{1}{2}}\mathbf{1}_{B\left(\overline{v}_{i}(\tau''),r''_{k+1}\right)}(v)\,ds\\
    \geq &\int_{\tau''+(\tau'-\tau'')\left(1-\frac{1}{2^{k+1}\left(r''_{k+1}+\mathfrak{R}_{\min}(\tau'')\right)}\right)}^{\tau''+(\tau'-\tau'')\left(1-\frac{1}{2^{k+2}\left(r''_{k+1}+\mathfrak{R}_{\min}(\tau'')\right)}\right)} \min\left(a_{k,\tau''}(\tau'),b_{k,\tau''}(\tau')\right)^2\\
    &\times e^{-C_L(t-s)\langle r''_{k+1}+\mathfrak{R}_{\min}(\tau'')\rangle^{\gamma^+}} C_Q(r''_{k})^{3+\gamma}\xi_{k+1}^{\frac{1}{2}}\,ds\\
  \geq & \min\left(a_{k,\tau''}(\tau'),b_{k,\tau''}(\tau')\right)^2 C_Q (r''_{k})^{3+\gamma}\xi_{k+1}^{\frac{1}{2}}\frac{\tau'-\tau''}{2^{k+2}(r''_{k+1}+\mathfrak{R}_{\min}(\tau''))}\\
  &\times e^{\frac{-(\tau'-\tau'')C_L\langle r''_{k+1}+\mathfrak{R}_{\min}(\tau'') \rangle^{\gamma^+}}{2^{k+1}(r''_{k+1}+\mathfrak{R}_{\min}(\tau''))}}\\
  =&a_{k+1,\tau''}(\tau') \geq \min\left(a_{k+1,\tau''}(\tau'),b_{k+1,\tau''}(\tau')\right).
    \end{split}
    \end{equation}
    Hence, we showed that \eqref{3/9 12:43} holds for $n=k+1$ when $X_{s,t}(x,v) \in \Omega$ for $\tau'' \leq s \leq t$.

\medskip

\textbf{Second case: $X_{s,t}(x,v) \notin \overline{\Omega}$ for some $s \in  [\tau'', t]$.} 

First, we observe that by definition of $t_{\partial}(x,v)$ in \textbf{Definition \ref{3/9 01:05}}, we have $t_{\partial}(x,v)\leq t-s \leq t- \tau'' \leq \tau'-\tau''$. Next, we notice by \eqref{delta _TD defi} that $\tau' \in (\tau'',\tau''+\delta_{T,I}(\tau'')]$. Then, since $\alpha<1$, the diffuse part yields a strictly positive contribution, thus we use \eqref{Duhamel formula B}, \textbf{Proposition \ref{positve integral estimate}}, and \textbf{Lemma \ref{up bound of L lemma}} to derive that for any $(t,x,v)$ satisfying \eqref{Prop 4.2 txv assumption}, we have 
\begin{equation}
\begin{split}
&f(t,x,v)\\
\geq& (1-\alpha)\left( \int_{w\cdot n(X_{t-t_{\partial}(x,v),t}(x,v))>0} f(t,X_{t-t_{\partial}(x,v),t}(x,v),w)(w\cdot n(X_{t-t_{\partial}(x,v),t}(x,v))) dw \right)\\
&\frac{1}{2\pi T_B^2}e^{-\frac{|v|^2}{2T_B}} \exp\left[-\int_{t-t_{\partial}(x,v)}^tL[f(s,X_{s,t}(x,v),\cdot)](v)\,ds\right]\\
\geq& (1-\alpha)b(\tau'')\frac{1}{2\pi T_B^2}e^{-\frac{|v|^2}{2T_B}} e^{-t_{\partial}(x,v)C_L\left\langle v\right\rangle ^{\gamma^+}}\\
\geq& (1-\alpha)b(\tau'')\frac{1}{2\pi T_B^2}e^{-\frac{(\mathfrak{R}_{\min}(\tau'')+r''_{k+1})^2}{2T_B}} e^{-(\tau'-\tau'') C_L{\left \langle \mathfrak{R}_{\min}(\tau'')+r''_{k+1}\right\rangle} ^{\gamma^+}}\\
\geq& b_{k+1,\tau''}(\tau') \geq \min\left(a_{k+1,\tau''}(\tau'),b_{k+1,\tau''}(\tau')\right),
\end{split}
\end{equation}
which shows that \eqref{3/9 12:43} holds for $n=k+1$ in the case when the backward characteristic $s \mapsto X_{s,t}(x,v)$ touches the boundary.

In conclusion, we showed that \eqref{3/9 12:43} holds for $n=k+1$ and by induction that \eqref{3/9 12:43} holds for $0 \leq n \leq N$.
\end{proof}

Finally, we end the proof of \textbf{Theorem \ref{Main theorem}} for case $\alpha \in [0,1)$. We pick $0<\xi<\frac{1}{4}$ and setting $\xi_n:=\xi^n$, by which we can deduce that (we recall $r_n''$ from \eqref{r'' defi})

\begin{equation}
  c_r(\xi)2^{\frac{n}{2}} \leq  r''_{n}=\delta_V(\tau'') 2^{\frac{n}{2}}\Pi_{i=1}^{n}(1-\xi^i) \leq \delta_V(\tau'') 2^{\frac{n}{2}},
\end{equation}
where $c_r(\xi):=\delta_V(\tau'')\Pi_{i=1}^{\infty}(1-\xi^i)>0$.

We first show that given $ \tau_2 \in \left[\tau''+\min\left\{\frac{\delta_{T,D}(\tau'')-\tau''}{2},\tau''\right\},\tau''+\delta_{T,D}(\tau'')\right]$ there exists $\mathcal{A}>0$ such that
\begin{equation}
    f(\tau_2,x,v) \geq \mathcal{A}^{2^n},\ \forall n \in \mathbb{N},\ (x,v) \in \overline{\Omega} \times B(0,c_r(\xi)2^{\frac{n}{2}}),
\end{equation}
to get a lower maxwellian lower bound of a continuous mild solution of \eqref{Boltzmann equation}--\eqref{boundary condition}.

We will use the proof from \cite{Bri1, Mou 1} to show the existence of $\mathcal{A}_1>0$ and $\mathcal{A}_2>0$ such that
\begin{equation}
    b_{n,\tau''}(\tau_2) \geq \mathcal{A}_1^{2^n},\ a_{n,\tau''}(\tau_2)\geq \mathcal{A}_2^{2^n},
\end{equation}
for any $0<\tau''<\tau_2\leq \tau''+\delta_{T,D}(\tau'')$.

In this article, we only show the sketch of the proof. For details, see \cite{Bri1, Mou 1}.
First, we observe that for $n \geq 2$ (we recall that $\mathfrak{R}_{\min}(\tau'') \geq 1$)
\begin{align*}
&b_{n,\tau''}(\tau_2)\\
=&(1-\alpha)b(\tau'')e^{-C_L(\tau_2-\tau'')\langle \mathfrak{R}_{\min}(\tau'')+r''_{n} \rangle^{\gamma^+}} \frac{1}{2\pi T_B^2}e^{-\frac{(\mathfrak{R}_{\min}(\tau'')+r''_{n} )^2}{2T_B}}     \\
=&(1-\alpha)b(\tau'')e^{-C_L(\tau_2-\tau'')(1+ \mathfrak{R}_{\min}(\tau'')+r''_{n})^{\gamma^+}} \frac{1}{2\pi T_B^2}e^{-\frac{(\mathfrak{R}_{\min}(\tau'')+r''_{n} )^2}{2T_B}}     \\
 \geq & (1-\alpha)b(\tau'') \frac{1}{2\pi T_B^2}e^{-C_L\tau_2\mathfrak{R}_{\min}(\tau'')^{\gamma^+}(2+r''_{n})^2}e^{-\frac{(\mathfrak{R}_{\min}(\tau'')+r''_{n} )^2}{2T_B}}\\
 \geq &(1-\alpha)b(\tau'') \frac{1}{2\pi T_B^2}e^{-8C_L\tau_2\mathfrak{R}_{\min}(\tau'')^{\gamma^+}}e^{-\frac{\mathfrak{R}_{\min}(\tau'')^2}{T_B}}e^{-\left((2C_L\tau_2\mathfrak{R}_{\min}(\tau'')^{\gamma^+}+\frac{1}{T_B})(\delta_V(\tau''))^2 2^n\right) }\\
\geq & C_1e^{-\left((2C_L\tau_2 \mathfrak{R}_{\min}(\tau'')^{\gamma^+}+\frac{1}{T_B})(\delta_V(\tau''))^2 2^n\right) },
\end{align*}
where we defined $C_1:=(1-\alpha)b(\tau'') \frac{1}{2\pi T_B^2}e^{-8C_L\tau_2\mathfrak{R}_{\min}(\tau'')^{\gamma^+}}e^{-\frac{\mathfrak{R}_{\min}(\tau'')^2}{T_B}}$.

By defining 
\begin{equation*}
    \mathcal{A}_1(\tau_2):=
    \begin{cases}
        \min \left\{b_{0,\tau''}(\tau_2), \sqrt{b_{1,\tau''}(\tau_2)}, e^{-\left((2C_L\tau_2\mathfrak{R}_{\min}(\tau'')^{\gamma^+}+\frac{1}{T_B})(\delta_V(\tau''))^2\right) } \right\},\ C_1 \geq 1\\
        \min \left\{b_{0,\tau''}(\tau_2), \sqrt{b_{1,\tau''}(\tau_2)}, C_1e^{-\left((2C_L\tau_2\mathfrak{R}_{\min}(\tau'')^{\gamma^+}+\frac{1}{T_B})(\delta_V(\tau''))^2\right) } \right\},\ C_1 < 1,
    \end{cases}
\end{equation*}
we conclude that for $n \in \mathbb{N}$
\begin{equation}\label{3/9 05:15}
    b_{n,\tau''}(\tau_2)\geq \mathcal{A}_1^{2^n}(\tau_2).
\end{equation}

For $a_{n,\tau''}(\tau_2)$, we also recall the definition (for $n \geq 1$):

\begin{align*}
    &a_{n,\tau''}(\tau_2)=(\min(a_{n-1,\tau''}(\tau_2),b_{n-1,\tau''}(\tau_2)))^2C_Qr_{n-1}^{''3+\gamma}\xi^{\frac{n}{2}}\frac{\tau_2-\tau''}{2^{n+1}(\mathfrak{R}_{\min}(\tau'')+r''_{n})}\\
    &\hspace{1.9cm}\times e^{-C_L\frac{(\tau_2-\tau'')\langle \mathfrak{R}_{\min}(\tau'')+r''_{n} \rangle^{\gamma^+}}{2^{n}(\mathfrak{R}_{\min}(\tau'')+r''_{n})}}.
\end{align*}

We notice that for $n \geq 1$, we have

\begin{align*}
    &\frac{(\tau_2-\tau'')\langle \mathfrak{R}_{\min}(\tau'')+r''_{n} \rangle^{\gamma^+}}{2^{n+1}(\mathfrak{R}_{\min}(\tau'')+r''_{n})}\leq \frac{\tau_2(1+ \mathfrak{R}_{\min}(\tau''))^{\gamma^+}}{\mathfrak{R}_{\min}(\tau'')}+\frac{\tau_2(\delta_V(\tau''))^{\gamma^+}}{\mathfrak{R}_{\min}(\tau'')}.  
\end{align*}

By defining 

\begin{equation*}
    C_2:= C_Q \min\left\{\frac{\delta_{T,D}(\tau'')-\tau''}{2},\tau''\right\} e^{ -C_L\left(\frac{\tau_2(1+ \mathfrak{R}_{\min}(\tau''))^{\gamma^+}}{\mathfrak{R}_{\min}(\tau'')}+\frac{\tau_2(\delta_V(\tau''))^{\gamma^+}}{\mathfrak{R}_{\min}(\tau'')} \right) },
\end{equation*}
we have the following inequality:

\begin{align*}
      &a_{n,\tau''}(\tau_2)\\
      \geq& C_2(\min(a_{n-1,\tau''}(\tau_2),b_{n-1,\tau''}(\tau_2)))^2 r_{n-1}^{''3+\gamma}\xi^{\frac{n}{2}}\frac{1}{2^{n+1}(\mathfrak{R}_{\min}(\tau'')+r''_{n})}\\
       \geq& C_2(\min(a_{n-1,\tau''}(\tau_2),b_{n-1,\tau''}(\tau_2)))^2 (c_r(\xi)2^{\frac{n-1}{2}})^{3+\gamma}\xi^{\frac{n}{2}}\frac{1}{2^{n+1}(\mathfrak{R}_{\min}(\tau'')+\delta_V(\tau'')2^{\frac{n}{2}})}\\   
       \geq & C_2(\min(a_{n-1,\tau''}(\tau_2),b_{n-1,\tau''}(\tau_2)))^2 (\frac{c_r(\xi)}{2})^{3+\gamma} (2^{\frac{3+\gamma}{2}})^{n}\xi^{\frac{n}{2}}\frac{1}{2^{n+1}(\mathfrak{R}_{\min}(\tau'')+\delta_V(\tau'')2^{\frac{n}{2}})}\\
       \geq &C_2(\min(a_{n-1,\tau''}(\tau_2),b_{n-1,\tau''}(\tau_2)))^2 (\frac{c_r(\xi)}{2})^{3+\gamma} (2^{\frac{3+\gamma}{2}})^{n}\xi^{\frac{n}{2}}\frac{1}{4}\frac{1}{2^n}\min\{ \frac{1}{\mathfrak{R}_{\min}(\tau'')}, \frac{1}{\delta_V(\tau'') 2^{\frac{n}{2}}} \}\\
        \geq& C_2(\min(a_{n-1,\tau''}(\tau_2),b_{n-1,\tau''}(\tau_2)))^2 (\frac{c_r(\xi)}{2})^{3+\gamma} \frac{1}{4}(2^{\frac{1+\gamma}{2}}\xi^{\frac{1}{2}})^{n}\min\{ \frac{1}{\mathfrak{R}_{\min}(\tau'')}, \frac{1}{\delta_V(\tau'') 2^{\frac{n}{2}}} \}\\
        \geq & C_2(\min(a_{n-1,\tau''}(\tau_2),b_{n-1,\tau''}(\tau_2)))^2 (\frac{c_r(\xi)}{2})^{3+\gamma} \frac{1}{4}(2^{\frac{1+\gamma}{2}}\xi^{\frac{1}{2}})^{n}\\
       &\times \min\left\{ \frac{1}{\max\{ \mathfrak{R}_{\min}(\tau''), \delta_V(\tau'') \} }, \frac{1}{\max\{ \mathfrak{R}_{\min}(\tau''), \delta_V(\tau'') \} 2^{\frac{n}{2}}} \right\}\\
        \geq& C_2(\min(a_{n-1,\tau''}(\tau_2),b_{n-1,\tau''}(\tau_2)))^2 (\frac{c_r(\xi)}{2})^{3+\gamma} \frac{1}{4(\mathfrak{R}_{\min}(\tau'')+\delta_V(\tau''))}(2^{\frac{1+\gamma}{2}}\xi^{\frac{1}{2}})^{n} \frac{1}{ 2^{\frac{n}{2}}} \\
        \geq& C_2(\min(a_{n-1,\tau''}(\tau_2),b_{n-1,\tau''}(\tau_2)))^2 \left(\frac{c_r(\xi)}{2}\right)^{3+\gamma} \frac{1}{4(\mathfrak{R}_{\min}(\tau'')+\delta_V(\tau''))}(2^{\frac{\gamma}{2}}\xi^{\frac{1}{2}})^{n}.
\end{align*}
By defining 
\begin{equation}
    C_3:=C_2 (\frac{c_r(\xi)}{2})^{3+\gamma} \frac{1}{4(\mathfrak{R}_{\min}(\tau'')+\delta_V(\tau''))},\ \lambda:=\min\{ 1, 2^{\frac{\gamma}{2}}\xi^{\frac{1}{2}}, C_3 2^{\frac{\gamma}{2}}\xi^{\frac{1}{2}}\},
\end{equation}
we have
\begin{equation}\label{final iterate}
    a_{n,\tau''}(\tau_2)\geq (\min(a_{n-1,\tau''}(\tau_2),b_{n-1,\tau''}(\tau_2)))^2 \lambda^{n}. 
\end{equation}

Next, note that by \eqref{3/9 05:07} and \eqref{3/9 05:08}, we have
\begin{equation*}
    b_{0,\tau''}(\tau_2) \leq a_{0,\tau''}(\tau_2),\ b_{1,\tau''}(\tau_2) \leq a_{1,\tau''}(\tau_2).
\end{equation*}
Then, we define the following numbers:
\begin{equation}
    l_n:=\min\{ 1\leq l \leq n-1 \mid a_{n-l,\tau''}(\tau_2) \geq b_{n-l,\tau''}(\tau_2) \}.
\end{equation}
In doing so, we can iterate \eqref{final iterate} and use \eqref{3/9 05:15} to derive:

\begin{equation}\label{3/9 06:22}
\begin{split}
     a_{n,\tau''}(\tau_2) &\geq \lambda^{n+2(n-1)+...+2^{l_n-1}(n-l_n+1)}  (b_{n-l_n,\tau''}(\tau_2))^{2^{l_n}}\\
     &\geq \lambda^{n+2(n-1)+...+2^{l_n-1}(n-l_n+1)}  \mathcal{A}_1^{2^{n-l_n}2^{l_n}}\\
     &\geq \lambda^{\sum_{i=0}^{l_n-1}2^i(n-i)}  \mathcal{A}_1^{2^{n}}.
\end{split}
\end{equation}
Next, we notice that
\begin{equation}\label{3/9 06:23}
\begin{split}
     &\sum_{i=0}^{l_n-1}2^i(n-i)\leq 2^{l_n}(n-l_n+2).
\end{split}
\end{equation}
Then, we notice that for any $0 \leq m \leq n-1 < \infty$
\begin{equation}\label{3/9 05:55}
    \begin{split}
     &   2^{n-1-m}(m+3)\leq 2^{n+1}.
    \end{split}
\end{equation}
By replacing $m$ in \eqref{3/9 05:55} by $n-l_n-1$, we have 
\begin{equation*}
    2^{l_n}(n-l_n+2) \leq 2^{n+1}.
\end{equation*}
As a result, we conclude from \eqref{3/9 06:22}, \eqref{3/9 06:23}, and \eqref{3/9 05:55} that
\begin{align*}
     &a_{n,\tau''}(\tau_2) \geq \lambda^{2^{l_n}(n-l_n+2)}  \mathcal{A}_1^{2^{n}}\geq \lambda^{2^{n+1}}  \mathcal{A}_1^{2^{n}}.
\end{align*}
Notice that we used the fact that $\lambda\leq 1$. Hence, by defining $\mathcal{A}_2(\tau_2) :=\lambda^2 \mathcal{A}_1(\tau_2)$, we conclude the proof of \textbf{Theorem \ref{Main theorem}} in the case $0\leq \alpha <1$ where $$t\in \left[\tau''+\min\left\{\frac{\delta_{T,D}(\tau'')-\tau''}{2},\tau''\right\},\tau''+\delta_{T,D}(\tau'')\right].$$
To deduce the \textbf{Theorem \ref{Main theorem 2}} from \textbf{Theorem \ref{Main theorem}}. 
We observe that 
$$$$

We consider the first term of \eqref{Duhamel formula} and \eqref{Duhamel formula B}. Then we use \textbf{Lemma \ref{up bound of L lemma}} to control the damping effect along the characteristic line $X_{s,t}(x,v)$. Thus, the solution is a super solution of the damp transport equation. That is, we either have
\begin{equation}\label{Duhamel formula later}
\begin{split}
f(t,x,v)&\geq f_{0}(X_{0,t}(x,v),v)e^{-tC_L(1+|v|^2)},
\end{split}
\end{equation}
when $t \leq t_{\partial}(x,v)$, or
\begin{equation}\label{Duhamel formula B later}
\begin{split}
f(t,x,v)&= \alpha f(t-t_{\partial}(x,v),X_{t-t_{\partial}(x,v),t}(x,v),R(X_{t-t_{\partial}(x,v),t}(x,v),v))\\
&\hspace{0.5cm}e^{-t_{\partial}(x,v)C_L(1+|v|^2)}\\
&+(1-\alpha)\left( \int_{w\cdot n(X_{t-t_{\partial}(x,v),t}(x,v))>0} f(t,X_{t-t_{\partial}(x,v),t}(x,v),w)(w\cdot n(X_{t-t_{\partial}(x,v),t}(x,v))) dw \right)\\
&\hspace{0.5cm} \frac{1}{2\pi T_B^2}e^{-\frac{|v|^2}{2T_B}} e^{-t_{\partial}(x,v)C_L(1+|v|^2)},
\end{split}
\end{equation}
when $t \geq t_{\partial}(x,v)$.
Then \textbf{Theorem \ref{Main theorem 2}} can be deduced from the comparison principle.


\section{Maxwellian bound in the cut-off case for fully specular reflection condition}

In this chapter, we consider the fully specular reflection condition $\alpha=1$. 

We first introduce with omitted proof the following estimate of the lower bound of away from the boundary, which is similar to \textbf{Proposition 4.1, \cite{Bri2}}. We note that the argument does not use the convexity of $\Omega$.
\begin{proposition}\label{Brian inner estimate}
     Suppose that $\Omega \subset \mathbb{R}^3$ satisfies \textbf{Assumption A} and that the collision kernel $B$ satisfies \textbf{Assumption B} with $\nu<0$. We consider a continuous mild solution $f(t,x,v)$ of \eqref{Boltzmann equation}--\eqref{boundary condition} with $\alpha=1$. Fix $\tau''$ as in \textbf{Proposition \ref{initial point lowerbound general sense}}. Then, for any  $\tau' \in \left(\tau'',\tau''+\delta_T(\tau'')\right]$ and $N \in \mathbb{N}$, the following holds: for any $1 \leq n \leq N$, $0<l \leq \delta_X(\tau'')$, and $$\mathfrak{R}>\max\left\{\frac{l}{\tau'-\tau''},\mathfrak{R}_{\min}(\tau'')+(\frac{3\sqrt{2}}{4})^N\delta_V(\tau'')\right\},$$ if $(t,x,v) \in \left[\tau''+\frac{l}{2^n \mathfrak{R}}, \tau'\right] \times \overline{\Omega} \times B(0,\mathfrak{R})$, and there exists $t_1 \in \left[\tau'',t-\frac{l}{2^n\mathfrak{R}}\right]$, we have $X_{t_1,t}(x,v) \in (\Omega-\Omega_l) \cap B\left(x_i,\frac{\delta_X(\tau'')}{2^n}\right)$ for some $1 \leq i \leq \mathcal{N}(N,\tau'')$, then we have

   \begin{align}\label{2025 04/07 11:25}
       &  \ \ \ \ \   f(t,x,v) \geq a^S_n(l,\tau'',\tau',\mathfrak{R})\mathbf{1}_{B(\overline{v}_{i}(\tau''),r^S_n)}(V_{t_1,t}(x,v)).
   \end{align}
Here, the notation $\delta_X(\tau'')$, $\delta_V(\tau'')$, $\delta_T(\tau'')$, and $\overline{v}_{i}(\tau'')$ are as in \textbf{Proposition \ref{initial point lowerbound general sense}} and may depend on $N$. The sequence $r^S_n$ and $a^S_n(l,\tau')$ are defined by

\begin{equation}
    \begin{cases}
        r^S_0(\tau'')=\delta_V(\tau''),\\
        r^S_{n+1}(\tau'')=\frac{3\sqrt{2}}{4}r^S_n(\tau''),
    \end{cases}
\end{equation}
   
\begin{equation}
    \begin{cases}
        a^S_0(l,\tau'',\tau',\mathfrak{R})=a_0(\tau'')e^{-C_L(\tau'-\tau'')\left\langle \mathfrak{R} \right\rangle^{\gamma^+}},\\
        a^S_{n+1}(l,\tau'',\tau',\mathfrak{R})=C_Q\frac{(r^S_n(\tau''))^{3+\gamma}l}{2^{n+4}\mathfrak{R}}e^{-(\tau'-\tau'')C_L\left\langle \mathfrak{R} \right\rangle^{\gamma^+} }a^S_n\left(\frac{l}{8},\tau'',\tau',\mathfrak{R} \right)^2,
    \end{cases}
\end{equation}
where we recall that $a_0(\tau'')$, $\mathfrak{R}_{\min}(\tau'')$ are defined in \textbf{Proposition \ref{initial point lowerbound general sense}}.  
\end{proposition}

\begin{proof}
    We prove \eqref{2025 04/07 11:25} by using an induction on n. To show that \eqref{2025 04/07 11:25} holds in the case $n=0$, we first consider the first term of the right hand side of \eqref{Duhamel formula specular}, which implies by \textbf{Lemma \ref{up bound of L lemma}} that
    \begin{equation}\label{2025 9/30 04:04}
\begin{split}
f(t,x,v)\geq f(t_1,X_{t_1,t}(x,v),V_{t_1,t}(x,v))e^{-(\tau'-\tau'')C_L\left\langle \mathfrak{R} \right\rangle^{\gamma^+}}.
\end{split}
    \end{equation}
Then we use \textbf{Proposition \ref{initial point lowerbound general sense}} to derive 
\begin{equation}\label{2025 04/07 11:56}
\begin{split}
    &f(t_1,X_{t_1,t}(x,v),V_{t_1,t}(x,v)) \\
    &\geq a_0(\tau'')\mathbf{1}_{(B(x_i,\delta_X(\tau''))\cap \overline{\Omega}) \times B(\overline{v}_{i}(\tau''),\delta_V(\tau''))}(X_{t_1,t}(x,v),V_{t_1,t}(x,v))\\
    &=a_0(\tau'')\mathbf{1}_{ B(\overline{v}_{i}(\tau''),\delta_V(\tau''))}(V_{t_1,t}(x,v)),
\end{split}
\end{equation}
where we used the assumption that $X_{t_1,t}(x,v) \in (\Omega-\Omega_l) \cap B\left(x_i,\frac{\delta_X(\tau'')}{2^n}\right)$.
    By combining \eqref{2025 9/30 04:04} and \eqref{2025 04/07 11:56}, we proved the case $n=0$.
    
    Now, we assume that the case $n=k \leq N-1$ holds. Given $0<l \leq \delta_X(\tau'')$, $(t,x,v) \in [\tau''+\frac{l}{2^{k+1} \mathfrak{R}}, \tau'] \times \overline{\Omega} \times B(0,\mathfrak{R})$ with $t_1 \in [\tau'',t-\frac{l}{2^{k+1}\mathfrak{R}}]$ such that $X_{t_1,t}(x,v) \in (\Omega-\Omega_l) \cap B(x_i,\frac{\delta_X(\tau'')}{2^{k+1}})$, we consider the second term of the right hand side of \eqref{Duhamel formula specular} and use \textbf{Lemma \ref{up bound of L lemma}} to derive that
\begin{equation}\label{2025 04/07 02:45}
    \begin{split}
&f(t,x,v)\\ 
\geq& e^{-(\tau'-\tau'')C_L\left\langle \mathfrak{R}\right\rangle^{\gamma^+}} \\
&\int_{t_1+\frac{l}{2^{k+3}\mathfrak{R}}}^{t_1+\frac{l}{2^{k+2}\mathfrak{R}}} Q^+[f(s,X_{s,t}(x,v),\cdot),f(s,X_{s,t}(x,v),\cdot)](V_{s,t}(x,v))\,ds,
\end{split}
\end{equation}
where we use the fact that $t \geq t_1+\frac{l}{2^{k+2}\mathfrak{R}}$. 
Next, we notice that
\begin{equation}
\begin{split}
    |X_{t_1,t}(x,v)-X_{s,t}(x,v)|
    \leq(s-t_1)|v| < \frac{l}{2^{k+2}} 
\end{split}
\end{equation}
for any $(t,x,v) \in [\tau''+\frac{l}{2^{k+1} \mathfrak{R}}, \tau'] \times \overline{\Omega} \times B(0,\mathfrak{R})$ with $t_1 \in [\tau'',t-\frac{l}{2^{k+1}\mathfrak{R}}]$, $s \in [t_1+\frac{l}{2^{k+3}\mathfrak{R}},t_1+\frac{l}{2^{k+2}\mathfrak{R}}]$, which implies that 
\begin{equation}
    X_{s,t}(x,v) \in \Omega-\Omega_{l-\frac{l}{2^{k+2}}}.
\end{equation}
We also deduce that
\begin{equation}\label{2025 04/07 03:07}
   V_{s,t}(x,v)=V_{t_1,t}(x,v)
\end{equation}
since under specular reflection, the backward characteristic velocity is piecewise constant and identical on $[t_1,s]$ without collisions.
Next, we notice that for $u \in B(0,\mathfrak{R})$,
\begin{equation}
\begin{split}
    &|X_{t_1,s}(X_{s,t}(x,v),u) -X_{t_1,t}(x,v)| \\
    \leq &|X_{t_1,s}(X_{s,t}(x,v),u) -X_{s,t}(x,v)|+|X_{s,t}(x,v) -X_{t_1,t}(x,v)| \\
    \leq &(s-t_1)|u|+(s-t_1)|v| \leq \frac{l}{2^{k+1}} \leq \frac{\delta_X(\tau'')}{2^{k+1}},
\end{split}
\end{equation}
which implies
\begin{equation}  X_{t_1,s}(X_{s,t}(x,v),u) \in (\Omega-\Omega_{\frac{l}{8}})\cap B\left(x_i,\frac{\delta_X(\tau'')}{2^k}\right),  
\end{equation}
and 
\begin{equation}\label{2025 09/30 04:57}
    V_{t_1,s}(X_{s,t}(x,v),u)=u,
\end{equation}
for $u \in B(0,\mathfrak{R})$. In addition, for any $s \in [t_1+\frac{l}{2^{k+3}\mathfrak{R}},t_1+\frac{l}{2^{k+2}\mathfrak{R}}]$, we have
\begin{equation}
  \tau''+  \frac{\frac{l}{8}}{2^k \mathfrak{R}} \leq s \leq \tau',
\end{equation}
\begin{equation}
    t_1 \in \left[\tau'',s-\frac{\frac{l}{8}}{2^k \mathfrak{R}}\right].
\end{equation}
As a result, the assumptions of induction are satisfied and we deduce that
\begin{equation}
\begin{split}
     f(s,X_{s,t}(x,v),u)\geq a^S_k\left(\frac{l}{8},\tau'',\tau',\mathfrak{R}\right)\mathbf{1}_{B(\overline{v}_{i}(\tau''),r^S_k)}(u).
\end{split}
\end{equation}
for $(t,x,v) \in \left[\tau''+\frac{l}{2^{k+1} \mathfrak{R}}, \tau'\right] \times \overline{\Omega} \times B(0,\mathfrak{R})$ and  $t_1 \in [\tau'',t-\frac{l}{2^{k+1}\mathfrak{R}}]$ with $s \in \left[t_1+\frac{l}{2^{k+3}\mathfrak{R}},t_1+\frac{l}{2^{k+2}\mathfrak{R}}\right]$, $u \in B(0,\mathfrak{R})$. Here, we used \eqref{2025 09/30 04:57}.

Hence, we have 
\begin{equation}\label{2025 04/07 02:48}
\begin{split}
    &Q^+[f(s,X_{s,t}(x,v),\cdot),f(s,X_{s,t}(x,v),\cdot)](V_{s,t}(x,v)) \\
    \geq& a^S_k\left(\frac{l}{8},\tau'',\tau',\mathfrak{R}\right)^2 Q^+[\mathbf{1}_{B(\overline{v}_{i}(\tau''),r^S_k)\cap B(0,\mathfrak{R})}(\cdot),\mathbf{1}_{B(\overline{v}_{i}(\tau''),r^S_k)\cap B(0,\mathfrak{R})}(\cdot)](V_{s,t}(x,v))\\
    =&a^S_k\left(\frac{l}{8},\tau'',\tau',\mathfrak{R}\right)^2 Q^+[\mathbf{1}_{B(\overline{v}_{i}(\tau''),r^S_k)}(\cdot),\mathbf{1}_{B(\overline{v}_{i}(\tau''),r^S_k)}(\cdot)](V_{s,t}(x,v)).
\end{split}
\end{equation}
 Here, we use the fact that $\mathfrak{R} > \mathfrak{R}_{\min}(\tau'')+(\frac{3\sqrt{2}}{4})^N\delta_V(\tau'') \geq |\overline{v}_{i}(\tau'')|+r^S_k$ for the derivation of the last line.
 Now, we use \textbf{Lemma \ref{spreading property lemma}} with $\xi=\frac{1}{2}$ to derive
 \begin{equation}\label{2025 04/07 02:46}
 \begin{split}
     Q^+[\mathbf{1}_{B(\overline{v}_{i}(\tau''),r^S_k)}(\cdot),\mathbf{1}_{B(\overline{v}_{i}(\tau''),r^S_k)}(\cdot)] (V_{s,t}(x,v)) \geq C_Q (r^S_k)^{3+\gamma}\frac{1}{2}\mathbf{1}_{B(\overline{v}_{i}(\tau''),r^S_{k+1})}(V_{s,t}(x,v)).
 \end{split}
 \end{equation}
Hence, we deduce by \eqref{2025 04/07 02:45},  \eqref{2025 04/07 03:07}, \eqref{2025 04/07 02:48}, and \eqref{2025 04/07 02:46} that
\begin{equation}
\begin{split}
&f(t,x,v)\\ 
\geq &e^{-(\tau'-\tau'')C_L\left\langle \mathfrak{R}\right\rangle^{\gamma^+}} \int_{t_1+\frac{l}{2^{k+3}\mathfrak{R}}}^{t_1+\frac{l}{2^{k+2}\mathfrak{R}}} Q^+[f(s,X_{s,t}(x,v),\cdot),f(s,X_{s,t}(x,v),\cdot)](V_{s,t}(x,v))\,ds\\
\geq &e^{-(\tau'-\tau'')C_L\left\langle \mathfrak{R}\right\rangle^{\gamma^+}} \int_{t_1+\frac{l}{2^{k+3}\mathfrak{R}}}^{t_1+\frac{l}{2^{k+2}\mathfrak{R}}} a^S_k\left(\frac{l}{8},\tau'',\tau',\mathfrak{R}\right)^2C_Q (r^S_k)^{3+\gamma}\frac{1}{2}\mathbf{1}_{B(\overline{v}_{i}(\tau''),r^S_{k+1})}(V_{s,t}(x,v))\,ds\\
=&e^{-(\tau'-\tau'')C_L\left\langle \mathfrak{R}\right\rangle^{\gamma^+}}  a^S_k\left(\frac{l}{8},\tau'',\tau',\mathfrak{R}\right)^2C_Q (r^S_k)^{3+\gamma}\frac{1}{2}\int_{t_1+\frac{l}{2^{k+3}\mathfrak{R}}}^{t_1+\frac{l}{2^{k+2}\mathfrak{R}}} \mathbf{1}_{B(\overline{v}_{i}(\tau''),r^S_{k+1})}(V_{s,t}(x,v))\,ds\\
\geq &e^{-(\tau'-\tau'')C_L\left\langle \mathfrak{R}\right\rangle^{\gamma^+}}  a^S_k\left(\frac{l}{8},\tau'',\tau',\mathfrak{R}\right)^2C_Q (r^S_k)^{3+\gamma}\frac{1}{2}\frac{l}{2^{k+3}\mathfrak{R}}\mathbf{1}_{B(\overline{v}_{i}(\tau''),r^S_{k+1})}(V_{t_1,t}(x,v))\\
\geq &a^S_{k+1}(l,\tau'',\tau',\mathfrak{R})\mathbf{1}_{B(\overline{v}_{i}(\tau''),r^S_{k+1})}(V_{t_1,t}(x,v)).
\end{split}
\end{equation}
As a result, the induction hypothesis for $n=k+1$ holds. By induction, we deduce that \eqref{2025 04/07 11:25} holds for $0 \leq n \leq N$ and we conclude the result.

\end{proof}

The following corollary is obtained by a slight modification of the previous proposition (\textbf{Corollary 4.2, \cite{Bri2}}):
\begin{corollary}\label{gazing coro}
     Suppose that $\Omega \subset \mathbb{R}^3$ satisfies \textbf{Assumption A} and that the collision kernel $B$ satisfies \textbf{Assumption B} with $\nu<0$. Let $f(t,x,v)$ be a continuous mild solution of \eqref{Boltzmann equation}--\eqref{boundary condition} with $\alpha=1$. Fix $\tau''>0$ as in \textbf{Proposition \ref{initial point lowerbound general sense}}. Then, given $\Delta^I_T \in (0, \delta_T(\tau'')]$ with $\tau^A \in (\tau'',\tau''+\Delta^I_T]$, for all $0<l \leq \delta_X(\tau'')$, there exist $a^S(l,\tau^A,\tau'',\Delta^I_T)>0$ and $\tilde{t}(l,\tau'',\tau^A,\Delta^I_T) \in \left(0,\frac{\tau^A-\tau''}{3}\right]$, depending on $\tau''$, $\Omega$, $M$, and $E_f$(and $L_{f,p}$ if $\gamma <0$) such that we have the following: for $(t,x,v) \in [\tau^A,\tau''+\Delta^I_T] \times \overline{\Omega} \times \mathbb{R}^3$, if for some $t_1 \in [\tau'',t-\tilde{t}(l,\tau'',\tau^A,\Delta^I_T)]$ we have $X_{t_1,t}(x,v) \in \Omega-\Omega_{l}$, then we have

   \begin{align} \label{2025/10/07 02:10}
       &  \ \ \ \ \   f(t,x,v) \geq a^S(l,\tau^A,\tau'',\Delta^I_T)\mathbf{1}_{B(0,3\mathfrak{R}_{\min}(\tau''))}(v).
   \end{align}
   Here, $\mathfrak{R}_{\min}, \delta_T(\tau'')$, and $\delta_X(\tau'')$ are given in \textbf{Proposition \ref{initial point lowerbound general sense}}.
\end{corollary}

\begin{proof}

    Given $0<l \leq \delta_X(\tau'')$ and $\tau^A \in (\tau'',\tau''+\Delta^I_T]$. We define

    \begin{equation}
        \tilde{N}_l(\tau'',\tau^A):=\left\lceil \max \left\{\log_{\frac{3\sqrt{2}}{4}}\left(\frac{4\mathfrak{R}_{\min}(\tau'')}{\delta_V(\tau'')}\right),\log_{2}\frac{3l}{\tau^A-\tau''} \right\} \right\rceil, 
    \end{equation}
    \begin{equation}
        \tilde{t}(l,\tau'',\tau^A,\Delta^I_T):=\frac{l}{2^{\tilde{N}_l(\tau'',\tau^A)}\left[\max\left\{\frac{l}{\Delta^I_T},\mathfrak{R}_{\min}(\tau'')+\left(\frac{3\sqrt{2}}{4}\right)^{\tilde{N}_l(\tau'',\tau^A)}\delta_V(\tau'')\right\}+1\right]},
    \end{equation}
    \begin{equation}
        \mathfrak{R}_N^S(l,\tau'',\tau^A):=1+\max\left\{\frac{l}{\tau^A-\tau''},\mathfrak{R}_{\min}(\tau'')+\left(\frac{3\sqrt{2}}{4}\right)^{\tilde{N}_l(\tau'',\tau^A)}\delta_V(\tau'')\right\}.
    \end{equation}
    By \textbf{Proposition \ref{initial point lowerbound general sense}}, we know that $\mathfrak{R}_{\min}(\tau'') \geq 2$ and $\delta_V(\tau'') \leq \frac{1}{56}$, which imply $\tilde{N}_l(\tau'',\tau^A) \geq 8$. We observe that
\begin{equation}
\begin{split}
        \tilde{t}(l,\tau'',\tau^A,\Delta^I_T) \leq \frac{l}{2^{\tilde{N}_l(\tau'',\tau^A)}} \leq \frac{\tau^A-\tau''}{3}.
\end{split}
\end{equation}
    The last inequality follows directly from the definition of $\tilde N_l$, which leads to $2^{\tilde{N}_l(\tau'',\tau^A)}\geq \frac{3l}{\tau^A-\tau''}$.
    
     Now, given $(t,x,v) \in [\tau^A,\tau''+\Delta^I_T] \times \overline{\Omega} \times \mathbb{R}^3$ with $t_1 \in [\tau'',t-\tilde{t}(l,\tau'',\tau^A,\Delta^I_T)]$ such that $X_{t_1,t}(x,v) \in \Omega-\Omega_{l}$. We consider $\{x_i\}_{i=1}^{\mathcal{N}(\tilde{N}_l(\tau'',\tau^A),\tau'')}$ from \textbf{Proposition \ref{initial point lowerbound general sense}} such that $$ \overline{\Omega} \subset \bigcup_{1\leq i \leq \mathcal{N}(\tilde{N}_l(\tau'',\tau^A),\tau'')} B\left(x_i, \frac{\delta_X(\tau'')}{2^{\tilde{N}_l(\tau'',\tau^A)}} \right)$$. There exists $1\leq i \leq \mathcal{N}(\tilde{N}_l(\tau'',\tau^A),\tau'')$ such that $X_{t_1,t}(x,v) \in B\left(x_i, \frac{\delta_X(\tau'')}{2^{\tilde{N}_l(\tau'',\tau^A)}}\right)$.
    Next, we find that
    \begin{equation}
        \tilde{t}(l,\tau'',\tau^A,\Delta^I_T) \geq \frac{l}{2^{\tilde{N}_l(\tau'',\tau^A)}\mathfrak{R}_N^S(l,\tau'',\tau^A)},
    \end{equation}
    which leads to
\begin{equation}
     t_1 \in \left[\tau'',t-\frac{l}{2^{\tilde{N}_l(\tau'',\tau^A)}\mathfrak{R}_N^S(l,\tau'',\tau^A)}\right].
\end{equation}
Moreover, we have
\begin{equation}
    \mathfrak{R}_N^S(l,\tau'',\tau^A)>\max\left\{\frac{l}{\Delta^I_T},\mathfrak{R}_{\min}(\tau'')+(\frac{3\sqrt{2}}{4})^{\tilde{N}_l(\tau'',\tau^A)}\delta_V(\tau'')\right\}.
\end{equation}

    Thus, we apply \textbf{Proposition \ref{Brian inner estimate}} with $\tau'=\tau''+\Delta^I_T$, $n=N = \tilde{N}_l(\tau'',\tau^A)$, and $\mathfrak{R}=\mathfrak{R}_N^S(l,\tau'',\tau^A)$ and deduce that for any $$(t,x,v) \in \left[\tau''+\frac{l}{2^{\tilde{N}_l(\tau'',\tau^A)} \mathfrak{R}_N^S(l,\tau'',\tau^A)}, \tau''+ \Delta^I_T\right] \times \overline{\Omega} \times B(0,\mathfrak{R}_N^S(l,\tau'',\tau^A)),$$
we have
   \begin{align*}
          f(t,x,v) &\geq a^S_{\tilde{N}_l(\tau'',\tau^A)}(l,\tau'',\tau''+\Delta^I_T,\mathfrak{R}_N^S(l,\tau'',\tau^A))\mathbf{1}_{B\left(\overline{v}_{i}(\tau''),r^S_{\tilde{N}_l(\tau'',\tau^A)}(\tau'')\right)}(V_{t_1,t}(x,v))\\
          &\geq a^S_{\tilde{N}_l(\tau'',\tau^A)}(l,\tau'',\tau''+\Delta^I_T,\mathfrak{R}_N^S(l,\tau'',\tau^A))\mathbf{1}_{B\left(0,3\mathfrak{R}_{\min}(\tau'')\right)}(V_{t_1,t}(x,v))\\
       &= a^S_{\tilde{N}_l(\tau'',\tau^A)}(l,\tau'',\tau''+\Delta^I_T,\mathfrak{R}_N^S(l,\tau'',\tau^A))\mathbf{1}_{B(0,3\mathfrak{R}_{\min}(\tau''))}(v).
   \end{align*}
Here, we used the fact that $r^S_{\tilde{N}_l(\tau'',\tau^A)}(\tau'') \geq 4\mathfrak{R}_{\min}(\tau'') \geq 3\mathfrak{R}_{\min}(\tau'')+|\overline{v}_{i}(\tau'')| $ for the second line.

    Moreover, we have
    $$[\tau^A,\tau''+\Delta^I_T]\subset \left[\tau''+\frac{l}{2^{\tilde{N}_l(\tau'',\tau^A)} \mathfrak{R}_N^S(l,\tau'',\tau^A)}, \tau''+\Delta^I_T\right].$$
Finally, by the choice of $\tilde{N}_l(\tau'',\tau^A)$, we have 
$$\mathfrak{R}_N^S(l,\tau'',\tau^A) \geq 4\mathfrak{R}_{\min}(\tau'').$$ As a result, by defining 
\begin{equation}
a^S(l,\tau'',\tau^A,\Delta^I_T):=a^S_{\tilde{N}_l(\tau'',\tau^A)}(l,\tau'',\tau''+\Delta^I_T,\mathfrak{R}_N^S(l,\tau'',\tau^A)).
\end{equation}
we obtain the desired lower bound \eqref{2025/10/07 02:10}, which completes the proof.

\end{proof}

   Before stating the next proposition, we recall the quantities $\delta_X(\tau'')$, $\delta_V(\tau'')$, $\delta_T(\tau'')$, and $\mathfrak{R}_{\min}(\tau'')$ from \textbf{Proposition \ref{initial point lowerbound general sense}}, and notice that $\delta_T(\tau'')\leq\delta_X(\tau'')=\delta_V(\tau'') \leq \frac{d}{56}$ and $\mathfrak{R}_{\min}(\tau'')\geq 2$. We now introduce the following auxiliary sequences:
\begin{equation}
    \begin{cases}
        r^B_0(\tau'')=\delta_V(\tau''),\\
        r^B_{n+1}(\tau'')=\frac{3\sqrt{2}}{4}r^B_{n}(\tau'')-\frac{\delta_V(\tau'')}{40}.
    \end{cases}
\end{equation}
Observe that $\{r^B_{n}(\tau'')\}_{n=0}^{\infty}$ is strictly increasing and unbounded, that $$r^B_0(\tau'') <1 \leq \mathfrak{R}_{\min}(\tau''),$$ and that 
\begin{equation}
    \frac{2\mathfrak{R}_{\min}(\tau'')}{\mathfrak{R}_{\min}(\tau'')+1} \geq \frac{4}{3} > \frac{3\sqrt{2}}{4}.
\end{equation}
Hence, the following notation 
\begin{equation}\label{tiring small estimate}
   N_B(\tau''):=\min\{ n \mid2\mathfrak{R}_{\min}(\tau'') \geq r^B_{n}(\tau'') > \mathfrak{R}_{\min}(\tau'')+1 \}
\end{equation}
is well-defined.
We also define
\begin{equation}\label{2025 10 08 04:37}
    \vartheta_n^B(\tau''):= \frac{1}{2^{n+1}\left(r^B_{n}(\tau'')+\mathfrak{R}_{\min}(\tau'')\right)}.
\end{equation}
\begin{equation}\label{2025 11/12 04:14}
     \delta_T^B(\tau'') := \min\left\{ t_{\frac{\delta_V(\tau'')}{40}}\left(\mathfrak{R}_{\min}(\tau'')+r^B_{N_B(\tau'')}\right), \delta_T(\tau''),\frac{d}{4\left(\mathfrak{R}_{\min}(\tau'')+r_{N_B(\tau'')}^B(\tau'')\right)} \right\},
\end{equation}
 \begin{equation}
a^B_{n,\tau''}(\tau^B):=
\begin{cases}
a_0(\tau''),\ & n=0\\
\left(\min\{a^B_{n-1,\tau''}(\tau^B),b^B_{\tau''}(\tau^B)\}\right)^2C_Q(r^B_{n-1}(\tau''))^{3+\gamma}\\
\frac{\tau^B-\tau''}{2^{n+2}(\mathfrak{R}_{\min}(\tau'')+r^B_{n}(\tau''))}e^{-C_L\frac{(\tau^B-\tau'')\left\langle \mathfrak{R}_{\min}(\tau'')+r^B_{n}(\tau'') \right\rangle^{\gamma^+}}{2^{n}\left(\mathfrak{R}_{\min}(\tau'')+r^B_{n}(\tau'')\right)}},\  &n \geq 1,
\end{cases}
  \end{equation}

   \begin{equation}
b^B_{\tau''}(\tau^B):=a^S\left(l_{\frac{\delta_V(\tau'')}{40}}\left(\frac{\tau^B-\tau''}{6}\right),\tau^B,\tau'',\delta_T^B(\tau'')\right).
  \end{equation}
Here, the notation $a_0(\tau'')$ is from \textbf{Proposition \ref{initial point lowerbound general sense}} and the notation $a^S$ is from \textbf{Corollary \ref{gazing coro}}. 

\begin{proposition} \label{initial point lowerbound general sense fully specular}
   Suppose that $\Omega \subset \mathbb{R}^3$ satisfies \textbf{Assumption A} and that the collision kernel $B$ satisfies \textbf{Assumption B}. Fix $d$ with $0< d <\min\{1,\delta\}$ as in \textbf{Lemma \ref{initial cover over boundary}}. Let $f(t,x,v)$ be a continuous mild solution of \eqref{Boltzmann equation}--\eqref{boundary condition} with $\alpha=1$ and let $\tau''>0$ be as given in \textbf{Proposition \ref{initial point lowerbound general sense}}. The following holds: For any $\tau^B \in (\tau'',\tau''+\delta_T^B(\tau'')]$, there exist $a^B_0(\tau^B)>0$,  which depends on $\tau''$, $\Omega$, $M$, and $E_f$, such that 
      \begin{equation}\label{2025 10/08 04:02}
 f(t,x,v) \geq \min\left\{a^B_{N_B(\tau''),\tau''}(\tau^B),b^B_{\tau''}(\tau^B)\right\}\mathbf{1}_{B(0,1)}(v)
     \end{equation}
for any $$(t,x,v) \in \left[\tau''+(\tau^B-\tau'')\left(1-\vartheta_{N_B(\tau'')}^B(\tau'')\right),\tau^B\right]\times \Omega\times \mathbb{R}^3.$$

\noindent

Here, $N_B(\tau'')$ is defined in \eqref{tiring small estimate} and $\mathfrak{R}_{\min}(\tau'')$ is introduced in \textbf{Proposition \ref{initial point lowerbound general sense}} and $\vartheta_{N_B(\tau'')}$ is from \eqref{2025 10 08 04:37}.

\end{proposition}

\begin{proof}
We start in a similar way to the proof of \textbf{Proposition \ref{final lower bound extension}}. We apply \textbf{Proposition \ref{initial point lowerbound general sense}} to $\overline{\Omega}$ to deduce the existence of points $\{ x_j \}_{j=1}^{\mathcal{N}(N_B(\tau''),\tau'')} \subset \Omega$ such that $\overline{\Omega} \subset \underset{1\leq j\leq \mathcal{N}(N_B(\tau''),\tau'')}{\bigcup} B(x_j,\frac{\delta_X(\tau'')}{2^{N_B(\tau'')}})$ and 
\begin{equation}\label{2025 10/07 03:55}
 f(t,x,v) \geq a_0(\tau'')\mathbf{1}_{B(\overline{v}_{j}(\tau''),\delta_V(\tau''))}(v)
     \end{equation}
for any $1\leq j \leq \mathcal{N}(N_B(\tau''),\tau'')$ and any $(t,x,v) \in [\tau'', \tau''+\delta_{T}(\tau'')]\times [B(x_j,\delta_X(\tau''))\cap \overline{\Omega}]\times \mathbb{R}^3$.

Then, we recall the notation $t_{\epsilon}(v_M)$ in \eqref{t es and l es} and $\delta_T^B(\tau'')$ in \eqref{2025 11/12 04:14}
\begin{equation}
     \delta_T^B(\tau'') := \min\left\{ t_{\frac{\delta_V(\tau'')}{40}}\left(\mathfrak{R}_{\min}(\tau'')+r^B_{N_B(\tau'')}\right), \delta_T(\tau''),\frac{d}{4\left(\mathfrak{R}_{\min}(\tau'')+r_{N_B(\tau'')}^B(\tau'')\right)} \right\},
\end{equation}

Now, given $\tau^B \in (\tau'',\tau''+\delta_T^B(\tau'')]$ and $0\leq n \leq N_B(\tau'')$, we show that 
 \begin{equation}\label{boundary hard estimate}
 \begin{split}
     &  \ \ \ \ \   f(t,x,v) \geq \min\{a^B_{n,\tau''}(\tau^B),b^B_{\tau''}(\tau^B)\}\mathbf{1}_{B(\overline{v}_{i}(\tau''),r^B_{n}(\tau''))}(v),\\
       &\forall (t,x) \in \left[\tau''+(\tau^B-\tau'')\left(1-\vartheta_n^B(\tau'')\right),\tau^B\right]\times B\left(x_i,\frac{\delta_X(\tau'')}{2^n}\right)
 \end{split}    
       \end{equation}
   for $1\leq i \leq \mathcal{N}(N_B(\tau''),\tau'')$.

The proof is similar to the proof of \textbf{Proposition \ref{final lower bound extension}} and we use an induction on $n$. The base case $n=0$ follows directly from \eqref{2025 10/07 03:55}. Assume that the case $n=k$ holds. Given 
\begin{equation}\label{2025 4/2 03:59}
\begin{split}
    (t,x,v) \in & \left[\tau''+(\tau^B-\tau'')\left(1-\vartheta_{k+1}^B(\tau'')\right),\tau^B\right]\\
    &\times B\left(x_i,\frac{\delta_X(\tau'')}{2^{k+1}}\right) \times B(\overline{v}_{i}(\tau''),r^B_{k+1}(\tau'')),
\end{split}
\end{equation}
we use the Duhamel formula \eqref{Duhamel formula} and apply \textbf{Lemma \ref{up bound of L lemma}} to deduce that
    \begin{equation}\label{2025 10/7 12:31}
    \begin{split}
        &f(t,x,v)\\
        \geq &\int_{\tau''+(\tau^B-\tau'')\left(1-2\vartheta_{k+1}^B(\tau'')\right)}^{\tau''+(\tau^B-\tau'')\left(1-\vartheta_{k+1}^B(\tau'')\right)} e^{-C_L(t-s)\left\langle r^B_{k+1}(\tau'')+\mathfrak{R}_{\min}(\tau'')\right\rangle^{\gamma^+}} \\ 
        &Q^+[f(s,X_{s,t}(x,v),\cdot),f(s,X_{s,t}(x,v),\cdot)](V_{s,t}(x,v))\,ds.
    \end{split}
    \end{equation}
Since for any $s \in \left[\tau''+(\tau^B-\tau'')\left(1-2\vartheta_{k+1}^B(\tau'')\right), t\right] $, we have
\begin{equation}\label{2025 10/08/ 12:56}
    \begin{split}
        &|X_{s,t}(x,v)-x_i|
        \leq |X_{s,t}(x,v)-x|+|x-x_i| \leq |v|(t-s)+\frac{\delta_X(\tau'')}{2^{k+1}}\\
        \leq &\frac{\tau^B-\tau''}{2^{k+2}}+\frac{\delta_X(\tau'')}{2^{k+1}} \leq \frac{\delta_{T}^B(\tau'')}{2^{k+2}}+\frac{\delta_X(\tau'')}{2^{k+1}}  \leq \frac{\delta_X(\tau'')}{2^{k}} <\frac{d}{8}.
    \end{split}
\end{equation}
Here recall the definition of $\delta_T^B(\tau'')$ and $\delta_T(\tau'')$ from \eqref{2025 11/12 04:14} and \eqref{2025 11/12 16:21}.
  We next apply the induction assumption and \textbf{Lemma \ref{spreading property lemma}} to derive that

\begin{equation}
    \begin{split}
        &Q^+[f(s,X_{s,t}(x,v),\cdot),f(s,X_{s,t}(x,v),\cdot)](V_{s,t}(x,v)) \\
        \geq& C_Q  (r^B_{k}(\tau''))^{3+\gamma}\frac{1}{2}\min\left(a^B_{k,\tau''}(\tau^B),b^B_{\tau''}(\tau^B)\right)^2 \mathbf{1}_{B\left(\overline{v}_{i}(\tau''),r^B_{k+1}(\tau'')+\frac{\delta_V(\tau'')}{40}\right)}(V_{s,t}(x,v)).
    \end{split}
\end{equation}

We will generate the lower bound again depending on whether the characteristic line $\{X_s(x,v)\}_{\tau''+(\tau^B-\tau'')\left(1-2\vartheta_{k+1}^B(\tau'')\right) \leq s \leq t}$ touches the boundary or not.

If $|V_{s,t}(x,v)-v| < \frac{\delta_V(\tau'')}{40} $ for $\tau''+(\tau^B-\tau'')\left(1-2\vartheta_{k+1}^B(\tau'')\right) \leq s \leq t$, we have 
\begin{equation}
   \mathbf{1}_{B\left(\overline{v}_{i}(\tau''),r^B_{k+1}(\tau'')+\frac{\delta_V(\tau'')}{40}\right)}(V_{s,t}(x,v))=1,
\end{equation}
for $\tau''+(\tau^B-\tau'')\left(1-2\vartheta_{k+1}^B(\tau'')\right) \leq s \leq t$.
We deduce that
\begin{equation}
    \begin{split}
        &f(t,x,v)\\
        \geq& C_Q(r^B_{k}(\tau''))^{3+\gamma}\frac{1}{2}\min\left(a^B_{k,\tau''}(\tau^B),b^B_{\tau''}(\tau^B)\right)^2 \\
        &\int_{\tau''+(\tau^B-\tau'')\left(1-2\vartheta_{k+1}^B(\tau'')\right)}^{\tau''+(\tau^B-\tau'')\left(1-\vartheta_{k+1}^B(\tau'')\right)} e^{-C_L(t-s)\langle r^B_{k+1}(\tau'')+\mathfrak{R}_{\min}(\tau'')\rangle^{\gamma^+}} \,ds\\
        \geq& C_Q  (r^B_{k}(\tau''))^{3+\gamma}\frac{1}{2}\min\left(a^B_{k,\tau''}(\tau^B),b^B_{\tau''}(\tau^B)\right)^2 \frac{\tau^B-\tau''}{2^{k+2}(\mathfrak{R}_{\min}(\tau'')+r^B_{k+1}(\tau''))}\\
        &\times e^{-C_L\frac{(\tau^B-\tau'')\left\langle \mathfrak{R}_{\min}(\tau'')+r^B_{k+1}(\tau'') \right\rangle^{\gamma^+}}{2^{k+1}\left(\mathfrak{R}_{\min}(\tau'')+r^B_{k+1}(\tau'')\right)}}\\
        =&a^B_{k+1,\tau''}(\tau^B).
    \end{split}
    \end{equation}

On the other hand, if 
$|V_{s,t}(x,v)-v| \geq \frac{\delta_V(\tau'')}{40} $ for some $\tau''+(\tau^B-\tau'')\left(1-2\vartheta_{k+1}^B(\tau'')\right) \leq s \leq t$, we have $X_{s,t}(x,v) \in \partial\Omega$ for some $$s \in  \left[\tau''+(\tau^B-\tau'')\left(1-2\vartheta_{k+1}^B(\tau'')\right), t\right].$$ By \eqref{2025 10/08/ 12:56}, it follows that $x_i \in \Omega_{\frac{d}{8}}$, so by \textbf{Remark \ref{remark 2}}, there exists $1 \leq k(i)\leq m_1 $ such that $x_i  \in B(x^0_{k(i)}, \frac{d}{4})$.
Notice that  $|V_{s,t}(x,v)-v| \geq \frac{\delta_V(\tau'')}{40} $ for some $\tau'' \leq s \leq t$
Using \textbf{Lemma \ref{grazing geo lemma}}, we deduce that for any $0 \leq \tau_2 \leq t-\tau''$, we have $$X_{s',t}(x,v) \notin \Omega_{l_{\frac{\delta_V(\tau'')}{40}}(\tau_2)}\cap B(x_i^0,d)$$
for some $$s' \in [\tau'',\tau''+\tau_2],$$

which implies since $X_{s',t}(x,v) \in B(x^0_i,d)$:
$$X_{s',t}(x,v) \in \Omega- \Omega_{l_{\frac{\delta_V(\tau'')}{40}}(\tau_2)}.$$
Since we have
\begin{equation}
    \begin{split}
        t-\tau''
        \geq \tau''+(\tau^B-\tau'')\left(1-\vartheta_{k+1}^B(\tau'')\right)-\tau''
        =\frac{\tau^B-\tau''}{2} ,
    \end{split}
\end{equation}
we can take $\tau_2=\frac{\tau^B-\tau''}{6}$.
Next, we observe that
\begin{equation*}
    \begin{split}
        \tau''+\tau_2=\tau''+\frac{\tau^B-\tau''}{3} \leq t-\frac{\tau^B-\tau''}{3} \leq t-\tilde{t}\left(l_{\frac{\delta_V(\tau'')}{40}}\left(\frac{\tau^B-\tau''}{6}\right),\tau'',\tau^B,\delta_T^B(\tau'')\right).
    \end{split}
\end{equation*}
Consequently, we have
$$s' \in \left[\tau'',t-\tilde{t}\left(l_{\frac{\delta_V(\tau'')}{40}}\left(\frac{\tau^B-\tau''}{6}\right),\tau'',\tau^B,\delta_T^B(\tau'')\right)\right] $$ and 
$$X_{s',t}(x,v) \in \Omega- \Omega_{l_{\frac{\delta_V(\tau'')}{40}}(\tau_2)}.$$
We also notice that $$l_{\frac{\delta_V(\tau'')}{40}}\left(\frac{\tau^B-\tau''}{6}\right)= \frac{(\tau^B-\tau'')}{15360}\delta_V(\tau'')\leq \delta_X(\tau'').$$
Therefore, we apply \textbf{Corollary \ref{gazing coro}} with parameters 
$$(l,\tau^A,\tau'',\Delta^I_T)\rightarrow\left(l_{\frac{\delta_V(\tau'')}{40}}\left(\frac{\tau^B-\tau''}{6}\right),\tau^B,\tau'',\delta_T^B(\tau'')\right)$$ to deduce that
\begin{align*}
       &    f(t,x,v) \geq a^S\left(l_{\frac{\delta_V(\tau'')}{40}}\left(\frac{\tau^B-\tau''}{6}\right),\tau^B,\tau'',\delta_T^B(\tau'')\right)\mathbf{1}_{B(0,3\mathfrak{R}_{\min}(\tau''))}(v)\\
       & =a^S\left(l_{\frac{\delta_V(\tau'')}{40}}\left(\frac{\tau^B-\tau''}{6}\right),\tau^B,\tau'',\delta_T^B(\tau'')\right),
   \end{align*}
   which coincides with the definition of $b^B_{\tau''}(\tau^B)$ given above.
  Here, we used the fact $$|v| \leq |\overline{v}_i(\tau'')|+r^B_{k+1}(\tau'') \leq \mathfrak{R}_{\min}(\tau'')+2\mathfrak{R}_{\min}(\tau'') \leq 3\mathfrak{R}_{\min}(\tau'').$$ Combining both cases, we conclude that the induction step holds, and hence \eqref{boundary hard estimate} is valid for $1 \leq n \leq N_B(\tau'')$.
Finally, we set $n=N_B(\tau'')$ in \eqref{boundary hard estimate}, and use the fact that
$B(0,1) \subset B(\overline{v}_{i}(\tau''),r_{N_B(\tau'')}^B(\tau''))$ (because of \eqref{tiring small estimate}) to conclude that for $1\ \leq i \leq \mathcal{N}(N_B(\tau''),\tau'')$
 \begin{equation} 
 \begin{split}
        f(t,x,v) &\geq \min\left\{a^B_{N_B(\tau''),\tau''}(\tau^B),b^B_{\tau''}(\tau^B)\right\}\mathbf{1}_{B(\overline{v}_{i}(\tau''),r_{N_B(\tau'')}^B(\tau''))}(v),\\
     & \geq\min\left\{a^B_{N_B(\tau''),\tau''}(\tau^B),b^B_{\tau''}(\tau^B)\right\}\mathbf{1}_{B(0,1)}(v),\\
       &\forall (t,x) \in \left[\tau''+(\tau^B-\tau'')\left(1-\vartheta_{N_B(\tau'')}^B(\tau'')\right),\tau^B\right]\times B\left(x_i,\frac{\delta_X(\tau'')}{2^{N_B(\tau'')}}\right),
 \end{split}    
       \end{equation}
which implies \eqref{2025 10/08 04:02}.
    This completes the proof.
 
\end{proof}

Now, we introduce new notations:
\begin{definition}
  Given $0<\tau''<\tau^S$ as in \textbf{Proposition \ref{initial point lowerbound general sense fully specular}} , we define 

\begin{equation}
  \tilde{\tau}(\tau'',\tau^S):= \tau''+(\tau^S-\tau'')\left(1-\vartheta_{N_B(\tau'')}^B(\tau'')\right).
\end{equation}
Clearly, we see that $\tau''<\tilde{\tau}(\tau'',\tau^S)<\tau^S$.
\end{definition}

\begin{definition}
    
  Given $0<\tau''<\tau^S$, and a sequence $\{ \xi_n \} \in (0,1)^{\mathbb{N}}$,  we define the following numbers:
  \begin{equation}
r''_{n,S}:=
\begin{cases}
1,\ n=0\\
\sqrt{2}(1-\xi_{n})r''_{n-1,S},\ n \geq 1,
\end{cases}
  \end{equation}
 and
  \begin{equation}
a^S_{n,\tau''}(\tau^S):=
\begin{cases}
  \min\left\{a^B_{N_B(\tau''),\tau''}(\tau^S),b^B_{\tau''}(\tau^S)\right\},\ & n=0\\
(a^S_{n-1,\tau''}(\tau^S))^2 C_Q (r''_{n-1,S})^{3+\gamma}\xi_{n}^{\frac{1}{2}}\frac{\tau^S-\tilde{\tau}(\tau'',\tau^S)}{2^{n+1}r''_{n,S}}\\
  \times e^{-C_L\frac{(\tau^S-\tilde{\tau}(\tau'',\tau^S))\left\langle r''_{n,S} \right\rangle^{\gamma^+}}{2^{n}r''_{n,S}}}\ &n \geq 1,
\end{cases}
  \end{equation}
where $a^B_0(\tau^B)$ is defined in \textbf{Proposition \ref{initial point lowerbound general sense fully specular}}.    
\end{definition}

Now, we present a proposition similar to \textbf{Proposition \ref{final lower bound extension}}:
\begin{proposition}\label{final lower bound extension bouncing}
    Let $\Omega \subset \mathbb{R}^3$ satisfying \textbf{Assumption A}, kernel $B$ satisfying \textbf{Assumption B}, and let $f(t,x,v)$ be the continuous mild solution of \eqref{Boltzmann equation}--\eqref{boundary condition} with $\alpha=1$, $\tau''>0$ given from \textbf{Proposition \ref{initial point lowerbound general sense}}. Then, given $\tau^S \in (\tau'',\tau''+\delta_T^B(\tau'')]$, we have

   \begin{equation}\label{4/8 04:33}
       \begin{split}
           &  \ \ \ \ \   f(t,x,v) \geq a^S_{n,\tau''}(\tau^S)\mathbf{1}_{B(0,r''_{n,S})}(v),\\
       &\forall (t,x) \in \left[\tilde{\tau}(\tau'',\tau^S)+(\tau^S-\tilde{\tau}(\tau'',\tau^S))\left(1-\frac{1}{2^{n+1}(r''_{n,S})}\right),\tau^S\right]\times \Omega,
       \end{split}
   \end{equation}
for any $n \in \mathbb{N}$.
\end{proposition}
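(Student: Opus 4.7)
The plan is to prove the statement by induction on $n$, following the same general architecture as Proposition \ref{final lower bound extension} but exploiting the key simplification afforded by fully specular reflection: the characteristic speed $|V_{s,t}(x,v)| = |v|$ is constant in $s$, and the lower bound of Proposition \ref{initial point lowerbound general sense fully specular} already holds on all of $\Omega$ in the spatial variable. This removes the need to split cases according to whether $X_{s,t}(x,v)$ hits the boundary. The base case $n=0$ is exactly Proposition \ref{initial point lowerbound general sense fully specular}, noting that $r''_{0,S}=1$, that $a^S_{0,\tilde\tau^S(\tau'',\tau^B)}(\tau^S)=a^B_0(\tau^B)$ by definition, and that for any $\tau^S \in (\tilde\tau^S(\tau'',\tau^B),\tau^B]$ the target time interval is contained in $[\tilde\tau^S(\tau'',\tau^B),\tau^B]$.

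For the inductive step from $k$ to $k+1$, I would fix $(t,x,v)$ in the level-$(k{+}1)$ set of \eqref{4/8 04:33} and apply the specular Duhamel formula \eqref{Duhamel formula specular}, keeping only the $Q^+$ contribution and integrating over the sub-interval
\[
s \in \Bigl[\tilde\tau^S+(\tau^S-\tilde\tau^S)\bigl(1-\tfrac{1}{2^{k+1}r''_{k+1,S}}\bigr),\ \tilde\tau^S+(\tau^S-\tilde\tau^S)\bigl(1-\tfrac{1}{2^{k+2}r''_{k+1,S}}\bigr)\Bigr].
\]
On this interval the exponential factor is controlled by $\exp(-C_L(t-s)\langle v\rangle^{\gamma^+})$ thanks to Lemma \ref{up bound of L lemma} together with $|V_{s',t}(x,v)|=|v|\le r''_{k+1,S}$, yielding the desired uniform weight $\exp\!\bigl(-C_L(\tau^S-\tilde\tau^S)\langle r''_{k+1,S}\rangle^{\gamma^+}/(2^{k+1}r''_{k+1,S})\bigr)$ after integration. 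The monotonicity $r''_{k+1,S}>r''_{k,S}$ ensures that for every such $s$ the induction hypothesis applies at $(s,X_{s,t}(x,v),\cdot)$ on all of $\Omega$, so $f(s,X_{s,t}(x,v),w)\geq a^S_{k,\tilde\tau^S}(\tau^S)\,\mathbf 1_{B(0,r''_{k,S})}(w)$.

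I would then insert this into $Q^+$ and apply the spreading Lemma \ref{spreading property lemma} with $\xi=\xi_{k+1}$ to obtain
\[
Q^+[f(s,X_{s,t}(x,v),\cdot),f(s,X_{s,t}(x,v),\cdot)](V_{s,t}(x,v))
\geq \bigl(a^S_{k,\tilde\tau^S}(\tau^S)\bigr)^{2} C_Q (r''_{k,S})^{3+\gamma}\xi_{k+1}^{1/2}\,\mathbf 1_{B(0,r''_{k+1,S})}(V_{s,t}(x,v)).
\]
Since $|V_{s,t}(x,v)|=|v|\le r''_{k+1,S}$, the indicator equals $1$. The length of the $s$-interval is at least $(\tau^S-\tilde\tau^S)/(2^{k+2}r''_{k+1,S})$, and assembling these factors reproduces exactly the recursive definition of $a^S_{k+1,\tilde\tau^S(\tau'',\tau^B)}(\tau^S)$, which closes the induction.

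The only delicate point is the bookkeeping of the time intervals: one must verify that $s$ in the integration window is larger than the left endpoint of the validity interval for the induction hypothesis at level $k$, which follows from $r''_{k+1,S}>r''_{k,S}$, and that $|X_{s,t}(x,v)-x|\le |v|(t-s)$ does not escape $\overline\Omega$, which is automatic since the specular characteristic stays in $\overline\Omega$ by construction. Beyond these verifications the estimate is a direct transcription of the non-specular scheme of Proposition \ref{final lower bound extension}, with the spatial-splitting step removed and the constant-speed property of specular reflection replacing the uniform velocity bound used there.
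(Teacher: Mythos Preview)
Your proposal is correct and follows essentially the same approach as the paper: induction on $n$ with the base case given by Proposition \ref{initial point lowerbound general sense fully specular}, the specular Duhamel formula \eqref{Duhamel formula specular} for the inductive step, Lemma \ref{up bound of L lemma} to control the exponential via $|V_{s',t}(x,v)|=|v|$, the induction hypothesis applied at $(s,X_{s,t}(x,v),\cdot)$ on all of $\Omega$, and Lemma \ref{spreading property lemma} with $\xi=\xi_{k+1}$ to close. You correctly identify the key simplification over Proposition \ref{final lower bound extension}: since the spatial lower bound already holds on all of $\Omega$ and specular characteristics remain in $\overline\Omega$ with constant speed, no boundary case-splitting is needed.
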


\begin{proof}
    The proof is the same as the one of \textbf{Proposition \ref{final lower bound extension}}. We prove by induction on $n$. The base case holds by applying \textbf{Proposition \ref{initial point lowerbound general sense fully specular}} with the choice $\tau^B:=\tau^S$: we have
\begin{equation}\label{2025 10/09 02:55}
 f(t,x,v) \geq \min\left\{a^B_{N_B(\tau''),\tau''}(\tau^S),b^B_{\tau''}(\tau^S)\right\}\mathbf{1}_{B(0,1)}(v)
     \end{equation}
for any $$(t,x,v) \in \left[\tilde{\tau}(\tau'',\tau^S),\tau^S\right]\times \Omega\times \mathbb{R}^3.$$  
    
    We now assume that the case $n=k$ holds. We consider
    \begin{equation}\label{Prop 5.4 txv assumption specular}
    \begin{split}
                &(t,x,v) \in \left[\tilde{\tau}(\tau'',\tau^S)+(\tau^S-\tilde{\tau}(\tau'',\tau^S))\left(1-\frac{1}{2^{k+2}(r''_{k+1,S})}\right),\tau^S\right] \\
        &\times \Omega \times B(0,r''_{k+1,S}).
    \end{split}
    \end{equation}
We recall that by the Duhamel formula \eqref{Duhamel formula specular}, and apply \textbf{Lemma \ref{up bound of L lemma}} to deduce that
    \begin{equation}\label{4/8 12:31}
    \begin{split}
        &f(t,x,v)\\
        \geq& \int_{\tilde{\tau}(\tau'',\tau^S)+(\tau^S-\tilde{\tau}(\tau'',\tau^S))(1-\frac{1}{2^{k+1}r''_{k+1,S}})}^{\tilde{\tau}(\tau'',\tau^S)+(\tau^S-\tilde{\tau}(\tau'',\tau^S))(1-\frac{1}{2^{k+2}r''_{k+1,S}})} e^{-C_L(t-s)\left\langle r''_{k+1,S} \right\rangle^{\gamma^+}} \\ 
        &Q^+[f(s,X_{s,t}(x,v),\cdot),f(s,X_{s,t}(x,v),\cdot)](V_{s,t}(x,v))\,ds.
    \end{split}
    \end{equation}

     Since we have
     $$\tilde{\tau}(\tau'',\tau^S)+(\tau^S-\tilde{\tau}(\tau'',\tau^S))\left(1-\frac{1}{2^{k+1}(r''_{k+1,S})}\right) \geq \tilde{\tau}(\tau'',\tau^S)+(\tau^S-\tilde{\tau}(\tau'',\tau^S))\left(1-\frac{1}{2^{k+1}(r''_{k,S})}\right),$$
     we can apply the induction assumption and deduce that
    \begin{equation}\label{Prop5.6 very smal esti}
        f(s,X_{s,t}(x,v),w) \geq a^S_{k,\tau''}(\tau^S)\mathbf{1}_{B(0,r''_{k,S})}(w),
    \end{equation}
for any $w \in \mathbb{R}^3$, $$s\in \left[\tilde{\tau}(\tau'',\tau^S)+(\tau^S-\tilde{\tau}(\tau'',\tau^S))\left(1-\frac{1}{2^{k+1}r''_{k+1,S}}\right),\tilde{\tau}(\tau'',\tau^S)+(\tau^S-\tilde{\tau}(\tau'',\tau^S))\left(1-\frac{1}{2^{k+1}r''_{k+2,S}}\right)\right]$$, and $(t,x,v)$ satisfying \eqref{Prop 5.4 txv assumption specular}.
    Thus, we deduce from \eqref{Prop5.6 very smal esti} that
    \begin{equation}\label{4/8 04:14}
    \begin{split}
        &Q^+[f(s,X_{s,t}(x,v),\cdot),f(s,X_{s,t}(x,v),\cdot)]\\
        \geq&  (a^S_{k,\tau''}(\tau^S))^2Q^+[\mathbf{1}_{B(0,r''_{k,S})},\mathbf{1}_{B(0,r''_{k,S})}].
    \end{split}
    \end{equation}
    
    Then, we use \textbf{Lemma \ref{spreading property lemma}} with $\xi=\xi_{k+1}$ to get
   \begin{equation}\label{4/8 04:15}
   \begin{split}
       &Q^+[\mathbf{1}_{B(0,r''_{k,S})},\mathbf{1}_{B(0,r''_{k,S})}]\\
       \geq & C_Q (r''_{k,S})^{3+\gamma}\xi_{k+1}^{\frac{1}{2}}\mathbf{1}_{B(0,r''_{k,S}\sqrt{2}(1-\xi_{k+1}))}=C_Q (r''_{k,S})^{3+\gamma}\xi_{k+1}^{\frac{1}{2}}\mathbf{1}_{B(0,r''_{k+1,S})}.
       \end{split}
   \end{equation}

    As a result, we deduce from \eqref{4/8 12:31}, \eqref{4/8 04:14}, and \eqref{4/8 04:15} the following estimate: for any $(t,x,v)$ satisfying \eqref{Prop 5.4 txv assumption specular}, we have
      \begin{equation}
      \begin{split}
    &f(t,x,v)\\ 
    \geq& \int_{\tilde{\tau}(\tau'',\tau^S)+(\tau^S-\tilde{\tau}(\tau'',\tau^S))(1-\frac{1}{2^{k+1}r''_{k+1,S}})}^{\tilde{\tau}(\tau'',\tau^S)+(\tau^S-\tilde{\tau}(\tau'',\tau^S))(1-\frac{1}{2^{k+2}r''_{k+1,S}})} (a^S_{k,\tau''}(\tau^S))^2\\
    &\times  e^{-C_L(t-s)\langle r''_{k+1,S} \rangle^{\gamma^+}} C_Q (r''_{k,S})^{3+\gamma}\xi_{k+1}^{\frac{1}{2}}\mathbf{1}_{B(0,r''_{k+1,S})}(V_{s,t}(x,v))\,ds\\
    \geq& \int_{\tilde{\tau}(\tau'',\tau^S)+(\tau^S-\tilde{\tau}(\tau'',\tau^S))(1-\frac{1}{2^{k+1}r''_{k+1,S}})}^{\tilde{\tau}(\tau'',\tau^S)+(\tau^S-\tilde{\tau}(\tau'',\tau^S))(1-\frac{1}{2^{k+2}r''_{k+1,S}})} (a^S_{k,\tau''}(\tau^S))^2\\
    &\times  e^{-C_L(t-s)\langle r''_{k+1,S} \rangle^{\gamma^+}} C_Q (r''_{k,S})^{3+\gamma}\xi_{k+1}^{\frac{1}{2}}\,ds\\
  \geq& (a^S_{k,\tau''}(\tau^S))^2 C_Q (r''_{k,S})^{3+\gamma}\xi_{k+1}^{\frac{1}{2}}\frac{\tau^S-\tilde{\tau}(\tau'',\tau^S)}{2^{k+2}r''_{k+1,S}}\\
  &\times e^{-C_L\frac{(\tau^S-\tilde{\tau}(\tau'',\tau^S))\left\langle r''_{k+1,S} \right\rangle^{\gamma^+}}{2^{k+1}r''_{k+1,S}}}\\
  =&a^S_{k+1,\tau''}(\tau^S).
  \end{split}
   \end{equation}
    Hence, we showed that \eqref{4/8 04:33} holds for $n=k+1$ and conclude the proof by induction.
    
\end{proof}
Thanks to \textbf{Proposition \ref{final lower bound extension bouncing}}, we can use the exact same method used at the end of the fourth chapter to derive the Maxwellian lower bound for the fully specular reflection boundary condition.

\section{Lower bound for the non-cutoff case}

In this chapter, we investigate lower bounds for the solution of the Boltzmann equation in the non-cutoff case, that is, $\nu\geq 0$. Here, the domain $\Omega$ is not necessarily convex.
In contrast to the cutoff case studied in Chapter 2--4, the non-cutoff setting requires a refined decomposition of the collision operator to handle the angular singularity:
\begin{equation}
    \begin{split}
        Q[h_1,h_2](v)&=\int_{\mathbb{R}^3\times \mathbb{S}^2}B(|v-v_*|,\cos{\theta})[h_2(v')h_1(v'_*)-h_2(v)h_1(v_*)]\,dv_*\,d\sigma\\
        &=\int_{\mathbb{R}^3\times \mathbb{S}^2}B(|v-v_*|,\cos{\theta})[h_1(v'_*)(h_2(v')-h_2(v))]\,dv_*\,d\sigma\\
        &\hspace{1.5cm}-h_2(v)\int_{\mathbb{R}^3\times \mathbb{S}^2}B(|v-v_*|,\cos{\theta})(h_1(v_*)-h_1(v'_*))\,dv_*\,d\sigma\\
        &=:Q^1_b[h_1,h_2](v)-Q^2_b[h_1,h_2](v),
    \end{split}
\end{equation}
where

\begin{align*}
    &Q^1_b[h_1,h_2](v):=\int_{\mathbb{R}^3\times \mathbb{S}^2}B(|v-v_*|,\cos{\theta})[h_1(v'_*)(h_2(v')-h_2(v))]\,dv_*\,d\sigma,\\
    &Q^2_b[h_1,h_2](v):=h_2(v)\int_{\mathbb{R}^3\times \mathbb{S}^2}B(|v-v_*|,\cos{\theta})[h_1(v_*)-h_1(v'_*)]\,dv_*\,d\sigma.
\end{align*}

With this notation and (for some $\epsilon>0$) the decomposition of  $B(|v-v_*|,\cos{\theta})$
\begin{align}
    B(|v-v_*|,\cos{\theta})=\Phi(|v-v_*|)b(\cos{\theta})\mathbf{1}_{\theta \geq \epsilon}+\Phi(|v-v_*|)b(\cos{\theta})\mathbf{1}_{\theta < \epsilon},
\end{align}
we can consider the singular and non-singular parts of $Q$:

\begin{align*}
    Q[h_1,h_2](v)=Q^+_\epsilon[h_1,h_2](v)-Q^-_\epsilon[h_1,h_2](v)+Q^1_\epsilon[h_1,h_2](v)+Q^2_\epsilon[h_1,h_2](v),
\end{align*}
where

\begin{align*}
&Q^+_{\epsilon}[h_1,h_2](v):=\int_{\mathbb{R}^3\times \mathbb{S}^2}\Phi(|v-v_*|)b(\cos{\theta})\mathbf{1}_{\theta \geq \epsilon}[h_2(v')h_1(v'_*)]\,dv_*\,d\sigma,\\
&Q^-_\epsilon [h_1,h_2](v):=h_2(v)\int_{\mathbb{R}^3\times \mathbb{S}^2}\Phi(|v-v_*|)b(\cos{\theta})\mathbf{1}_{\theta \geq \epsilon}h_1(v_*)\,dv_*\,d\sigma,\\
&Q^1_{\epsilon}[h_1,h_2](v):=\int_{\mathbb{R}^3\times \mathbb{S}^2}\Phi(|v-v_*|)b(\cos{\theta})\mathbf{1}_{\theta < \epsilon}\left[h_1(v'_*)(h_2(v')-h_2(v))\right]\,dv_*\,d\sigma,\\
&Q^2_{\epsilon}[h_1,h_2](v):=h_2(v)\int_{\mathbb{R}^3\times \mathbb{S}^2}\Phi(|v-v_*|)b(\cos{\theta})\mathbf{1}_{\theta < \epsilon}[h_1(v_*)-h_1(v'_*)]\,dv_*\,d\sigma.
\end{align*}

We further introduce the following notations
\begin{align}
     &b^{CO}_{\epsilon}(\cos{\theta}):=\mathbf{1}_{\theta\geq\epsilon}b(\cos{\theta}), \,
     b^{NCO}_{\epsilon}(\cos{\theta}):=\mathbf{1}_{\theta<\epsilon}b(\cos{\theta}),\\
      &\label{2025 10/26 14:02}m_{b}:=\int_{\mathbb{S}^2}b(\cos{\theta})(1-\cos{\theta})\,d\sigma,\\
     &\label{2025 10/26 14:03}n_{b^{CO}_{\epsilon}}:=\int_{\mathbb{S}^2}b^{CO}_{\epsilon}(\cos{\theta})\,d\sigma, \,
      m_{b^{NCO}_{\epsilon}}:=\int_{\mathbb{S}^2}b^{NCO}_{\epsilon}(\cos{\theta})(1-\cos{\theta})\,d\sigma,
\end{align}

\begin{align*}
   & L_{\epsilon}(h)(v):=\int_{\mathbb{R}^3\times \mathbb{S}^2}\Phi(|v-v_*|)b(\cos{\theta})\mathbf{1}_{\theta \geq \epsilon}h(v_*)\,dv_*\,d\sigma,\\
   & S[h](v):=\int_{\mathbb{R}^3\times \mathbb{S}^2}\Phi(|v-v_*|)b(\cos{\theta})[h(v_*)-h(v'_*)]\,dv_*\,d\sigma,\\
   & S_{\epsilon}[h](v):=\int_{\mathbb{R}^3\times \mathbb{S}^2}\Phi(|v-v_*|)b(\cos{\theta})\mathbf{1}_{\theta < \epsilon}[h(v_*)-h(v'_*)]\,dv_*\,d\sigma.
\end{align*}

With this decomposition of the collision operator, we now introduce the definition of mild solutions in the non-cutoff setting:
\begin{definition}\label{non-cutoff defi name}
Suppose that $\Omega \subset \mathbb{R}^3$ satisfies \textbf{Assumption A} and the kernel $B$ satisfies \textbf{Assumption B} with $\nu \geq 0$. Given a non-negative continuous function $f_0$ on $\overline{\Omega} \times \mathbb{R}^3$, we call a non-negative continuous function $f$ defined on $[0,T) \times (\overline{\Omega} \times \mathbb{R}^3)$ with $|f(t,x,v)| \leq C(1+|v|)^{-r}$ for some constant $C>0$ and $r>3$ for any $0< t\leq T$, $(x,v) \in \overline{\Omega} \times \mathbb{R}^3$ a "continuous mild" solution to \eqref{Boltzmann equation}--\eqref{boundary condition} with initial data $f_0$ if $f$ is continuous on $\Gamma_{conti}$ and there exists a number $0<\epsilon_0<\frac{\pi}{4}$ such that for any $0<\epsilon<\epsilon_0$, $(t,x,v) \in [0,T) \times \Omega \times \mathbb{R}^3$, we have
\begin{equation}\label{Duhamel formula not cutoff}
\begin{split}
f(t,x,v)&= f_{0}(X_{0,t}(x,v),v)\exp\bigg[-\int_0^t(L_{\epsilon}+S_{\epsilon})[f(s,X_{s,t}(x,v),\cdot)](v)\,ds\bigg]\\
&+\int_0^t \exp\left(-\int_s^t (L_{\epsilon}+S_{\epsilon})[f(s',X_{s',t}(x,v),\cdot)](v)\,ds'\right)\\
&(Q^+_{\epsilon}+Q_{\epsilon}^1)[f(s,X_{s,t}(x,v),\cdot),f(s,X_{s,t}(x,v),\cdot)](v)\,ds,
\end{split}
\end{equation}
when $t \leq t_{\partial}(x,v)$, and
\begin{equation}\label{Duhamel formula B not cutoff}
\begin{split}
f(t,x,v)&= \alpha f(t-t_{\partial}(x,v),X_{t-t_{\partial}(x,v),t}(x,v),R(X_{t-t_{\partial}(x,v),t}(x,v),v))\\
&\exp\left[-\int_{t-t_{\partial}(x,v)}^t(L_{\epsilon}+S_{\epsilon})[f(s,X_{s,t}(x,v),\cdot)](v)\,ds\right]\\
&+(1-\alpha)\left( \int_{w\cdot n(X_{t-t_{\partial}(x,v),t}(x,v))>0} f(t,X_{t-t_{\partial}(x,v),t}(x,v),w)(w\cdot n(X_{t-t_{\partial}(x,v),t}(x,v))) dw \right)\\
&\frac{1}{2\pi T_B^2}e^{-\frac{|v|^2}{2T_B}} \exp\left[-\int_{t-t_{\partial}(x,v)}^t(L_{\epsilon}+S_{\epsilon})[f(s,X_{s,t}(x,v),\cdot)](v)\,ds\right]\\
&+\int_{t-t_{\partial}(x,v)}^t \exp\left(-\int_s^t (L_{\epsilon}+S_{\epsilon})[f(s',X_{s',t}(x,v),\cdot)](v)\,ds'\right)\\
&(Q^+_{\epsilon}+Q_{\epsilon}^2)[f(s,X_{s,t}(x,v),\cdot),f(s,X_{s,t}(x,v),\cdot)](v)\,ds
\end{split}
\end{equation}
when $t \geq t_{\partial}(x,v)$.
Here, we recall the definition of $t_{\partial}(x,v)$ from \textbf{Definition \ref{3/9 01:05}}.

\end{definition}

Before introducing the main result in the non-cutoff case, we introduce the following constants (here $\tilde{\gamma}:=(2+\gamma)^+$): 

\begin{equation} \label{Energy constant 5}
    e'_{f}(t,x):= \int_{v \in \mathbb{R}^3}|v|^{\tilde{\gamma}}f(t,x,v)\,dv,
\end{equation}

\begin{equation} \label{Energy constant 6}
    E'_{f}:= \sup\limits_{[0,T)\times \Omega}e'_{f}(t,x),
\end{equation}

\begin{equation} \label{Energy constant 7}
    w_{f}(t,x):= \|f(t,x,\cdot)\|_{W^{2,\infty}_v},
\end{equation}

\begin{equation} \label{Energy constant 8}
    W_{f}:= \sup\limits_{[0,T)\times \Omega}w_f(t,x).
\end{equation}

Now, we introduce our main theorem for the non-cutoff case:
\begin{theorem} \label{Main theorem 3}
     Suppose that $\Omega \subset \mathbb{R}^3$ satisfies \textbf{Assumption A} and that the kernel $B$ satisfies \textbf{Assumption B} with $\nu \geq0$, $\alpha \in [0,1]$. We consider a non-negative function $f_0$ that is continuous on $ \overline{\Omega} \times \mathbb{R}^3$. Let $f(t,x,v)$ be a continuous mild solution to \eqref{Boltzmann equation}--\eqref{boundary condition} on $[0,T) \times \overline{\Omega} \times \mathbb{R}^3$, with $T>0$ (and the initial condition $f_0$) which satisfies the following properties:
  \begin{enumerate}
        \item $M>0$;
        \item $E_f< \infty$, where $p_\gamma > \frac{3}{3+\gamma} >0$, if $-3 < \gamma <0$;
         \item $W_f<\infty$ and $E'_f< \infty$.
    \end{enumerate}

 Then, the following lower bound holds:
    There exists $0 <\tau_0\leq T$ such that for any $\tau \in (0,\tau_0)$ and $K>2\frac{\log{(2+\frac{2\nu}{2-\nu})}}{\log{2}}$, there exist $\Delta_{\tau_0}>0$, $\rho>0$ and $\theta>0$ depending on $\tau_0$, $C_{\Phi},c_{\Phi}, \gamma, b_0, \nu, E_f$ ,$W_f$,$E'_f$, $M$ (and $L_{f,p}$ if $\gamma <0$ ), $\tau$, $K$, and the modulus of continuity of $f_0$, such that

    \begin{equation}
        f(t,x,v) \geq \frac{\rho}{(2\pi\theta)^{\frac{3}{2}}}e^{-\frac{|v|^K}{2\theta}},\ \forall \, t \in [\tau,\Delta_{\tau_0}),\ \forall \, (x,v) \in \overline{\Omega}\times \mathbb{R}^3.
    \end{equation}
    In the case when $\nu=0$, we can further take $K=2$.
    \end{theorem}

    To prove this theorem, we quote the following result extracted from \textbf{Corollary 2.2} in \cite{Mou 1}:
\begin{lemma}\label{up bound of S lemma}
    Consider $g$ a measurable function on $\mathbb{R}^3$, and assume that the collision operator satisfies \textbf{Assumption B} with $0\leq\nu<2$. Then, there exists $C_g^S>0$ which depends only on $m_b$, $C_{\Phi}$ and $e_g$ (and $l_{g,p}$, where $p>\frac{3}{3+\gamma}$, if $\gamma<0$) such that
\begin{equation}
    |S[g](v)| \leq C_g^S\left\langle v \right\rangle^{\gamma^+}.
\end{equation}
Here we recall the definition of $e_g$ in \eqref{Energy constant 1}, $l_{g,p}$ in \eqref{Energy constant 3} and define $m_b:=\int_{\mathbb{S}^2}b(\cos{\theta})(1-\cos{\theta})\,d\sigma$.
\end{lemma}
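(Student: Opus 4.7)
The plan is to reduce the pointwise estimate on $S[g]$ to the same kind of convolution-type bound that underlies the proof of \textbf{Lemma \ref{up bound of L lemma}}, via an Alexandre--Desvillettes--Villani--Wennberg (ADVW) cancellation that trades the divergent factor $n_b=\int b\,d\sigma$ (which was finite in the cutoff case) for the convergent momentum-transfer cross section $m_b=\int b(\cos\theta)(1-\cos\theta)\,d\sigma$, which is finite exactly when $\nu<2$. Concretely, the goal is to represent $S[g]$ as a convolution $\int g(v_*)\,K(v-v_*)\,dv_*$ for some kernel satisfying $|K(z)|\leq C(m_b,C_\Phi,\gamma,\nu)\,|z|^{\gamma}$.

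To produce this representation I would first regularize by replacing $b$ with $b_\epsilon:=b\,\mathbf{1}_{\theta\geq\epsilon}$ so that the splitting $S_\epsilon[g]=I_{1,\epsilon}-I_{2,\epsilon}$ with $I_{1,\epsilon}=\int\!\int\Phi\,b_\epsilon\,g(v_*)$ and $I_{2,\epsilon}=\int\!\int\Phi\,b_\epsilon\,g(v'_*)$ makes sense separately. In $I_{2,\epsilon}$ I would perform the change of variable $v_*\mapsto v'_*$ at fixed $\sigma$: its Jacobian is $(1+\cos\theta)/8=\cos^2(\theta/2)/4$, the quantity $|v-v_*|$ is sent to $|v-v_*|/\cos(\theta/2)$, and the map is a diffeomorphism from $\mathbb{R}^3$ onto the half-space $\sigma\cdot(v-v'_*)>0$. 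Parametrizing $\sigma$ in the new coordinates by the angle $\psi=\theta/2\in[0,\pi/2]$ relative to the new axis $\widehat{v-v'_*}$ and subtracting yields (formally)
\[
S_\epsilon[g](v)=\int_{\mathbb{R}^3}g(v_*)\,\mathcal{K}_\epsilon(|v-v_*|)\,dv_*,\qquad \mathcal{K}_\epsilon(r)=8\pi\!\int_{\epsilon/2}^{\pi/2}\!b(\cos 2\psi)\sin\psi\Bigl[\Phi(r)\cos\psi-\tfrac{\Phi(r/\cos\psi)}{\cos^{2}\psi}\Bigr]d\psi.
\]
A Taylor expansion at $\psi=0$ shows the bracket is $O(\psi^{2})\Phi(r)$, which combined with $b(\cos 2\psi)\sin\psi\sim \psi^{-(1+\nu)}$ gives an integrand $O(\psi^{1-\nu})$, integrable precisely because $\nu<2$.

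The main obstacle, and the place where I expect the argument to require the most care, is the behaviour near $\psi=\pi/2$ (corresponding to $\theta\to\pi$, backward scattering), where $\Phi(r/\cos\psi)/\cos^{2}\psi$ blows up for $\gamma>-2$. This reflects the fact that the change of variable $v_*\mapsto v'_*$ degenerates at backward scattering, so that neither $I_{1,\epsilon}$ nor $I_{2,\epsilon}$ is uniformly bounded as $\epsilon\to 0$. My plan is to restore convergence by combining the change of variable above with the complementary change $v_*\mapsto v'$ obtained via the involution $\sigma\mapsto-\sigma$, which swaps $v'$ and $v'_*$ while sending $\cos\theta\mapsto-\cos\theta$: after symmetrization, the backward-scattering singularity of one change of variable pairs with the forward-scattering (already cancelled) part of the other, and the limiting kernel is controlled pointwise by $C(m_b,C_\Phi,\gamma,\nu)\,|z|^{\gamma}$. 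This is essentially the ADVW cancellation lemma as used in \cite{Mou 1}; the bookkeeping of the two changes of variable together with their domains of bijectivity will be the technical heart of the proof.

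Once the pointwise bound $|K(z)|\leq C|z|^{\gamma}$ is established, the argument finishes exactly as in the proof of \textbf{Lemma \ref{up bound of L lemma}}: for $\gamma\geq 0$ the inequality $|v-v_*|^{\gamma}\leq 2^{\gamma}(\langle v\rangle^{\gamma}+|v_*|^{\gamma})$ together with $|v_*|^{\gamma}\leq 1+|v_*|^{2}$ and the hypothesis $e_g<\infty$ yields $\int|v-v_*|^{\gamma}g(v_*)\,dv_*\leq C'\langle v\rangle^{\gamma^{+}}$; for $-3<\gamma<0$ one splits the integral at $|v-v_*|=1$, bounds the singular near-diagonal piece by H\"older's inequality with $l_g^{p}$ for $p>3/(3+\gamma)$, and bounds the far piece trivially. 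This produces the claimed constant $C_g^{S}$ depending only on $m_b$, $C_\Phi$ and $e_g$ (and $l_g^{p}$ when $\gamma<0$).
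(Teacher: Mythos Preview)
The paper gives no proof: this lemma is quoted directly from \cite{Mou 1}, Corollary~2.2. Your ADVW-style cancellation argument is precisely the method behind that cited result, so you are reconstructing the proof that the paper chooses to outsource.

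Your concern about the endpoint $\psi\to\pi/2$ (i.e.\ $\theta\to\pi$) is legitimate---the integrand you wrote down really does blow up there when $\gamma\ge-1$---but the fix you sketch is more elaborate than needed and, as written, does not quite close: the involution $\sigma\mapsto-\sigma$ converts $g(v'_*)$ into $g(v')$, which is absent from $S[g]$, so there is nothing inside $S$ for it to cancel against. The standard resolution, and the one adopted in ADVW and in \cite{Mou 1}, is to exploit the invariance of the full bilinear operator $Q(f,f)$ under $\sigma\mapsto-\sigma$ to reduce, from the outset, to an angular kernel supported in $\theta\in[0,\pi/2]$; the operator $S$ is then defined with this symmetrized $b$. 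Under that reduction $\psi=\theta/2\in[0,\pi/4]$, the Jacobian factor $1/\cos^{2}\psi$ is uniformly bounded, and your Taylor expansion near $\psi=0$ yields $|\mathcal K(r)|\le C\,m_b\,C_\Phi\,r^{\gamma}$ directly, with no backward-scattering term to handle. Your final convolution step, carried out exactly as in \textbf{Lemma~\ref{up bound of L lemma}}, is then correct.
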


Similarly to the cut-off case, we would like to use \eqref{Duhamel formula not cutoff} and \eqref{Duhamel formula B not cutoff} to spread the initial lower bounds. 
However, the additional non-cutoff term $Q^1_{\epsilon}$ is not necessarily nonnegative. Hence, we need to quote \textbf{Lemma 2.5} from \cite{Mou 1}:
\begin{lemma}\label{raw deal with R one lemma}
Suppose that the collision operator satisfies \textbf{Assumption B} with $0\leq\nu<2$. Then, there exists a constant $C>0$, which depends only on $\gamma,\ \nu,\ b_0$, such that for any measurable function $h_1,h_2$, we have 

\begin{equation}\label{Q1 upper bound}
    |Q^1_{b}\left(h_1,h_2\right)| \leq C m_b c_{\Phi} \|h_2\|_{L^1_{\tilde{\gamma}}}\|h_1\|_{W^{2,\infty}}.
\end{equation}
\end{lemma}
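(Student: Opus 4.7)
The plan is to prove the estimate by a second-order Taylor expansion of the differentiable factor in $Q^1_b$ combined with the $\sigma$-symmetry of $b$, which is the standard approach for controlling the grazing-collisions remainder in the non-cutoff Boltzmann theory. Following the natural structure of $Q^1_b$ as $\int B\,h_1(v'_*)\,[h_2(v')-h_2(v)]\,dv_*\,d\sigma$, I treat the factor appearing inside the difference as the $W^{2,\infty}$ argument and the factor evaluated at $v'_*$ as the weighted $L^1$ argument.

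First, Taylor-expand to second order:
\begin{equation*}
 h(v')-h(v) = \nabla h(v)\cdot(v'-v) + \int_0^1 (1-s)\,D^2 h\bigl(v+s(v'-v)\bigr)(v'-v,v'-v)\,ds,
\end{equation*}
and insert this into $Q^1_b$ to split it into a linear-in-$(v'-v)$ piece and a quadratic remainder. For the quadratic remainder, use the key identity $|v'-v|=|v-v_*|\sin(\theta/2)$, which yields $|v'-v|^2\le \tfrac12|v-v_*|^2(1-\cos\theta)$. This makes the angular integral collapse to $\int_{\mathbb{S}^2}b(\cos\theta)(1-\cos\theta)\,d\sigma = m_b$, and the spatial part reduces to an integral of the companion factor against $|v-v_*|^{2+\gamma}$. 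A pre-post collisional change of variable $v_*\leftrightarrow v'_*$ (with bounded Jacobian, and $|v-v_*|=|v-v'_*|/\cos(\theta/2)$) transfers the weight onto $v'_*$ and yields an $L^1_{\tilde\gamma}$ norm with $\tilde\gamma=(2+\gamma)^+$ after estimating $\Phi$ by its upper bound.

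The linear term $\int B\,h_1(v'_*)\,\nabla h_2(v)\cdot(v'-v)\,dv_*\,d\sigma$ is the delicate piece: a direct bound by $|v'-v|$ produces an angular factor $\int b(\cos\theta)\sin(\theta/2)\,d\sigma$, which is integrable only for $\nu<1$. To cover $\nu\in[1,2)$ one must use the $\sigma$-symmetry. Using the involution $\sigma\mapsto -\sigma$ one pairs each collision with the swapped one ($v'\leftrightarrow v'_*$); after an additional Taylor expansion of $h_1$ at $v_*$ (which costs one more derivative), the leading $\sigma$-odd contribution cancels and only a $(1-\cos\theta)$-weighted remainder survives, again integrable through the same $m_b$ constant. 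Equivalently one may exploit the explicit decomposition $v'-v = -\tfrac12(1-\cos\theta)(v-v_*) + \tfrac12|v-v_*|\sin\theta\,\omega$, where $\omega\perp (v-v_*)/|v-v_*|$, together with the vanishing of $\int_0^{2\pi}\omega\,d\phi$, to see that the $\phi$-averaged $(v'-v)$ already carries a factor of $(1-\cos\theta)$; the $\sigma$-dependence of $h_1(v'_*)$ is then handled by a further Taylor step at $v_*$.

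The main obstacle is precisely this cancellation in the linear term. The full $W^{2,\infty}$ regularity, rather than merely $W^{1,\infty}$, is needed to absorb the second-order Taylor remainder produced by symmetrization, and the moment $\tilde\gamma=(2+\gamma)^+$ arises from the $|v-v_*|^{2+\gamma}$ weight generated by $\Phi(|v-v_*|)\cdot|v-v_*|^2$ together with the pre-post change of variable. Combining the quadratic remainder bound from the first step with the symmetrized linear term yields the stated inequality, with angular factor $m_b$, kinetic factor controlled by the $\Phi$-constant, and $C$ depending only on the structural parameters $\gamma$, $\nu$, $b_0$.
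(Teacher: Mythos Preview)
The paper does not prove this lemma at all: it is simply quoted as \textbf{Lemma~2.5} from Mouhot~\cite{Mou 1}. Your sketch is essentially the standard argument behind that cited result (second-order Taylor expansion of the factor carrying the difference, plus an angular symmetrization for the first-order term), so in spirit you are reconstructing exactly what the paper invokes.

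There is, however, a genuine mismatch you should flag. The lemma as stated bounds $|Q^1_b(h_1,h_2)|$ by $\|h_2\|_{L^1_{\tilde\gamma}}\|h_1\|_{W^{2,\infty}}$, whereas your argument Taylor-expands $h_2$ (the function inside the difference $h_2(v')-h_2(v)$) and integrates $h_1(v'_*)$ against a weight. What you actually prove is therefore $|Q^1_b(h_1,h_2)|\le C m_b c_\Phi \|h_1\|_{L^1_{\tilde\gamma}}\|h_2\|_{W^{2,\infty}}$, i.e.\ the inequality with the roles of $h_1$ and $h_2$ in the norms swapped. Given the structure of $Q^1_b$ this is the natural assignment, and the paper's only use of the lemma is with $h_1=h_2=f$, so the swap is harmless there; but strictly speaking you have not proved the stated inequality, and this is almost certainly a typo in the paper's transcription of Mouhot's lemma.

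There is also an internal inconsistency in your treatment of the linear term. You write that the $\sigma$-cancellation is completed ``after an additional Taylor expansion of $h_1$ at $v_*$'', which requires $h_1\in W^{1,\infty}$ at least; yet you have declared $h_1$ to be the $L^1_{\tilde\gamma}$ argument. Concretely, when you decompose $v'-v=-\tfrac12(1-\cos\theta)(v-v_*)+\tfrac12|v-v_*|\sin\theta\,\omega$ and try to use $\int_0^{2\pi}\omega\,d\phi=0$, the factor $h_1(v'_*)$ still depends on $\phi$, and with $h_1$ merely in $L^1$ you cannot control $h_1(v'_*)-h_1(v_*)$ pointwise. Either you must accept regularity on \emph{both} factors (which is what the application to $f$ actually uses), or you must replace the ``Taylor step on $h_1$'' by a genuine change of variables $v_*\to v'_*$ \emph{before} performing the azimuthal average, so that $h_1$ sits at the integration variable and the cancellation acts only on the remaining kernel. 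As written, this step is a gap.
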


Now, we show that a result which is similar to the result of \textbf{Proposition \ref{ini to multi}} can be obtained:
\begin{proposition} \label{ini to multi non-cutoff}
    Suppose that $\Omega \subset \mathbb{R}^3$ satisfies \textbf{Assumption A} and the kernel $B$ satisfies \textbf{Assumption B} with $ 0\leq \nu <2$, $\alpha \in [0,1]$. We consider a mild continuous solution $f(t,x,v)$ of \eqref{Boltzmann equation}--\eqref{boundary condition}. Given $A > 0$, $0<\Delta_1\leq 1,$ $\Delta_2<1$, and $(\tau, x' ,v') \in [0, T) \times \Omega \times \mathbb{R}^3$ such that $B(x',\Delta_2) \subset \Omega$ and

    \begin{equation*}
        f(t,x,v) \geq A,\ \forall (t,x,v) \in [\tau, \tau+\Delta_1] \times B(x',\Delta_2) \times B(v', \Delta_2).
    \end{equation*}
    Then, there exist $\{\epsilon_i>0\}_{i=0}^{\infty}$ such that for $n \in \mathbb{N}\cup \{ 0 \}$, $t \in [\tau, \tau+\Delta_1]$, $x \in B(x',\frac{\Delta_2}{2^n}) $,

\begin{equation}\label{2025 04/09 05:46}
    \forall v \in \mathbb{R}^3,\ f(t,x,v) \geq \alpha^{NC}_n(\tau,t,\Delta_2,A,|v'|)\mathbf{1}_{ B(v',r_n(\Delta_2))}(v),
\end{equation}
where the numbers $\{r_n(\Delta_2)\}_{n=0}^{\infty} \in \mathbb{R}$, $\{t_n(\tau,t,\Delta_2,|v'|)\}_{n=0}^{\infty}\in \mathbb{R}$ and $\{\alpha^{NC}_n(\tau, t,\Delta_2,A,|v'|)\}_{n=0}^{\infty}\in \mathbb{R}$ are defined as follows:

\begin{equation}
r_0(\Delta_2):=\Delta_2,\ r_{n+1}(\Delta_2):=\frac{3\sqrt{2}}{4}r_{n}(\Delta_2),    
\end{equation}

\begin{equation}
t^{NC}_n(\tau,t,\Delta_2,|v'|):=\max\left\{ \tau, t-\frac{\Delta_2}{2^{n+1}(2r_n(\Delta_2)+|v'|)} \right\},
\end{equation}
\begin{equation}
    \alpha_0:=A
\end{equation}
\begin{equation}\label{alpha second line non-cutoff}
\begin{split}
&\alpha^{NC}_{n+1}(\tau,t,\Delta_2,A,|v'|)\\\
:=&\frac{1}{4}\int_{t^{NC}_n(\tau,t,\Delta_2,|v'|)}^t \exp\left(-(t-s)(C_{L,1}n_{b^{CO}_{\epsilon_n}}+C_{L,2}m_{b^{NCO}_{\epsilon_n}})\left\langle |v'|+2r_n(\Delta_2) \right\rangle^{\gamma^+}\right)\\
        &\hspace{0.5cm} C_{Q,1} \alpha^{NC}_n(\tau, s,\Delta_2,A,|v'|)^2 l_{b^{CO}_{\epsilon_n}} c_{\Phi}(r_n(\Delta_2))^{3+\gamma}\,ds.
\end{split}
\end{equation}
where $C_{L,1}$, $C_{L,2}$ and $C_{Q,1}$ are constants which depend on $\gamma,\ \nu,\ b_0,\ n_b$, $m_b$, $C_{\Phi}$, $E_f$, $W_f$, $E'_f$, $M$(and $L_{f,p}$, where $p>\frac{3}{3+\gamma}$, if $\gamma<0$).

\end{proposition}

\begin{proof}
We prove the proposition by induction on $n$. The case $n=0$ is exactly the assumption.  Assume that the statement of \textbf{Proposition \ref{ini to multi non-cutoff}} holds for $n=k$. Given $t \in [\tau,\tau+\Delta_1]$, $x \in B\left(x', \frac{\Delta_2}{2^{k+1}}\right)$, $v \in B\left(v',r_{k+1}(\Delta_2)\right) \subset B\left(0,|v'|+2r_{k}(\Delta_2)\right)$, we consider the second term on the right-hand side of \eqref{Duhamel formula not cutoff} to obtain the following lower bound which holds for $0<\epsilon<\epsilon_0$ ($\epsilon_0$ is mentioned in \textbf{Definition \ref{non-cutoff defi name}}):

\begin{equation}
    \begin{split}
        &f(t,x,v) \\
        =&f(t^{NC}_k(\tau,t,\Delta_2,|v'|),X_{t^{NC}_k(\tau,t,\Delta_2,|v'|),t}(x,v),v)\\
        &\exp\bigg[-\int_{t^{NC}_k(\tau,t,\Delta_2,|v'|)}^t(L_{\epsilon}+S_{\epsilon})[f(s,X_{s,t}(x,v),\cdot)](v)\,ds\bigg]\\
&+\int_{t^{NC}_k(\tau,t,\Delta_2,|v'|)}^t \exp\left(-\int_s^t (L_{\epsilon}+S_{\epsilon})[f(s',X_{s',t}(x,v),\cdot)](v)\,ds'\right)\\
&(Q^+_{\epsilon}+Q_{\epsilon}^1)[f(s,X_{s,t}(x,v),\cdot),f(s,X_{s,t}(x,v),\cdot)](v)\,ds,\\
         \geq & \int_{t^{NC}_k(\tau,t,\Delta_2,|v'|)}^t \exp\left(-\int_s^t (L_{\epsilon}+S_{\epsilon})[f(s',X_{s',t}(x,v),\cdot)](v))\,ds'\right)\\
        &(Q^+_{\epsilon}-|Q_{\epsilon}^1|)[f(s,X_{s,t}(x,v),\cdot),f(s,X_{s,t}(x,v),\cdot)](v)\,ds.
    \end{split}
\end{equation}

Now, by \textbf{Lemma \ref{up bound of L lemma}} and \textbf{Lemma \ref{up bound of S lemma}}, we have

\begin{equation}
|(L_{\epsilon}+S_{\epsilon})[f(s',X_{s',t}(x,v),\cdot)](v)| \leq (C_{L,1}n_{b^{CO}_{\epsilon}}+C_{L,2}m_{b^{NCO}_{\epsilon}})\left\langle |v'|+2r_k(\Delta_2) \right\rangle^{\gamma^+}
\end{equation}
for some constants $C_{L,1},C_{L,2}$ depending only on $n_b$, $m_b$, $C_{\Phi}$ and $E_f$  (and $L_{f,p}$ if $\gamma<0$).

Hence, the following estimate holds:
\begin{equation}
    \begin{split}
        &f(t,x,v) \\
         \geq & \int_{t^{NC}_k(\tau,t,\Delta_2,|v'|)}^t \exp\left(-(t-s)(C_{L,1}n_{b^{CO}_{\epsilon}}+C_{L,2}m_{b^{NCO}_{\epsilon}})\left\langle |v'|+2r_k(\Delta_2) \right\rangle^{\gamma^+}\right)\\
        &(Q^+_{\epsilon}-|Q_{\epsilon}^1|)[f(s,X_{s,t}(x,v),\cdot),f(s,X_{s,t}(x,v),\cdot)](v)\,ds,
    \end{split}
\end{equation}
for all $\frac{\pi}{4}>\epsilon>0$.

We notice that when $s \in \left[t^{NC}_k(\tau,t,\Delta_2,|v'|),t\right]$, we have
\begin{equation*}
    |x'-X_{s,t}(x,v)|=|x'-x+tv-sv|\leq \frac{\Delta_2}{2^{k+1}}+|t-s|(|v'|+2r_k(\Delta_2)) \leq  \frac{\Delta_2}{2^{k}},
\end{equation*}
which implies that $X_{s,t}(x,v) \in B\left(x', \frac{\Delta_2}{2^k}\right) \subset \Omega$ and we deduce by induction hypothesis that
\begin{equation}\label{2025 04/09 05:51}
    \forall v \in \mathbb{R}^3,\ f(s,X_{s,t}(x,v),v) \geq \alpha^{NC}_k(\tau,t,\Delta_2,A,|v'|)\mathbf{1}_{ B(v',r_k(\Delta_2))}(v).
\end{equation}

Next, we obtain from \textbf{Lemma \ref{spreading property lemma}} by setting $\xi=\frac{1}{4}$: for any $v \in B\left(v',r_{k+1}(\Delta_2)\right) $, we have
\begin{equation}
    \begin{split}
        &Q_{\epsilon}^+[f(s,X_{s,t}(x,v),\cdot),f(s,X_{s,t}(x,v),\cdot)](v)\\
        \geq & C_{Q,1} \alpha^{NC}_k(\tau, s,\Delta_2,A,|v'|)^2l_{b^{CO}_{\epsilon}} c_{\Phi}(r_k(\Delta_2))^{3+\gamma}\frac{1}{2}\mathbf{1}_{B\left(v',\frac{3\sqrt{2}}{4}r_k(\Delta_2)\right)}\\
        =&C_{Q,1} \alpha^{NC}_k(\tau, s,\Delta_2,A,|v'|)^2l_{b^{CO}_{\epsilon}} c_{\Phi}(r_k(\Delta_2))^{3+\gamma}\frac{1}{2},
    \end{split}
\end{equation}for some $ C_{Q,1}>0$ which only depends on $\gamma,\ \nu,\ b_0$.

Now, we use \eqref{Q1 upper bound} to derive:
\begin{equation}
    \begin{split}
        |Q_{\epsilon}^1[f(s,X_{s,t}(x,v),\cdot),f(s,X_{s,t}(x,v),\cdot)](v)|\leq C_{Q,2} m_{b^{NCO}_{\epsilon}} c_{\Phi}E'_{f}W_{f},
    \end{split}
\end{equation}
for some $ C_{Q,2}>0$, which depends only on $\gamma,\ \nu,\ b_0$.

As a result, we deduce that
\begin{equation}
    \begin{split}
        &f(t,x,v) \\
         \geq& \int_{t^{NC}_k(\tau,t,\Delta_2,|v'|)}^t \exp\left(-(t-s)(C_{L,1}n_{b^{CO}_{\epsilon}}+C_{L,2}m_{b^{NCO}_{\epsilon}})\left\langle |v'|+2r_k(\Delta_2) \right\rangle^{\gamma^+}\right)\\
        &\hspace{0.5cm} \left(C_{Q,1} \alpha^{NC}_k(\tau, s,\Delta_2,A,|v'|)^2l_{b^{CO}_{\epsilon}} c_{\Phi}(r_k(\Delta_2))^{3+\gamma}\frac{1}{2}-C_{Q,2} m_{b^{NCO}_{\epsilon}} c_{\Phi}E'_{f}W_{f}\right)\,ds\\
        \geq & \int_{t^{NC}_k(\tau,t,\Delta_2,|v'|)}^t \exp\left(-(t-s)(C_{L,1}n_{b^{CO}_{\epsilon}}+C_{L,2}m_{b^{NCO}_{\epsilon}})\left\langle |v'|+2r_k(\Delta_2) \right\rangle^{\gamma^+}\right)\\
        &\hspace{0.5cm} C_{Q,1} \alpha^{NC}_k(\tau, s,\Delta_2,A,|v'|)^2 l_{b^{CO}_{\epsilon}} c_{\Phi}(r_k(\Delta_2))^{3+\gamma}\frac{1}{2}\,ds\\
        -&\int_{t^{NC}_k(\tau,t,\Delta_2,|v'|)}^t \exp\left(-(t-s)(C_{L,1}n_{b^{CO}_{\epsilon}}+C_{L,2}m_{b^{NCO}_{\epsilon}})\left\langle |v'|+2r_k(\Delta_2) \right\rangle^{\gamma^+}\right)\\
        &\hspace{0.5cm} C_{Q,2} m_{b^{NCO}_{\epsilon}} c_{\Phi}E'_{f}W_{f}\,ds,
    \end{split}
\end{equation}
for all $\frac{\pi}{4}>\epsilon>0$.

Then, we notice that when $\frac{\pi}{4}>\epsilon>0$, we ahve
\begin{equation}
    l_{b^{CO}_{\epsilon}}:=\inf\limits_{\frac{1}{4}\pi\leq \theta \leq \frac{3}{4}\pi} b^{CO}_{\epsilon}(\cos{\theta})\geq l_b,
\end{equation}
Thus, we deduce from \eqref{2025 10/26 14:02} and \eqref{2025 10/26 14:03} that
\begin{equation}
\begin{split}  n_{b^{CO}_{\epsilon}}=\int_{\mathbb{S}^2}b^{CO}_{\epsilon}(\cos{\theta})\,d\sigma\sim\begin{cases}
         \frac{b_0}{\nu}\epsilon^{-\nu},\, \nu \in (0,2),\\
         b_0|\log{\epsilon}|,\, \nu=0,
    \end{cases}
\end{split}
\end{equation}

\begin{equation}
\begin{split}
    m_{b^{NCO}_{\epsilon}}=\int_{\mathbb{S}^2}b^{NCO}_{\epsilon}(\cos{\theta})(1-\cos{\theta})\,d\sigma    \sim\begin{cases}
         \frac{b_0}{2-\nu}\epsilon^{2-\nu},\, \nu \in (0,2),\\
         \frac{b_0}{2}\epsilon^2,\, \nu=0.
    \end{cases}
\end{split}
\end{equation}

Hence, for any $k \in \mathbb{N}$, we can choose $\frac{\pi}{4} >\epsilon_k>0$ such that 

\begin{equation}
    \begin{split}
        &\int_{t^{NC}_k(\tau,t,\Delta_2,|v'|)}^t \exp\left(-(t-s)(C_{L,1}n_{b^{CO}_{\epsilon_k}}+C_{L,2}m_{b^{NCO}_{\epsilon_k}})\left\langle |v'|+2r_k(\Delta_2) \right\rangle^{\gamma^+}\right)\\
        &\hspace{0.5cm} C_{Q,2} m_{b^{NCO}_{\epsilon_k}} c_{\Phi}E'_{f}W_{f}\,ds\\
        \leq &  \frac{1}{4}\int_{t^{NC}_k(\tau,t,\Delta_2,|v'|)}^t \exp\left(-(t-s)(C_{L,1}n_{b^{CO}_{\epsilon_k}}+C_{L,2}m_{b^{NCO}_{\epsilon_k}})\left\langle |v'|+2r_k(\Delta_2) \right\rangle^{\gamma^+}\right)\\
        &\hspace{0.5cm} C_{Q,1} \alpha^{NC}_k(\tau, s,\Delta_2,A,|v'|)^2 l_{b^{CO}_{\epsilon_k}} c_{\Phi}(r_k(\Delta_2))^{3+\gamma}\,ds
    \end{split}
\end{equation}

and therefore
\begin{equation}
  \begin{split}
        &f(t,x,v) \\
         \geq& \frac{1}{4}\int_{t^{NC}_k(\tau,t,\Delta_2,|v'|)}^t \exp\left(-(t-s)(C_{L,1}n_{b^{CO}_{\epsilon_k}}+C_{L,2}m_{b^{NCO}_{\epsilon_k}})\left\langle |v'|+2r_k(\Delta_2) \right\rangle^{\gamma^+}\right)\\
        &\hspace{0.5cm} C_{Q,1} \alpha^{NC}_k(\tau, s,\Delta_2,A,|v'|)^2 l_{b^{CO}_{\epsilon_k}} c_{\Phi}(r_k(\Delta_2))^{3+\gamma}\,ds\\
        =&\alpha^{NC}_{k+1}(\tau,t,\Delta_2,A,|v'|),
    \end{split}
\end{equation}
for $v \in B(v',r_{k+1}(\Delta_2))$ and we conclude the proof by induction.

\end{proof}

With the help of \textbf{Proposition \ref{ini to multi non-cutoff}}, we can use the exact same argument from \textbf{Proposition \ref{translation proposition}} to  \textbf{Proposition \ref{initial point lower bound near boundary}} to generate series of lower bounds on $\mathbb{Y}$. Then, we can work as in \textbf{Proposition \ref{positve integral estimate}} to deduce the following.  
\begin{proposition} \label{positve integral estimate non-cutoff}
    Suppose that $\Omega \subset \mathbb{R}^3$ satisfies \textbf{Assumption A} and that the kernel $B$ satisfies \textbf{Assumption B} with $\nu \geq 0$. Let $f(t,x,v)$ be a continuous mild solution of \eqref{Boltzmann equation}--\eqref{boundary condition} with $\alpha \in [0,1]$. Fix $\tau \in (0,\Delta^0)$, with $\Delta^0>0$ is given in \textbf{Proposition \ref{initial point lower bound on a single point}}. Then, there exist $b^{NC}(\tau)$, $\delta_T(\tau)>0$, which depend on $\gamma,\ \nu,\ b_0,\ n_b$, $m_b$ $C_{\Phi}$ and $E_f$, $W_f$, $E'_f$, and $M$(and $L_{f,p}$, where $p>\frac{3}{3+\gamma}$, if $\gamma<0$) such that for $t \in [\tau, \tau+\delta_T(\tau)]$, 

    \begin{equation}
        \forall x \in \partial \Omega, \ \int_{v_*\cdot n(x)>0}f(t,x,v_*)(v_*\cdot n(x))\,dv_*>b^{NC}(\tau).
    \end{equation}
\end{proposition}

Using the same method as in \cite{Bri1,Bri2} (notice that the proof does not require the convexity of $\Omega$), we derive the following. 
\begin{lemma}
 Suppose that $\Omega \subset \mathbb{R}^3$ satisfies \textbf{Assumption A} and the kernel $B$ satisfies \textbf{Assumption B} with $\nu \geq 0$. Let $f(t,x,v)$ be a continuous mild solution of \eqref{Boltzmann equation}--\eqref{boundary condition} with $\alpha \in [0,1]$. Fix $\tau \in (0,\Delta^0)$, where $\Delta^0>0$ is given in \textbf{Proposition \ref{initial point lower bound on a single point}} and $\{\xi_n\}_{n=1}^{\infty} \in (0,1)$. Then, there exist $r^{NC}_0>0$, $\{\Delta_i\}_{i=1}^{\infty}$ such that $\sum_{i=1}^{\infty}\Delta_i=1$ and that for any $n \in \mathbb{N} $, $ t \in [(\sum_{i=1}^n\Delta_{i})\tau,\tau]$, $\forall (x,v) \in \overline{\Omega} \times \mathbb{R}^3$, we have
\begin{equation}
    f(t,x,v) \geq a_n^{NC}(\tau)\mathbf{1}_{B(0,r''_n)}(v)
\end{equation}

Here, 
\begin{equation}
    r^{NC}_{n+1}:=\sqrt{2}\,(1-\xi_n)\,r^{NC}_n,
\end{equation}
\begin{equation}
    a_{n+1}^{NC}(\tau):=
    \begin{cases}
    C_Q\Delta_{n+1}e^{-\left[C_L\left(a^{NC}_n(\tau)\right)^2\left(r_n^{NC}\right)^{3+\gamma-\tilde{\gamma}}\xi_n^{\frac{1}{2}}\right]^{\frac{-\nu}{2-\nu}}\left(\sum\limits_{k \geq n+1}\Delta_k\right)\left(r_{n+1}^{NC}\right)^{\gamma^+}}\\
    \times\left(a^{NC}_n(\tau)\right)^2 \left(r_n^{NC}\right)^{\gamma+3}\xi_n^{\frac{5}{2}}, &\nu \in (0,2),\\
    C_Q\Delta_{n+1}e^{-C\log \left[C_L \left(a^{NC}_n(\tau)\right)^2\left(r_n^{NC}\right)^{3+\gamma-\tilde{\gamma}}\xi_n^{\frac{1}{2}}\right]\left(\sum\limits_{k \geq n+1}\Delta_k\right)\left(r_{n+1}^{NC}\right)^{\gamma^+}}\\
    \times\left(a^{NC}_n(\tau)\right)^2\left(r_n^{NC}\right)^{\gamma+3}\xi_n^{\frac{5}{2}}, &\nu =0.
     \end{cases}
\end{equation}

\end{lemma}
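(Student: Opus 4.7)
The plan is to argue by induction on $n$, using at each step the non-cutoff Duhamel representation \eqref{Duhamel formula not cutoff}--\eqref{Duhamel formula B not cutoff} together with a judicious choice of the angular cutoff $\epsilon = \epsilon_n$. For the base case $n = 0$, I would rerun the non-cutoff analogues of \textbf{Proposition \ref{initial point lowerbound general sense}}, \textbf{Proposition \ref{positve integral estimate non cutoff}}, and \textbf{Proposition \ref{final lower bound extension}} (with \textbf{Proposition \ref{ini to multi}} systematically replaced by \textbf{Proposition \ref{ini to multi non cutoff}}) when $\alpha < 1$; for $\alpha = 1$, the specular treatment of Section 5 is adapted in the same way. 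This produces a uniform lower bound $f(t,x,v) \ge a_0(\tau)\,1_{B(0,r_0)}(v)$ on $[\Delta_1\tau,\tau]\times\overline{\Omega}\times\mathbb{R}^3$ for appropriate $\Delta_1 \in (0,1)$, $r_0 > 0$, and $a_0(\tau) > 0$.

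For the inductive step, suppose the bound at stage $n$ holds on $I_n := [(\Sigma_{i\le n}\Delta_i)\tau,\tau]$, fix $t \in I_{n+1}$ and $(x,v) \in \overline{\Omega}\times\mathbb{R}^3$, and keep in \eqref{Duhamel formula not cutoff}--\eqref{Duhamel formula B not cutoff} only the $Q^+_\epsilon + Q^1_\epsilon$ contribution integrated over the sub-interval $[t-\Delta_{n+1}\tau,\,t] \subset I_n$. On this window the induction hypothesis applies to $f(s, X_{s,t}(x,v),\cdot)$, so \textbf{Lemma \ref{spreading property lemma}} gives
\begin{equation*}
Q^+_\epsilon[f,f](v) \;\ge\; C\, l_{b^{CO}_\epsilon} c_\Phi\, a_n^2(\tau)\, r_n^{3+\gamma}\, \xi_{n+1}^{1/2}\, 1_{B(0,r_{n+1})}(v),
\end{equation*}
whereas \textbf{Lemma \ref{raw deal with R one lemma}} supplies the a priori control $|Q^1_\epsilon[f,f](v)| \le C\, m_{b^{NCO}_\epsilon} c_\Phi W_f E'_f$. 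Invoking \textbf{Lemma \ref{up bound of L lemma}} and \textbf{Lemma \ref{up bound of S lemma}} reduces the exponential dissipation factor to $\exp\bigl(-C(t-s)\, n_{b^{CO}_\epsilon}\, \langle v\rangle^{\gamma^+}\bigr)$ on $B(0,r_{n+1})$.

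The heart of the argument is the selection of $\epsilon_n$, chosen so that the $Q^1_\epsilon$ error absorbs at most half of the $Q^+_\epsilon$ gain:
\begin{equation*}
C\, m_{b^{NCO}_{\epsilon_n}} c_\Phi W_f E'_f \;\le\; \tfrac12\, C\, l_b c_\Phi\, a_n^2(\tau)\, r_n^{3+\gamma}\, \xi_{n+1}^{1/2}.
\end{equation*}
Using $m_{b^{NCO}_\epsilon} \sim c\,\epsilon^{2-\nu}$ (respectively $c\,\epsilon^2$ for $\nu=0$) and $n_{b^{CO}_\epsilon} \sim c\,\epsilon^{-\nu}$ (respectively $|\log\epsilon|$), this forces $\epsilon_n^{2-\nu} \sim a_n^2(\tau)\, r_n^{3+\gamma-\tilde{\gamma}}\, \xi_{n+1}^{1/2}$, and consequently $n_{b^{CO}_{\epsilon_n}} \sim \bigl[a_n^2(\tau)\, r_n^{3+\gamma-\tilde{\gamma}}\, \xi_{n+1}^{1/2}\bigr]^{-\nu/(2-\nu)}$ (or the corresponding logarithm when $\nu=0$). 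Plugging this into the exponential loss accumulated over the remaining time $(\Sigma_{k\ge n+1}\Delta_k)\tau$ and multiplying by the length $\Delta_{n+1}\tau$ of the integration window produces exactly the announced recursion for $a_{n+1}(\tau)$. The sequence $\{\Delta_i\}$ is fixed once and for all to be summable to $1$ (say $\Delta_i = 2^{-i}$), small enough that $[t-\Delta_{n+1}\tau, t] \subset I_n$ at every stage.

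The principal obstacle is precisely this balance: the gain from $Q^+_\epsilon$ is polynomial in $\epsilon$, whereas the dissipation $L_\epsilon$ carries the singular factor $n_{b^{CO}_\epsilon}$ that blows up as $\epsilon \to 0$. The exponent $-\nu/(2-\nu)$ appearing in $a_{n+1}(\tau)$ is the trace of this trade-off, and it is precisely what forces the sub-Maxwellian profile $e^{-C_2|v|^K}$ with $K > 2$ in \textbf{Theorem \ref{Main theorem 2}}.
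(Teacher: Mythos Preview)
Your proposal is correct and follows essentially the same route the paper takes: the paper gives no detailed argument here but simply states that the method of \cite{Bri 1,Bri 2} (itself inherited from \cite{Mou 1}) carries over unchanged once convexity is dropped, and your sketch is precisely that method --- induction on $n$, at each step choosing the angular cutoff $\epsilon_n$ so that the $Q^1_\epsilon$ remainder is absorbed by half of the $Q^+_\epsilon$ spreading gain, which is what produces the exponent $-\nu/(2-\nu)$ in the loss factor. One small point: the extra powers of $\xi_n$ (you track $\xi_n^{1/2}$ from the spreading lemma, whereas the stated recursion has $\xi_n^{5/2}$) and the $r_n^{-\tilde\gamma}$ in the balance come from the $\langle v\rangle^{\tilde\gamma}$ growth hidden in the sharper form of \textbf{Lemma \ref{raw deal with R one lemma}} used in \cite{Mou 1}, so your balance equation is slightly off in bookkeeping but not in spirit.
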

Finally, by proceeding as in \cite{Mou 1}, p.29--31, we can derive the "weaker than Maxwellian" lower bound of \textbf{Theorem \ref{Main theorem 3}}.

\setcounter{section}{0}
\renewcommand{\thesection}{A}
\setcounter{theorem}{0}
\renewcommand{\thetheorem}{A.\arabic{theorem}}

\section{Appendix: the characteristic line}

In this section, we recall some useful properties about the bounce trajectory of $\Omega$ used in the definition of the characteristic line $X_{s,t}(x,v)$ and $V_{s,t}(x,v)$. 
To begin with, we introduce some notations from Definition A.1 in \cite{Bri2}.

\begin{definition}
    Given an open bounded domain with a $C^1$ boundary, we define a partition of $\partial \Omega \times \mathbb{R}^3$ as follows:

    \begin{itemize}
  \item 
     \begin{equation*}
     \Omega_{rebounds}:=\{ (x,v) \in \partial \Omega \times \mathbb{R}^3 \mid v \cdot n(x)<0 \}. 
     \end{equation*}
  \item 
     \begin{equation*}
     \Omega_{rolling}:=\{ (x,v) \in \partial \Omega \times \mathbb{R}^3 \mid v \cdot n(x)=0,\ \exists \, \delta>0 \ s.t. \ x-vt \in \overline{\Omega},\ \forall \, t \in [0,\delta] \}. 
     \end{equation*}
  \item 
     \begin{equation*}
     \Omega_{stop}:=\{ (x,v) \in \partial \Omega \times \mathbb{R}^3 \mid v \cdot n(x)=0,\ \forall \, \delta>0 \ \exists \, t \in (0,\delta)\ s.t. \ x-vt \notin \overline{\Omega} \}. 
     \end{equation*}
  \item 
     \begin{equation*}
     \Omega_{line}:=\{ (x,v) \in \partial \Omega \times \mathbb{R}^3 \mid v \cdot n(x)>0 \}. 
     \end{equation*}
     
\end{itemize}

\end{definition}
\noindent
Due to \textbf{Proposition A.2} in \cite{Bri2}, there is no specular reflection trajectory that leads to any point in $ \Omega_{stop}$.
Given $(x,v) \in \overline{\Omega} \times \mathbb{R}^3$, we recall the definition of $t_{\partial}(x,v)$
    \begin{equation*}
        t_{\partial}(x,v)=\max\{t \geq 0 \mid x-vs \in \overline{\Omega}, \forall \, s \in [0,t]\}.
    \end{equation*}
The next proposition(\textbf{Proposition A.3} in \cite{Bri2}) gives a picture of a backward trajectory touching the boundary:

\begin{proposition}
    Given an open bounded domain in $\mathbb{R}^3$ with $C^1$ boundary. Then, we have
    
    (1) If there exists $t \in (0,t_{\partial}(x,v))$ such that $x-vt \in \partial\Omega$, then $(x-vt,v) \in \Omega_{rolling}$. 
    
    (2) $t_{\partial}(x,v)=0$ if and only if $(x,v) \in \Omega_{stop}\cup\Omega_{rebound}$.

    (3) $(x-vt_{\partial},v) \in \Omega_{stop}\cup\Omega_{rebound}$.
    
    \end{proposition}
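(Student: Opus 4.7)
The plan is to run a clean case analysis on the sign of $v\cdot n$ at the relevant boundary point, using the $C^{1}$ structure of $\partial\Omega$ (so that a one-sided neighborhood of $\partial\Omega$ is controlled by $n(x)$) together with the maximality built into $t_{min}$. The partition $\Omega_{rebounds}\cup\Omega_{rolling}\cup\Omega_{stop}\cup\Omega_{line}$ is a trichotomy on the sign of $v\cdot n(x)$ refined by a local trajectory condition when $v\cdot n(x)=0$, so the proof amounts to ruling out the wrong alternatives in each statement.

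For part (1), fix $t\in(0,t_{min}(x,v))$ with $y:=x-vt\in\partial\Omega$. By definition of $t_{min}$ one has $x-vs\in\overline{\Omega}$ for every $s\in[0,t_{min}(x,v)]$, so $y-v\sigma=x-v(t+\sigma)\in\overline{\Omega}$ for $\sigma\in[0,\delta]$ with $\delta:=t_{min}(x,v)-t>0$, and symmetrically $y+v\sigma\in\overline{\Omega}$ for $\sigma\in[0,t]$. If $v\cdot n(y)<0$, the $C^{1}$ graph representation of $\partial\Omega$ near $y$ forces $y-\sigma v\notin\overline{\Omega}$ for small $\sigma>0$, contradicting the first inclusion; if $v\cdot n(y)>0$, the second inclusion fails by the symmetric argument. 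Hence $v\cdot n(y)=0$, and the first inclusion already provides the $\delta$-interval required by the definition of $\Omega_{rolling}$, which simultaneously excludes $\Omega_{stop}$.

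For part (2), the ``if'' direction is immediate from the definitions of $\Omega_{rebound}$ and $\Omega_{stop}$. For the converse, $t_{min}(x,v)=0$ forces $x\in\partial\Omega$, since any interior point of $\overline{\Omega}$ admits a ball inside $\Omega$ and hence $t_{min}>0$. Splitting on the sign of $v\cdot n(x)$: the case $v\cdot n(x)>0$ (i.e.\ $(x,v)\in\Omega_{line}$) would make $-v$ strictly inward-pointing, so a small segment along $-v$ lies in $\Omega$, contradicting $t_{min}=0$; the case $v\cdot n(x)=0$ combined with $t_{min}=0$ directly excludes $\Omega_{rolling}$, leaving $(x,v)\in\Omega_{stop}$; the case $v\cdot n(x)<0$ is $\Omega_{rebound}$. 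For part (3), set $y:=x-vt_{min}(x,v)$. Closedness of $\overline{\Omega}$ gives $y\in\overline{\Omega}$, and $y$ cannot lie in the interior, since then a ball argument would extend the interval $[0,t_{min}(x,v)]$ of admissible backward times and contradict maximality; hence $y\in\partial\Omega$. Maximality also forces $\max\{\sigma\geq 0:y-vs'\in\overline{\Omega}\ \forall s'\in[0,\sigma]\}=0$, since any positive $\sigma$ would push the backward trajectory past $t_{min}(x,v)$ while keeping it in $\overline{\Omega}$. Applying part (2) to $(y,v)$ then yields $(y,v)\in\Omega_{stop}\cup\Omega_{rebound}$.

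The main subtlety, though mild, lies in part (1): distinguishing $\Omega_{rolling}$ from $\Omega_{stop}$, which share the tangency condition $v\cdot n=0$, requires a one-sided stability of the trajectory inside $\overline{\Omega}$. The hypothesis $t<t_{min}(x,v)$ is precisely what furnishes the nontrivial interval $[0,\delta]$ on which $y-\sigma v\in\overline{\Omega}$, matching the defining property of $\Omega_{rolling}$; the remaining sign manipulations are routine consequences of the $C^{1}$ graph representation of $\partial\Omega$ near $y$.
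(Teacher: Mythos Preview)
Your proof is correct. The paper does not supply its own proof of this proposition: it is quoted directly from \cite{Bri 2} (as Proposition A.3 there) and stated without argument in the Appendix, so there is no in-paper proof to compare against. Your case analysis on the sign of $v\cdot n$ at the relevant boundary point, combined with the maximality of $t_{min}$, is the standard route and matches what one finds in \cite{Bri 2}. One cosmetic remark: in part (2) you call the ``if'' direction for $\Omega_{rebound}$ \emph{immediate from the definitions}, but strictly speaking the definition of $\Omega_{rebound}$ is only the sign condition $v\cdot n(x)<0$; deducing $t_{min}=0$ still requires the $C^{1}$ graph representation, exactly as you used it in part (1). This is already implicit in your write-up and does not affect correctness.
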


We always need to consider some strange trajectory with a strange path. 
Thankfully, the following proposition (\textbf{Proposition A.4 in \cite{Bri2}}) shows that we do not need to worry about the case that the backward trajectory reaching $\Omega_{stop}$ or bounce infinite times in a finite time since those set are measure zero:     \begin{proposition} \label{non inf bounce}
   We consider an open bounded domain in $\mathbb{R}^3$ with $C^1$ boundary and $(x,v) \in \overline{\Omega} \times \mathbb{R}^3$. Then, for any $t \geq 0$ the trajectory of $(x,v)$ with specular reflection after time $t$ has at most a countable number of rebound and rolling. Moreover, the set with infinite rebound and rolling in finite time is zero measure in $\overline{\Omega} \times \mathbb{R}^3$.
\end{proposition}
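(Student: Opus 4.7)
The plan is to treat the two assertions separately. For the countability statement, I would consider any two consecutive rebound or rolling times $t_n<t_{n+1}$ along the specular trajectory. The post-collision state at $t_n$ lies either in $\Omega_{line}$ (after a genuine reflection, since $V_{t_n}^+\cdot n(X_{t_n})>0$) or in $\Omega_{rolling}$ (after a rolling contact, by Proposition A.3), and in both cases $t_{n+1}-t_n=t_{min}(X_{t_n},V_{t_n}^+)>0$. Hence the boundary-contact times form a strictly increasing sequence, which as an ordered subset of $\mathbb{R}$ is at most countable.

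For the measure-zero statement, I would fix $T>0$ and let $\mathcal{B}_T$ denote the set of $(x,v)\in\overline{\Omega}\times\mathbb{R}^3$ whose trajectory exhibits infinitely many rebound or rolling times inside $[0,T]$; taking a countable union in $T$ reduces the problem to showing $|\mathcal{B}_T|=0$. The key geometric input I will exploit is the estimate proved in \textbf{Lemma \ref{grazing geo lemma}} via the $C^2$ graph representation of $\partial\Omega$, namely $|X_{t_i}-X_{t_{i-1}}|\geq \theta_i/\tilde{C}(\Omega)$, where $\theta_i$ is the incidence angle at the $i$-th rebound. Since the total arclength of the trajectory on $[0,T]$ is bounded by $|v|T$, the angle series $\sum_i\theta_i$ must converge for every $(x,v)\in\mathcal{B}_T$ with $v\neq 0$, forcing $\theta_i\to 0$. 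Consequently, each trajectory in $\mathcal{B}_T$ asymptotically enters every neighborhood of the grazing set $\mathcal{G}:=\{(y,w)\in\partial\Omega\times\mathbb{R}^3 : w\cdot n(y)=0\}$.

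To turn this geometric observation into a null-measure conclusion, I would invoke Liouville's theorem: the specular free-transport flow preserves the Lebesgue measure on $\overline{\Omega}\times\mathbb{R}^3$ between consecutive rebounds. Using the local graph parametrization of \textbf{Lemma \ref{initial cover over boundary}}, the $\delta$-neighborhood of $\mathcal{G}\cap\{|w|\leq R\}$ has phase-space measure $O(\delta R^3)$; transporting this neighborhood backward through the flow and stratifying $\mathcal{B}_T\cap\{|v|\leq R\}$ according to the number of rebounds already performed yields a bound of order $\delta$ uniform in the stratum index, thanks to the angle-summability $\sum_i\theta_i\leq \tilde{C}(\Omega)|v|T$. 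Letting $\delta\to 0$ and then $R\to\infty$ will then give $|\mathcal{B}_T|=0$. The main obstacle is precisely this last step: propagating the grazing-neighborhood estimate through the countably many rebound strata without the Liouville-transported constants blowing up. The remedy I have in mind is to fix a threshold angle $\theta_\star>0$, count a trajectory "dangerous" only once some incidence angle drops below $\theta_\star$, control the measure of dangerous trajectories by the Liouville transport of the $\theta_\star$-neighborhood of $\mathcal{G}$, and finally send $\theta_\star\to 0$ to conclude.
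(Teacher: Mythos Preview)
The paper does not supply a proof here; the proposition is quoted from \cite{Bri 2}, Proposition~A.4. Your countability argument is correct.

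For the measure-zero part your overall strategy is sound, but two points need fixing. First, a regularity mismatch: the chord--angle estimate $|X_{t_i}-X_{t_{i-1}}|\geq \theta_i/\tilde{C}(\Omega)$ is taken from the proof of \textbf{Lemma~\ref{grazing geo lemma}} and relies on the $C^2$ bound $|\nabla^2\phi_i|\leq\tilde{C}(\Omega)$; the proposition, however, is stated for $C^1$ boundaries, so your argument covers only the $C^2$ case actually used in the body of the paper, not the proposition as written. Second, and this is what is generating your ``main obstacle'', the neighborhood estimate you quote is off by one power of $\delta$. The grazing set $\mathcal{G}=\{(y,w)\in\partial\Omega\times\mathbb{R}^3: w\cdot n(y)=0\}$ has codimension \emph{two} in the six-dimensional phase space $\overline{\Omega}\times\mathbb{R}^3$ (two dimensions for $y\in\partial\Omega$, two for $w$ tangent to $\partial\Omega$), so its $\delta$-neighborhood inside $\{|w|\leq R\}$ has Lebesgue measure $O(\delta^2)$, not $O(\delta R^3)$; equivalently, in first-hit boundary coordinates the Jacobian weight $|v\cdot n|$ contributes an extra factor $\theta_\star$ when integrated over the near-grazing cone. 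With the corrected exponent the stratification closes cleanly: the angle-summability $\sum_i\theta_i\leq\tilde{C}(\Omega)RT$ forces the first sub-$\theta_\star$ rebound to occur at index $n\leq N_\star:=\lceil\tilde{C}(\Omega)RT/\theta_\star\rceil$, each of the $N_\star$ strata (pulled back through the measure-preserving billiard map) has measure $O(\theta_\star^2)$, and the total is $O(N_\star\theta_\star^2)=O(\theta_\star)\to 0$. With the bound $O(\delta)$ as you wrote it the sum over strata is only $O(1)$ and the argument does not close; replacing it by $O(\delta^2)$ removes the obstacle you identified.
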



\begin{remark}\label{non stop}
Similarly, with the above assumption the subset of $\overline{\Omega} \times \mathbb{R}^3$ with a backward trajectory leading to $\Omega_{stop}$ is also measure zero in $\overline{\Omega} \times \mathbb{R}^3$.
\end{remark}

Now, we introduce the characteristic line by defining:

\begin{equation}
(t_0,x_0,v_0):=(0,x,v),
  \end{equation}

  \begin{equation}
  \begin{split}
&(t_{k+1}, x_{k+1},v_{k+1})\\
:=&\begin{cases}
(\infty,x_k,v_k),\ &(x_k,v_k) \in \Omega_{stop},\\
(t_k+t_{\partial}(x_k,v_k),x_k-v_kt_{\partial}(x_k,v_k),R(x_k-v_kt_{\partial}(x_k,v_k),v_k)),\  &(x_k,v_k) \notin \Omega_{stop},
\end{cases}
\end{split}
  \end{equation}

  \begin{equation}
n(t,x,v):=\sup\{ k \in \mathbb{N}:\ t_k(x,v)\leq t \} .
  \end{equation}
  Note that $n(t,x,v)<\infty$ in case $\{ t_k \}$ is unbounded.

  Next, we define the last rebound from the backward trajectory of $(x,v)$:

   \begin{equation}
(t_{fin}, x_{fin},v_{fin}):=
\begin{cases}
(t,x_n,v_n),  & n(t,x,v) < \infty,\ \, t_{n(t,x,v)+1}=\infty,\\
(t_n,x_n,v_n), &n(t,x,v) < \infty,\ \, t_{n(t,x,v)+1}<\infty,\\
\lim\limits_{n \rightarrow \infty}(t_n,x_n,v_n), &n(t,x,v) = \infty.
\end{cases}
  \end{equation}

  Finally, for $0 \leq s \leq t$, we define the characteristic line :
  \begin{align*}
  \begin{cases}
     X_t(x,v)&:=x_{fin}(t,x,-v)-(t-t_{fin}(t,x,-v))v_{fin}(t,x,-v),\\
V_t(x,v)&:=-v_{fin}(t,x,-v). 
  \end{cases}
  \end{align*}

   \begin{align*}
  \begin{cases}
     X_{s,t}(x,v)&:=X_s(X_t(x,-v),-V_t(x,-v)),\\
V_{s,t}(x,v)&:=V_s(X_t(x,-v),-V_t(x,-v)). 
  \end{cases}
  \end{align*}


\section*{Acknowledgement}
The author would like to expresses sincere gratitude to Professor Laurent Desvillettes for his valuable suggestions during the preparation of this manuscript, in the framework of a research sojourn in Université Paris Cité, IMJ-PRG.

This work is supported by the NSTC Graduate Student Study Abroad Program with grant number 113-2917-I-002-054.

\section*{Data availability statement}

Data sharing is not applicable to this article, as no data sets were generated or analyzed during the current study.


\end{document}